\newtheorem{proposition}{Proposition}
\newtheorem{corollary}[proposition]{Corollary}
\newtheorem{lemma}[proposition]{Lemma}
\newtheorem{theorem}[proposition]{Theorem}
\newtheorem{conjecture}[proposition]{Conjecture}
\newtheorem*{conjecture*}{Conjecture}
\newtheorem*{theorem*}{Theorem}
\newtheorem*{corollary*}{Corollary}
\newtheorem*{proposition*}{Proposition}
\newtheorem*{lemma*}{Lemma}
\theoremstyle{definition}
\newtheorem{definition}[proposition]{Definition}
\newtheorem{construction}[proposition]{Construction}
\newtheorem*{definition*}{Definition}
\newtheorem*{construction*}{Construction}
\theoremstyle{remark}
\newtheorem{remark}[proposition]{Remark}
\newtheorem*{remark*}{Remark}
\newtheorem*{variant*}{Variant}
\newtheorem{example}[proposition]{Example}
\newtheorem*{example*}{Example}
\newcommand{\id}{\operatorname{id}}
\newcommand{\Z}{\mathbb{Z}}
\def\C{\mathbb C}
\newcommand{\Q}{\mathbb{Q}}
\newcommand{\F}{\mathbb{F}}
\newcommand{\1}{\mathbbm{1}}
\newcommand{\Spt}{\mathcal{S}\mathrm{pt}{}}
\newcommand{\Perf}{\mathrm{Perf}}
\DeclareMathOperator*{\colim}{colim}
\let\lim=\relax
\DeclareMathOperator*{\lim}{lim}
\def\Map{\mathrm{Map}}
\def\map{\mathrm{Map}}
\def\CAlg{\mathrm{CAlg}}
\def\Ind{\mathrm{Ind}}
\def\Span{\mathrm{Span}}
\def\Cat{\mathcal{C}\mathrm{at}{}}
\def\Spc{\mathcal{S}\mathrm{pc}{}}
\def\Fin{\cat F\mathrm{in}}
\def\Fun{\mathrm{Fun}}
\newcommand{\tr}{\mathrm{tr}}
\def\op{\mathrm{op}}
\let\cat=\mathrm
\def\FEt{\mathrm{FEt}{}}
\def\mot{\mathrm{mot}}
\newcommand{\et}{{\acute{e}t}}
\tikzset{
	symbol/.style={
		draw=none,
		every to/.append style={
			edge node={node [sloped, allow upside down, auto=false]{$#1$}}}
	}
}
\newcommand{\KO}{\mathrm{KO}}
\newcommand{\K}{\mathrm{K}}
\renewcommand{\H}{\mathrm{H}}
\newcommand{\comp}{{{\kern -.5pt}\wedge}}
\newcommand{\Dbb}{\mathbb{D}}
\DeclareSymbolFontAlphabet{\mathbb}{AMSb} 
\DeclareSymbolFontAlphabet{\mathbbl}{bbold}
\numberwithin{proposition}{section}
\numberwithin{equation}{section}
\theoremstyle{proposition}
\let\amp=&
\newcommand{\x}{\times}
\newcommand{\GL}{\mathrm{GL}}
\newcommand{\image}{\mathrm{Im}}
\newcommand{\wt}[1]{\widetilde{#1}}
\newcommand{\Cbb}{\mathbb{C}}
\newcommand{\Rbb}{\mathbb{R}}
\newcommand{\Cx}{\Cbb^\x}
\newcommand{\Zp}{\Z_p}
\newcommand{\Zell}{\Z_\ell}
\newcommand{\Zpx}{\Zp^\x}
\newcommand{\znx}{\left(\Z/N\right)^\times}
\newcommand{\Fbb}{\mathbb{F}}
\newcommand{\Fq}{\Fbb_q}
\newcommand{\Qell}{\Q_\ell}
\newcommand{\Fr}{\mathrm{Fr}}
\newcommand{\sX}{\mathcal{X}}
\newcommand{\piet}{\pi_1^\et}
\newcommand{\Ocal}{\mathcal{O}}
\newcommand{\gal}{\mathrm{Gal}}
\newcommand{\aut}{\mathrm{Aut}}
\newcommand{\norm}{\mathrm{Norm}}
\newcommand{\ord}{\mathrm{ord}}
\newcommand{\Frac}{\mathrm{Frac}}
\newcommand{\Dcal}{\mathcal{D}}
\newcommand{\KU}{\mathrm{KU}}
\newcommand{\BU}{\mathrm{BU}}
\newcommand{\E}{\mathrm{E}}
\newcommand\rcolon{
	\nobreak
	\mspace{6mu plus 1mu}
	{:}
	\nonscript\mkern-\thinmuskip
	\mathpunct{}
	\mspace{2mu}
}
\DeclareMathOperator{\spec}{Spec}
\newcommand{\M}{\mathrm{M}}
\newcommand{\Ical}{\mathcal{I}}
\newcommand{\Fcal}{\mathcal{F}}
\newcommand{\llb}{\llbracket}
\newcommand{\rrb}{\rrbracket}
\newcommand{\simto}{\overset{\sim}{\longrightarrow}}
\newcommand{\Orb}{\Ocal\mathrm{rb}}
\setlist{leftmargin=*}
\newcommand{\hemail}[1]{\email{\href{mailto:#1}{#1}}}
\title{Equivariant algebraic $\K$-theory and Artin $L$-functions}
\author{Elden Elmanto}
\address{Department of Mathematics, University of Toronto, Toronto, ON, M5S 2E4, Canada}
\author{Ningchuan Zhang}
\address{Department of Mathematics, Indiana University Bloomington, Bloomington, IN 47405, USA}
\address{Max Planck Institute for Mathematics, Bonn 53111, Germany}
\begin{document}

	\begin{abstract}
		In this paper, we generalize the Quillen--Lichtenbaum Conjecture relating  special values of Dedekind zeta functions to algebraic $\K$-groups. The former has been settled by Rost--Voevodsky up to the Iwasawa Main Conjecture. Our generalization extends the scope of this conjecture to Artin $L$-functions of Galois representations of finite, function, and totally real number fields. The statement of this conjecture relates  norms of the special values of these $L$-functions to sizes of equivariant algebraic $\K$-groups with coefficients in an equivariant Moore spectrum attached to a Galois representation.
		
		We prove this conjecture in many cases, integrally, except up to a possible factor of powers of $2$ in the non-abelian and  totally real number field case. In the finite field case, we further determine the group structures of their equivariant algebraic $\K$-groups with coefficients in Galois representations.   At heart, our method lifts the M\"obius inversion formula for factorizations of zeta functions as a product of $L$-functions, to the $E_1$-page of an equivariant spectral sequence converging to equivariant algebraic $\K$-groups. Additionally, the spectral Mackey functor structure on equivariant $\K$-theory allows us to incorporate certain ramified extensions that appear in these $L$-functions.  
	\end{abstract}
	\subjclass[2020]{19F27, 19L47, 55P91}
	\maketitle 
	
	\tableofcontents
	
	\section{Introduction}
	The study of the Riemann $\zeta$-function has been a driving force in mathematics since its invention. While the location of its zeros is the subject of the celebrated Riemann Hypothesis, there is another aspect of its study that has garnered much attention in the previous few decades: its special values at non-positive integers,  and connections with algebraic $\K$-theory. The beginning of this connection is actually much older. If $F$ is a number field and $\zeta_F(s)$ is the Dedekind zeta function of $F$ (see \Cref{ex:ddk}), then Dirichlet's class number formula (from early 1800's) can be re-interpreted as
	\[
	\zeta_F^*(0) = -\frac{\# \K_0(\mathcal{O}_F)_{\mathrm{torsion}}}{\# \K_1(\mathcal{O}_F)_{\mathrm{torsion}}}\cdot R_F,
	\]
	where $\zeta_F^*(0)$ is the leading coefficient of the Taylor expansion of $\zeta_F(s)$ around $s=0$, $\Ocal_F$ is the ring of integers of $F$, and $R_F$ is the Dirichlet regulator of $F$. On the other hand, the Birch-Tate conjecture can be reinterpreted in $\K$-theoretic terms as well. It states that for a totally real number field $F$ with $r_1(F)$ real embeddings, we have an equality:
	\[
	\zeta_F^*(-1) = \zeta_F(-1) =(-2)^{r_1(F)}\cdot \frac{\# \K_2(\mathcal{O}_F)}{\# \K_3(\mathcal{O}_F)}.
	\]
	
	In the 70's, Quillen defined higher algebraic $\K$-theory in  \cite{Quillen_higher_alg_K}  and computed $\K_*(\Fq)$ for a finite field $\Fq$ in \cite{quillen-plus}. Around the same time, Lichtenbaum studied \'etale cohomology groups of number fields in \cite{Lichtenbaum_1972}. These computations led to the \textbf{Quillen--Lichtenbaum Conjecture} (QLC), first proposed in \cite{Lichtenbaum_1973}, describing  $\zeta_F^*(1-k)$ in terms of the orders of  certain $\K$-groups, building a very compelling bridge between the seemingly disparate fields of number theory and homotopy theory. 
	
	To be slightly more precise, the QLC proposes that special values of the Dedekind $\zeta$-function $\zeta_F$ for a number field $F$ are computed by the higher algebraic $\K$-groups of its ring of integers $\Ocal_F$ in the sense that the following equality holds up to a power of $2$:
	\begin{equation*}
		\zeta^*_{F}(1-k)=\pm\frac{\# \K_{2k-2}(\mathcal{O}_F)}{\# \K_{2k-1}(\mathcal{O}_F)_{\mathrm{torsion}}}\cdot R^B_k(F), \qquad k \geq 2.
	\end{equation*}
	Here, $R^B_k(F)$ is the $k$-th Borel/Beilinson regulator.\footnote{The two regulators only differ by powers of $2$ by \cite[Theorem 10.9]{BurgosGil_reg}.} We also note that the Dirichlet's Unit Theorem and Borel's calculation of rational algebraic $\K$-groups of number fields imply that  
	\begin{equation*}
		\ord_{s=1-k} \zeta_F(s)=\mathrm{rank}(\K_{2k-1}(\Ocal_F)), \qquad k\ge 1.
	\end{equation*}
	We refer the readers to \Cref{sec:ql-ff} for a review on the Quillen--Lichtenbaum Conjectures for zeta functions of finite, function, and number fields. 
	
	The QLC has been settled in large part due Rost--Voevodsky's proof of the Bloch--Kato conjecture  \cite{voevodsky-z2,voevodsky-zl}. It was a remarkable achievement. Among other things, it led to the development of Morel and Voevodsky's $\mathbb{A}^1$-homotopy theory \cite{A1-homotopy-theory}, an environment in which one can do homotopy theory within algebraic geometry. This subject has, nowadays, taken a life of its own.
	\subsection{What is done in this paper} In this paper, we propose and prove in some cases, a \emph{refinement} of the QLC.  Let us discuss the simplest possible setup --- the context of finite fields. In \cite{quillen-plus}, Quillen computed algebraic $\K$-groups of finite fields as:
	\begin{equation*}
		\K_{t}(\Fq)=\begin{cases}
			\Z, & t=0;\\
			\Z/(q^k-1),& t=2k-1>0 ;\\
			0,& \text{else}. 
		\end{cases}
	\end{equation*}
	This is the first complete computation of $\K$-groups of a class of fields, yielding the simplest case of a QLC type equality which plainly states:
	\begin{equation*}
		\zeta(\Fq,-k)=\frac{1}{1-q^k}=-\frac{\# \K_{2k}(\Fq)}{\# \K_{2k-1}(\Fq)}, \qquad k\ge 1.
	\end{equation*}
	Now, fix $m \ge 1$ and let  $\Fr_q$ be the Frobenius element (generator) of the Galois group $ \gal(\Fbb_{q^m}/\Fq)\cong C_m$. Consider a character $\chi\colon \gal(\Fbb_{q^m}/\Fq)\to \Cx$. In this context, we can also attach an $L$-function to $\chi$ given by a simple ``twist'' of the zeta function of $\Fq$:
	\begin{equation*}
		L(\Fq,\chi,s)=\frac{1}{1-\chi(\Fr_q) q^{-s}}.
	\end{equation*} 
	\begin{theorem}[Special case of \Cref{thm:eQLC_finfld} and \Cref{cor:size-ff-1d}]\label{thm:main_Fq2}
		Let $\sigma$ be the sign character of the group $\gal\left(\Fbb_{q^2}/\Fq\right)=C_2$ and  $\mathrm{S}^{1-\sigma}$ be the representation sphere for the virtual $C_2$-representation $1-\sigma$. Then
		\begin{equation*}
			\pi_{t}^{C_2}(\K(\Fbb_{q^2})\otimes \mathrm{S}^{1-\sigma})=\begin{cases}
				\Z/(q^k+1), & t=2k-1>0;\\
				0, & \text{else.}
			\end{cases}
		\end{equation*}
		It follows that:
		\begin{equation*}
			\frac{1}{1+q^k}=L(\Fq,\sigma,-k)=\frac{\# \pi_{2k}^{C_2}\left(\K(\Fbb_{q^2})\otimes \mathrm{S}^{1-\sigma}\right)}{\# \pi_{2k-1}^{C_2}\left( \K(\Fbb_{q^2})\otimes \mathrm{S}^{1-\sigma}\right)}, \quad k \geq 1.
		\end{equation*} 
	\end{theorem}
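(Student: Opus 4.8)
The plan is to identify $\K(\Fbb_{q^2})\otimes\mathrm{S}^{1-\sigma}$, after passing to genuine $C_2$-fixed points, with the cofiber of the base-change map $\K(\Fq)\to\K(\Fbb_{q^2})$, and then to read off the homotopy groups from Quillen's computation. Throughout, $\K(\Fbb_{q^2})$ is regarded as a genuine $C_2$-spectrum through its spectral Mackey functor structure for the Galois extension $\Fbb_{q^2}/\Fq$: its underlying spectrum is $\K(\Fbb_{q^2})$ with the Frobenius action, its categorical $C_2$-fixed points are $\K(\Fq)$, the restriction $\K(\Fq)\to\K(\Fbb_{q^2})$ is base change, and the transfer is the $\K$-theoretic transfer.

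First I would invoke the fundamental cofiber sequence for the sign representation. Smashing $C_{2+}\to\mathrm{S}^0\xrightarrow{a_\sigma}\mathrm{S}^{\sigma}$ with $\mathrm{S}^{-\sigma}$ and using the projection formula $C_{2+}\otimes\mathrm{S}^{-\sigma}\simeq\Ind_e^{C_2}\mathrm{S}^{-1}$, one obtains a cofiber sequence of $C_2$-spectra $C_{2+}\to\mathrm{S}^{1-\sigma}\to\mathrm{S}^{1}$ whose preceding Puppe map $\mathrm{S}^{0}\to C_{2+}$ is the transfer $\tau$ — equivalently, $\tau\otimes\mathrm{id}$ realizes the unit of the adjunction $\mathrm{Res}\dashv\Ind$. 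Tensoring this sequence with $\K(\Fbb_{q^2})$ and applying the exact functor $(-)^{C_2}$, the term $(C_{2+}\otimes\K(\Fbb_{q^2}))^{C_2}$ becomes the underlying spectrum $\K(\Fbb_{q^2})$ via $(\Ind_e^{C_2}Y)^{C_2}\simeq Y^{e}$, and $(\mathrm{S}^{1}\otimes\K(\Fbb_{q^2}))^{C_2}\simeq\Sigma\,\K(\Fq)$. Rotating, the first map of the resulting cofiber sequence $\K(\Fq)\to\K(\Fbb_{q^2})\to(\K(\Fbb_{q^2})\otimes\mathrm{S}^{1-\sigma})^{C_2}$ is $(-)^{C_2}$ applied to $\tau\otimes\mathrm{id}$, which under the identification $(\Ind\,\mathrm{Res}\,\K)^{C_2}\simeq(\mathrm{Res}\,\K)^{e}$ is exactly the restriction map — that is, base change along $\Fq\hookrightarrow\Fbb_{q^2}$. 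Hence
\[
\bigl(\K(\Fbb_{q^2})\otimes\mathrm{S}^{1-\sigma}\bigr)^{C_2}\;\simeq\;\mathrm{cofib}\bigl(\K(\Fq)\longrightarrow\K(\Fbb_{q^2})\bigr).
\]

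Next I would feed in Quillen's computation of $\K_\ast(\Fq)$ and $\K_\ast(\Fbb_{q^2})$. In even positive degrees both groups vanish; in degree $0$ base change is the identity $\Z\to\Z$; and in odd positive degree $2k-1$ it is the injection $\Z/(q^k-1)\hookrightarrow\Z/(q^{2k}-1)$ given by multiplication by $(q^{2k}-1)/(q^k-1)=q^k+1$, with cokernel cyclic of order $q^k+1$. Since base change is therefore injective on all homotopy groups, the long exact sequence of the cofiber collapses to natural isomorphisms $\pi_t^{C_2}(\K(\Fbb_{q^2})\otimes\mathrm{S}^{1-\sigma})\cong\mathrm{coker}(\K_t(\Fq)\to\K_t(\Fbb_{q^2}))$, which gives $\Z/(q^k+1)$ for $t=2k-1>0$ and $0$ otherwise. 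The $L$-value identity then follows at once: since $\sigma(\Fr_q)=-1$ we have $L(\Fq,\sigma,-k)=(1-\sigma(\Fr_q)q^{k})^{-1}=(1+q^k)^{-1}$, and this equals $\#\pi_{2k}^{C_2}(\K(\Fbb_{q^2})\otimes\mathrm{S}^{1-\sigma})/\#\pi_{2k-1}^{C_2}(\K(\Fbb_{q^2})\otimes\mathrm{S}^{1-\sigma})$ under the convention that the trivial group has order $1$.

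The step I expect to be the main obstacle is the first one: verifying that the connecting map of the fundamental cofiber sequence, after tensoring with $\K(\Fbb_{q^2})$ and taking genuine fixed points, really is the base-change map, and not zero, nor its composite with a Galois automorphism, nor a transfer pointing the other way. This is precisely the point at which the Möbius inversion $L(\Fq,\sigma,s)=\zeta_{\Fbb_{q^2}}(s)/\zeta_{\Fq}(s)$ gets promoted to the level of spectra; once this identification is in place, Quillen's theorem and the order count are routine.
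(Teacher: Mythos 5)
Your proof is correct and, once unwound, is essentially the paper's argument specialized to $m=2$. The paper's Construction~\ref{con:eMoore} defines $\M(\psi_2)$ via the cofiber sequence $C_{2+}\to \mathrm{S}^0\to \M(\psi_2)[1]$, i.e.\ $\M(\psi_2)=\mathrm{S}^{\sigma-1}$, so $\M(\sigma)=\Dbb\M(\psi_2)=\mathrm{S}^{1-\sigma}$; the equivariant cellular filtration of Construction~\ref{constr:cell_ss} for $\lambda=1$ is then exactly your fundamental cofiber sequence, and the two-column eAHSS of~\eqref{eq:cell-ss} is the long exact sequence of $\K(\F_q)\to\K(\F_{q^2})\to(\K(\F_{q^2})\otimes\mathrm{S}^{1-\sigma})^{C_2}$, with the collapse at $E_2$ amounting to injectivity of base change on homotopy groups. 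Your identification of the rotated connecting map as restriction (via $\tau=\Dbb p$ and self-duality of $C_{2+}$) is the same identification the paper obtains from the cellular attaching maps, so the step you flagged as a potential obstacle does go through; and your explicit multiplication-by-$(q^k+1)$ computation of $\K_{2k-1}(\F_q)\to\K_{2k-1}(\F_{q^2})$ reproduces the diagram in the proof of Proposition~\ref{cor:finite_fixed}.
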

	We note the appearance  the equivariant homotopy groups on the $\K$-theory sides of this equality; this is one of the novel contributions of the present paper.  Readers unfamiliar with genuine equivariant homotopy theory are referred to \Cref{subsec:spectral-mackey} for a (hopefully, gentle) introduction to the subject.  
	
	The main theorem of the current paper produces many other cases of the identity in \Cref{thm:main_Fq2} for \textbf{Artin $L$-functions} $L(X,\rho,s)$  (\Cref{def:Artin-L}) of Galois representations $\rho$ of finite, function, and totally real number fields.  Algebraic $\K$-groups are notoriously difficult to compute and, even more so,  equivariant algebraic $\K$-groups. We see our paper as, to our knowledge, the first collection of computations of certain equivariant algebraic $\K$-groups --- at least the ratio of their sizes. 
	
	To state our generalization of the QLC to Artin $L$-functions, we need to first construct spectral liftings of Galois representations. The representation sphere $\mathrm{S}^{1-\sigma}$ in \Cref{thm:main_Fq2} is an integral equivariant Moore spectrum for the sign representation $\sigma$, in the sense that $C_2$ acts on its zeroth (integral) homology group by $\sigma$. In nice cases, we can construct an integral equivariant Moore spectrum $\M(\rho)$ for a Galois representation $\rho$, following the second author's thesis \cite[\S3.3]{nz_Dirichlet_J}.
	\begin{theorem}[\Cref{con:eMoore} and  \Cref{thm:eMoore_higherdim}]\label{thm:eMoore}
		Let $G$ be a finite group and $\rho\colon G\to \GL_N(\Ocal_E)$ be an $E$-linear representation for some number field $E$. If $\rho$ is conjugate to a direct sum of inductions of abelian characters on subgroups, then there exists a connective $G$-CW spectrum $\M(\rho)$ whose integral homology groups are concentrated in degree $0$ and the induced $G$-action  on $\H_0(\M(\rho);\Z)= \Ocal_E^{\oplus N}$ is isomorphic to $\rho$.
	\end{theorem}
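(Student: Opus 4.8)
The strategy is to reduce, by a sequence of formal steps, to the case of a faithful abelian character of a finite cyclic group, for which an explicit two‑cell model is available. First, since $\rho$ is conjugate to $\bigoplus_i\Ind_{H_i}^G\chi_i$ as $\Ocal_E[G]$‑modules, it suffices to construct $\M$ for each summand and take the wedge: a wedge of connective finite $G$‑CW spectra is again one, and $\H_*$ commutes with finite wedges, so $\H_0(\M(\rho))\cong\rho$ and $\H_{\neq 0}=0$. Next, given a character $\chi\colon H\to\GL_1(\Ocal_E)=\Ocal_E^\times$ of a finite group, factor it as $H\twoheadrightarrow C:=\chi(H)\hookrightarrow\Ocal_E^\times$ with $C$ finite cyclic, and set $\M(\chi):=G_+\wedge_H\mathrm{infl}_C^H\M(\bar\chi)$, where $\bar\chi$ denotes the inclusion $C\hookrightarrow\Ocal_E^\times$ and $\M(\bar\chi)$ is built below. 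Inflation along $H\twoheadrightarrow C$ and induction $G_+\wedge_H(-)$ preserve connectivity and finiteness, carry CW spectra to CW spectra, and on underlying spectra implement, respectively, restriction of scalars along $\Z[H]\to\Z[C]$ and $\Z[G]\otimes_{\Z[H]}(-)$ (the latter because $\Z[G]$ is $\Z[H]$‑free); hence $\H_*(\M(\chi))$ is concentrated in degree $0$ and equal there to $\Ind_H^G\chi$, given the analogous property for $\M(\bar\chi)$ over $C$.

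\emph{The cyclic case.} Let $C$ be cyclic of order $m$ with generator $g$ and $\bar\chi(g)=\zeta$ a primitive $m$‑th root of unity; we must realize $\Ocal_E$, as a $\Z[C]$‑module via $g\mapsto\zeta$, as the only homology group, in degree $0$, of a connective finite $C$‑CW spectrum. Do first $\Ocal_E=\Z[\zeta_m]$. When $m=p^a$ is a prime power, let $C_p\le C$ be the order‑$p$ subgroup; then $\Phi_{p^a}(g)$ is the norm element $N_{C_p}$, and the projection of $C$‑sets $C\to C/C_p$ induces $\Z[C]\twoheadrightarrow\Z[C/C_p]$ whose kernel $(g^{p^{a-1}}-1)\Z[C]$ is $\Z[C]$‑linearly isomorphic to $\Z[C]/(\Phi_{p^a}(g))\cong\Z[\zeta_{p^a}]$, with $g$ acting by $\zeta_{p^a}$. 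Thus $\M(\bar\chi):=\Sigma^{-1}\mathrm{cofib}\!\big(\Sigma^\infty_+ C\to\Sigma^\infty_+(C/C_p)\big)$ is a connective finite $C$‑CW spectrum — the cofiber is $0$‑connected since the map is surjective on $\pi_0$ — with homology $\Z[\zeta_{p^a}]$ in degree $0$. For general $m=\prod_i p_i^{a_i}$, write $C=\prod_i C_{p_i^{a_i}}$ and $\Z[\zeta_m]\cong\Z[\zeta_{p_1^{a_1}}]\otimes_\Z\cdots\otimes_\Z\Z[\zeta_{p_k^{a_k}}]$ (the cyclotomic discriminants at distinct primes are coprime), and take the external smash $\M(\bar\chi):=\bigwedge_i\M(\bar\chi_{p_i^{a_i}})$; by the Künneth theorem (all homologies being free) $\H_0=\bigotimes_i\Z[\zeta_{p_i^{a_i}}]=\Z[\zeta_m]$ with $g$ acting by $\zeta_m$, and $\H_{\neq 0}=0$.

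\emph{From $\Z[\zeta_m]$ to $\Ocal_E$.} As $\Ocal_E$ is a finitely generated torsion‑free, hence projective, module over the Dedekind domain $\Z[\zeta_m]$, choose an ideal $\mathfrak a$ with $\Ocal_E\oplus\mathfrak a\cong\Z[\zeta_m]^{\oplus(r+1)}$, $r=[E:\Q(\zeta_m)]$, and let $e\in M_{r+1}(\Z[\zeta_m])=\mathrm{End}_{\Z[C]}(\Z[\zeta_m]^{\oplus(r+1)})$ project onto $\Ocal_E$. The $C$‑equivariant self‑map of $\M(\bar\chi)$ given by the action of $g$ induces multiplication by $\zeta_m$ on $\H_0$, so $\pi_0\mathrm{End}^C(\M(\bar\chi))\to\Z[\zeta_m]$ is surjective; its kernel is the torsion ideal, which contains no nonzero idempotent — an idempotent would split off a finite summand with vanishing homology, hence contractible — and a torsion ideal without idempotents in a ring finitely generated over $\Z$ is nilpotent, so $e$ lifts to an idempotent of $M_{r+1}(\pi_0\mathrm{End}^C\M(\bar\chi))=\pi_0\mathrm{End}^C(\M(\bar\chi)^{\vee(r+1)})$. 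The resulting retract of $\M(\bar\chi)^{\vee(r+1)}$ is a connective finite $C$‑CW spectrum with $\H_0=\mathrm{im}(e)=\Ocal_E$ and $g$ acting by $\zeta_m$; this is the input $\M(\bar\chi)$ fed back into the first paragraph.

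\emph{Where the difficulty lies.} Everything except the cyclic case is formal bookkeeping with induction and inflation. The crux is the statement underlying that case: that the cyclotomic lattice $\Z[\zeta_m]$, and hence any $\Ocal_E$ of this kind, lies in the thick subcategory of $\mathrm{D}^b(\Z[C])$ generated by permutation modules, made concrete by the two‑cell model, together with the (routine, once the algebra is in place) checks that this model is genuinely connective and that the permutation‑module differentials in sight are realized by stable $C$‑equivariant maps. This is precisely the point at which the hypothesis that $\rho$ be an \emph{honest} — not merely virtual — sum of inductions of abelian characters is used: for a general representation the associated $\Z[G]$‑lattice need not lie in that thick subcategory, since integral Brauer/Artin induction carries the usual defect, and then no finite $G$‑CW Moore spectrum with homology concentrated in a single degree can exist.
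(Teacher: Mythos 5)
Your overall architecture — reduce to a primitive cyclic character via wedge, induction, and inflation; build the cyclic case at a prime power as the desuspended cofiber of $\Sigma^\infty_+ C\to\Sigma^\infty_+(C/C_p)$; then take external smash products over distinct primes — is precisely the paper's construction (\Cref{con:eMoore}, \Cref{thm:eMoore_higherdim}), and that part is correct. The genuinely new step you add is the passage from $\Z[\zeta_m]$ to $\Ocal_E$: you correctly observe that $\Ocal_E$ is in general only \emph{projective}, not free, over $\Z[\zeta_m]$, and try to repair this with an idempotent-lifting argument. (For comparison, the paper simply asserts that $\Ocal_E$ is $C_m$-equivariantly isomorphic to $\Z[\zeta_m]^{\oplus[E:\Q(\zeta_m)]}$, which implicitly assumes triviality of the Steinitz class; so you are, to your credit, being more careful here.)

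However, the idempotent-lifting step as written has a gap. You claim that the kernel of the ring map $\pi_0\mathrm{End}^{C_m}(\M(\bar\chi))\to\Z[\zeta_m]$ (evaluation on $\H_0$ of the underlying spectrum) is the torsion ideal, and deduce nilpotence. This kernel is \emph{not} torsion: by the rational tom Dieck splitting, $\pi_0\mathrm{End}^{C_m}(\M(\bar\chi))\otimes\Q$ receives nonzero contributions from $\pi_0\mathrm{End}(\Phi^{H}\M(\bar\chi))$ for $H\ne e$ (in the prime-power case $\Phi^{C_p}\M(\bar\chi)\simeq\Sigma^{-1}(C/C_p)_+$ is nonzero), so the kernel has positive $\Z$-rank. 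The simplest instance is $m=p=2$: there $\M(\bar\chi)\simeq S^{\sigma-1}$, $\pi_0\mathrm{End}^{C_2}(\M(\bar\chi))\cong A(C_2)$, and the augmentation $A(C_2)\to\Z$ has free, non-nilpotent kernel of rank $1$. Relatedly, your justification that the kernel has no nonzero idempotent (``an idempotent would split off a finite summand with vanishing homology, hence contractible'') is not valid equivariantly: a finite $C_m$-CW spectrum whose \emph{underlying} homology vanishes can still be noncontractible as a genuine $C_m$-spectrum, because its geometric fixed points at proper subgroups need not vanish. Without nilpotence of the kernel (or some other argument), the lift of the idempotent $e\in M_{r+1}(\Z[\zeta_m])$ to $M_{r+1}(\pi_0\mathrm{End}^{C_m}(\M(\bar\chi)))$ is not justified, so the passage from $\Z[\zeta_m]$ to a general $\Ocal_E$ is not established. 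Everything else in your argument is sound and matches the paper; the gap is isolated to this one step.
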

	\begin{theorem}[Main Theorem, Theorems \ref{thm:eQLC_finfld}, \ref{thm:eQLC_numfld}, and \ref{prop:eQLC_reduction}]\label{thm:main}
		Let the representation $\rho\colon G=\aut_X(Y)\to \GL_N(\Ocal_E)$ and its equivariant Moore spectrum $\M (\rho)$ be as described in \Cref{thm:eMoore}. We have the following QLC type identities as in \Cref{thm:main_Fq2}:
		\begin{enumerate}
			\item (Finite fields)  $Y=\spec \F_{q^m}$ and $X=\spec \Fq$ are finite fields with Galois group $G=C_m$. Then:
			\begin{equation*}
				\norm_{E/\Q} L(\Fq,\rho,-k)=(-1)^{\dim_\Q (\rho\otimes\Q)^G}\cdot \frac{\#\pi_{2k}^{C_m}\left(\K(\F_{q^m})\otimes \M (\rho)\right)}{\#\pi_{2k-1}^{C_m}\left(\K(\F_{q^m})\otimes \M (\rho)\right)},\qquad k\ge 1.
			\end{equation*}
			In particular, when $\rho=\chi$ is a $1$-dimensional representation, we have: (\Cref{cor:size-ff-1d}) 
			\begin{equation*}
				\pi_{*}^{C_m}(\K(\Fbb_{q^m})\otimes \M (\chi))=\begin{cases}
					\Z/\mathrm{Norm}_{\Q[\image \chi]/\Q} L(\Fq,\chi,-k)^{-1}\cong \Z[\image \chi]/ L(\Fq,\chi,-k)^{-1}, & *=2k-1>0;\\
					0, &\text{else.}
				\end{cases}
			\end{equation*}
			\item (Function fields) $Y\to X$ is a finite morphism of integral, normal, and affine curves over a finite field $\Fq$ such that $\aut_X(Y)= G$ and the induced map at the generic points  (function fields) is a $G$-Galois extension. Then:
			\begin{equation*}
				\norm_{E/\Q}	L(X,\rho,1-k)		=(-1)^{\dim_\Q[(\K_1(Y)\oplus\Z_{\mathrm{triv}})\otimes \rho\otimes \Q]^G}\cdot\frac{\#\pi_{2k-2}^{G}\left(\K(Y)\otimes \M (\rho)\right)}{\#\pi_{2k-1}^{G}\left(\K(Y)\otimes \M (\rho)\right)}, \qquad k\ge 2.
			\end{equation*}
			\item (Totally real number fields) $Y=\spec \Ocal_{F'}$ and $X=\spec\Ocal_F$ where $F'/F$ is a $G$-Galois extension of totally real and abelian number fields. Then:
			\begin{equation*}
				\norm_{E/\Q}	L(\Ocal_F,\rho,1-2n)= ((-1)^n\cdot 2)^{r_1(F)\cdot (\dim_\Q \rho\otimes \Q)}\cdot \frac{\#\pi_{4n-2}^{G}\left(\K(\Ocal_{F'})\otimes \M (\rho)\right)}{\#\pi_{4n-1}^{G}\left(\K(\Ocal_{F'})\otimes \M (\rho)\right)},\qquad n\ge 1,
			\end{equation*}
			where  $r_1(F)$ is the number of real embeddings of $F$. 
			
			The equality only holds up to possible powers of $2$ if $F'$ is only totally real but not abelian over $\Q$. 
		\end{enumerate}
	\end{theorem}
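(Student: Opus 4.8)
The plan is to deduce all three identities from the Quillen--Lichtenbaum conjecture for the \emph{zeta} functions of the fields involved --- available by Quillen's computation in the finite field case (recalled in \Cref{sec:ql-ff}), by Quillen together with \'etale descent and the characteristic-$p$ input of Geisser--Levine in the function field case, and by Rost--Voevodsky together with the Iwasawa Main Conjecture in the totally real case --- and to propagate it along the Artin formalism, realized at the level of the genuine $G$-spectrum $\K(Y)$ underlying the spectral Mackey functor of $Y$. First, by additivity of both sides of each identity in $\rho$ and the description of $\M(\rho)$ in \Cref{thm:eMoore}, it suffices to treat $\rho=\Ind_H^G\chi$ for an abelian character $\chi$ of a subgroup $H\le G$; since $\M(\Ind_H^G\chi)\simeq G_+\wedge_H\M(\chi)$, the Wirthm\"uller isomorphism and the projection formula give
\[
\pi^G_t\bigl(\K(Y)\otimes\M(\Ind_H^G\chi)\bigr)\;\cong\;\pi^H_t\bigl(\K(Y)|_H\otimes\M(\chi)\bigr),
\]
where $\K(Y)|_H$ is the genuine $H$-spectrum of the Mackey functor of $Y$ over the intermediate cover $Y/H$, and this matches the inductivity $L(X,\Ind_H^G\chi,s)=L(Y/H,\chi,s)$ together with transitivity of $\norm_{E/\Q}$. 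Thus we may assume $G$ abelian and $\rho=\chi$ a single character valued in $\Ocal_E^\times$.

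\textbf{The equivariant spectral sequence and M\"obius inversion.} For $G$ abelian and $\chi$ of exact order $d$ (factoring through $\bar G=G/U$), the Moore spectrum $\M(\chi)$ has $\Ocal_E=\Z[\zeta_d]$-coefficients and is built from finitely many $G$-cells $G/V_+\wedge D^n$ with $V\supseteq U$. Smashing with $\K(Y)$ and combining the resulting cellular filtration with motivic/\'etale descent for $\K$ produces a spectral sequence converging to $\pi^G_*\bigl(\K(Y)\otimes\M(\chi)\bigr)$ whose $E_1$-page is assembled from the groups $\pi^V_*\K(Y)=\K_*(Y/V)$ --- equivalently from the $\Z_\ell$-\'etale cohomology groups $\H^*(X,\psi\otimes\Z_\ell(j))$ for characters $\psi$ trivial on $V$ --- with $d_1$-differentials given by the restrictions and transfers of the spectral Mackey functor $\K(Y)$. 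The key point is that, as a complex of $E_1$-pages, this one is obtained from the descent spectral sequences computing the $\K_*(Y/V)$ by exactly the M\"obius inversion over the lattice $\{V : V\supseteq U\}$ that turns the factorizations $\zeta_{Y/V}(s)=\prod_{\psi|_V=1}L(X,\psi,s)$ into the product $\prod_{\ord\psi=d}L(X,\psi,s)=\norm_{\Q(\zeta_d)/\Q}L(X,\chi,s)$ over the Galois orbit of $\chi$. This reduces each identity of the theorem to Quillen--Lichtenbaum for the zeta functions $\zeta_{Y/V}$, once orders, ranks, and signs are matched.

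\textbf{Orders, regulators, and signs.} In the finite field case the intermediate fields $\F_{q^e}$ have $\K_*(\F_{q^e})$ equal to $\Z$ in degree $0$ and cyclic of order $q^{ek}-1$ in degree $2k-1$; the spectral sequence degenerates after $d_1$, and the $\chi$-contribution in degree $2k-1$ is the $\Z[\zeta_d]$-module $\Z[\zeta_d]/(1-\chi(\Fr_q)q^k)$, whose underlying abelian group is cyclic of order $\Phi_d(q^k)=\norm_{E/\Q}L(\F_q,\chi,-k)^{-1}$, which gives part~(1). The sign $(-1)^{\dim_\Q(\rho\otimes\Q)^G}$ records the multiplicity of the trivial constituent of $\rho$, the only one contributing the classical sign in $\zeta(\F_q,-k)=-\#\K_{2k}/\#\K_{2k-1}$, every nontrivial constituent contributing a positive $L$-value. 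In the function field and totally real cases $\K_*$ of the intermediate fields also carries nonzero ranks in odd degrees, computed $\chi$-isotypically by Dirichlet's unit theorem and Borel's theorem --- the source of the exponents $\dim_\Q[(\K_1(Y)\oplus\Z_{\mathrm{triv}})\otimes\rho\otimes\Q]^G$ and of the sign --- while the finite parts are the orders of the Galois cohomology groups $\H^*(X,\chi\otimes\Z_\ell(j))$, pinned to $L$-values by the Bloch--Kato conjecture and, in the totally real case, the Iwasawa Main Conjecture. The factor $((-1)^n\cdot 2)^{r_1(F)\cdot\dim_\Q\rho}$ is the product over the $r_1(F)$ real places of $F$, each contributing the Birch--Tate $2$-power, repeated $\dim_\Q\rho$ times because the coefficient module $\M(\rho)$ has $\Z$-rank $\dim_\Q\rho$.

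\textbf{Main obstacle.} The delicate point is the $2$-primary information in the non-abelian cases. Although the first step reduces everything to abelian characters, the intermediate fields $Y/V$ need not be abelian over $\Q$, and the $2$-adic orders of the groups $\H^*(X,\chi\otimes\Z_2(j))$ are tied to $L$-values only through the $2$-adic Iwasawa Main Conjecture, known for abelian extensions of $\Q$ but not in general; this is what forces the ``up to powers of $2$'' caveat precisely when $F'$ fails to be abelian over $\Q$. A second difficulty, present already away from $2$, is to make the M\"obius inversion compatible with the Artin formalism for \emph{ramified} covers $Y/V\to X$: there the transfers of the spectral Mackey functor $\K(Y)$ differ from the unramified (Galois descent) ones, and one must verify that this difference exactly accounts for the modified Euler factors of $L(X,\chi,s)$ at the ramified primes. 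Carrying this out --- via equivariant localization sequences relating $\K(Y)$ to the $K$-theory of the \'etale locus and of the ramification divisor --- is the technical heart of the function field and number field cases, and is what lets the ramified primes appearing in these $L$-functions be absorbed.
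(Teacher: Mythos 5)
Your high-level skeleton --- reduce by additivity and the Wirthm\"uller/induction formula to a single abelian character $\chi$, build a cellular filtration on $\M(\chi)$ and the resulting equivariant Atiyah--Hirzebruch spectral sequence for $\pi^G_*(\K(Y)\otimes\M(\chi))$, and match the $E_1$-page against the M\"obius-inverted factorization of zeta functions into $L$-functions --- is exactly the paper's plan, and the finite-field computation you describe of the $\chi$-contribution as $\Z[\zeta_d]/(1-\chi(\Fr_q)q^k)$ is correct. But as written the argument has genuine gaps in the function field and number field cases, and one of your diagnoses of ``where the difficulty is'' points in the wrong direction.

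The central missing idea is the Euler number bookkeeping that lets you pass from the $E_1$-page to $\pi^G_*$. It is not enough that ``the spectral sequence degenerates after $d_1$''; that is not true in general rows, and you never explain why it should hold. The paper's mechanism is: (i) the Euler number of each row $E_1^{*,t}$ of the $E_1$-complex is invariant under taking cohomology, so it equals the Euler number of $E_2^{*,t}=\H^*_G(\M(\psi_m);\underline\K_t(Y))$; (ii) for odd $t$ the Bredon cohomology of $\M(\psi_m)$ is concentrated in degree $0$ because $\underline\K_{2n-1}(Y)$ is a weak fixed point Mackey functor, and for even $t$ it is concentrated in degrees $0$ and $-1$; (iii) consequently the $E_2$-page has only two nonzero rows in each stem, forcing collapse, and the ratio of Euler numbers in two adjacent rows assembles into $\#\pi^G_{2n-2}/\#\pi^G_{2n-1}$. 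Step (ii) is itself a theorem: it requires both the cellular \v{C}ech-complex analysis of $\H^*_G(\M(\psi_m);-)$ (Propositions~\ref{prop:Bredon_psi_m_fixedpt} and~\ref{prop:Bredon_psi_m_general}) and the integral descent results for odd $\K$-groups of curves and rings of integers (Corollaries~\ref{cor:finite_fixed}, \ref{cor:fixed-curve}, \ref{cor:fixed-numfld_odd} and Proposition~\ref{prop:fixed-numfld_2}), proved via \'etale cohomology and, at the prime $2$, Rognes--Weibel. None of this appears in your proposal, and without it the claim that ``orders, ranks, and signs are matched'' is unsupported.

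Your ``main obstacle'' paragraph misidentifies the role of ramification. You worry that one must verify that the transfers of $\underline\K_\pi$ ``exactly account for the modified Euler factors'' via equivariant localization sequences at the ramification divisor. This is unnecessary: the Mackey functor is defined (\Cref{constr:span-k}) by $\underline\K_\pi(G/H)=\K(Y/_H)$, so the $K$-theory of the (possibly ramified) quotients enters directly, and the identity $\zeta(Y/_H,s)=\prod_{\chi|_H=1}L(X,\chi,s)$ already carries the inertia-corrected Euler factors by the definition of the Artin $L$-function. No separate analysis of the ramified primes is needed. Relatedly, invoking the Bloch--Kato conjecture for the twisted \'etale cohomology groups $\H^*(X,\chi\otimes\Z_\ell(j))$ is inconsistent with your M\"obius inversion plan: the whole point of the inversion is to avoid the twisted $L$-values and reduce to the untwisted QLC for the $\zeta_{Y/V}$, which is exactly what makes the argument go through integrally (up to $2$-powers as stated).
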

	We make some remarks about this result:
	
	\begin{remark}[Equivariant $\K$-groups] 
		In the case of finite fields, we see our computation as an equivariant extension of Quillen's celebrated computation of algebraic $\K$-groups of finite fields in \cite{quillen-plus}.
	\end{remark}
	
	\begin{remark}[Use of genuine equivariant homotopy theory] Naively, one might want to express the $\K$-theory side in terms of homotopy fixed points of the $G$-action on $\K(Y)$. However, this formulation does not quite work because the map $Y \rightarrow X$ might have ramification points, i.e., it is not a Galois or \'etale cover. To overcome this, we use genuine equivariant homotopy theory via the language of spectral Mackey functors as in \Cref{subsec:spectral-mackey}. This structure helps us record the ramification at every intermediate extension which is strictly more data than the $G$-action on $\K(Y)$. 
		
		Notably, algebraic $\K$-theory of a Galois cover of schemes is known to have a spectral $G$-Mackey functor/genuine $G$-spectrum structure by Barwick \cite{Barwick_spectral_Mackey_I} and Merling \cite{Merling_equivar_alg_k}. We  extend this result to the \emph{pseudo} Galois covers (in the sense of the function and number field cases of \Cref{thm:main}; see also \Cref{def:p-gal}) of $1$-dimensional schemes in \Cref{constr:span-k}.  
	\end{remark}

	\begin{remark}[Integral results] Inverting the order of the group erases the difference between equivariant homotopy groups and homotopy groups of the homotopy fixed points. A simpler version of \Cref{thm:main}, using just fixed points on $\K$-groups, is easier to prove after inverting the order of the group. Hence the main contribution of this paper is to provide an integral statement without inverting any  integer on $\K$-theory, except for possibly the prime $2$ in the totally real but non-abelian number field case.  
	\end{remark}

	\begin{remark}[Equivariant Moore spectra] One of the key characters of the second author's thesis \cite{nz_Dirichlet_J} is the equivariant Moore spectrum. For us, the equivariant cellular structure on this object is crucial to the proof of \Cref{thm:main}. Essentially, it ``animates'' the M\"obius inversion formula. 
		
		Let us recall that in classical topology, the Moore spectrum associated to an abelian group $A$ is a spectrum $\M A$ characterized as the unique connective spectrum such that $\H_0(\M A;\Z) \cong A$ and $\H_{>0}(\M A; \mathbb{Z}) = 0$. In addition,  homotopy groups of a spectrum $X$ with coefficients in an abelian group $A$ are simply  homotopy groups of $X \otimes \M A$. In particular, algebraic $\K$-groups of a ring $R$ with coefficients in $A$ are then  homotopy groups of the spectrum $\K(R) \otimes \M A$.\footnote{This is different from $\K_*(R)\otimes_\Z A$. The two are connected by a (non-split) Universal Coefficient Theorem.}
		
		However, we note that the existence and uniqueness of equivariant Moore spectra are subtle questions. If $G$ is a group acting on $A$, then Steenrod asked if one can always promote the action to one on the Moore spectrum $\M A$, so that the induced action on $\H_0$ is the prescribed one. This has been answered in the negative by Carlsson \cite{carlsson-cex}. Even if the $G$-action can be promoted to $\M A$, the lifting is not unique in general. See discussions in  \Cref{rem:eMoore_uniqueness}. 
	\end{remark}
	Applying our method to rational equivariant algebraic $\K$-theory of number fields, we recover a generalization of Borel's rank \Cref{thm:Borel_rank} to Artin $L$-functions by Gross in \cite{Gross_Artin_L}, which was reformulated by the second author in \cite{nz_QB_AL}.
	\begin{theorem}[Gross, Zhang, {\Cref{thm:twisted_Borel-1d} and \Cref{prop:twisted_Borel}}]
		Let $F'/F$ be a Galois extension of number fields and $\rho\colon G\to\GL_N(\Ocal_E)$ be an $E$-linear Galois representation for some number field $E$. Then there exists a unique rational equivariant Moore spectrum $M\Q(\rho)$ such that 
		\begin{equation*}
			\ord_{s=1-n}L(\Ocal_F,\rho,s)=\dim_E \pi^G_{2n-1}(\K(\Ocal_{F'})\otimes M\Q(\rho)).
		\end{equation*}
	\end{theorem}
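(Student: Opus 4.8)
The plan is to reduce the identity, via Artin formalism, to Borel's rank theorem, after observing that rationally the genuine equivariant structure on $\K(\Ocal_{F'})$ collapses onto its Borel completion. First I would dispatch the soft part, namely the existence and uniqueness of $M\Q(\rho)$: rationally there is no Steenrod-type obstruction. Writing $\rho\otimes_{\Ocal_E}E$ as a $\Z$-linear combination of inductions of one-dimensional characters (Brauer), the one-dimensional summands have honest integral equivariant Moore spectra by \Cref{thm:eMoore}; rationalizing and assembling these (taking cofibers for negative multiplicities) inside $\SH(G)_\Q$ --- which is generated under finite (co)limits, (de)suspensions and retracts by the orbits $G/H_+$ --- produces a connective genuine $G$-spectrum whose underlying spectrum is the rational Moore spectrum on the $\Q$-vector space $\rho\otimes_{\Ocal_E}E$ with its $G$-action, and uniqueness follows because the relevant obstruction and indeterminacy groups are cohomology groups of subquotients of $G$ with $\Q$-coefficients, hence vanish. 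This is the content of \Cref{prop:twisted_Borel} (see also \cite{nz_Dirichlet_J}) and is independent of the number theory.

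Next I would establish the rational collapse. By Borel's \Cref{thm:Borel_rank}, $\K_{2j}(\Ocal_K)\otimes\Q=0$ for all $j\geq1$ and all number fields $K$. Using the double-coset composites $\tr\circ\mathrm{res}$ and $\mathrm{res}\circ\tr$, the transfer maps of the spectral Mackey functor $\K(\Ocal_{F'})$ from \Cref{constr:span-k} become rationally invertible up to the relevant indices, so that $\Phi^{H}\bigl(\K(\Ocal_{F'})\otimes\Q\bigr)=0$ for every $e\neq H\leq G$; this is the rational triviality of the genuine structure alluded to in the introduction. Hence $\K(\Ocal_{F'})\otimes\Q$ is Borel-complete, the isotropy-separation tower for $\K(\Ocal_{F'})\otimes M\Q(\rho)$ degenerates onto homotopy orbits, and (using again that higher group homology vanishes rationally) one obtains
\[
\pi^G_{2n-1}\!\bigl(\K(\Ocal_{F'})\otimes M\Q(\rho)\bigr)\ \cong\ \bigl(\K_{2n-1}(\Ocal_{F'})\otimes_{\Z}\rho\bigr)^{G}\otimes_{\Ocal_E}E,
\]
an $E$-vector space whose dimension is the multiplicity of $\rho$ in $\K_{2n-1}(\Ocal_{F'})\otimes_\Z E$ (the character of $\K_{2n-1}(\Ocal_{F'})\otimes\Q$ being real, since this module is defined over $\Q$).

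It then remains to match this multiplicity with $\ord_{s=1-n}L(\Ocal_F,\rho,s)$. Both quantities are manifestly additive in the class of $\rho$ in $R_\Q(G)$: the $\K$-side by the displayed isomorphism, and the $L$-side because $\ord_{s=1-n}$ carries products of Artin $L$-functions to sums and $L(\Ocal_F,\mathrm{Ind}_H^G\psi,s)=L(\Ocal_{(F')^H},\psi,s)$. By Artin's induction theorem the class of $\rho$ lies in the $\Q$-span of the $\mathrm{Ind}_{C}^{G}\1$ with $C\leq G$ cyclic, so it suffices to treat $\rho=\Q[G/C]$, where the left-hand side is $\ord_{s=1-n}\zeta_{(F')^C}(s)$ and the right-hand side is $\dim_\Q\bigl(\K_{2n-1}(\Ocal_{F'})\otimes\Q\bigr)^{C}\cong\dim_\Q\bigl(\K_{2n-1}(\Ocal_{(F')^C})\otimes\Q\bigr)$ (again by the rationally invertible transfers); these agree by the Dirichlet--Borel rank formula recalled in the introduction (for $n=1$ this incorporates Dirichlet's unit theorem). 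Assembling the three steps proves the identity, and uniqueness was settled in the first step; this recovers \cite{Gross_Artin_L} in the reformulation of \cite{nz_QB_AL}, and is precisely \Cref{thm:twisted_Borel-1d} together with \Cref{prop:twisted_Borel}.

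The main obstacle will be the second step: one must genuinely verify that the spectral Mackey functor structure on $\K(\Ocal_{F'})$ attached to the \emph{ramified}, merely pseudo-Galois extension $\Ocal_{F'}/\Ocal_F$ --- which carries strictly more information than the $G$-action on $\K(\Ocal_{F'})$, and is exactly why homotopy fixed points do not suffice integrally (cf.\ the remarks after \Cref{thm:main}) --- contributes nothing beyond its Borel completion after inverting all primes, i.e.\ that the geometric-fixed-point/Tate terms die rationally and the isotropy-separation tower degenerates. This is where Borel's vanishing of the even $\K$-groups, the rational invertibility of the transfers, and the explicit equivariant cell structure of the Moore spectra of \Cref{thm:eMoore} all enter; once that is in place, the rest is the classical archimedean bookkeeping already packaged in Borel's and Artin's theorems.
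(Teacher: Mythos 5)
Your proposal is correct, but it takes a genuinely different and more economical route than the paper's. The paper's proof of the one--dimensional case (\Cref{thm:twisted_Borel-1d}) runs the full machinery: the M\"obius inversion formula, Borel's rank theorem, the $E_1$-page of the equivariant Atiyah--Hirzebruch spectral sequence from the cellular filtration on $\M(\psi_m)$ of \Cref{constr:cell_ss}, the sparsity of Bredon cohomology (\Cref{prop:Bredon_psi_m_general}), and the rational collapse of the eAHSS. It then bootstraps to higher-dimensional $\rho$ (\Cref{prop:twisted_Borel}) via \emph{Brauer} induction, writing $\rho$ as a virtual $\Z$-linear difference of inductions of abelian characters and building $M\Q(\rho)$ as a cofiber. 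You instead observe that rationally all the genuine structure flattens — in fact the simplest way to package your ``Borel completion / isotropy separation'' step is just the paper's own \Cref{prop:Mack_G_inv}: applied to the $G$-Mackey functor $\underline\pi_{2n-1}\bigl(\K(\Ocal_{F'})\otimes M\Q(\rho)\bigr)$, it gives $\pi^G_{2n-1}\cong\bigl(\K_{2n-1}(\Ocal_{F'})\otimes E^N\bigr)^G$ directly, with no need to invoke geometric fixed points or the tom~Dieck splitting. From there you use \emph{Artin} induction rather than Brauer induction, reducing to $\rho=\Ind_C^G\1$ for cyclic $C$, where the identity is exactly Borel's rank theorem plus rational Galois descent on $\K$-groups (\Cref{cor:K_desc_G_inv}) and the induction/descent identities for Artin $L$-functions (\Cref{prop:Artin-L_properties}). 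This bypasses the cellular Moore spectrum and eAHSS entirely. The trade-off: your argument is considerably cleaner for the rational statement, while the paper's cellular approach is the one that refines to the integral statements in \Cref{thm:main} (where neither Artin induction nor the rational invertibility of transfers is available), so the paper is carrying a heavier scaffold for a reason. Two small points worth tightening: the geometric-fixed-point vanishing $\Phi^H(\K(\Ocal_{F'})\otimes\Q)=0$ needs the rational idempotent/tom~Dieck splitting to be stated carefully (it is true, but surjectivity of $\tr$ on $\pi_*$ alone does not immediately kill a cofiber) — \Cref{prop:Mack_G_inv} sidesteps this entirely; and the final dimension count is a priori the multiplicity of $\rho^\vee$, not $\rho$, so your parenthetical remark about the character of $\K_{2n-1}(\Ocal_{F'})\otimes\Q$ being real (since the module is defined over $\Q$) is doing genuine work and should be kept.
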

	\subsection{Outline of proof} 
	We break the proof of \Cref{thm:main} into two major steps: the first, more difficult step, is a calculation of the equivariant $\K$-theory with coefficients in a primitive abelian character; the second step is a reduction to the case of primitive characters of a cyclic groups. We explain these steps in turn. 
	\subsubsection{Step I: Calculation in the case of primitive character of a cyclic group} We begin with some purely number theoretic reductions. In the situation when $G=C_m$ is cyclic, $Y/X$ is a pseudo $C_m$-Galois cover of schemes of dimension $d\le 1$, and $\rho = \chi\colon C_m \hookrightarrow \mathbb{C}^{\times}$ is a primitive abelian character. We exploit the fact that when $G$ is cyclic, conjugacy classes of characters form a partially ordered set (by their kernels) and thus we can apply the \textbf{M\"obius Inversion Formula} to the factorization formula in \Cref{subsec:inv_fact}:
	\begin{align*}
		\zeta(Y,s)=\prod_{\chi\in C_m^\vee} L(X,\chi,s) &&\Longrightarrow &	\prod_{\chi\colon C_m\hookrightarrow \Cx}L(X,\chi,s)
		= \prod_{j=0}^\lambda\left(\prod_{1\le k_1<\cdots<k_j\le \lambda} \zeta\left(Y/_{C_{p_{k_1}\cdots p_{k_j}}},s\right)\right)^{(-1)^j},
	\end{align*}
	where $\{p_1,\cdots,p_\lambda\}$ is the set of prime factors of $m$ and $Y/_H$ is the quotient scheme of $Y$ under the action of a subgroup $H\le C_m$. 
	
	In the cases of finite, function, and totally real number fields, the special values of $L$-functions at negative integers are algebraic numbers in $\Q[\zeta_m]$. Furthermore, the product of $L(X,\chi,d-n)$ over injective characters is equal to the norm of this algebraic number over $\Q$. The classical QLC (see \Cref{sec:ql-ff}) states that the values of those zeta functions at negative integers are ratios between sizes of algebraic $\K$-groups. In this way, we can express norms of special values of $L$-functions as alternating products of sizes of algebraic $\K$-groups:
	\begin{equation}\label{eqn:norm_alt_prod_K}
		|\norm_{\Q[\zeta_m]/\Q}L(X,\chi,d-n)| = \frac{\prod_{j=0}^\lambda\left(\prod_{1\le k_1<\cdots<k_j\le \lambda} \# \K_{2n-2d}\left(Y/_{C_{p_{k_1}\cdots p_{k_j}}}\right)\right)^{(-1)^j}}{\prod_{j=0}^\lambda\left(\prod_{1\le k_1<\cdots<k_j\le \lambda} \# \K_{2n-1}\left(Y/_{C_{p_{k_1}\cdots p_{k_j}}}\right)\right)^{(-1)^j}}.
	\end{equation}
	
	Let us switch gear now and set out to compute the equivariant $\K$-groups with coefficients in a Moore spectrum. Let $M$ be a dualizable $G$-spectrum. We can try to calculate the equivariant homotopy groups of $\K(Y)$ with coefficients in $M$ via the \textbf{equivariant Atiyah-Hirzebruch spectral sequence} (eAHSS):
	\begin{equation}\label{eq:eahss}
		E_2^{s,t} = \H^s_G(M; \underline{\pi}_t(\K(Y))) \Longrightarrow \pi_{t-s}^G(\K(Y) \otimes \Dbb M),
	\end{equation}
	where $\Dbb M:=\underline{\map}(M,\mathrm{S}^0)$ is the Spanier-Whitehead dual of $M$ in $G$-spectra. The $E_2$-page of this spectral sequence is Bredon cohomology, the $G$-equivariant analog of singular cohomology. These groups are difficult to compute in general, but one can try to get a handle on them if there is a good equivariant cellular decomposition of $M$. The latter gives an $E_1$-refinement to the $E_2$ page of the spectral sequence \eqref{eq:eahss}. When $\Dbb M=\M(\chi)$ is the equivariant Moore spectrum for the character $\chi$,  we have the cellular structure discussed in \Cref{constr:cell_ss}, whence an equivariant filtration on the $G$-spectrum $\K(Y) \otimes \M(\chi)$. This cellular structure comes equipped with an explicit description of its graded pieces, described in terms of fixed points with respect to various subgroups of a cyclic group $G$.  The $E_1$-page of the eAHSS \eqref{eq:eahss} is then:
	\begin{equation}\label{eqn:eAHSS_E1_intro}
		E_1^{s,t}=\bigoplus_{1\le k_1<\cdots<k_s\le \lambda} \K_{t}\left(Y/_{C_{p_{k_1}\cdots p_{k_s}}}\right)\Longrightarrow \pi_{s+t}^{C_m}(\K(Y) \otimes \M(\chi)). 
	\end{equation}
	
	To compute this spectral sequence, one then observes two further phenomena: that the spectral sequence \eqref{eq:eahss} collapses at the $E_2$-page and that the passage from the $E_1$-page to the $E_2$-page is not too ``lossy''. This amounts to a sparsity result on the level of Bredon cohomology for these Moore spectra; see Propositions~\ref{prop:Bredon_psi_m_fixedpt} and~\ref{prop:Bredon_psi_m_general}. There is a $\K$-theoretic input in these calculations; specifically for the former case: without inverting the order of $G$ odd $\K$-groups of certain extensions satisfy descent on the level of homotopy groups (Corollaries~\ref{cor:finite_fixed},~\ref{cor:fixed-curve} and~\ref{cor:fixed-numfld_odd}). 
	
	Finally, the $L$-function and the equivariant homotopy theory sides of the proof are connected by the remarkable observation that the alternating products of sizes of algebraic $\K$-groups in \eqref{eqn:norm_alt_prod_K} are precisely the \textbf{Euler numbers} of the equivariant cellular chain complexes on the $E_1$-page of the eAHSS in \eqref{eqn:eAHSS_E1_intro}. The ratios of these Euler numbers are invariant when passing to the $E_2$-page (taking cohomology), and then to the $E_\infty$-page since the $E_2$-page has only two non-zero rows and the eAHSS collapses there. 
	
	Everything above is summarized below in the case of primitive characters of pseudo $C_m$-Galois covers of affine curves (the signs are ignored for simplicity): 
	\begin{align*}
		&\quad~|\norm_{\Q[\zeta_m]/\Q}L(X,\chi,1-n)| &&\\
		&= \prod_{j=0}^\lambda\left(\prod_{1\le k_1<\cdots<k_j\le \lambda} \left|\zeta\left(Y/_{C_{p_{k_1}\cdots p_{k_j}}},1-n\right)\right|\right)^{(-1)^j} && \textup{M\"obius Inversion Formulae~\Cref{subsec:inv_fact}}\\
		&= \prod_{j=0}^\lambda\left(\prod_{1\le k_1<\cdots<k_j\le \lambda}\frac{\# \K_{2n-2}\left(Y/_{C_{p_{k_1}\cdots p_{k_j}}}\right)}{\# \K_{2n-1}\left(Y/_{C_{p_{k_1}\cdots p_{k_j}}}\right)}\right)^{(-1)^j}&& \textup{Quillen--Lichtenbaum Conjectures \Cref{sec:ql-ff}}\\
		&= \frac{\prod_{j=0}^\lambda\left(\prod_{1\le k_1<\cdots<k_j\le \lambda} \# \K_{2n-2}\left(Y/_{C_{p_{k_1}\cdots p_{k_j}}}\right)\right)^{(-1)^j}}{\prod_{j=0}^\lambda\left(\prod_{1\le k_1<\cdots<k_j\le \lambda} \# \K_{2n-1}\left(Y/_{C_{p_{k_1}\cdots p_{k_j}}}\right)\right)^{(-1)^j}}&&\\
		&=\frac{\text{Euler number}(E_1^{*,2n-2}, d_1)}{\text{Euler number}(E_1^{*,2n-1},d_1)}&& \textup{eAHSS \eqref{eqn:eAHSS_E1_intro}}\\
		&=\frac{\text{Euler number}(E_2^{*,2n-2})}{\text{Euler number}(E_2^{*,2n-1})}&& \text{Euler numbers invariant under cohomology}\\
		&=\frac{\# \H_{C_m}^0(\M (\psi_m);\underline{\K}_{2n-2}(Y))/ \# \H_{C_m}^{-1}(\M (\psi_m);\underline{\K}_{2n-2}(Y))}{\# \H_{C_m}^0(\M (\psi_m);\underline{\K}_{2n-1}(Y))}&& \textup{Propositions \ref{prop:Bredon_psi_m_fixedpt} and \ref{prop:Bredon_psi_m_general}}\\
		&=\frac{\# E_2^{0,2n-2}}{\# E_2^{0,2n-1}\cdot\# E_2^{-1,2n-2}}&& \\
		&=   \frac{\# \pi_{2n-2}^{C_m}\left(\K(Y)\otimes \M (\chi)\right)}{\# \pi_{2n-1}^{C_m}\left(\K(Y)\otimes \M (\chi)\right)}. && \textup{eAHSS collapses on $E_2$-page}
	\end{align*}	
	\subsubsection{Step II: Reduction to the primitive characters of cyclic groups} We notice some similarities between identities of Artin $L$-functions and isomorphisms of equivariant homotopy groups as summarized in \Cref{table:analogies}. We think of these identities as a sequence of moves that one can make to reduce $G$ to a cyclic group and the representation to an abelian character; the point is that these moves can be both be made in the $L$-function and the $\K$-theory side.  In addition, $L$-functions satisfy a descent identity ((3) in \Cref{prop:Artin-L_properties}) that is similar to the descent property in equivariant homotopy theory (see \Cref{prop:desc}), which further reduces the case to \emph{primitive} abelian characters. 
	
	\begin{table}[ht]
		\renewcommand{\arraystretch}{1.5}
		\caption{Comparison of identities and isomorphisms}\label{table:analogies}
		\begin{tabu}{|c|c|}
			\hline $L$-functions &Equivariant homotopy groups\\\hline
			$L(X,1_G,s)=\zeta(X)$& $\pi^G_*(M)\cong \pi_*(M^G)$ \\\hline
			$L(X,\rho_1\oplus \rho_2,s)=L(X,\rho_1,s)\cdot L(X,\rho_2,s)$& $\pi_*^G(M_1\oplus M_2)=\pi_*^G(M_1)\oplus \pi_*^G(M_2)$\\\hline
			$L(X,\mathrm{Ind}_H^G\rho,s)=L(Y/_H,\rho,s)$ &$\pi^G_*(\mathrm{Ind}_H^GM)\cong \pi_*^H(M)$\\\hline
		\end{tabu}          
	\end{table}
	
	More precisely, if we have a representation of the form $\rho=\bigoplus_{i} \mathrm{Ind}_{H_i}^G\chi_i$, which is conjugate to a sum of inductions of abelian characters $\chi_i$ on subgroups $H_i\le G$, then we can construct a lift of it to the sphere spectrum by setting $ \M (\rho)=\bigoplus_{i} \mathrm{Ind}_{H_i}^G\M (\chi_i)^{\oplus [E:\Q[\image \chi_i]]}$; that this is a Moore spectrum for $\rho$ is verified in \Cref{thm:eMoore_higherdim}. We then prove in \Cref{prop:eQLC_reduction} that the Main \Cref{thm:main} for $\rho$ reduces to the primitive abelian character $\chi$ case by the comparison table above.
	\subsection{Related work} To our knowledge, the first instance of a $\K$-theoretic interpretation of Artin $L$-functions was spelled out in the work of Kolster \cite{kolster} in which he considers the sign representation of a CM extension of number fields. His methods proceed via \'etale cohomology and hence, just like we do, use the Quillen--Lichtenbaum Conjectures. As the odd $\K$-groups of number fields with complex embeddings have non-trivial free parts, the result of this paper does not cover his. It would be interesting to cover his situation as well by taking into account of regulators (see \Cref{sec:future} below).
	
	As explained in \cite[Section 1.4]{Huber-Kings_2003}, Bloch--Kato's conjecture \cite{bloch-kato} concerning cohomological interpretations of $L$-function of Artin motives implies Lichtenbaum's conjecture (relating \'etale cohomology and special values of zeta functions) when the Artin motive is $h^0(F)$ for $F$ a field. Furthermore, in \cite{Huber-Kings_2003}, the Bloch--Kato conjecture was verified for Artin motives associated to Dirichlet characters. Therefore, at least in spirit, our results are related to the work of \cite{Huber-Kings_2003}. We leave it to interested readers to work out an explicit relationship. 
	
	\subsection{Future work}\label{sec:future}
	In future work, we hope to remove the assumptions in \Cref{thm:eMoore} and \Cref{thm:main}. Two major ones are:
	\begin{enumerate}
		\item The Galois representations are assumed to be a sum of inductions of abelian characters on subgroups in \Cref{thm:eMoore}. 
		\item In the number field case, we can only prove \Cref{thm:main} when $n$ is even and the number fields are totally real. 
	\end{enumerate}
	
	The first assumption is related to the \textbf{Brauer Induction Theorem} (see \cite[Theorems 10.20]{Serre_rep1977}), which says that  the complex representation ring $R(G)$ of a finite group $G$ is generated by abelian characters up to inductions.  This only implies any complex representation is a \emph{virtual sum} of inductions of abelian characters, not necessarily a \emph{direct} sum.  See more discussions in \Cref{rem:Brauer_ind}. In future works, we hope to construct integral equivariant Moore spectra for general Galois representations of finite groups as $G$-CW spectra. This will help us prove \Cref{thm:main} for general higher-dimensional Galois representations. 
	
	The second assumption is related to \textbf{regulators} of $\zeta$-functions and free parts of algebraic $\K$-groups of number fields. On the $\K$-theory side, our proof relies crucially on the finiteness of $\K_{4n-1}(\Ocal_F)$ when $F$ is totally real. In this case, the equivariant cellular chain complex to compute the Bredon cohomology $\H^*_G(\Dbb \M (\chi);\underline{\K}_{4n-1}(\Ocal_F))$ is a bounded complex of finite abelian groups. Hence the size of $\H^0_G$ is equal to the Euler number of the complex if it is the only non-zero cohomology group. On the $\zeta$-function side, $\zeta_F(1-k)=0$ unless $F$ is totally real and $k$ is even. If either fails, the QLC states that
	\begin{equation*}
		\zeta^*_{F}(1-k)=\pm \frac{\# \K_{2k-2}(\mathcal{O}_F)}{\# \K_{2k-1}(\mathcal{O}_F)_{\mathrm{torsion}}}\cdot R^B_k(F),
	\end{equation*}
	holds up to powers of $2$, where $\zeta^*_F(1-k)$ is the leading coefficient in the Taylor expansion of $\zeta_F(s)$ at $s=1-k$, and $R^B_k(F)$ is the  $k$-th Beilinson/Borel regulator of $F$. In future works, we plan to prove \Cref{thm:main} for general number fields, using corresponding non-equivariant Quillen--Lichtenbaum Conjecture for those number fields (see \Cref{thm:QLC_numfld}). We thus end this introduction with what we believe is a reasonable generalized version of QLC.
	\begin{conjecture}[Generalized QLC]\label{conj:gen-qlc} Let $F'/F$  be a $G$-Galois extension of number fields for a finite group $G$ and $\rho\colon G\to \GL_N(\Ocal_E)$ be an $E$-linear Galois representation for a number field $E$. Then, up to a powers of $2$, we have an equality
		\begin{equation*}
			\prod_{\tau\in \gal(E/\Q)} L^*(\Ocal_F,\tau\circ \rho,1-k)= \pm \frac{\# \pi^G_{2k-2}(\K(\Ocal_{F'})\otimes \M (\rho))}{\#\pi^G_{2k-1}(\K(\Ocal_{F'})\otimes \M (\rho))_\mathrm{torsion}}\cdot \prod_{\tau\in \gal(E/\Q)} R_{k,\tau\circ \rho} \qquad k \geq 1,
		\end{equation*}
		where $L^*(\Ocal_F,\rho,1-k)$ is the leading coefficient of the Taylor series of the $L$-function at $s=1-k$ and $R_{k,\rho}$ is the $k$-th regulator of $\rho$.
	\end{conjecture}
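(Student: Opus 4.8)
The plan is to follow the two-step architecture of the proof of \Cref{thm:main}, upgrading each step so that it tolerates the non-vanishing free parts of the odd $\K$-groups that occur over a general number field. As preliminaries one must, first, drop the hypothesis of \Cref{thm:eMoore} and construct for an arbitrary $E$-linear Galois representation $\rho$ a connective $G$-CW spectrum $\M(\rho)$ with $\H_0(\M(\rho);\Z)\cong\Ocal_E^{\oplus N}$ realising $\rho$; since Brauer induction presents $\rho$ only as a \emph{virtual} sum of inductions of abelian characters (\Cref{rem:Brauer_ind}), the direct-sum recipe of \Cref{thm:eMoore_higherdim} no longer applies, and one must build $\M(\rho)$ cell by cell from a chosen Brauer resolution --- attaching cells to kill the spurious higher homology created by the virtual cancellations --- and verify independence of the resolution up to powers of $2$ (\Cref{rem:eMoore_uniqueness}). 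With $\M(\rho)$ available, the product $\prod_{\tau\in\gal(E/\Q)}$ on both sides of the conjecture is, exactly as in \Cref{thm:main}, the norm $\norm_{E/\Q}$ forced by the $\Ocal_E$-module structure on $\pi^G_*(\K(\Ocal_{F'})\otimes\M(\rho))$. One then invokes \Cref{table:analogies} and the descent identity (part (3) of \Cref{prop:Artin-L_properties} versus \Cref{prop:desc}) and argues as in \Cref{prop:eQLC_reduction} to reduce to $G=C_m$ cyclic and $\rho=\chi\colon C_m\hookrightarrow\Cx$ a \emph{primitive} character: each move in \Cref{table:analogies} preserves not just the value but the whole Taylor leading coefficient $L^*$, and on the $\K$-side preserves both the torsion and the free part of the equivariant homotopy groups, while the regulator factor is additive in $\rho$ and compatible with $\mathrm{Ind}_H^G$ (as in the reformulation of Gross's theorem in \cite{nz_QB_AL}).

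\textbf{M\"obius inversion and the non-equivariant input.} For primitive $\chi$, apply the M\"obius inversion formula of \Cref{subsec:inv_fact} to the factorization $\zeta_{F'}(s)=\prod_{\chi\in C_m^\vee}L(\Ocal_F,\chi,s)$ to express $\prod_{\tau}L^*(\Ocal_F,\tau\circ\chi,1-k)$ as an alternating product, over the intermediate fields $F''$ of $F'/F$, of the leading coefficients $\zeta^*_{F''}(1-k)$. Now feed in the full non-equivariant Quillen--Lichtenbaum Conjecture with regulators (\Cref{thm:QLC_numfld}),
\[\zeta^*_{F''}(1-k)=\pm\,\frac{\#\K_{2k-2}(\Ocal_{F''})}{\#\K_{2k-1}(\Ocal_{F''})_{\mathrm{torsion}}}\cdot R^B_k(F''),\]
valid up to powers of $2$. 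This writes the left-hand side of the conjecture as the product of a \emph{torsion factor}, the alternating product of the $\#\K_{2k-2}(\Ocal_{F''})/\#\K_{2k-1}(\Ocal_{F''})_{\mathrm{torsion}}$ over the intermediate fields, and a \emph{regulator factor}, the corresponding alternating product of the $R^B_k(F'')$.

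\textbf{The torsion factor via the equivariant Atiyah--Hirzebruch spectral sequence.} As in Step~I of the outline, the cellular structure of \Cref{constr:cell_ss} produces the $E_1$-page \eqref{eqn:eAHSS_E1_intro} of the eAHSS \eqref{eq:eahss} converging to $\pi^{C_m}_*(\K(\Ocal_{F'})\otimes\M(\chi))$. The new feature is that the rows $t=2k-1$ are complexes of finitely generated, rather than finite, abelian groups. Choosing a functorial splitting $\K_{2k-1}(\Ocal_{F''})\cong\Z^r\oplus(\text{finite})$, the free summands form a subcomplex on the $E_1$-page computing the free part of $\pi^{C_m}_{2k-1}$, while the torsion subquotients behave as in \Cref{thm:main}. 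One then checks that the Bredon-cohomology sparsity of \Cref{prop:Bredon_psi_m_fixedpt} and \Cref{prop:Bredon_psi_m_general} persists for general number fields, up to powers of $2$, so that the eAHSS still degenerates at $E_2$ and $E_2$ retains only the two relevant rows; granting this, the torsion factor of the previous paragraph equals the ratio of Euler numbers of the torsion $E_1$-complexes, which is invariant under passing to cohomology, hence equals the ratio of the orders of the torsion of $\pi^{C_m}_{2k-2}$ and $\pi^{C_m}_{2k-1}$.

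\textbf{The regulator factor --- the main obstacle.} It remains to match the regulator factor with $\prod_{\tau}R_{k,\tau\circ\chi}$ and the free part of $\pi^{C_m}_{2k-1}(\K(\Ocal_{F'})\otimes\M(\chi))$ with the order of vanishing. The latter is governed by the twisted Borel rank theorem (\Cref{thm:twisted_Borel-1d}, \Cref{prop:twisted_Borel}): it gives $\dim_E\pi^G_{2k-1}(\K(\Ocal_{F'})\otimes M\Q(\rho))=\ord_{s=1-k}L(\Ocal_F,\rho,s)$, and after running over $\gal(E/\Q)$, together with an arithmetic fracture square relating $\M(\rho)$, its $p$-completions and the rational equivariant Moore spectrum $M\Q(\rho)$, the $\Z$-rank of $\pi^G_{2k-1}(\K(\Ocal_{F'})\otimes\M(\rho))$ equals $\ord_{s=1-k}\prod_{\tau}L(\Ocal_F,\tau\circ\rho,s)$. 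The genuinely new work is a \emph{regulator analogue of \Cref{table:analogies}}: one must show the Beilinson/Borel regulator on $\K_{2k-1}$ is compatible with $\oplus$, with $\mathrm{Ind}_H^G$, and with the M\"obius combination of intermediate fields, so that the covolume of the free part of $\pi^{C_m}_{2k-1}(\K(\Ocal_{F'})\otimes\M(\chi))$ under the induced equivariant regulator equals the alternating product of the $R^B_k(F'')$, i.e.\ $\prod_{\tau}R_{k,\tau\circ\chi}$. I expect this regulator bookkeeping --- together with re-establishing the $E_2$-degeneration of the eAHSS in the presence of infinite homotopy groups --- to be the principal difficulty; the ineliminable ambiguity by powers of $2$ is inherited from the non-abelian totally real case of \Cref{thm:main} and from $2$-torsion in Bredon cohomology.
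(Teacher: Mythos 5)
The statement you are addressing is explicitly a \emph{conjecture} in the paper; it appears at the end of the introduction as a target the authors mark out for future work (\Cref{sec:future}), and the paper contains no proof of it. Your sketch is therefore not being compared against an existing argument but against the authors' own stated roadmap, and the two agree in outline: enlarge the supply of integral equivariant Moore spectra beyond direct sums of inductions of abelian characters, then extend the equivariant Atiyah--Hirzebruch / Euler-number argument to tolerate regulators and the non-torsion parts of odd $\K$-groups. That alignment is reassuring, but it also means your sketch inherits all the open problems the paper flags, and several of the steps you describe as ``one must show'' or ``one then checks'' are in fact the genuine obstructions, not technicalities.

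Three points deserve to be called out as actual gaps rather than deferred work. First, your proposed construction of $\M(\rho)$ ``cell by cell from a chosen Brauer resolution, attaching cells to kill the spurious higher homology created by the virtual cancellations'' is not known to terminate in an equivariant Moore spectrum: Carlsson's counterexample \cite{carlsson-cex}, cited in \Cref{rem:eMoore_uniqueness} of the paper, shows that for some groups and modules no such $G$-CW spectrum exists, so one cannot expect an unconditional construction without imposing hypotheses, and the paper itself leaves this as open. Second, the phrase ``choosing a functorial splitting $\K_{2k-1}(\Ocal_{F''})\cong\Z^r\oplus(\text{finite})$'' hides the fact that no such splitting is known to be compatible with all the restriction and transfer maps in the Mackey functor $\underline{\K}_{2k-1}$; without a Mackey-functorial splitting, the torsion part does not form a subcomplex of the $E_1$-page and the Euler-number bookkeeping (which is only meaningful for bounded complexes of finite groups) does not carry over. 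The authors' own rational statement (\Cref{prop:twisted_Borel}) isolates the free part only after tensoring with $\Q$, which loses exactly the information your argument needs to retain. Third, and most seriously, the regulator step: you identify as the ``main obstacle'' showing that the Borel/Beilinson regulator is additive, compatible with induction, and produces the alternating product of $R^B_k(F'')$ as the covolume of the free part of the equivariant homotopy group. This is precisely the content that would turn the conjecture into a theorem and is not a matter of bookkeeping; no mechanism for defining an equivariant regulator on $\pi^G_{2k-1}(\K(\Ocal_{F'})\otimes\M(\rho))$ compatible with the Mackey structure is offered or currently available. In short, your sketch is a faithful expansion of the authors' declared strategy for this conjecture, but it is a strategy, not a proof; the places you mark as difficulties are exactly the open problems the paper leaves to future work.
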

	\subsection*{Notation and convention} We freely use the language and syntax of higher algebra as laid out in \cite{lurie-ha}; in particular the foundations of equivariant homotopy theory that we use is expressed in terms of $\infty$-categories as in the work of Barwick \cite{Barwick_spectral_Mackey_I}.
	\begin{itemize} 
		\item (Euler numbers) Suppose that we have a bounded chain complex of \emph{finite} abelian groups
		\[
		D_{*} := 0 \rightarrow D_m \rightarrow D_{m-1} \rightarrow \cdots\rightarrow D_{m-k} \rightarrow 0.
		\]
		Then we set
		\[
		\text{Euler number}(D_{*}) := \prod \#\left( D_j \right)^{(-1)^j}. 
		\]
		Note that this number is invariant under taking (co)homology. 
		\item (Spanier-Whitehead duals) If $(\mathcal{C}, \otimes, \1)$ is a presentably symmetric monoidal $\infty$-category, then we may speak of a dualizable object $X$ and their categorical/Spanier-Whitehead dual $\mathbb{D}(X)$ which is calculated as the internal hom $\underline{\map}(X, \1)$.
		\item (Representations) For a group $G$, we write $1_G$ and $\rho_G$ for its complex trivial and regular representations, respectively.
		\item (primitive/injective characters) In this paper, a \textbf{primitive} character $\chi\colon G\to \Cx$ is simply an \textbf{injective} group homomorphism. So we will use the terms primitive/injective characters interchangeably. Such representations are also called \textbf{faithful} in representation theory. We remind the readers that this is different from the notion of a \emph{primitive Dirichlet character}, where $\chi\colon \znx\to \Cx$ is called primitive if it cannot factor through $(\Z/N')^\times$ for any proper divisor $N'$ of $N$ (see \cite[\S VII.2]{Neukirch_ANT}). 
		\item The Euler totient function $\phi(m)$ of a positive integer $m$ is the number of positive integers $1\le k\le m$ that are coprime to $m$. 
		\item We will fix the meanings of the following letters (in standard math font) throughout the paper:
		\begin{itemize}
			\item $d$ is the (Krull) dimension of a scheme $X$.
			\item $\kappa(x)$ is the residue field of a closed point $x$ in a scheme $X$. 
			\item $k(X)$ is the function field of an integral scheme $X$. 
			\item $q$ is the size of a finite field $\Fq$.
			\item $\ell$ is a prime number not dividing the size of the base field $\Fq$. 
			\item $F$ and $F'$ are number fields whose zeta functions and algebraic $\K$-theory are studied.
			\item $r_1(F)$ is the number of real embeddings of a  number field $F$.
			\item $E$ is a number field such that a Galois representation $\rho$ is $\Ocal_E$-linear. 
			\item$\norm_{E/\Q}(x)$ is the field norm of an algebraic number $x$ in $E$ over $\Q$.
			\item $N$ is the dimension of a representation $\rho$ over $E$. 
			\item $G$ is a finite group.
			\item $C_m$ is the cyclic group of order $m$.
			\item $\psi_m\colon C_m\hookrightarrow \Z[\zeta_m]^\times$ is a fixed integral cyclotomic character of $C_m$.
			\item $\sigma$ is the sign character/representation of $C_2$.
			\item $\lambda$ is the number of distinct prime factors of an integer $m$.
		\end{itemize}
	\end{itemize}
	\subsection*{Acknowledgments} This project has benefitted from numerous conversations that the authors had with Matthew Ando, Paul Balmer, Clark Barwick, Mark Behrens, Ted Chinburg, Paul Goerss, Lars Hesselholt, Mike Hopkins, Annette Huber-Klawitter, Michael Larsen, Yifeng Liu,  Mona Merling, Matthew Morrow, Niranjan Ramachandran, Catherine Ray, Charles Rezk, Nick Rozenblyum, Peter Scholze, and Zhouli Xu. In particular, we would like to thank Tobias Barthel and Michael Mandell for a careful reading of the draft of the paper.
	
	EE has benefitted from an Erik Ellentuck fellowship from the Institute for Advanced Study and expresses gratitude for perfect working conditions.
	
	NZ was partially supported by the NSF Grant DMS-2348963/2304719 and the AMS-Simons Travel Grant.  Some of the work was done when NZ was visiting the Max Planck Institute for Mathematics.  He thanks the MPIM for their hospitality and support.
	\section{Zeta and $L$-functions} \label{sec:zeta-l} We begin with an exposition of some concepts in number theory and arithmetic algebraic geometry. We hope that the exposition will be friendly to homotopy theorists who are not necessarily well-acquainted with the apparatus of zeta and $L$-functions. We will follow Serre's exposition in \cite{Serre_zeta_L}. 
	\subsection{Zeta functions}
	To begin with, the most fundamental function in the subject is arguably the following:
	
	\begin{definition}\label{def:hs-weil} Let $X$ be a finite type $\mathbb{Z}$-scheme of which is relatively equidimensional over $\mathbb{Z}$ of relative dimension $d$. We define the \textbf{Hasse--Weil zeta function} attached to $X$ to be
		\begin{equation*}
			\zeta(X,s)= \prod_{x \in |X|} \frac{1}{1- \#\kappa(x)^{-s}};
		\end{equation*} 
		where $|X|$ is the topological space (set) of closed points on the scheme $X$ and $\#k$ denotes the cardinality of a finite field $k$.  Notice that the residue field $\kappa(x)$ of a closed point $x\in |X|$ is also finite, since $X$ is finite type over $\Z$. This product is absolutely convergent over the domain $\mathrm{Re}(s)>d$ in $\Cbb$.
	\end{definition}
	\begin{example}\label{ex:finite-field} If $X = \spec (\Fbb_q)$ is a finite field,  then we have
		\begin{equation*}
			\zeta(\Fq,s)=\frac{1}{1-q^{-s}}.
		\end{equation*}
	\end{example}
	\begin{example}\label{ex:function-field} If the structure map $X \rightarrow \spec(\mathbb{Z})$ factors through $\spec(\mathbb{F}_q) \rightarrow \spec(\mathbb{Z})$ for some finite field $\Fq$, then we can describe the Hasse-Weil zeta function in terms of point count. Namely,  we  have an equality (see proof in \Cref{rem:katz}):
		\begin{equation*}
			\zeta(X,s)=\exp\left(\sum_{m=1}^{\infty}\frac{\# X(\Fbb_{q^m})}{m}\cdot q^{-ms}\right).
		\end{equation*}
		This is often called the \textbf{Weil zeta function} of the $\Fq$-scheme $X$. In particular, when $X=\spec\Fq$, we recover the formula in \Cref{ex:finite-field}. 
	\end{example}	
	\begin{example} \label{ex:ddk} Suppose that $X = \spec(\mathcal{O}_F)$ where $F$ is a number field and $\mathcal{O}_F$ is its ring of integers. Then we recover the \textbf{Dedekind zeta function}, in the sense we get equalities:
		\begin{equation}\label{def:hs-weil_numfld}
			\zeta_F(s):=\zeta(\Ocal_F,s)=\prod_{(0)\ne \mathfrak{p}\trianglelefteq \Ocal_F} \left(1-(\# \Ocal_F/\mathfrak{p})^{-s}\right)^{-1}=\sum_{(0)\ne \Ical\trianglelefteq \Ocal_F} \frac{1}{(\# \Ocal_F/\Ical)^s},
		\end{equation} 
		where the product and the summation range over all non-zero \emph{prime} ideals and all non-zero ideals and of $\Ocal_{F}$, respectively. The last equality boils down to the unique factorization of ideals in the Dedekind domain $\Ocal_F$. When $F = \mathbb{Q}$, we recover the \textbf{Riemann zeta function}:
		\begin{equation*}
			\zeta(s):=\zeta_\Q(s)=\prod_{p\text{ prime}}\frac{1}{1-p^{-s}}=\sum_{n=1}^\infty \frac{1}{n^s}. 
		\end{equation*}
	\end{example}
	
	The next theorem records analytic properties of the Hasse-Weil zeta function in certain situations. 	
	\begin{theorem}
		The $\zeta$-functions in the three examples above admit meromorphic continuations to the complex plane.
	\end{theorem}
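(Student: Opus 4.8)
\subsection*{Proof proposal}

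The plan is to treat the three examples separately, in increasing order of difficulty, in each case either exhibiting the function explicitly as a rational function of $q^{-s}$ or reducing to a classical analytic theorem. The finite field case of \Cref{ex:finite-field} is immediate: $\zeta(\Fq,s)=(1-q^{-s})^{-1}$ is a rational function of the variable $t=q^{-s}$, and since $s\mapsto q^{-s}$ is entire and nowhere vanishing, $\zeta(\Fq,s)$ is meromorphic on all of $\C$, with simple poles precisely at the points $s\in\frac{2\pi i}{\log q}\,\Z$.

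For the Weil zeta function of an $\Fq$-scheme $X$ as in \Cref{ex:function-field}, the essential input is the rationality theorem for zeta functions of schemes of finite type over a finite field: the formal power series $Z(X,t):=\exp\!\left(\sum_{m\ge 1}\frac{\#X(\F_{q^m})}{m}\,t^m\right)$ is a rational function of $t$ with rational coefficients. This is Dwork's theorem, later reproved cohomologically by Grothendieck via the Grothendieck--Lefschetz trace formula, which moreover gives
\[
Z(X,t)=\prod_i\det\!\left(1-t\,\Fr_q\mid \H^i_c(X_{\ol{\F}_q};\Qell)\right)^{(-1)^{i+1}}.
\]
Substituting $t=q^{-s}$ yields $\zeta(X,s)=Z(X,q^{-s})$; arguing as in the finite field case --- now the entire map $s\mapsto q^{-s}$ may meet the zeros and poles of the rational function $Z(X,\cdot)$ --- one concludes that $\zeta(X,s)$ is meromorphic on $\C$. (When $X=\spec\Fq$ this recovers the previous case.)

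The Dedekind zeta function of \Cref{ex:ddk} requires genuine analytic work, and here the plan is to follow Hecke's theta-function method. First decompose $\zeta_F(s)=\sum_{\mathfrak c\in\Cl(F)}\zeta_F(s,\mathfrak c)$ into partial zeta functions indexed by the ideal class group, where $\zeta_F(s,\mathfrak c)=\sum_{\mathfrak a\in\mathfrak c}(\#\Ocal_F/\mathfrak a)^{-s}$ converges absolutely for $\mathrm{Re}(s)>1$ by \Cref{def:hs-weil}. Fixing an integral ideal $\mathfrak b$ with $[\mathfrak b]=\mathfrak c^{-1}$, the integral ideals in $\mathfrak c$ correspond bijectively to $\Ocal_F^\times$-orbits of nonzero elements of $\mathfrak b$, which rewrites $\zeta_F(s,\mathfrak c)$ as a sum over a lattice in $F\otimes_\Q\R$ modulo the unit group. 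One then forms the completed zeta function $\Lambda_F(s)=|d_F|^{s/2}\,\Gamma_{\R}(s)^{r_1(F)}\,\Gamma_{\C}(s)^{r_2(F)}\,\zeta_F(s)$, writes $\Lambda_F$ as a Mellin transform of a theta series attached to the lattice $\Ocal_F\subset F\otimes_\Q\R$, splits the Mellin integral at $t=1$, and applies the modular transformation law relating $\theta(1/t)$ to $\theta(t)$, which follows from Poisson summation for that lattice. This exhibits $\Lambda_F(s)$ --- and therefore $\zeta_F(s)$ --- as meromorphic on all of $\C$ (with $\zeta_F$ in fact having a single pole, a simple one at $s=1$). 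The case $F=\Q$ is precisely the classical argument relating $\zeta(s)$ to the Jacobi theta function $\sum_n e^{-\pi n^2 t}$.

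I expect the only genuinely delicate point to be the presence of the infinite unit group $\Ocal_F^\times$ (of rank $r_1(F)+r_2(F)-1$ by Dirichlet): one must quotient the relevant lattice sum by units using a fundamental domain for the action on the norm-one hypersurface, which is exactly where the regulator $R_F$ enters and where care is required. Everything else amounts to manipulating absolutely convergent Dirichlet series and integrals in the half-plane $\mathrm{Re}(s)>1$. For a complete treatment we refer the reader to Hecke's original work, to \cite[Ch.~VII]{Neukirch_ANT}, or to Tate's adelic reformulation.
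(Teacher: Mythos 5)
Your proposal is correct and follows essentially the same route as the paper: the explicit rational function $(1-q^{-s})^{-1}$ for finite fields, Dwork's rationality theorem (with the Grothendieck--Lefschetz trace formula) for the Weil zeta function, and the classical analytic continuation for $\zeta_F$ with a citation to Neukirch. The only difference is that for the number field case you spell out the Hecke theta-function/Mellin-transform argument in some detail, whereas the paper merely gestures at extending to the critical strip and invoking the functional equation and then defers to \cite[Cor.~VII.5.11]{Neukirch_ANT} --- but that reference proves the result by exactly the method you describe, so this is the same argument presented at different levels of detail.
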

	\begin{proof} We give a sketch of proof for each case in turn.
		\begin{enumerate}
			\item When $X=\spec \Fq$, this is obvious since $\zeta(\Fq,s)=\frac{1}{1-q^{-s}}$ is a meromorphic function on $\Cbb$ with a simple pole at $s=0$.
			\item When $X$ is a smooth variety of dimension $d$ over a finite field $\Fq$, the meromorphic continuation follows from the rationality part of the Weil Conjectures by Dwork  in \cite{Dwork_rationality1960}. This can be proved using the Lefschetz fixed point theorem for \'etale cohomology. Namely, we have an identity  \cite[Theorem 27.6]{milneLEC}:
			\begin{equation*}
				\zeta(X,s)=\prod_{r=0}^{2d}\det\left[1-\Fr_q \cdot q^{-s}\mid \H^r_\et(\overline{X};\Qell)\right]^{(-1)^{r+1}},
			\end{equation*}
			where $\overline{X}=X\times_{\Fq} \spec \overline{\F}_q$, $\Fr_q$ is the $q$-th power Frobenius endomorphism on $\overline{X}$, and $\ell$ is a prime number that does not divide $q$. 
			\item When $X=\spec \Ocal_{F}$ for a number field $F$, the summation \eqref{def:hs-weil_numfld} for the Dedekind zeta function $\zeta_F(s)$ converges absolutely and uniformly when $\mathrm{Re} (s)\ge1+\delta$ for every $\delta>0$. One first extends $\zeta_F$ to the strip $0<\mathrm{Re
			}(s)\le 1$. The analytic continuation follows from the functional equation of $\zeta_F$ in this region.  This is a standard result in analytic number theory. See \cite[Corollary VII.5.11.(i)]{Neukirch_ANT}. \qedhere
		\end{enumerate}
	\end{proof}
	\begin{remark}
		The existence of analytic/meromorphic continuation of the Hasse-Weil $\zeta$-functions for finite type varieties over $\Z$ is open in general. Some other known cases include  \emph{cellular} schemes like $\mathbb{A}^d, \mathbb{P}^d$ over $\Ocal_{F}$ (since their zeta functions are products of translations of $\zeta_F(s)$) and elliptic curves (as a consequence of the celebrated modularity theorem). 
	\end{remark}
	\begin{proposition}\label{prop:rational_value}
		In all  three examples above, $\zeta(X,s)$ is a rational number when $s$ is a non-positive integer. 
	\end{proposition}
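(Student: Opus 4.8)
The plan is to handle the three examples in turn, since each calls for classical input of an entirely different nature. One preliminary point: in the finite field and $\Fq$-scheme cases the value at $s=0$ is generically a pole (the factor $1-T$ attached to $\mathrm{H}^{0}$ of a connected component gives a pole at $q^{-s}=1$), so ``non-positive integer'' should be read as ``negative integer, together with $s=0$ wherever $\zeta(X,s)$ is holomorphic''. Granting this, the case $X=\spec\Fq$ needs nothing beyond \Cref{ex:finite-field}: for $s=-k$ with $k\ge1$ one reads off $\zeta(\Fq,-k)=(1-q^{k})^{-1}\in\Q$.

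For a general finite type $\Fq$-scheme $X$ as in \Cref{ex:function-field}, I would invoke Dwork's rationality theorem \cite{Dwork_rationality1960}. Writing $Z(X,T)=\exp\!\left(\sum_{m\ge1}\#X(\Fbb_{q^{m}})\,T^{m}/m\right)$, so that $\zeta(X,s)=Z(X,q^{-s})$, Dwork's theorem gives $Z(X,T)\in\Q(T)$; hence it suffices to know that $q^{-s}=q^{k}$ is not a pole of $Z(X,T)$ when $s=-k$, $k\ge1$. Via the Grothendieck--Lefschetz trace formula, the poles of $Z(X,T)$ are reciprocals of Frobenius eigenvalues on the even-degree compactly supported $\ell$-adic cohomology of $X_{\overline{\Fq}}$, and by Deligne's theory of weights these cohomology groups are mixed of non-negative weight, so the eigenvalues in question have complex absolute value $\ge1$. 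Thus every pole of $Z(X,T)$ lies in the closed unit disc, whereas $|q^{k}|=q^{k}>1$; therefore $\zeta(X,-k)=Z(X,q^{k})$ is a well-defined rational number. When $X$ is moreover smooth and proper of dimension $d$ one can be fully explicit, the poles of $Z(X,T)$ occurring only among $q^{-j}$ for $0\le j\le d$.

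For $X=\spec\Ocal_{F}$ the statement is genuinely arithmetic, and this is where the real difficulty sits. For a negative integer $s=1-k$ (so $k\ge2$), I would split according to the functional equation of $\zeta_{F}$: if $F$ is not totally real, or if $k$ is odd, the Gamma factors force $\ord_{s=1-k}\zeta_{F}(s)\ge1$, hence $\zeta_{F}(1-k)=0\in\Q$; and if $F$ is totally real and $k$ is even, then $\zeta_{F}(1-k)$ is a nonzero rational number --- this is the theorem of Siegel and Klingen, the single genuinely hard input of the whole proposition, which I would simply cite as a black box. The remaining value $s=0$ is handled by the analytic class number formula: $\ord_{s=0}\zeta_{F}(s)=r_{1}(F)+r_{2}(F)-1$ (with $r_{2}(F)$ the number of complex places of $F$) and the leading Taylor coefficient equals $-h_{F}R_{F}/w_{F}$, so $\zeta_{F}(0)=0\in\Q$ unless $r_{1}(F)+r_{2}(F)=1$, i.e.\ $F=\Q$ or $F$ is imaginary quadratic, in which case the unit rank vanishes, $R_{F}=1$, and $\zeta_{F}(0)=-h_{F}/w_{F}\in\Q$. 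Apart from Siegel--Klingen everything here is routine, as is the whole of the first two cases once Dwork rationality and Deligne's weight bounds are in hand.
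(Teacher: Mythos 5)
Your proof is correct and rests on the same two pillars the paper cites: Dwork's rationality theorem (the rationality part of the Weil Conjectures) for the $\Fq$-scheme cases, and Siegel--Klingen (cited as \cite{Klingen1962} in the paper) for the number field case, with the observation that $\zeta_F(1-n)$ vanishes outside the totally real, $n$ even, range. Where you go beyond the paper's terse two-line argument is in addressing a point the paper silently passes over: that the statement must be read away from poles, and in the function field case that the negative integers $s=-k$ genuinely avoid the poles of $Z(X,T)$. Your weight-theoretic justification (Deligne's Weil II bounds forcing Frobenius eigenvalues on even-degree compactly supported cohomology to have absolute value $\ge 1$, so poles of $Z(X,T)$ lie in the closed unit disc while $q^{k}>1$) is sound and closes this gap; similarly, handling $s=0$ for $\zeta_F$ via the class number formula correctly captures the $F=\Q$ and imaginary quadratic cases where $\zeta_F(0)\neq0$, which the paper's phrasing (``$\zeta_F(1-n)=0$ unless $F$ is totally real and $n$ is even'') overlooks at $n=1$. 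So: same route, same deep inputs, but your write-up is more careful about domains of definition than the published proof.
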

	\begin{proof}
		Again, the case when $X$ is a smooth variety over a finite field follows from the rationality part of the Weil Conjectures. For $X=\Ocal_{F}$, one can first show $\zeta_F(1-n)=0$ unless $F$ is totally real and $n$ is even. The rationality of $\zeta_F(1-2n)$ when $F$ is totally real  was proved in \cite{Klingen1962}.
	\end{proof}
	\subsection{Artin $L$-functions of Galois representations}\label{subsec:artin} 
	In this subsection, we introduce $L$-functions associated to Galois representations. We work with pseudo-Galois coverings of schemes to incorporate ramifications. 
	\begin{definition}\label{def:p-gal} 
		A finite surjective morphism of noetherian, normal, integral schemes $\pi\colon Y\to X$ is called a \textbf{pseudo-Galois} covering if the induced map on function fields $k(X)\to k(Y)$ is a finite Galois extension and the natural homomorphism $\aut_X(Y)\to \gal(k(Y)/k(X))$ is an isomorphism. A ring map $R\to R'$ is called pseudo-Galois if the induced map $\spec R'\to \spec R$ is a pseudo-Galois covering. If the group $G = \gal(k(Y)/k(X))$ is fixed, we say that $\pi$ is a \textbf{pseudo $G$-Galois cover}. 
	\end{definition}
	
	\begin{remark} The concept in \Cref{def:p-gal} was given the name pseudo-Galois extensions by Suslin and Voevodsky in~\cite[Definition 5.5]{suslin1996singular}.
	\end{remark}
	
	\begin{example}
		\begin{enumerate}
			\item Any Galois covering of normal integral schemes is pseudo-Galois.
			\item Suppose that $X$ is a normal integral scheme with function field $F$ and let $F'$ be a finite, Galois extension of $F$. Let $Y$ be the normalization of $X$ into $F'$ \cite[\href{https://stacks.math.columbia.edu/tag/035H}{Tag 035H}]{stacks-project}, then the map $Y \rightarrow X$ is an example of a pseudo-Galois covering. 
			\item As a more classical version of the above example, we can consider $F'/F$ is a Galois extension of number fields. Then one can check this extension map restricts to their rings of integers $\Ocal_F\to\Ocal_{F'}$, which is a pseudo-Galois extension of Dedekind domains. 
			\item  Geometrically, a pseudo-Galois covering of Riemann surfaces is a ramified covering space. Given a compact Riemann surface $X$, there is a one-to-one correspondence between ramified coverings of $X$ and Galois extensions of the field of meromorphic functions on $X$. See \cite[Theorem 8.12]{Forster_RS}. 
		\end{enumerate}
	\end{example}
	Let $\pi\colon Y\to X$ be a pseudo $G$-Galois covering, $y\in |Y|$ be a closed point and $x=\pi(y)$. We recall the following definitions and results needed to define Artin $L$-functions (see \cite[\href{https://stacks.math.columbia.edu/tag/0BSD}{Tag 0BSD}]{stacks-project}):
	\begin{itemize}
		\item $\Dcal(y)=\{g\in G\mid g(y)=y\}$ is the decomposition group of $y$.
		\item One can show that there are natural surjections:
		\begin{equation*}
			\begin{tikzcd}
				\Dcal(y) \rar[->>]&\gal(\kappa(y)/\kappa(x)).
			\end{tikzcd}
		\end{equation*}
		\item $\Ical(y)$, the inertia subgroup of $y$, is the kernel of the surjection above. The map $\pi$ is unramified at $x$ if $\Ical(y)$ is trivial (for any $y$ above $x$).
		\item $\Fr_{y/x}\in \Dcal(y)/\Ical(y)$ corresponds to the Frobenius element in $\gal(\kappa(y)/\kappa(x))$ under the isomorphism $\gal(\kappa(y)/\kappa(x)) \cong \Dcal(y)/\Ical(y)$.  
		\item If $y'$ is another point above $x$ such that $g(y)=y'$ for some $g\in G$. Then $g^{-1}\Dcal(y')g=\Dcal(y)$, $g^{-1}\Ical(y')g=\Ical(y)$, and $g^{-1}\Fr_{y'/x}g=\Fr_{y/x}$.
		\item We will write $\Fr_x$ and $\Ical(x)$ for the conjugacy classes of the element $\{\Fr_{y/x}\}$ and the subgroups $\{\Ical(y)\}$ for all $y$ above $x$. 
	\end{itemize}
	\begin{definition}\label{def:Artin-L}
		Suppose that $Y \rightarrow X$ is a pseudo-Galois covering of normal integral schemes such that $G =  \gal(k(Y)/k(X))$. Let $\rho\colon G\to \GL_N(\Cbb)$ be a representation of $G$ valued in a finite dimensional complex vector space $V=\Cbb^N$. The \textbf{Artin $L$-function} of $\rho$ is defined to be the Euler product:
		\begin{equation*}
			L(X,\rho,s)=\prod_{x\in |X|}\frac{1}{\det\left[\id-\# \kappa(x)^{-s}\cdot \rho(\Fr_x)\left| V^{\Ical(x)}\right.\right]}.
		\end{equation*}
		This product converges absolutely to a complex analytic function over the domain $\mathrm{Re}(s)>\dim X=\dim Y$. Notice in each Euler factor:
		\begin{itemize}
			\item $\Fr_x$ is represented by an element $\Fr_{y/x}\in \Dcal(y)/\Ical(y)\subseteq G/\Ical(y)$ for some $y$ above $x$. We are allowed to evaluate $\rho$ on $\Fr_{y/x}$ only after restricting the representation to the fixed point subspace $V^{\Ical(y)}$ by the inertia subgroup;
			\item the determinant does not depend on the choice of $y$ over $x$. 
		\end{itemize}
	\end{definition}
	\begin{proposition}[{\cite[Proposition VII.10.4]{Neukirch_ANT}}]\label{prop:Artin-L_properties}
		The Artin $L$-functions satisfy the following identities with respect to operations on representations:
		\begin{enumerate}
			\item  $L(X,1_G,s)=\zeta(X,s).$ where $1_G$ is the $1$-dimensional trivial representation of $G$. 
			\item (Additivity) $L(X,\rho_1,s)\cdot L(X,\rho_2,s)=L(X,\rho_1\oplus \rho_2,s)$.
			\item (Descent) Notice the Galois representation $\rho$ factors uniquely as $\rho\colon G\twoheadrightarrow G/\ker \rho\xrightarrow{\rho '}\GL_N(\Cbb)$. Then
			\begin{equation*}
				L(X,\rho,s)=L(X,\rho',s),
			\end{equation*}
			where the pseudo-Galois covering on the right hand side is $Y/_{\ker \rho}\to X$. Thus it makes sense to drop the total space $Y$ from the notation of an Artin $L$-function.  
			\item (Induction) Let $H$ be a subgroup of $G$. Then for any finite-dimensional complex representation $\rho$ of $H$, we have an induction formula:
			\begin{equation*}
				L(X,\mathrm{Ind}_H^G\rho,s)=L(Y/_H,\rho,s).
			\end{equation*}
		\end{enumerate}
	\end{proposition}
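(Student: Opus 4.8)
The plan is to check each of the four identities Euler factor by Euler factor: for a fixed closed point $x\in|X|$ I would compare the local factor at $x$ on the left with the local factor(s) at the point(s) of the relevant cover lying over $x$ on the right, then take the product over all $x\in|X|$ (for the induction formula one groups the Euler product of $L(Y/_H,\rho,s)$ over $|Y/_H|$ according to the image in $|X|$, which is legitimate by absolute convergence in $\mathrm{Re}(s)>\dim X$). Identities (1) and (2) come out at once. For (1), the trivial representation $1_G$ has $V^{\Ical(x)}=\Cbb$ and $\rho(\Fr_x)=\id$ for every $x$, so its local factor is $(1-\#\kappa(x)^{-s})^{-1}$, matching the definition of $\zeta(X,s)$. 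For (2), one uses $(V_1\oplus V_2)^{\Ical(x)}=V_1^{\Ical(x)}\oplus V_2^{\Ical(x)}$ and the fact that $\rho_1\oplus\rho_2$ acts block-diagonally on it, so the determinant in the local factor splits as the product of the two determinants.

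For the descent identity (3), I would first record the functoriality of decomposition theory in a tower of normal covers: if $N\trianglelefteq G$ and $Z=Y/_N$, and $y\in|Y|$ has images $z\in|Z|$ and $x\in|X|$, then the decomposition group, inertia group, and Frobenius class of $z$ over $x$ in $\gal(k(Z)/k(X))=G/N$ are precisely the images of $\Dcal(y)$, $\Ical(y)$, $\Fr_{y/x}$ under $G\twoheadrightarrow G/N$. Applying this with $N=\ker\rho$: since $\rho$ factors through $G/\ker\rho$, both the subspace $V^{\Ical(x)}$ and the operator $\rho(\Fr_x)$ depend only on the images of $\Ical(x)$ and $\Fr_x$ in $G/\ker\rho$, hence they coincide with the data used to define $L(X,\rho',s)$ via the cover $Y/_{\ker\rho}\to X$. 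So the Euler factors agree point by point.

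The induction formula (4) is the real content. Fix $x\in|X|$, choose $y\in|Y|$ over $x$, and set $\Dcal=\Dcal(y)$, $\Ical=\Ical(y)$, $q=\#\kappa(x)$, with $\overline{\Fr}\in\Dcal/\Ical$ the Frobenius. Since $G$ acts transitively on the points of $Y$ over $x$, that fiber is the $G$-set $G/\Dcal$, and the points of $Y/_H$ over $x$ correspond to the double cosets $H\backslash G/\Dcal$; write $z_g$ for the point attached to $Hg\Dcal$. Tracking decomposition theory through the tower $Y\to Y/_H\to X$ shows that the decomposition and inertia groups of a point of $Y$ over $z_g$, as subgroups of $H$, are $H\cap g\Dcal g^{-1}$ and $H\cap g\Ical g^{-1}$, and that $\#\kappa(z_g)$ and $\Fr_{z_g}$ are governed by how $\overline{\Fr}$ permutes the relevant cosets. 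Next I would restrict $W=\mathrm{Ind}_H^GV$ to $\Dcal$: Mackey's double coset formula gives $W|_{\Dcal}\cong\bigoplus_{g\in H\backslash G/\Dcal}\mathrm{Ind}_{g^{-1}Hg\cap\Dcal}^{\Dcal}\bigl({}^{g}V\bigr)$, where ${}^{g}V$ is $V$ viewed as a representation of $g^{-1}Hg\cap\Dcal$ via conjugation by $g$. Since taking $\Ical$-invariants and applying $\det(\id-q^{-s}\overline{\Fr}\mid-)$ respects the direct sum, the whole statement reduces to the per-double-coset local identity
\[
\det\!\left[\id - q^{-s}\,\overline{\Fr}\ \middle|\ \bigl(\mathrm{Ind}_{g^{-1}Hg\cap\Dcal}^{\Dcal}\,{}^{g}V\bigr)^{\Ical}\right]
= \det\!\left[\id - \#\kappa(z_g)^{-s}\,\rho(\Fr_{z_g})\ \middle|\ V^{\Ical(z_g)}\right].
\]

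The remaining task is then a purely group-theoretic lemma: given a finite group $\Dcal$ with normal subgroup $\Ical$ and cyclic quotient generated by $\overline{\Fr}$, a subgroup $\Delta\le\Dcal$, and a representation $\tau$ of $\Delta$, compute $\det\bigl(\id-t\,\overline{\Fr}\mid(\mathrm{Ind}_{\Delta}^{\Dcal}\tau)^{\Ical}\bigr)$ by letting $\overline{\Fr}$ permute the natural basis of cosets, decomposing into $\overline{\Fr}$-orbits, and checking that each orbit of length $f$ contributes a factor $\det(\id-t^{f}\tau(w)\mid(\text{an }\Ical\text{-fixed subspace}))$ for a suitable $w\in\Delta$ — which is exactly the Euler factor of $L(Y/_H,\rho,s)$ at the corresponding $z_g$. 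I expect this orbit-counting argument, together with the careful bookkeeping of the inertia subgroups $H\cap g\Ical g^{-1}$ in the tower, to be the main obstacle; in the unramified case ($\Ical$ trivial) it is the classical computation behind Artin's functoriality of $L$-functions, and the ramified (pseudo-Galois) case is handled by running the same argument over $\Dcal/\Ical$ in place of $\Dcal$.
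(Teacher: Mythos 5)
The paper does not prove this proposition at all: it is quoted directly from Neukirch, \cite[Proposition VII.10.4]{Neukirch_ANT}, so there is no ``paper's proof'' to compare against. Your outline is the standard textbook argument and it is correct, including the generalization from rings of integers to pseudo-Galois covers of schemes, since the proof only uses local Euler factors and the behavior of decomposition/inertia/Frobenius in towers, which is identical in that generality.

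A few remarks on the substance. Parts (1) and (2) are immediate, as you say. For (3), the only point to make explicit is that taking $\Ical(y)$-invariants of $V$ and evaluating $\rho(\Fr_{y/x})$ depend only on the images of $\Ical(y)$ and $\Fr_{y/x}$ in $G/\ker\rho$ \emph{because} $\rho$ kills $\ker\rho$, together with the standard compatibility $\Dcal(z/x)=\Dcal(y/x)N/N$ and $\Ical(z/x)=\Ical(y/x)N/N$ when $N\trianglelefteq G$; you have this. For (4) you have correctly located the crux: after Mackey's double coset formula reduces the local factor at $x$ to a product indexed by $H\backslash G/\Dcal$, the remaining content is the identity
\[
\det\!\Bigl[\,\id - t\,\overline{\Fr}\,\Big|\,\bigl(\mathrm{Ind}_{\Delta}^{\Dcal}\,\tau\bigr)^{\Ical}\Bigr]
\;=\;
\det\!\Bigl[\,\id - t^{f}\,\tau(w)\,\Big|\,\tau^{\Delta\cap\Ical}\Bigr]
\]
for a cyclic quotient $\Dcal/\Ical=\langle\overline{\Fr}\rangle$, a subgroup $\Delta\leq\Dcal$, $f=[\Dcal:\Delta\Ical]$, and a suitable $w\in\Delta$ depending on $\overline{\Fr}^{f}$. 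The one ingredient you leave implicit and should record is the isomorphism
$\bigl(\mathrm{Ind}_{\Delta}^{\Dcal}\tau\bigr)^{\Ical}\cong\mathrm{Ind}_{\Delta\Ical/\Ical}^{\Dcal/\Ical}\bigl(\tau^{\Delta\cap\Ical}\bigr)$,
which uses normality of $\Ical$ in $\Dcal$ (so $\Delta\backslash\Dcal/\Ical=\Delta\Ical\backslash\Dcal$ and the conjugate inertia groups all coincide). With that in hand, the ramified case really is just the unramified orbit-counting computation carried out over $\Dcal/\Ical$, as you predict. Also take care that $\#\kappa(z_g)=q^{f_g}$ where $f_g$ is the orbit length above, so the substitution $t\mapsto q^{-s}$ and the orbit bookkeeping match up; this is the final sanity check to close (4). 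Overall: correct strategy, correctly identified bottleneck, and no gaps beyond deferred but routine verifications.
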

	\begin{corollary}[Factorization of zeta functions, {{\cite[Corollary VII.10.5]{Neukirch_ANT}}}]\label{cor:fac_zeta-to-L}
		For any normal subgroup $H\trianglelefteq G$, we have  
		\begin{equation*}
			\zeta(Y/_H,s)=\prod_{H\subseteq \ker \rho} L(X,\rho,s)^{\dim \rho},
		\end{equation*}
		where $\rho$ ranges through all irreducible complex representations of $G$ whose kernels contain $H$.  If $G$ is abelian, write $G^\vee=\hom(G,\Cx)$ for the group of its complex abelian characters. Then for any subgroup $H\le G$, we have
		\begin{equation*}
			\zeta(Y/_H,s)=\prod_{\chi \in (G/H)^\vee } L(X,\chi,s),
		\end{equation*}
		where complex abelian characters of $G/H$ are identified with those of $G$ that are trivial on $H$. 
	\end{corollary}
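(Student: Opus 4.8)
The plan is to deduce both identities purely formally from the four properties of Artin $L$-functions listed in \Cref{prop:Artin-L_properties}, with the only genuine input being an elementary fact from the character theory of finite groups. First I would note that the quotient map $Y\to Y/_H$ is again a pseudo-Galois covering of normal integral schemes, now with group $H$: its function-field extension is $k(Y)/k(Y)^H$, which is Galois with group $H$ since $k(Y)/k(X)$ is Galois with group $G\supseteq H$, and the automorphism/Galois comparison map is an isomorphism. (That $Y/_H$ exists as a normal integral scheme is automatic in the situations of interest, where $Y$ is affine or quasi-projective.) Applying property (1) to this covering with the trivial representation $1_H$ gives $\zeta(Y/_H,s)=L(Y/_H,1_H,s)$, and then property (4) applied to the subgroup $H\le G$ of the covering $Y\to X$ rewrites this as $\zeta(Y/_H,s)=L(X,\mathrm{Ind}_H^G 1_H,s)$.

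Next I would decompose the induced representation $\mathrm{Ind}_H^G 1_H$ into irreducibles. By Frobenius reciprocity the multiplicity of an irreducible $\rho$ in $\mathrm{Ind}_H^G 1_H$ is $\dim_\Cbb\rho^H$, so $\mathrm{Ind}_H^G 1_H\cong\bigoplus_\rho\rho^{\oplus\dim\rho^H}$ over the irreducible complex representations $\rho$ of $G$. The point where normality of $H$ enters is this: when $H\trianglelefteq G$, the subspace $\rho^H$ is $G$-stable, so by irreducibility it is either $0$ or all of $\rho$, and $\rho^H=\rho$ precisely when $H\subseteq\ker\rho$; hence $\dim\rho^H=\dim\rho$ for $H\subseteq\ker\rho$ and $0$ otherwise. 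Substituting $\mathrm{Ind}_H^G 1_H\cong\bigoplus_{H\subseteq\ker\rho}\rho^{\oplus\dim\rho}$ into the expression above and invoking the additivity property (2) turns the $L$-function of a sum into a product, giving $\zeta(Y/_H,s)=\prod_{H\subseteq\ker\rho}L(X,\rho,s)^{\dim\rho}$, which is the first claim. The abelian case then drops out: every irreducible is one-dimensional, every subgroup is normal, and $H\subseteq\ker\chi$ is exactly the condition that $\chi$ be inflated from $G/H$, i.e. $\chi\in(G/H)^\vee$, so $\zeta(Y/_H,s)=\prod_{\chi\in(G/H)^\vee}L(X,\chi,s)$.

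I do not expect a serious obstacle here; the argument is a bookkeeping of Euler products once the identity $\mathrm{Ind}_H^G 1_H\cong\bigoplus_{H\subseteq\ker\rho}\rho^{\oplus\dim\rho}$ is in hand, and the latter is standard. If anything, the only points requiring care are making sure properties (1) and (4) are applied to the right pseudo-Galois coverings --- (1) to $Y\to Y/_H$ and (4) to the subgroup $H$ of $G=\gal(k(Y)/k(X))$ --- and checking that $\dim\rho^H$ really is constant (equal to $0$ or $\dim\rho$) on each irreducible, which is exactly where Schur's lemma together with the normality hypothesis are used. Property (3) is used only implicitly, to justify writing $L(X,\rho,s)$ without reference to the total space when $\rho$ is inflated from $G/H$.
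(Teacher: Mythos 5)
Your proof is correct and follows essentially the same route as the paper: both reduce to $\zeta(Y/_H,s)=L(X,\mathrm{Ind}_H^G 1_H,s)$ via properties (1) and (4) of \Cref{prop:Artin-L_properties}, decompose $\mathrm{Ind}_H^G 1_H$ into irreducibles, and apply additivity. The only cosmetic difference is in how the decomposition is obtained --- the paper identifies $\mathrm{Ind}_H^G 1_H$ with the regular representation $\rho_{G/H}$ of the quotient and invokes its standard isotypic decomposition, whereas you compute multiplicities directly by Frobenius reciprocity and then use Schur's lemma together with normality of $H$ to show $\dim\rho^H\in\{0,\dim\rho\}$.
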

	\begin{proof}
		This follows from \Cref{prop:Artin-L_properties} and the decomposition of the regular representation $\rho_{G/H}$ of the quotient group $G/H$.
		\begin{equation*}
			\zeta(Y/_H,s)= L(X,\mathrm{Ind}_{H}^G 1_H,s)=L(X,\rho_{G/H},s)=L\left(X,\bigoplus_{H\subseteq \ker \rho}\rho^{\oplus \dim \rho},s\right)= \prod_{H\subseteq \ker \rho} L(X,\rho,s)^{\dim \rho}. 
		\end{equation*}
	\end{proof}
	\begin{example}\label{exam:fr-fq}
		Let $Y/X=\spec\F_{q^m}/\spec \Fq$ and $\chi\colon \aut(Y/X)\cong C_m \to \Cx$ be a complex character. Denote the Frobenius element by $\Fr$. Then we have 
		\begin{equation*}
			L(\Fq,\chi,s)=\frac{1}{1-\chi(\Fr) q^{-s}}. 
		\end{equation*}
	\end{example}
	\begin{example}\label{rem:katz} 
		When $X$ is a normal integral $\Fq$-scheme, the Artin $L$-function of a Galois representation of $k(X)$ can be expressed as an exponential sum: (see for example \cite{Katz_4lectures})
		\[
		L(X,\rho,s)=\exp\left(\sum_{m=1}^{\infty}\sum_{x\in X(\Fbb_{q^m})}\tr\left[\rho(\Fr_x)|V^{\Ical(x)}\right]\cdot\frac{q^{-ms}}{m}\right) \in 1 + q^{-s} E\llb q^{-s}\rrb,
		\]
		where $\rho$ is an $E$-linear representation for some subfield $E$ of $\Cbb$. 	It follows that the Artin $L$-function $L(X,\rho,s)$ only depends on the character of the representation $\rho$.  Note that the identity in \Cref{ex:function-field} is the special case of the one above when $\rho=1_G$ is the $1$-dimensional representation. 
		
		We see that the two expressions coincide as follows. For each closed point $x\in |X|$ with residue field $\F_{q^{n}}$, set $x_m$ to be the closed point $\spec \F_{q^{nm}}\to \spec \F_{q^{n}}\xrightarrow{x} X$. Notice the Galois conjugates of $x\colon \spec \F_{q^m}\to X$ over $\Fq$ are identified in $|X|$, but they are counted separately in each $X(\Fbb_{q^m})$. Then we have (see also \cite[pages 157 -- 158 and 165 -- 166]{milneLEC}) 
		\begin{align*}
			&\quad \exp\left(\sum_{m=1}^{\infty}\sum_{x\in X(\Fbb_{q^m})}\tr\left[\rho(\Fr_x)|V^{\Ical(x)}\right]\cdot\frac{q^{-ms}}{m}\right)\\
			&=\exp\left(\sum_{m=1}^{\infty}\sum_{x\in X(\Fbb_{q^m})}\tr\left[\rho(\Fr_x)|V^{\Ical(x)}\right]\cdot\frac{\# \kappa(x)^{-s}}{[\kappa(x):\Fq]}\right)\\
			&=\exp\left(\sum_{x\in |X|}\left([\kappa(x):\Fq]\sum_{m=1}^\infty \tr\left[\rho(\Fr_{x_m})\mid V^{\Ical(x)}\right]\cdot \frac{\# \kappa(x_m)^{-s}}{[\kappa(x_{m}):\Fq]}\right)\right)\\
			&=\prod_{x\in|X|} \exp\left(\left.\tr\left[\sum_{m=1}^\infty\rho(\Fr_{x})^m\frac{\# \kappa(x)^{-ms}}{m} \right)\right|V^{\Ical(x)}\right]\\
			&=\prod_{x\in|X|}\det\left[\left.\exp\left(\sum_{m=1}^\infty \frac{\left(\rho(\Fr_{x})\cdot \# \kappa(x)^{-s}\right)^m}{m}\right)\right|V^{\Ical(x)}\right]\\
			&=\prod_{x\in|X|}\det\left[\exp\left(-\log(1-\rho(\Fr_{x})\cdot \#\kappa(x)^{-s})\right)\mid V^{\Ical(x)}\right]\\
			&=\prod_{x\in|X|}\frac{1}{\det\left[1-\rho(\Fr_x)\cdot \#\kappa(x)^{-s}\mid V^{\Ical(x)}\right]}.
		\end{align*}
	\end{example}
	\begin{example}
		Let $X$ be an integral normal $\Fq$-scheme of dimension $d$, $E$ be a number field, and $\rho\colon \piet(X)\to \GL_N(\Ocal_E)\hookrightarrow \GL_N(\overline{\Q}_\ell)$ be a Galois representation for some prime $\ell\nmid q$. Denote by $\mathcal{F}_\rho$ the lisse sheaf on $X_{\et}$ associated to $\rho$ and $\pi\colon \overline{X}\to X$ is the base change map to $\overline{\F}_q$. Then the Lefschetz fixed point theorem for \'etale cohomology implies 
		\begin{equation}\label{eqn:Weil_L-et_coh}
			L(X,\rho,s)= \prod_{r=0}^{2d}\det\left[1-\Fr \cdot q^{-s}\mid \H^r_\et(\overline{X}; \pi^\ast\Fcal_\rho)\right]^{(-1)^{r+1}},
		\end{equation}
		It follows that $L(X,\rho,s)$ is a rational function of $q^{-s}$ whose coefficients are algebraic numbers in $F$. In particular, $L(X,\rho,s)$ admits a meromorphic continuation to $\Cbb$.
	\end{example}
	\begin{lemma}\label{lem:rational_Weil_L}
		Let $E$ be a number field and $\rho\colon \piet(X)\to \GL_N(\Ocal_E)$ be an $E$-linear Galois representation of a connected, normal, separated $\Fq$-scheme of dimension $d$. Then 
		\begin{enumerate}
			\item For any integer $n$, if $s=n$ is not a pole for $L(X,\rho,s)$, then $L(X,\rho,n)\in E$. 
			\item Let $\sigma\in \mathrm{Aut}_\Q(E)$ be a field automorphism, then $L(X,\sigma\circ \rho,n)=\sigma(L(X,\rho,n))$.
		\end{enumerate}
	\end{lemma}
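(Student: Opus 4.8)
The plan is to deduce both statements from two facts already in hand: by the Lefschetz trace formula \eqref{eqn:Weil_L-et_coh}, $L(X,\rho,s)$ is a rational function of $T:=q^{-s}$ over $\overline{\Q}$; and by \Cref{rem:katz}, its Taylor expansion around $T=0$ lies in $1+T\,E\llb T\rrb$, i.e.\ has all coefficients in $E$. (The latter holds because each coefficient is a universal $\Q$-polynomial expression in the traces $\tr\!\big[\rho(\Fr_x)\mid V^{\Ical(x)}\big]$, and these lie in $E$ since a lift of $\Fr_x$ and the inertia subgroups act $E$-linearly on the $E$-form $V=E^{N}$, the subspace $V^{\Ical(x)}$ being cut out by the averaging idempotent $\tfrac1{\#\Ical(x)}\sum_{g\in\Ical(x)}\rho(g)$, which has entries in $E$.) First I would record the elementary descent lemma that a formal power series over a field which represents a rational function over some extension field is already rational over the field itself: the existence of a relation $Qf=P$ with prescribed bounds on $\deg P$ and $\deg Q$ is a finite homogeneous linear system in the coefficients of $P$ and $Q$ with entries in $E$ (the Taylor coefficients of $f$), and such a system has a nonzero solution over $E$ as soon as it has one over the extension. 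Clearing common factors and normalizing so that $Q(0)=1$ (possible since $f(0)=1$ forces $Q(0)\neq0$) then gives $L(X,\rho,s)=P(T)/Q(T)$ with $P,Q\in E[T]$ coprime; as $T=q^{-s}$ omits only the value $0$ and the expression is in lowest terms, the poles of $L(X,\rho,\cdot)$ are precisely the $s$ with $Q(q^{-s})=0$.

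Part (1) is then immediate: if $s=n\in\Z$ is not a pole, then $Q(q^{-n})\neq0$, and since $q^{-n}\in\Q\subseteq E$ and $P,Q\in E[T]$ we get $L(X,\rho,n)=P(q^{-n})/Q(q^{-n})\in E$.

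For part (2), fix $\sigma\in\aut_\Q(E)$. It restricts to a ring automorphism of $\Ocal_E$, hence acts coefficientwise (fixing $T$) as a ring automorphism of $E[T]$ and of $E\llb T\rrb$, still denoted $\sigma$. Because $\ker(\sigma\circ\rho)=\ker\rho$, the twist $\sigma\circ\rho$ is again a continuous $\Ocal_E$-linear representation of $\piet(X)$ attached to the same covering $Y\to X$, with the same residue-field data, Frobenii and inertia; and since $\sigma$ commutes entrywise with the averaging idempotents and with the trace, one has $\tr\!\big[(\sigma\circ\rho)(\Fr_x)\mid V^{\Ical(x)}\big]=\sigma\big(\tr\!\big[\rho(\Fr_x)\mid V^{\Ical(x)}\big]\big)$ for every closed point $x$. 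Feeding this into the exponential-sum expression of \Cref{rem:katz} and using that $\exp$ has rational coefficients, I get $L(X,\sigma\circ\rho,s)=\sigma\big(L(X,\rho,s)\big)$ as elements of $E\llb T\rrb$. Applying $\sigma$ to $Qf=P$ yields $\sigma(Q)\cdot L(X,\sigma\circ\rho,s)=\sigma(P)$ with $\sigma(P),\sigma(Q)\in E[T]$ still coprime and $\sigma(Q)(0)=1$, so the poles of $L(X,\sigma\circ\rho,\cdot)$ are the $s$ with $\sigma(Q)(q^{-s})=0$; as $q^{-n}\in\Q$ is $\sigma$-fixed, $\sigma(Q)(q^{-n})=\sigma\big(Q(q^{-n})\big)$, whence $s=n$ is a pole for $\sigma\circ\rho$ iff it is for $\rho$, and when it is not, $L(X,\sigma\circ\rho,n)=\sigma(P)(q^{-n})/\sigma(Q)(q^{-n})=\sigma\big(P(q^{-n})/Q(q^{-n})\big)=\sigma\big(L(X,\rho,n)\big)$.

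The single non-formal step is the descent of rationality from $\overline\Q$ down to $E$ in the first paragraph; everything else is routine bookkeeping, and I expect no real difficulty there (it is the Kronecker/Hankel criterion together with the insensitivity of linear algebra to the ground field). As an alternative one could argue that each factor $\det\!\big(1-\Fr\,T\mid\H^r_\et(\overline X;\pi^*\Fcal_\rho)\big)$ already lies in $E[T]$: here $\rho$ automatically has finite image (a continuous homomorphism from the profinite group $\piet(X)$ into $\GL_N$ of the integers of a number field has compact, hence countable and thus finite, image), so $\Fcal_\rho$ has finite monodromy and $\H^r_\et(\overline X;\pi^*\Fcal_\rho)$ is naturally a finitely generated $\Ocal_E$-module with $E$-linear Frobenius. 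I would nevertheless use the power-series argument above, as it avoids comparing $\Ocal_E$-coefficient étale cohomology with its $\ell$-adic counterpart.
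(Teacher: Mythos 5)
Your proof is correct and uses the same two ingredients as the paper's (very terse) proof --- the Lefschetz identity \eqref{eqn:Weil_L-et_coh} for rationality in $T=q^{-s}$, and the exponential-sum expression of \Cref{rem:katz} for $E$-rationality of the power-series coefficients --- but you fill in a step the paper glosses over. The paper simply says ``(1) follows from the identity \eqref{eqn:Weil_L-et_coh},'' yet that identity only produces characteristic polynomials over $\overline{\Q}_\ell$, so some descent to $E$ is needed. The intended route is presumably your ``alternative'': since $\rho$ has finite image, $\Fcal_\rho$ carries an $\Ocal_E$-structure with $E$-linear Frobenius, so each characteristic factor already lies in $E[T]$. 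Your preferred route instead descends rationality via the Kronecker/Hankel criterion (rationality of a power series over a field is a condition on Hankel determinants and so is insensitive to field extensions), combined with the fact from \Cref{rem:katz} that the coefficients lie in $E$; this avoids any comparison of $\Ocal_E$-coefficient \'etale cohomology with its $\overline{\Q}_\ell$-linear avatar. Both are valid, and your version has the virtue of being entirely elementary once rationality and the coefficient field of the power series are in hand. One tiny imprecision: you say the Lefschetz identity gives rationality ``over $\overline{\Q}$,'' whereas a priori it gives rationality over $\overline{\Q}_\ell$ (algebraicity of the eigenvalues is Deligne, which you do not need); your descent lemma works verbatim from $\overline{\Q}_\ell$, so nothing breaks. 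The handling of part (2) --- $\sigma$ commuting with the averaging idempotent for $V^{\Ical(x)}$, with the restricted trace, and with evaluation at the $\sigma$-fixed point $q^{-n}$ --- is careful and correct, and matches what the paper intends when it attributes (2) to \Cref{rem:katz}.
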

	\begin{proof}
		(1) follows from the identity \eqref{eqn:Weil_L-et_coh} and (2) is a result of the definition of $L(X,\rho,s)$ in \Cref{rem:katz} as the exponential of a formal power series with coefficients in $F$. 
	\end{proof}
	\begin{proposition}\label{prop:rational_Weil_L_psuedo}
		Let $\pi\colon Y\to X$ be a pseudo-Galois cover of normal integral curves  over $\Fq$ with $G=\gal(k(Y)/k(X))$ and $\rho\colon G\to \GL_N(\Ocal_E)$ be a Galois representation. Then 
		\begin{enumerate}
			\item The Artin $L$-function $L(X,\rho,s)$ admits a meromorphic continuation to $\C$. 
			\item For any integer $n$, if $s=n$ is not a pole for $L(X,\rho,s)$, then $L(X,\rho,n)\in E$. 
			\item Let $\sigma\in \mathrm{Aut}_\Q(E)$ be a field automorphism, then $L(X,\sigma\circ \rho,n)=\sigma(L(X,\rho,n))$.
		\end{enumerate}
	\end{proposition}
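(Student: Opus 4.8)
The plan is to bootstrap from \Cref{lem:rational_Weil_L}, which already handles genuine \'etale covers, by isolating the finitely many ramified Euler factors of $L(X,\rho,s)$. Since $k(Y)/k(X)$ is a Galois --- in particular separable --- extension, the morphism $\pi$ is generically \'etale, and as $X$ is a curve its ramification locus is a finite set $S$ of closed points. Put $U=X\setminus S$, so that $\pi^{-1}(U)\to U$ is a $G$-Galois \'etale cover, connected because $Y$ is integral, and $\rho$ pulls back to a continuous representation $\rho_U\colon\piet(U)\twoheadrightarrow G\to\GL_N(\Ocal_E)$. Comparing Euler factors in \Cref{def:Artin-L} --- at a point $x\in U$ one has $\Ical(x)=1$ --- the Euler product factors as
\begin{equation*}
	L(X,\rho,s)=L(U,\rho_U,s)\cdot\prod_{x\in S}\det[\id-\#\kappa(x)^{-s}\,\rho(\Fr_x)\mid V^{\Ical(x)}]^{-1}.
\end{equation*}
The factor $L(U,\rho_U,s)$ is covered verbatim by \Cref{lem:rational_Weil_L} applied to the connected, normal, separated $\Fq$-scheme $U$ of dimension $\le 1$: this supplies its meromorphic continuation, the fact that it takes values in $E$ at non-polar integers, and its $\mathrm{Aut}_\Q(E)$-equivariance.

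It remains to control the finitely many ramified factors, and here the key point is that they are defined over $\Ocal_E$. For $x\in S$ the inertia group $\Ical(x)$ acts $\Ocal_E$-linearly on $\Ocal_E^{N}$, so $M_x:=(\Ocal_E^{N})^{\Ical(x)}$ is a finitely generated projective $\Ocal_E$-module on which $\Fr_x$ acts by a well-defined $\Ocal_E$-linear endomorphism --- well-defined because any two lifts of $\Fr_x\in\Dcal(y)/\Ical(y)$ differ by an element of $\Ical(y)$, which acts trivially on $M_x$ --- and $V^{\Ical(x)}=M_x\otimes_{\Ocal_E}E$. Consequently $\det[\id-t\,\rho(\Fr_x)\mid V^{\Ical(x)}]$ is a polynomial in $t$ with coefficients in $\Ocal_E$, so after the substitution $t=\#\kappa(x)^{-s}$ the finite product is a rational function of $q^{-s}$; combined with the previous paragraph this proves part~(1). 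Moreover, base change along the automorphism $\sigma\colon\Ocal_E\to\Ocal_E$ is exact, hence commutes with taking $\Ical(x)$-invariants and carries the Frobenius characteristic polynomial to the polynomial obtained by applying $\sigma$ to its coefficients.

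For parts (2) and (3) I would pass to the exponential-sum description. By \Cref{rem:katz}, $L(X,\rho,s)=\exp\big(\sum_{m\ge1}a_m(\rho)\,q^{-ms}/m\big)$ with $a_m(\rho)=\sum_{x\in X(\Fbb_{q^m})}\tr[\rho(\Fr_x)\mid V^{\Ical(x)}]$, and by the previous paragraph each such trace lies in $\Ocal_E$, so the Taylor expansion of $L(X,\rho,s)$ in the variable $q^{-s}$ has coefficients in $E$. Since part~(1) exhibits $L(X,\rho,s)$ as a rational function of $q^{-s}$ with a priori $\overline{\Q}$-coefficients, the classical descent of rationality for power series over a field (Fischer--P\'olya, via vanishing of Hankel determinants --- the same descent already underlying \Cref{lem:rational_Weil_L}) forces $L(X,\rho,s)\in E(q^{-s})$, and evaluating at $s=n$ with $n$ not a pole gives $L(X,\rho,n)\in E$, which is (2). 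For (3): the base-change computation above gives $a_m(\sigma\circ\rho)=\sigma(a_m(\rho))$ for every $m$, hence $L(X,\sigma\circ\rho,s)=\sigma(L(X,\rho,s))$ as power series in $q^{-s}$ with $\sigma$ acting on coefficients; writing $L(X,\rho,s)=P(q^{-s})/Q(q^{-s})$ with $P,Q\in E[t]$ and $Q(0)=1$, the rational number $q^{-n}$ is fixed by $\sigma$, so $Q(q^{-n})=0$ iff $Q^{\sigma}(q^{-n})=0$, and the evaluation $L(X,\sigma\circ\rho,n)=\sigma(L(X,\rho,n))$ goes through exactly when $s=n$ is not a pole of $L(X,\rho,s)$.

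I expect the only delicate point to be the bookkeeping at the ramified places: checking that $V^{\Ical(x)}$ is genuinely an $E$-structure (so that Frobenius characteristic polynomials have $\Ocal_E$-coefficients and transform correctly under $\sigma$) and that peeling off these finitely many factors does not corrupt the pole/value statements, as handled at the end of the previous paragraph. A heavier but conceptually cleaner alternative to the first two paragraphs would be to invoke the Grothendieck--Lefschetz trace formula for the constructible $\Ocal_E$-sheaf $j_{*}\Fcal_\rho$ on $X$ --- whose stalk at $x\in S$ is exactly $V^{\Ical(x)}$ with its Frobenius action, so that $L(X,\rho,s)=\prod_{r}\det[1-\Fr\cdot q^{-s}\mid\H^{r}_{\et}(\overline{X};j_{*}\Fcal_\rho)]^{(-1)^{r+1}}$ --- but I would favor the elementary factorization above, since it only uses \Cref{lem:rational_Weil_L}, which is already in hand.
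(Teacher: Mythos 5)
Your proposal follows the same route as the paper: remove the finite ramification locus, apply \Cref{lem:rational_Weil_L} to the resulting genuine $G$-Galois \'etale cover, and observe that the remaining finitely many ramified Euler factors are under control. You spell out more carefully what the paper leaves implicit --- namely that each ramified factor is the reciprocal of a polynomial in $q^{-s}$ with coefficients in $\Ocal_E$ (since $V^{\Ical(x)}$ descends to an $\Ocal_E$-lattice on which $\Fr_x$ acts $\Ocal_E$-linearly) and that these factors transform correctly under $\sigma\in\mathrm{Aut}_\Q(E)$ --- so the argument is a correct and somewhat more detailed version of the paper's proof.
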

	\begin{proof}
		Notice the branch locus $B$ of $\pi$ is a finite disjoint union of closed points in the curve $X$. By \Cref{def:Artin-L}, we have 
		\begin{equation*}
			L(X,\rho,s)=L(X-B,\rho,s)\cdot\prod_{x\in |B|}\frac{1}{\det[\id-\#\kappa(x)^{-s}\rho(\Fr_x)\mid V^{\Ical(x)}]}. 
		\end{equation*} 
		As $Y-\pi^{-1}(B)\to X-B$ is a $G$-Galois cover by assumption,  the three claims follow from \Cref{lem:rational_Weil_L}. 
	\end{proof}
	\begin{example}
		Let $F'/F$ be a $G$-Galois extension of number fields. When $\chi\colon G\to \Cx$ is a $1$-dimensional complex representation (abelian character), the Artin $L$-function of $\chi$ is usually called the \textbf{Dirichlet $L$-function}. The abelian character $\chi$ necessarily factors through an abelian extension of $F$. One particular example is when $F=\Q$. Then the Kronecker-Weber Theorem states any finite abelian Galois extension of $\Q$ is contained in some cyclotomic extension $\Q(\zeta_N)$ of $\Q$.  Let $\chi\colon \znx\cong \gal(\Q(\zeta_N)/\Q)\to \Cx$ be a Dirichlet character, then we have an identity:
		\begin{equation*}
			L(\Z,\chi,s):=\prod_p \frac{1}{1-\chi(p) p^{-s}}=\sum_{n=1}^\infty \frac{\chi(n)}{n^s},\qquad \chi(n)=0\text{ if } (n,N)\ne 1.
		\end{equation*}
		Using global class field theory, there is a similar formula to express the Artin $L$-function of an abelian Galois representation of a number field as a summation  (Hecke $L$-function). See \cite[Theorem VII.10.6]{Neukirch_ANT}. 
	\end{example}
	\begin{proposition}
		Let $F'/F$ be a $G$-Galois extension of number fields.
		\begin{enumerate}
			\item (Artin) The Artin $L$-function $L(\Ocal_F,\chi,s)$ of a non-trivial complex abelian character $\chi\colon G\to \Cx$ extends to an \emph{entire} function of $s$. 
			\item (Brauer) The Artin $L$-function $L(\Ocal_F,\rho,s)$ of any complex Galois representation $\rho$ of $G$ extends to a \emph{meromorphic} function on the complex plane. In particular, this meromorphic continuation is \emph{entire} if $\rho$ is a direct sum of inductions of non-trivial abelian characters on subgroups. 
		\end{enumerate}
	\end{proposition}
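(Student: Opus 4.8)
These are the classical theorems of Artin (part (1)) and Brauer (part (2)); the plan is to deduce both from the multiplicativity properties of Artin $L$-functions recorded in \Cref{prop:Artin-L_properties} together with class field theory, quoting the one genuinely analytic input.

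For part (1), I would first invoke the descent identity \Cref{prop:Artin-L_properties}(3) to replace $\chi$ by the faithful character it induces on $G/\ker\chi$; since a one-dimensional character factors through an abelian quotient, this reduces the problem to the case where $\chi$ is an injective character and $F'/F$ is an abelian extension (note $\ker\chi\trianglelefteq G$, so the relevant covering $Y/_{\ker\chi}\to X$ is genuinely Galois). Next I would apply global class field theory: the Artin reciprocity map identifies $\gal(F'/F)$ with a finite quotient of the idele class group $\A_F^\times/F^\times$, and under this identification $\chi$ becomes a finite-order Hecke character of $F$ whose Hecke $L$-function coincides with $L(\Ocal_F,\chi,s)$ (see \cite[Theorem VII.10.6]{Neukirch_ANT}). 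The final step is Hecke's theorem that the $L$-function of a non-trivial Hecke character extends to an entire function of $s$ — proved by writing the completed $L$-function as a Mellin transform of a theta series and establishing a functional equation; I would quote this rather than reprove it, referring to \cite[Ch.~VII]{Neukirch_ANT}. This last step is the only real obstacle; everything else is formal.

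For part (2), I would start from Brauer's induction theorem \cite[Theorem 10.20]{Serre_rep1977}: in the representation ring $R(G)$ one may write $[\rho]=\sum_i n_i[\mathrm{Ind}_{H_i}^G\psi_i]$ with $n_i\in\Z$, $H_i\le G$, and each $\psi_i\colon H_i\to\Cx$ one-dimensional. Separating positive and negative coefficients gives an honest isomorphism of complex $G$-representations
\[
\rho\ \oplus\ \bigoplus_{n_i<0}\bigl(\mathrm{Ind}_{H_i}^G\psi_i\bigr)^{\oplus(-n_i)}\ \cong\ \bigoplus_{n_i>0}\bigl(\mathrm{Ind}_{H_i}^G\psi_i\bigr)^{\oplus n_i},
\]
so applying additivity and the induction formula \Cref{prop:Artin-L_properties}(2),(4) — with $F_i=(F')^{H_i}$ the fixed field of $H_i$ — yields
\[
L(\Ocal_F,\rho,s)=\prod_i L\bigl(\Ocal_{F_i},\psi_i,s\bigr)^{n_i}.
\]
Then I would observe that each factor with $\psi_i$ non-trivial is entire by part (1) applied to the $H_i$-Galois extension $F'/F_i$, while each factor with $\psi_i$ trivial equals $\zeta_{F_i}(s)$ by \Cref{prop:Artin-L_properties}(1), which is meromorphic on $\C$; hence the finite product $L(\Ocal_F,\rho,s)$ is meromorphic. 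Finally, if $\rho\cong\bigoplus_j\mathrm{Ind}_{H_j}^G\psi_j$ with every $\psi_j$ non-trivial, then the product has no $\zeta$-factors and no negative exponents, so $L(\Ocal_F,\rho,s)$ is a finite product of entire functions, hence entire. Once the Hecke-theoretic input of part (1) is granted, part (2) presents no further obstacle.
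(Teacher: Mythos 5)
The paper states this proposition without proof, treating it as a classical result of Artin and Brauer; your reconstruction is the standard textbook argument and it is correct. Your reduction in part (1) via the descent identity to a faithful character on the abelian quotient $G/\ker\chi$, the passage through Artin reciprocity to a finite-order Hecke character, and the appeal to Hecke's theorem for entireness of non-trivial Hecke $L$-functions is exactly the usual route (cf.\ \cite[Ch.~VII]{Neukirch_ANT}). Your part (2) is likewise standard: Brauer induction gives $L(\Ocal_F,\rho,s)=\prod_i L(\Ocal_{F_i},\psi_i,s)^{n_i}$, the factors with $\psi_i$ trivial are Dedekind zeta functions hence meromorphic, those with $\psi_i$ non-trivial are entire by part (1), and the finite product is meromorphic; the ``in particular'' clause follows because in that case there are neither zeta factors nor negative exponents. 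One small stylistic point: Brauer's theorem as usually cited allows one to take the $H_i$ to be elementary subgroups, which is what makes the reduction to abelian characters automatic, but the weaker statement you quote (arbitrary subgroups with one-dimensional $\psi_i$) is all that is needed here and is indeed what \cite[Theorem 10.20]{Serre_rep1977} gives.
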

	\begin{remark}
		Artin conjectured that for any non-trivial irreducible Galois representation $\rho$ of number fields, the Artin $L$-function $L(X,\rho, s)$ extends to an \emph{entire} function of $s$. He proved this conjecture when $\rho$ is $1$-dimensional using the Artin reciprocity law (1-dimensional Langlands correspondence). See more discussions in \cite{Murty_Artin_L}. 
	\end{remark}
	\begin{remark}
		We note the condition for holomorphic continuation in Brauer's theorem also appears in the assumption of our main \Cref{thm:main}. Indeed, both uses the Brauer Induction Theorem. See more discussions in \Cref{rem:Brauer_ind}.
	\end{remark}
	
	Like \Cref{prop:rational_Weil_L_psuedo}, we also have the following algebraicity result for special values of Artin $L$-functions of number fields.
	\begin{proposition}[{\cite[Theorem 1.2]{Coates-Lichtenbaum}}]\label{prop:alg_special_val_numfld}
		Let $F'/F$ be a $G$-Galois extension of number fields and $\rho\colon G\to \GL_N(\Ocal_E)$ be an $E$-linear Galois representation for some number field $E$. Then $L(\Ocal_F,\rho,1-n)\in E$ if it is not a pole. Moreover if $\rho'=\tau\circ \rho$ for some $\tau\in \gal(E/\Q)$, then $L(\Ocal_F,\rho',1-n)=\tau(L(\Ocal_F,\rho,1-n))$.
	\end{proposition}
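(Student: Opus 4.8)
The plan is to bootstrap from the classical rationality theorems for abelian $L$-values of totally real fields, using Brauer induction together with the formal properties of Artin $L$-functions recorded in \Cref{prop:Artin-L_properties}.

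First I would reduce to one-dimensional characters. Since $G$ is finite, Brauer's induction theorem \cite{Serre_rep1977} provides subgroups $H_i \le G$, finite-order characters $\chi_i \colon H_i \to \overline{\Q}^\times$, and integers $a_i \in \Z$ with $\rho = \sum_i a_i\, \mathrm{Ind}_{H_i}^G \chi_i$ as virtual characters of $G$. Because the Artin $L$-function depends only on the character of a representation, the additivity and induction identities of \Cref{prop:Artin-L_properties} yield
\[
L(\Ocal_F,\rho,s) = \prod_i L\bigl(\Ocal_{F_i},\chi_i,s\bigr)^{a_i}, \qquad F_i := (F')^{H_i},
\]
where each factor is the Hecke $L$-function of the number field $F_i$ attached to the finite-order character $\chi_i$ of $\gal(F'/F_i)$. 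The only possible pole of any such factor is a simple pole at $s=1$ when $\chi_i$ is trivial, so for $n \ge 1$ no factor has a pole at $s=1-n$; assuming $s=1-n$ is not a pole of $L(\Ocal_F,\rho,s)$, no factor carrying a negative exponent vanishes there, so the right-hand side is a finite product of nonzero algebraic numbers once each $L(\Ocal_{F_i},\chi_i,1-n)$ is known to be algebraic. It therefore suffices to prove the two assertions for each $L(\Ocal_{F_i},\chi_i,1-n)$ with $\Q(\chi_i)$ in place of $E$, and reassemble.

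Next I would invoke the classical input. If $F_i$ is not totally real, or $\chi_i$ fails the parity condition imposed at $s=1-n$ by the functional equation, then $L(\Ocal_{F_i},\chi_i,1-n)=0\in\Q$ and nothing is to be shown. Otherwise the Siegel--Klingen theorem \cite{Klingen1962} and its refinements to nontrivial ray-class characters (Shintani, Cassou-Nogu\`es, Deligne--Ribet) express $L(\Ocal_{F_i},\chi_i,1-n)$ as a $\Q$-linear combination of monomials in the values of $\chi_i$, concretely via generalized Bernoulli--Hurwitz numbers attached to $(\chi_i,n)$. Hence $L(\Ocal_{F_i},\chi_i,1-n)\in\Q(\chi_i)\subseteq\overline{\Q}$, and since the defining expression has rational coefficients, applying any $\sigma\in\gal(\overline{\Q}/\Q)$ merely replaces the values of $\chi_i$ by their $\sigma$-images, giving $L(\Ocal_{F_i},\sigma\circ\chi_i,1-n)=\sigma\bigl(L(\Ocal_{F_i},\chi_i,1-n)\bigr)$. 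Feeding this back through the product formula, using that $\sigma$ acts on the representation ring $R(G)$ commuting with $\mathrm{Ind}_{H_i}^G$ and with the formation of virtual differences, I obtain $L(\Ocal_F,\rho,1-n)\in\overline{\Q}$ together with $L(\Ocal_F,\sigma\circ\rho,1-n)=\sigma\bigl(L(\Ocal_F,\rho,1-n)\bigr)$ for every $\sigma\in\gal(\overline{\Q}/\Q)$.

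Finally I would descend the field of definition using $\Ocal_E$-linearity. Every $\tau\in\gal(\overline{\Q}/E)$ fixes $\Ocal_E$ pointwise, so $\tau\circ\rho=\rho$ and the equivariance just established reads $\tau\bigl(L(\Ocal_F,\rho,1-n)\bigr)=L(\Ocal_F,\rho,1-n)$; as this holds for all such $\tau$, the value lies in $E$, and restricting the equivariance statement to lifts of elements of $\gal(E/\Q)$ gives the asserted identity $L(\Ocal_F,\tau\circ\rho,1-n)=\tau\bigl(L(\Ocal_F,\rho,1-n)\bigr)$. The one unavoidable deep ingredient is the algebraicity and Galois behaviour of Hecke $L$-values at non-positive integers over totally real base fields \cite{Klingen1962, Coates-Lichtenbaum}; the rest is bookkeeping with the Artin formalism and the Galois action, and the only subtlety worth flagging is that Brauer induction is non-canonical, which is harmless here since the product $\prod_i L(\Ocal_{F_i},\chi_i,s)^{a_i}$ recovers the intrinsic $L(\Ocal_F,\rho,s)$ regardless of the chosen decomposition.
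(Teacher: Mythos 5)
The paper does not supply a proof of this proposition; it simply cites \cite[Theorem 1.2]{Coates-Lichtenbaum}, so there is no in-paper argument to compare yours against. Your proposed route --- Brauer induction to abelian characters, the Siegel--Klingen algebraicity and $\gal(\overline{\Q}/\Q)$-equivariance for the abelian constituents, then descent to $E$ via $\gal(\overline{\Q}/E)$-invariance because $\rho$ has matrix entries in $\Ocal_E$ --- is the expected route and does match the strategy of Coates--Lichtenbaum.

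There is, however, a genuine gap at the step where you assert that ``assuming $s=1-n$ is not a pole of $L(\Ocal_F,\rho,s)$, no factor carrying a negative exponent vanishes there.'' Brauer's theorem produces a strictly \emph{virtual} decomposition $\rho=\sum_i a_i\,\mathrm{Ind}_{H_i}^G\chi_i$ with some $a_i<0$, and the no-pole hypothesis only gives the inequality $\sum_i a_i\,\ord_{s=1-n}L(\Ocal_{F_i},\chi_i,s)\ge 0$ on the total order of vanishing; it does not preclude some $\chi_i$ with $a_i<0$ from having $L(\Ocal_{F_i},\chi_i,1-n)=0$, compensated by zeros of factors with $a_j>0$. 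In that situation the pointwise product you write down is an indeterminate form, and the honest value of $L(\Ocal_F,\rho,1-n)$ is a ratio of \emph{leading Taylor coefficients} $L^\ast(\Ocal_{F_i},\chi_i,1-n)$; for those abelian constituents with positive order of vanishing, these leading coefficients are regulator terms which are transcendental in general, so algebraicity of the product requires a cancellation of regulators that your sketch neither states nor proves. A complete argument must either establish that such cancellations do not occur (e.g.\ by choosing the Brauer decomposition compatibly with the explicit Tate/Gross formula for $\ord_{s=1-n}$), or prove the cancellation directly; this is the real technical point in \cite{Coates-Lichtenbaum} and should be quoted rather than elided. If you prefer, you can instead restrict the statement to the case actually used in the paper --- totally real abelian $F'/F$ with $n$ even --- where every abelian constituent in sight is itself at a critical point and the issue disappears.
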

	\subsection{M\"obius inversion formulae} \label{subsec:inv_fact} In \Cref{cor:fac_zeta-to-L}, we express zeta functions as products of $L$-functions associated to irreducible representations of Galois groups. Ideally, we want a procedure to extract each $L(X,\chi,s)$ for each character $\chi$ and explain a Quillen--Lichtenbaum phenomenon for it. However, we are unable to separate out characters which are conjugate. In fact, by \cite[Theorem 1.2]{Coates-Lichtenbaum} and \Cref{prop:rational_Weil_L_psuedo}, if the characters are conjugate then their special values are conjugate and therefore the same from the point-of-view of $\K$-theory. Noting that two abelian characters $\chi, \tau: G \rightarrow \Cx$ are conjugate if and only if they have the same kernel, we prove the following key computational tool  which should be thought of as a M\"obius Inversion Formula.  
	\begin{proposition}[M\"obius Inversion Formula]\label{prop:inv_fac}
		Let $G$ be a finite group. Denote the group of complex characters of $G$ by $G^\vee$. For a normal subgroup $H\trianglelefteq G$, we can view $(G/H)^\vee$ as a subgroup of $G^\vee$  by
		\begin{equation*}
			(G/H)^\vee:=\hom(G/H,\Cx)\cong \{\chi\in G^\vee\mid H\subseteq \ker\chi \}.
		\end{equation*}
		Given a family of complex (meromorphic) functions $g(s,\chi)$ indexed by $\chi\in G^\vee$, we define $f(s,G/H)$ for each normal subgroup $H\trianglelefteq G$ by setting 
		\begin{equation}\label{eqn:prod_fac}
			f(s,G/H):=\prod_{\chi\in (G/H)^\vee}g(s,\chi).
		\end{equation}
		Then for any cyclic quotient $G/H\cong C_m$ of $G$ where $m=p_1^{d_1}\cdots p_\lambda^{d_\lambda}$ is the prime factorization of $m$, we have the following M\"obius Inversion Formula:
		\begin{equation*}
			\prod_{\tau\in G^\vee,\ker \tau=H} g(s,\tau)=\prod_{j=0}^\lambda\left(\prod_{1\le k_1<\cdots<k_j\le \lambda} f\left(s,(G/H)/_{C_{p_{k_1}\cdots p_{k_j}}}\right)\right)^{(-1)^j},.
		\end{equation*}
		Here the $j=0$ factor is set to be $f(s,G/H)$.
	\end{proposition}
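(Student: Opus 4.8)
The plan is to reduce at once to the case of a cyclic group and then apply the classical multiplicative M\"obius inversion over the divisor lattice of $m$. First I would note that every quantity appearing in the identity depends only on the restriction of the family $\{g(s,-)\}$ to the subgroup $(G/H)^\vee \subseteq G^\vee$: the characters $\tau$ with $\ker\tau = H$ are exactly those that factor through $G/H \cong C_m$ and are faithful there, and each group $(G/H)/_{C_{p_{k_1}\cdots p_{k_j}}}$ is a further quotient of $G/H$, so its character group also lies inside $(G/H)^\vee$. Hence, after fixing an isomorphism $G/H \cong C_m$, we may assume $G = C_m$ and $H = 1$, and the assertion becomes a purely formal statement about $C_m$ and an arbitrary (multiplicatively written) family $\{g(s,\chi)\}_{\chi \in C_m^\vee}$.

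Next I would use that $C_m$ has a unique subgroup $C_e$ of each order $e \mid m$, so that the kernel of $\chi \in C_m^\vee$ is $C_{m/\ord(\chi)}$, and accordingly group the defining product of $f$ by the order of the character. Setting $Q(d) := \prod_{\ord(\chi) = d} g(s,\chi)$ for $d \mid m$, the equivalence ``$C_n \subseteq \ker\chi \iff \ord(\chi) \mid m/n$'' gives, for every $n \mid m$,
\[
f\bigl(s,\, C_m/_{C_n}\bigr) = \prod_{C_n \subseteq \ker\chi} g(s,\chi) = \prod_{d \mid (m/n)} Q(d).
\]
Writing $R(a) := f\bigl(s, C_m/_{C_{m/a}}\bigr)$ for $a \mid m$, this reads $R(a) = \prod_{d \mid a} Q(d)$.

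The last step is the inversion: from $R(a) = \prod_{d \mid a} Q(d)$ and the identity $\sum_{d \mid n}\mu(d) = [\,n = 1\,]$ one gets $Q(a) = \prod_{d \mid a} R(d)^{\mu(a/d)}$, hence in particular
\[
\prod_{\ker\chi = 1} g(s,\chi) = Q(m) = \prod_{a \mid m} f\bigl(s, C_m/_{C_{m/a}}\bigr)^{\mu(m/a)}.
\]
To finish, I would observe that $\mu(m/a) \neq 0$ precisely when $m/a$ is squarefree, i.e. $m/a = p_{k_1}\cdots p_{k_j}$ for some (possibly empty) subset $\{k_1 < \cdots < k_j\} \subseteq \{1,\dots,\lambda\}$, in which case $\mu(m/a) = (-1)^j$ and $C_{m/a} = C_{p_{k_1}\cdots p_{k_j}}$; this turns the last product into exactly the claimed expression, the $j=0$ term being $f(s, C_m/_{C_1}) = f(s, G/H)$. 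I do not anticipate a real obstacle: the only points demanding care are the bookkeeping in the reduction --- matching divisors $e \mid m$ with subgroups $C_e \le C_m$, matching the quotient $C_m/_{C_e}$ with the characters of order dividing $m/e$, and matching the nonvanishing of $\mu$ with subsets of $\{p_1,\dots,p_\lambda\}$ --- together with the initial check that ``$\ker\tau = H$'' means ``faithful as a character of $G/H$'', which is what makes the left-hand side equal $Q(m)$.
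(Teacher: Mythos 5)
Your proof is correct, and it takes a genuinely tidier route to the same combinatorial content. The paper proceeds more informally: it separates primitive characters from non-primitive ones, notes that a non-primitive character of $C_m$ must factor through some $C_m/C_{p_i}$, and then invokes the Inclusion--Exclusion Principle to correct the over-counting among the products $\prod_i f(s, (G/H)/_{C_{p_i}})$ without writing out the final combinatorics. You instead reduce cleanly to $G = C_m$, $H = 1$, introduce the auxiliary products $Q(d) = \prod_{\ord\chi = d} g(s,\chi)$ indexed by divisors of $m$, observe $f(s, C_m/_{C_{m/a}}) = \prod_{d \mid a} Q(d)$, and then apply the \emph{classical} arithmetic M\"obius inversion formula $Q(a) = \prod_{d \mid a} R(d)^{\mu(a/d)}$, recovering the statement by identifying the squarefree divisors $m/a$ with subsets of $\{p_1,\dots,p_\lambda\}$. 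What your version buys is explicitness: the result becomes a corollary of the standard divisor-lattice M\"obius inversion rather than of an unspecified inclusion--exclusion argument, and the bookkeeping between subgroups $C_n \le C_m$, quotients $C_m/_{C_n}$, and orders of characters is laid out in full. The paper's version is shorter on the page but leaves more to the reader.
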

	\begin{proof}
		On the right hand side of  \eqref{eqn:prod_fac}, separating primitive (injective) characters of $G/H$ from the non-primitive ones, we obtain:
		\begin{equation}\label{eqn:char_prod}
			\prod_{\tau\in G^\vee, \ker \tau=H}g(s,\tau)=\prod_{\tau\colon G/H\hookrightarrow \Cx}g(s,\tau)=f(s,G/H)\cdot\left(\prod_{\tau\in (G/H)^\vee\text{, non-primitive}}g(s,\tau)\right)^{-1}
		\end{equation}
		Notice any non-primitive character $\tau$ of $G/H\cong C_m$ must factor through a further quotient $(G/H)/_{C_{p_i}}\cong C_m/C_{p_i}$ for some prime factor $p_i$ of $m$. By \eqref{eqn:prod_fac}, we have
		\begin{equation}\label{eqn:prod_2factors}
			\prod_{i=1}^\lambda f(s, (G/H)/_{C_{p_i}})=\prod_{i=1}^\lambda\prod_{\tau\in \left((G/H)/_{C_{p_i}}\right)^\vee}g(s,\tau).
		\end{equation}
		Comparing this with \eqref{eqn:char_prod}, we observe the right hand side of \eqref{eqn:prod_2factors} over-counts those characters $\tau\colon G/H\to \Cx$ whose kernels contain $C_{p_{i}p_{j}}$ for some distinct pair $p_i,p_j$ of prime factors of $m$. The M\"obius inversion formula then follows from the Inclusion and Exclusion Principle. 
	\end{proof}
	Next, we  apply \Cref{prop:inv_fac} to the factorization formulas for zeta functions in \Cref{cor:fac_zeta-to-L}. For $L$-functions of (pseudo-)Galois representations of varieties over $\Fq$, we obtain: 
	\begin{corollary}\label{cor:norm_Weil_L}
		Let $Y/X$ be a pseudo-Galois extension of normal integral curves or \'etale covers of integral normal varieties over $\Fq$ for the group $G=C_m$ and $\chi\colon \gal(Y/X)\cong C_m\hookrightarrow \Cx$ be a primitive character. We have
		\begin{equation*}
			\norm_{\Q(\zeta_m)/\Q}L(X,\chi,-n)
			= \prod_{j=0}^\lambda\left(\prod_{1\le k_1<\cdots<k_j\le \lambda} \zeta\left(Y/_{C_{p_{k_1}\cdots p_{k_j}}},-n\right)\right)^{(-1)^j}.
		\end{equation*}
	\end{corollary}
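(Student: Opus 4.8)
The plan is to deduce Corollary~\ref{cor:norm_Weil_L} directly from the Möbius Inversion Formula (Proposition~\ref{prop:inv_fac}) by feeding it the right family of functions, and then to identify the left-hand side of the resulting identity with a norm. The key observation is that the general algebraic identity in Proposition~\ref{prop:inv_fac} becomes the claimed statement once we set $G = C_m$, $H = 1$, and take $g(s,\chi) = L(X,\chi,s)$ as the indexing family. With these choices, the definition $f(s, C_m/K) := \prod_{\chi \in (C_m/K)^\vee} g(s,\chi)$ becomes exactly $\prod_{\chi \in (C_m/K)^\vee} L(X,\chi,s)$, which by the factorization formula for zeta functions (Corollary~\ref{cor:fac_zeta-to-L}, abelian case) equals $\zeta(Y/_K, s)$. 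In particular, taking $K = C_{p_{k_1}\cdots p_{k_j}}$, the terms $f(s, (C_m)/_{C_{p_{k_1}\cdots p_{k_j}}})$ appearing on the right-hand side of the Möbius formula are precisely the $\zeta(Y/_{C_{p_{k_1}\cdots p_{k_j}}}, s)$ of the statement. So the right-hand side of Proposition~\ref{prop:inv_fac} already matches the right-hand side of the Corollary verbatim.

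It remains to match the left-hand sides: Proposition~\ref{prop:inv_fac} gives $\prod_{\tau \in C_m^\vee,\ \ker\tau = 1} L(X,\tau,s)$, i.e.\ the product of $L(X,\tau,s)$ over all \emph{primitive} (injective) characters $\tau$ of $C_m$. These are exactly the $\phi(m)$ characters of the form $\sigma \circ \chi$ for $\sigma \in \gal(\Q(\zeta_m)/\Q)$, since $\gal(\Q(\zeta_m)/\Q)$ acts simply transitively on the primitive characters of $C_m$. By Lemma~\ref{lem:rational_Weil_L}(2) (in the étale-cover case) or Proposition~\ref{prop:rational_Weil_L_psuedo}(3) (in the pseudo-Galois curve case), we have $L(X, \sigma\circ\chi, -n) = \sigma(L(X,\chi,-n))$ whenever $s = -n$ is not a pole; and by part (1)/(2) of the same results $L(X,\chi,-n) \in \Q(\zeta_m)$ (or rather in the subfield $\Q[\image\chi] = \Q(\zeta_m)$ since $\chi$ is primitive). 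Hence
\begin{equation*}
\prod_{\tau\colon C_m \hookrightarrow \Cx} L(X,\tau,-n) = \prod_{\sigma \in \gal(\Q(\zeta_m)/\Q)} \sigma\bigl(L(X,\chi,-n)\bigr) = \norm_{\Q(\zeta_m)/\Q} L(X,\chi,-n),
\end{equation*}
which is the left-hand side of the Corollary. Specializing $s = -n$ in the Möbius identity (which is legitimate as long as no pole is hit, and the zeta functions $\zeta(Y/_{C_{p_{k_1}\cdots p_{k_j}}}, -n)$ are the rational values of Proposition~\ref{prop:rational_value}) completes the argument.

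**The only genuine obstacle** is a bookkeeping point rather than a conceptual one: one must check that evaluating the meromorphic identity of Proposition~\ref{prop:inv_fac} at $s = -n$ makes sense, i.e.\ that the individual factors do not have poles there, or at least that poles cancel so that both sides are finite and equal. In the function-field / étale-cover setting this is controlled by formula~\eqref{eqn:Weil_L-et_coh} and the fact (Proposition~\ref{prop:rational_value}) that the relevant zeta values at non-positive integers are honest rational numbers; for curves one invokes Proposition~\ref{prop:rational_Weil_L_psuedo}. I would phrase the proof so that the Möbius identity of Proposition~\ref{prop:inv_fac} is first established as an identity of meromorphic functions and then evaluated, noting that each $\zeta\bigl(Y/_{C_{p_{k_1}\cdots p_{k_j}}}, s\bigr)$ is regular and nonzero at $s = -n$ in the cases at hand (finite fields, function fields as in Quillen--Lichtenbaum), so that no indeterminacy arises. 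Modulo this check, the corollary is an immediate combination of Proposition~\ref{prop:inv_fac}, Corollary~\ref{cor:fac_zeta-to-L}, and the Galois-equivariance of special values.
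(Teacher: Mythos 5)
Your proof is correct and follows essentially the same route as the paper: apply Proposition~\ref{prop:inv_fac} with $g(s,\chi)=L(X,\chi,s)$, identify $f(s,G/H)=\zeta(Y/_H,s)$ via Corollary~\ref{cor:fac_zeta-to-L}, and rewrite the product over injective characters as a field norm using the Galois-equivariance of special values from Lemma~\ref{lem:rational_Weil_L}/Proposition~\ref{prop:rational_Weil_L_psuedo}. The only extra content is your explicit flag about regularity at $s=-n$, which the paper leaves implicit.
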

	\begin{proof}
		By \Cref{lem:rational_Weil_L} (for Galois covers of varieties) and  \Cref{prop:rational_Weil_L_psuedo} (for pseudo Galois covers of curves), we have 
		\begin{align*}
			\norm_{\Q(\zeta_m)/\Q}L(X,\chi,-n)=\prod_{\tau\in \gal(\Q(\zeta_m)/\Q)}\tau(L(X,\chi,-n))&=\prod_{\tau\in \gal(\Q(\zeta_m)/\Q)}L(X,\tau\circ \chi,-n)\\&= \prod_{\psi\colon \gal(Y/X)\hookrightarrow \Cx} L(X,\psi,-n).
		\end{align*}
		Setting $G=\gal(Y/X)\cong C_m$ and $g(s,\chi)=L(X,\chi,s)$ in \Cref{prop:inv_fac}, we have $f(s,G/H)=\zeta(Y/_H,s)$ by \Cref{cor:fac_zeta-to-L}. The claim then follows from \Cref{prop:inv_fac}. 
	\end{proof}
	For Dirichlet $L$-functions of number fields,  we have
	\begin{proposition}\label{prop:Dirichlet_L_Dedekind}
		Let $F'/F$ be a $C_m$-Galois extension of number fields. Suppose $m$ has exactly $\lambda$ many distinct prime factors $p_1,\cdots, p_\lambda$. Let $\chi\colon \gal(F'/F)\hookrightarrow \Cx$ be a primitive (injective) character. 
		\begin{enumerate}
			\item Write $\ord_{s=s_0}f(s)$ for the order of vanishing of a meromorphic function $f(s)$ at $s=s_0$. Then 
			\begin{equation*}
				\sum_{\psi\colon \gal(F'/F)\hookrightarrow \Cx} \ord_{s=s_0} L(\Ocal_F,\psi,s)=\sum_{j=0}^\lambda(-1)^j\left(\sum_{1\le k_1<\cdots<k_j\le \lambda} \ord_{s=s_0} \zeta_{(F')^{C_{p_{k_1}\cdots p_{k_j}}}}(s)\right).
			\end{equation*}
			In particular when $s=1-n$, we have:
			\begin{equation*}
				\phi(m)\cdot \ord_{s=1-n} L(\Ocal_F,\chi,s)=\sum_{j=0}^\lambda(-1)^j\left(\sum_{1\le k_1<\cdots<k_j\le \lambda} \ord_{s=1-n} \zeta_{(F')^{C_{p_{k_1}\cdots p_{k_j}}}}(s)\right).
			\end{equation*}
			\item Write $f^*(s_0)$ for the leading coefficient of the Laurent series of a meromorphic function $f(s)$ at $s=s_0$. Then 
			\begin{equation*}
				\prod_{\psi\colon \gal(F'/F)\hookrightarrow \Cx}  L^*(\Ocal_F,\psi,s)=\prod_{j=0}^\lambda\left(\prod_{1\le k_1<\cdots<k_j\le \lambda} \zeta^*_{(F')^{C_{p_{k_1}\cdots p_{k_j}}}}(s)\right)^{(-1)^j},
			\end{equation*}
			In particular, when $F'$ is a totally real number field (then so is $F$) and $s=1-2n$, we have:
			\begin{equation}\label{eq:norm-cal}
				\norm_{\Q(\zeta_m)/\Q}L(\Ocal_F,\chi,1-2n)=\prod_{j=0}^\lambda\left(\prod_{1\le k_1<\cdots<k_j\le \lambda} \zeta_{(F')^{C_{p_{k_1}\cdots p_{k_j}}}}(1-2n)\right)^{(-1)^j}.
			\end{equation}
		\end{enumerate}
	\end{proposition}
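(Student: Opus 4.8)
The plan is to feed the zeta-to-$L$ factorization of \Cref{cor:fac_zeta-to-L} into the M\"obius Inversion Formula of \Cref{prop:inv_fac} and then read off the consequences for orders of vanishing and leading Laurent coefficients. First I would apply \Cref{prop:inv_fac} with $G=\gal(F'/F)\cong C_m$, $H=\{1\}$, and $g(s,\psi)=L(\Ocal_F,\psi,s)$; since $\Ocal_F\to\Ocal_{F'}$ is a pseudo-$C_m$-Galois covering, \Cref{cor:fac_zeta-to-L} identifies the associated $f(s,C_m/_{H'})$ with $\zeta_{(F')^{H'}}(s)$ for each subgroup $H'\le C_m$. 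As a character of $C_m$ is primitive (injective) precisely when its kernel is trivial, \Cref{prop:inv_fac} then gives the master identity of meromorphic functions
\[
\prod_{\psi\colon \gal(F'/F)\hookrightarrow\Cx}L(\Ocal_F,\psi,s)=\prod_{j=0}^{\lambda}\biggl(\prod_{1\le k_1<\cdots<k_j\le\lambda}\zeta_{(F')^{C_{p_{k_1}\cdots p_{k_j}}}}(s)\biggr)^{(-1)^j}.
\]

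For part~(1): the order of vanishing $\ord_{s=s_0}$ is additive over products of (not identically zero) meromorphic functions, with a pole contributing a negative integer and a reciprocal a sign change, so applying $\ord_{s=s_0}$ to the master identity yields the first displayed formula at once. The ``in particular'' statement at $s=1-n$ is the one step that is not pure formalism: I would use that the $\phi(m)$ injective characters of $\gal(F'/F)$ form a single $\gal(\Q(\zeta_m)/\Q)$-orbit, together with the fact that the order of vanishing of an abelian Artin $L$-function $L(\Ocal_F,\psi,s)$ at an integer $s=1-n$ is controlled by its archimedean Euler factor and therefore depends only on $n$, on $r_2(F)$, and on the sign pattern of $\psi$ at the real places of $F$ --- all of which are fixed by a field automorphism of $\Q(\zeta_m)$, since such an automorphism fixes $\pm1$. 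Hence these orders are constant over the conjugates of $\chi$, so the left-hand side equals $\phi(m)\cdot\ord_{s=1-n}L(\Ocal_F,\chi,s)$. (One may alternatively invoke the rank formula of Gross recalled in the introduction.)

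For part~(2): leading Laurent coefficients are multiplicative over products of meromorphic functions and invert under reciprocals, so applying $f\mapsto f^*(s)$ to the master identity gives the first formula of~(2). For~\eqref{eq:norm-cal}: when $F'$ is totally real so is every intermediate field $(F')^{H'}$, hence $\zeta_{(F')^{H'}}(1-2n)$ is a nonzero rational by Klingen--Siegel (\cite{Klingen1962}) and $\zeta^*_{(F')^{H'}}(1-2n)=\zeta_{(F')^{H'}}(1-2n)$. On the $L$-side, $F'$ totally real forces the decomposition group at every archimedean place of $F$ to be trivial, so each injective $\psi$ is even at all real places and nontrivial; its functional equation, whose $\Gamma$-factor at the negative odd integer $1-2n$ is finite and nonzero because $\Gamma_{\Rbb}$ has poles only at even nonpositive integers, together with the nonvanishing of $L(\Ocal_F,\overline{\psi},2n)$ in the region of absolute convergence and the entireness of $L(\Ocal_F,\psi,s)$ (Artin/Hecke for nontrivial abelian characters), shows $L(\Ocal_F,\psi,1-2n)$ is finite and nonzero, whence $L^*(\Ocal_F,\psi,1-2n)=L(\Ocal_F,\psi,1-2n)$. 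Finally $\tau\mapsto\tau\circ\chi$ is a bijection from $\gal(\Q(\zeta_m)/\Q)$ onto the injective characters of $\gal(F'/F)$, so \Cref{prop:alg_special_val_numfld} turns the left-hand side into $\prod_{\tau\in\gal(\Q(\zeta_m)/\Q)}\tau\bigl(L(\Ocal_F,\chi,1-2n)\bigr)=\norm_{\Q(\zeta_m)/\Q}L(\Ocal_F,\chi,1-2n)$, giving~\eqref{eq:norm-cal}.

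The main (if mild) obstacle is the ``in particular'' clause of part~(1): the algebraicity relation $L(\Ocal_F,\tau\circ\chi,1-n)=\tau(L(\Ocal_F,\chi,1-n))$ by itself only detects whether the value vanishes, not the order of vanishing, so one genuinely has to appeal to the precise shape of the functional equation and its archimedean $\Gamma$-factors (or to the rank formula of Gross) in order to conclude that conjugate $L$-functions vanish to the same order at $s=1-n$. Everything else is a formal consequence of \Cref{prop:inv_fac} combined with additivity/multiplicativity and with standard nonvanishing and algebraicity inputs.
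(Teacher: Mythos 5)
Your proposal is correct and takes essentially the same route as the paper: apply \Cref{prop:inv_fac} with $G=C_m$, $H=\{1\}$, $g(s,\psi)=L(\Ocal_F,\psi,s)$, use \Cref{cor:fac_zeta-to-L} to identify the $f$'s with Dedekind zeta functions, and then pass to orders of vanishing and leading coefficients. The ``in particular'' clauses are then handled via the Galois-conjugation structure of the injective characters together with \Cref{prop:alg_special_val_numfld}.

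The one place where you go visibly beyond the paper's own proof is the ``in particular'' clause of part (1), and you are right to flag it: as stated, \Cref{prop:alg_special_val_numfld} only asserts $L(\Ocal_F,\tau\circ\chi,1-n)=\tau(L(\Ocal_F,\chi,1-n))$, which tells you that vanishing is simultaneous across the conjugates but not, on its face, that the \emph{orders} of vanishing agree. The paper's one-line citation glosses over this; you fill the gap (via the functional equation and the archimedean $\Gamma$-factors, or equivalently by invoking the rank formula of Gross), which is exactly the right supplementary input. Likewise, for~\eqref{eq:norm-cal} you make explicit the nonvanishing of $\zeta_{(F')^{H'}}(1-2n)$ (Klingen--Siegel) and of $L(\Ocal_F,\psi,1-2n)$ so that leading Laurent coefficients coincide with values, another step the paper leaves implicit. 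So: same proof strategy as the paper, with the ``in particular'' steps spelled out more carefully; your observation that \Cref{prop:alg_special_val_numfld} alone doesn't pin down orders of vanishing is a genuine and worthwhile point.
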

	\begin{proof}
		Applying the M\"obius Inversion Formula in \Cref{prop:inv_fac} to factorization of Dedekind zeta function $\zeta_{F'}(s)$ over $F$ in \Cref{cor:fac_zeta-to-L}, we get an identity of meromorphic functions:
		\begin{equation*}
			\prod_{\psi\colon \gal(F'/F)\hookrightarrow \Cx}  L(\Ocal_F,\psi,s)=\prod_{j=0}^\lambda\left(\prod_{1\le k_1<\cdots<k_j\le \lambda} \zeta_{(F')^{C_{p_{k_1}\cdots p_{k_j}}}}(s)\right)^{(-1)^j}.
		\end{equation*}
		The first halves of (1) and (2) follow directly by taking orders of vanishing and leading coefficients of Laurent series at $s=s_0$ on both sides of the identity. \Cref{prop:alg_special_val_numfld} implies that:
		\begin{align*}
			\phi(m)\cdot  \ord_{s=1-n} L(\Ocal_F,\chi,s)&=\sum_{\psi\colon \gal(F'/F)\hookrightarrow \Cx} \ord_{s=1-n} L(\Ocal_F,\psi,s),\\
			\norm_{\Q(\zeta_m)/\Q}L(\Ocal_F,\chi,1-2n)&=\prod_{\psi\colon \gal(F'/F)\hookrightarrow \Cx}  L(\Ocal_F,\psi,1-2n). \qedhere
		\end{align*}
	\end{proof}
	\section{Algebraic $\K$-theory and  the classical Quillen--Lichtenbaum Conjectures} \label{sec:k_ql}
	Let $F/\Q$ be a number field and $\Ocal_\Fbb$ be its ring of integers.  In this section, we review the classical Quillen--Lichtenbaum Conjecture for Dedekind $\zeta$-function attached to $F$. In particular, having pretended to be number theorists in the previous section, we pretend to be homotopy theorists and explain $\K$-theory in a (hopefully) friendly way to number theorists. To orient us, the road from $\zeta$-functions to algebraic $\K$-theory, is summarized in the diagram below (we will elaborate more on it during some proofs in what follows; see \Cref{sec:ql-ff}):
	\begin{equation}\label{eqn:QLC_proof}
		\begin{tikzcd}[column sep=16 ex]
			\substack{\text{Frobenius action on}\\ \H_\et^1(\Ocal_F[1/p,\zeta_{p^\infty}];\Zp(t))}\dar[squiggly]&\substack{\text{$p$-adic $L$-function}\\ \zeta_F(1-t)}\lar[squiggly,"\text{Iwasawa Main Conjecture}"',"\text{Mazur-Wiles}"]\\ \H_c^r\left(\Zpx;\H_\et^s(\Ocal_F[1/p,\zeta_{p^\infty}];\Zp(t))\right)\dar[Rightarrow, "\text{Hochschild-Lyndon-Serre SS}"]& \\
			\H_\et^{r+s}(\Ocal_F[1/p];\Zp(t))\rar[Rightarrow,"\text{Thomason SS}"]&\pi_{2t-r-s}\left(\mathrm{L}_{\K(1)}\K(\Ocal_F[1/p])\right)\\
			\H^{r+s}_{\mot}(\Ocal_F[1/p];\Zp(t))\rar[Rightarrow,"\text{Motivic SS}"]\uar["\text{\'etale-motivic comparison}"]&\pi_{2t-r-s}\K(\Ocal_F[1/p])^\wedge_p\uar["\mathrm{L}_{\K(1)}"']
		\end{tikzcd}
	\end{equation}
	
	We will also discuss the case of finite fields and functions fields of curves. 
	
	\subsection{Reminders on $\K$-theory}\label{sec:k-thy} The zeroth algebraic $\K$-group of a ring, denoted by $\K_0(R)$, is the Grothendieck group of finitely generated projective modules over $R$. While this is a relatively elementary definition, it stores rather subtle information about a ring. For example, say $D$ is a Dedekind domain with fraction field $F$, then the ideal class group of $F$ aka $\mathrm{Pic}(D)$ is a summand of $\K_0(D)$; more precisely we have a canonical short exact sequence
	\[
	0 \rightarrow \mathrm{Pic}(D)  \longrightarrow \K_0(D) \longrightarrow \Z \rightarrow 0;
	\]
	where the first map is defined by observing that any ideal of $D$ is projective. At this point, as already mentioned in the introduction, the reader might know that $\K_1$ has something to do with units. Indeed, if $R$ is a field or, more generally, a semilocal ring then $\K_1(R) \cong R^{\times}$, the group of units in $R$. This could seem unmotivated\footnote{Unless one already knows that one is looking for a higher analog of the class number formula.} but there is a check for the ``correctness'' of the definition of the functor of higher $K$-groups. This comes under the name ``Bass' fundamental theorem''. Indeed, for any ring $R$ (possibly even just an associative, unital ring) one should have a 4-term exact sequence (where the last map is, in fact, a split surjection)
	\[
	0 \rightarrow \K_n(R) \longrightarrow \K_n(R[t]) \oplus \K_n(R[t^{-1}]) \longrightarrow \K_n(R[t, t^{-1}]) \overset{\partial}{\longrightarrow} \K_{n-1}(R) \rightarrow 0. 
	\]
	We refer the reader to \cite[V.8]{weibel-k-book} for further explanation. Hence, once one has defined the functor of $\K_{n}$, one should recover the functor of $\K_{n-1}$. This principle even applies for any theory of negative $K$-groups.
	
	Quillen solved the problem of defining higher $\K$-groups in \cite{Quillen_higher_alg_K}. He built on an earlier definition of his which is somewhat more elementary. Higher $\K$-theory was defined via Quillen's $+$-construction \cite{quillen-plus} which is, informally, the universal way to abelianize the fundamental group of a space without changing its homology; we refer to \cite[IV. 1]{weibel-k-book} for a textbook discussion and \cite{hoyois-plus,raptis,BEHKSY} for modern exposition. The positive homotopy groups of the algebraic $\K$-theory of a ring is then calculated as
	\[
	\K_i(R) := \pi_i(\mathrm{BGL}(R)^+) \qquad i \geq 1,
	\]
	where $\mathrm{BGL}(R) := \colim \mathrm{BGL}_n(R)$. This is sufficient to give meaning to the groups that appear in the Quillen--Lichtenbaum Conjectures. From this point of view, the Quillen--Lichtenbaum Conjectures is ultimately a relationship between the homotopy groups of the ``derived abelianization'' of the classifying space of $\mathrm{GL}_{\infty}(R)$ and zeta functions.
	
	For the purposes of this paper, algebraic $\K$-theory in this guise is not enough. For one, it is not true that $\K(R)$ is functorially equivalent to $\mathrm{BGL}(R)^+ \times \K_0(R)$.\footnote{If $R$ is not regular, then we even have negative homotopy groups in $\K$-theory!} For another, we need certain extra structure of algebraic $\K$-theory that is not immediately visible using the $+$-construction. The viewpoint that we must take is the one taken by Blumberg-Gepner-Tabuada in \cite{blumberg2013universal}. To formulate this perspective, we set $\Cat^{\mathrm{perf}}_{\infty}$ to be the $\infty$-category of small, stable idempotent $\infty$-categories. This $\infty$-category houses the $\infty$-category of perfect complexes on rings in the sense that there is a functor
	\[
	\Perf\colon\CAlg \longrightarrow \Cat^{\mathrm{perf}}_{\infty}.
	\]
	Algebraic $\K$-theory is then a universal functor
	\[
	\K\colon \Cat^{\mathrm{perf}}_{\infty} \longrightarrow \Spt,
	\]
	which preserves final objects and bifiber sequences in $\Cat^{\mathrm{perf}}_{\infty}$ (Verdier sequences); we refer to \cite[Appendix A]{herm-2} for a modern exposition. 
	
	This viewpoint unveils clearly certain structures and properties that algebraic $K$-theory enjoys. One that is relevant for this paper are ``wrong-way'' maps for $\K$-groups. For example, if $f\colon R \rightarrow S$ is a morphism of rings which is perfect  \cite[\href{https://stacks.math.columbia.edu/tag/0685}{Tag 0685}]{stacks-project} and finite  so that the forgetful functor on derived categories preserves perfect complexes \cite[\href{https://stacks.math.columbia.edu/tag/0B6G}{Tag 0B6G}]{stacks-project}: 
	\[
	f_*\colon \Perf(S) \longrightarrow \Perf(R),
	\]
	then we have an induced map on $\K$-groups $\K_*(S) \rightarrow \K_*(R)$. Of course such a structure could be defined using more classical methods, but we will need to keep track of the coherences on this construction as we do via the language of spectral Mackey functors in \Cref{subsec:spectral-mackey-K}. 
	
	The last thing that the reader should keep in mind, though perhaps not strictly needed for the purposes of this paper, is the relationship between $\K$-theory and \'etale/motivic cohomology. The relevance of motivic cohomology, then still a conjecture object, to values of zeta functions was posited by Beilinson \cite{beilinson} and Lichtenbaum \cite{lichtenbaum}. For us, these form the $E_2$ page of the \textbf{motivic spectral sequence}:
	\[
	\H^{i-j}_{\mot}(X; \Z(-j)) \Longrightarrow \K_{-i-j}(X).
	\]
	Here $X$ is a smooth scheme over field, in which case this spectral sequence is the work of many mathematicians, beginning with Bloch and Bloch--Lichtenbaum \cite{bloch-lichtenbaum}, and then later Friedlander-Suslin \cite{friedlander-suslin}, Levine \cite{levine2008homotopy, levine-tech} and Voevodsky \cite{voevodsky293possible}. 
	
	The existence of the above spectral sequence indicates that one can access information stored in these higher $\K$-groups, once we can satisfactorily describe motivic cohomology. For the purposes of $L$-functions, the flagship theorem in this direction is the comparison of motivic cohomology with \'etale/syntomic cohomology as conjectured by Beilinson-Lichtenbaum and proved by to Rost--Voevodsky \cite{voevodsky-zl,voevodsky-z2} away from characteristic and Geisser-Levine \cite{geisser-levine} at the characteristic. 
	
	\subsection{Statements of the classical Quillen--Lichtenbaum Conjectures}\label{sec:ql-ff}
	The most basic case of the Quillen--Lichtenbaum Conjecture comes from Quillen's own computation of the $\K$-groups of finite fields. In \cite{quillen-plus}, he computed the groups as follows:
	\begin{equation}\label{eqn:K_finfld}
		\K_{t}(\Fq)=\begin{cases}
			\Z, & t=0;\\
			\Z/(q^n-1),& t=2n-1>0 ;\\
			0,& \text{else}. 
		\end{cases}
	\end{equation}
	From this, the Quillen--Lichtenbaum Conjecture reads as:
	\begin{equation}\label{eqn:QLC_finfld}
		\zeta(\Fq,-n)=\frac{1}{1-q^n}=-\frac{\# \K_{2n}(\Fq)}{\# \K_{2n-1}(\Fq)}, \qquad n\ge 1.
	\end{equation}
	Notice this identity even ``holds'' at $n=0$ in the sense that $\zeta(\Fq,s)$ has a simple pole at $s=0$, $\K_0(\Fq)=\Z$ is free of rank $1$, and $\K_{-1}(\Fq)=0$.
	
	For smooth affine curves over $\Fq$, the QLC states: 
	\begin{theorem}\label{thm:QLC_curves} Let $X$ be a smooth affine curve over a finite field $\Fq$ with smooth compactification $X\to \wt{X}$. Denote by $r(X)$ the number of closed points in the complement $\wt{X}-X$. Then we have the following equality for $n\ge 2$:
		\begin{equation*}
			\zeta(X,1-n)= \varepsilon\cdot \frac{\# \K_{2n-2}(X)}{\# \K_{2n-1}(X)}, \qquad \varepsilon=(-1)^{r(X)}=(-1)^{\mathrm{rank}(\K_1(X))+1}.
		\end{equation*}
	\end{theorem}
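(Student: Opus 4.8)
The plan is to verify the identity one prime at a time. Assume $X$ is geometrically connected over $\Fq$ (the general case reduces to this by enlarging the constant field, which changes none of $\zeta(X,s)$, $\K_\ast(X)$, $r(X)$). First, $\zeta(X,1-n)\in\Q$ by \Cref{prop:rational_value}, and it is nonzero for $n\ge 2$: the Riemann hypothesis for curves confines the zeros and poles of $\zeta(X,s)$ to points with $|q^{-s}|\in\{1,q^{1/2}\}$, and $q^{n-1}>q^{1/2}$. On the other side, $\#\K_{2n-2}(X)/\#\K_{2n-1}(X)$ is a positive rational once those groups are known to be finite (below). Since a nonzero rational is determined by its sign and all its $\ell$-adic valuations, it suffices to match (i) signs and (ii) $\ord_\ell\zeta(X,1-n)=\ord_\ell\#\K_{2n-2}(X)-\ord_\ell\#\K_{2n-1}(X)$ for every prime $\ell$. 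For (i): with $Z=\wt X\setminus X$, so $r(X)=\#|Z|$, the Euler product factors as $\zeta(X,s)=\zeta_{\wt X}(s)\prod_{z\in Z}(1-\#\kappa(z)^{-s})$; at $s=1-n$ with $n\ge 2$ each of the $r(X)$ factors $1-\#\kappa(z)^{\,n-1}$ is negative, whereas $\zeta_{\wt X}(1-n)>0$ because the functional equation relates it to the convergent positive Euler product $\zeta_{\wt X}(n)$ by a positive power of $q$; hence $\operatorname{sign}\zeta(X,1-n)=(-1)^{r(X)}$. For the second form of $\varepsilon$, note $\K_1(X)=\Ocal(X)^\times$ and $\Ocal(X)^\times/\Fq^\times$ is free of rank $r(X)-1$ by the units theorem, so $\operatorname{rank}\K_1(X)=r(X)-1$.

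For (ii), fix $\ell\ne p=\operatorname{char}\Fq$. As a smooth affine curve over $\Fq$, $X$ has $\ell$-étale cohomological dimension $\le 2$; thus the descent spectral sequence $E_2^{s,t}=\H^s_{\et}(X;\Zell(t/2))\Rightarrow\pi_{t-s}(\K^{\et}(X)^\wedge_\ell)$ is supported in $0\le s\le 2$, and combined with the Quillen--Lichtenbaum theorem \cite{voevodsky-zl,voevodsky-z2}, which gives $\K_m(X)^\wedge_\ell\cong\K^{\et}_m(X)^\wedge_\ell$ for $m\ge 1$, it yields $\K_{2n-1}(X)^\wedge_\ell\cong\H^1_{\et}(X;\Zell(n))$ and $\K_{2n-2}(X)^\wedge_\ell\cong\H^2_{\et}(X;\Zell(n))$ for $n\ge 2$ (the term $\H^0_{\et}(X;\Zell(n-1))$ vanishes as $n-1\ge 1$). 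A Hochschild--Serre argument over $\Fq$ together with the Weil bounds --- geometric Frobenius has no eigenvalue $1$ on $\H^0$ or $\H^1$ of $\overline X$ after the twist by $n\ge 2$ --- shows these groups are finite and that $\K_{2n-1}(X)\otimes\Q=0=\K_{2n-2}(X)\otimes\Q$; being also finitely generated, the $\K$-groups are then finite, as claimed in (i). The prime $p$ is harmless: by Geisser--Levine \cite{geisser-levine}, $\H^i_{\et}(X;\Z/p^\nu(n))\cong\H^{i-n}_{\Zar}(X;W_\nu\Omega^n_{X,\log})=0$ for $n\ge 2$ since $\Omega^n_X=0$ on a curve, so $\K_{2n-1}(X)$ and $\K_{2n-2}(X)$ are $p$-torsion-free, while $\ord_p\zeta(X,1-n)=0$ because the Frobenius slopes on $\H^1$ of a curve lie in $[0,1]$, making the Euler factors of $\zeta(X,1-n)$ $p$-adic units.

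It remains to match $\ell$-adic valuations for $\ell\ne p$. Since $X$ is affine, $\H^2_{\et}(\overline X;\Zell(n))=0$, so Hochschild--Serre gives $\H^2_{\et}(X;\Zell(n))\cong\H^1_{\et}(\overline X;\Zell(n))_{\Fr}$ and a short exact sequence $0\to\H^0_{\et}(\overline X;\Zell(n))_{\Fr}\to\H^1_{\et}(X;\Zell(n))\to\H^1_{\et}(\overline X;\Zell(n))^{\Fr}\to 0$, where $(-)_{\Fr}=\operatorname{coker}(1-\Fr)$ and $(-)^{\Fr}=\ker(1-\Fr)$. The elementary identity $\operatorname{length}_{\Zell}\operatorname{coker}(1-g)-\operatorname{length}_{\Zell}\ker(1-g)=\ord_\ell\det(1-g\mid M\otimes\Qell)$, valid for a finitely generated $\Zell$-module $M$ and an endomorphism $g$ with $1$ not an eigenvalue on $M\otimes\Qell$, applied to $M=\H^i_{\et}(\overline X;\Zell(n))$ and $g=\Fr$, turns $\operatorname{length}_{\Zell}\H^2_{\et}(X;\Zell(n))-\operatorname{length}_{\Zell}\H^1_{\et}(X;\Zell(n))$ into $\ord_\ell\det(1-\Fr\mid\H^1_{\et}(\overline X;\Qell(n)))-\ord_\ell\det(1-\Fr\mid\H^0_{\et}(\overline X;\Qell(n)))$. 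On the other hand Grothendieck--Lefschetz gives $\zeta(X,s)=\prod_i\det(1-\Fr\,q^{-s}\mid\H^i_c(\overline X;\Qell))^{(-1)^{i+1}}$; inserting Poincaré duality $\H^i_c(\overline X;\Qell)\cong\H^{2-i}_{\et}(\overline X;\Qell(1))^\vee$ and setting $s=1-n$ rewrites each $\det(1-\Fr\,q^{n-1}\mid\H^i_c(\overline X;\Qell))$ as $\det(1-\Fr\mid\H^{2-i}_{\et}(\overline X;\Qell(n)))$ up to a power of $q$, which is an $\ell$-adic unit, so $\ord_\ell\zeta(X,1-n)$ equals the same difference. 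Together with the sign computation of the first paragraph and the product formula, this proves the theorem.

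The delicate point is this last comparison: passing between the compactly-supported cohomology in the Lefschetz formula and the ordinary weight-$n$ cohomology that controls algebraic $\K$-theory means tracking Tate twists, the arithmetic/geometric Frobenius dichotomy, and the resulting powers of $q$ --- harmless $\ell$-adic units, but they do influence the global sign, which is why I extract the sign separately from the functional equation and the removed-point Euler factors. For curves the $p$-primary part is vacuous on both sides for dimension reasons; it is exactly this input --- Geisser--Levine together with a $p$-adic special-value formula via crystalline cohomology --- that would be the genuine obstacle in extending the statement beyond curves.
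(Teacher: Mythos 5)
Your proof is correct and takes essentially the same route as the paper: Thomason descent plus the Rost--Voevodsky/Quillen--Lichtenbaum comparison to reduce to étale cohomology, and the functional equation plus the removed-point Euler factors for the sign. The only cosmetic differences are that you re-derive the étale-cohomology/$\zeta$-value formula (Hochschild--Serre, Grothendieck--Lefschetz, Poincaré duality) where the paper simply cites Bayer--Neukirch, and you dispose of the $p$-primary part via Geisser--Levine rather than invoking Harder's finiteness theorem.
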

	\begin{proof}[Sketch Proof] We sketch a proof this result in steps:
		\begin{enumerate}
			\item First, we can relate the $\zeta$ functions with \'etale cohomology of $X$ is via the Weil Conjectures, which implies \cite[Theorem 3.1]{Bayer-Neukirch}:
			\begin{equation*}
				|\zeta(X,1-n))|_\ell=\frac{\# \H^2_\et(X;\Zell(n))}{\# \H^1_\et(X;\Zell(n))},
			\end{equation*}
			where $\ell$ is a prime not dividing $q$ and $|-|_\ell$ means $\ell$-adic absolute value (or equivalently valuation). 
			\item Thomason's hyperdescent theorem \cite{thomason_1985} produces a spectral sequence whose $E_2$-terms are \'etale cohomology and abuts to the homotopy groups of $\K(1)$-local algebraic $\K$-theory of $X$:
			\begin{equation*}\label{et}
				\H^{i-j}_\et(X;\Zell(-j))\Longrightarrow \pi_{-i-j}\left(\mathrm{L}_{\K(1)}\K(X)\right).
			\end{equation*}
			In the case of curves, this spectral sequence collapses.  Thomason observes in \cite[(4.6)]{thomason_1985} that:
			\begin{equation*}
				|\zeta(X,1-n)|_\ell=\frac{\# \pi_{2n-2}\mathrm{L}_{\K(1)}\K(X)}{\# \pi_{2n-1}\mathrm{L}_{\K(1)}\K(X)},
			\end{equation*}
			\item Next, we also have motivic spectral sequence \cite{voevodsky293possible,levine2008homotopy}
			\begin{equation*}\label{mot}
				\H^{i-j}_{\mot}(X;\Z(-j))\Longrightarrow \K_{-i-j}(X),
			\end{equation*}
			which fits into a comparison diagram
			\[
			\begin{tikzcd}
				\H^{i-j}_{\mot}(X;\Zell(-j))\rar[Rightarrow,"\text{Motivic SS}"]\dar["\text{\'etale-motivic comparison}"']&[10ex]\K_{-i-j}(X)^\wedge_{\ell}\dar["\mathrm{L}_{\K(1)}"]\\
				\H^{i-j}_{\et}(X;\Zell(-j))\rar[Rightarrow,"\text{Thomason SS}"]&\pi_{-i-j}\left(\mathrm{L}_{\K(1)}\K(X)\right).
			\end{tikzcd}
			\]
			\item There is then an isomorphism on $E_2$-pages thanks to the Rost--Voevodsky theorem \cite{voevodsky-z2,voevodsky-zl} which yields an isomorphism between $\K(1)$-local $\K$-theory and $\K$-theory in the above range.
			\item It remains to determine the sign. Let $Y$ be a smooth variety of dimension $d$ over $\Fq$. From \Cref{def:hs-weil}, it is clear that $\zeta(Y,s)>0$ when $s$ is a real number greater than $d$ (the bound is for the convergence of the infinite product). When $Y/\Fq$ is smooth proper, the sign of $\zeta(Y,d-n)$ is determined by the functional equation of the Weil zeta function \cite[Remark 27.13]{milneLEC}: 
			\begin{equation*}
				\zeta(Y,s)=\pm  q^{-d\chi(\overline{Y})/2}\cdot q^{\chi(\overline{Y})s}\cdot \zeta(Y, d-s),
			\end{equation*}
			where the sign is positive if $d$ is odd and $\chi(\overline{Y})$ is the Euler characteristic of the $\ell$-adic \'etale cohomology of  $\overline{Y}=Y\times_{\Fq}\spec \overline{\F}_q$. It follows that $\zeta(\wt{X},1-n)>0$ for the smooth compactification $\wt{X}$ of $X$. Again by \Cref{def:hs-weil}, we have
			\begin{equation*}
				\zeta(\wt{X},1-n)=\zeta(X,1-n)\cdot \prod_{x\in |\wt{X}- X|}\frac{1}{1-\#\kappa(x)^{n-1}}.
			\end{equation*}
			This implies $(-1)^{r(X)}\cdot \zeta(X,1-n)>0$ for $n>1$, where $r(X):=\#|\wt{X}-X|$. Write $X=\spec A$.  By  Dirichlet's Unit Theorem (see \cite[Theorem 18]{Cassels_globalfields}) and the result of Bass-Milnor-Serre \cite{Bass-Milnor-Serre}, we have $r(X)=\mathrm{rank}(A^\times)+1=\mathrm{rank}(\K_1(X))+1$.\qedhere
		\end{enumerate}
	\end{proof}
	Let $F$ be a number field. Denote by $\zeta^*_F(s)$ the first non-vanishing coefficient in Taylor expansion of the Dedekind zeta function $\zeta_F$ associated to $F$ at $s$,  by $r_1(F)$ the number of real embeddings of $F$, and by $r_2(F)$ the number of pairs of complex conjugate embeddings of $F$. Let $k$ be an integer greater than $1$. 
	\begin{theorem}[Quillen, Borel,  {\cite{Borel_cohomologie,Borel_stable_real_cohomology,Quillen_fin_gen}}]\label{thm:Borel_rank}
		Algebraic $\K$-groups of $\Ocal_F$ are all finitely generated abelian groups. In even degrees, $\K_{2k}(\Ocal_F)$ are finite  when $k>0$. In odd degrees, we have 
		\begin{equation*}
			\dim_\Q \K_{2k-1}(\Ocal_F)\otimes_\Z \Q=\mathrm{rank} (\K_{2k-1}(\Ocal_F))=\ord_{s=1-k}\zeta_F(s)=\begin{cases}
				r_1(F)+r_2(F)-1, & k=1;\\
				r_1(F)+r_2(F), & k>1 \text{ odd};\\
				r_2(F), & k>0 \text{ even}. 
			\end{cases}
		\end{equation*}
	\end{theorem}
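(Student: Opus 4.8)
The plan is to split the statement into three ingredients that can be established independently and then matched against one another: (i) finite generation of every $\K_n(\Ocal_F)$, which makes ``$\mathrm{rank}$'' and ``$\dim_\Q(-\otimes_\Z\Q)$'' tautologically equal; (ii) a computation of the rational homotopy groups $\K_*(\Ocal_F)\otimes_\Z\Q$; and (iii) a computation of $\ord_{s=1-k}\zeta_F(s)$ straight from the functional equation. Ingredients (i) and (ii) together force $\K_{2k}(\Ocal_F)$ for $k>0$ to be a finitely generated abelian group of rank $0$, hence finite, and the displayed equality is then just the observation that (ii) and (iii) produce the same numbers.

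For (i) I would invoke Quillen's finite generation theorem \cite{Quillen_fin_gen}: $\Ocal_F$ is regular, of Krull dimension $1$, and of finite type over $\Z$, and the proof runs through the localization and d\'evissage theorems to express $\K(\Ocal_F)$ in terms of the $\K$-theory of its residue fields --- computed in \eqref{eqn:K_finfld} --- and of the global field $F$, together with a filtration of the category of finitely generated $\Ocal_F$-modules. Two low-degree inputs take care of the edge cases: the short exact sequence $0\to\Pic(\Ocal_F)\to\K_0(\Ocal_F)\to\Z\to 0$ with finite class group $\Pic(\Ocal_F)$, so $\K_0$ has rank $1$; and $\K_1(\Ocal_F)=\Ocal_F^\times$ (using $\mathrm{SK}_1(\Ocal_F)=0$ by Bass--Milnor--Serre \cite{Bass-Milnor-Serre}), which has rank $r_1+r_2-1$ by Dirichlet's unit theorem, giving the $k=1$ line of the theorem.

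For (ii) I would pass from $\K(\Ocal_F)$ to $\mathrm{BGL}(\Ocal_F)^+$ as recalled in \Cref{sec:k-thy}. Being an $H$-space, its rational homotopy $\pi_*\otimes\Q$ is the module of primitives in the Hopf algebra $H_*(\mathrm{GL}(\Ocal_F);\Q)$ by Milnor--Moore, and homological stability identifies this with the stable rational homology of the arithmetic groups $\mathrm{GL}_N(\Ocal_F)$. The heart of the matter is Borel's computation of the stable real cohomology of these groups \cite{Borel_cohomologie,Borel_stable_real_cohomology}, obtained by comparing group cohomology with the continuous cohomology of $\mathrm{SL}_N(\Ocal_F\otimes_\Q\R)$ and with the relative Lie-algebra cohomology of the associated symmetric space; the upshot is that $\dim_\Q\K_{2k-1}(\Ocal_F)\otimes_\Z\Q$ equals $r_1+r_2$ for $k>1$ odd and $r_2$ for $k>0$ even, while $\K_{2k}(\Ocal_F)\otimes_\Z\Q=0$ for $k>0$, whence finiteness via (i). This is the step I expect to be the main obstacle: Borel's theorem is the deep geometric input, and even granting it one must run the dictionary between algebraic $\K$-theory and group homology carefully (the $+$-construction, the Milnor--Moore theorem for $H$-spaces, homological stability for $\mathrm{GL}_N(\Ocal_F)$).

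For (iii), which is pure analytic number theory, I would use the functional equation. Writing the completed Dedekind zeta function $\Lambda_F(s)=|d_F|^{s/2}\,\Gamma_\R(s)^{r_1}\,\Gamma_\C(s)^{r_2}\,\zeta_F(s)$ with $\Gamma_\R(s)=\pi^{-s/2}\Gamma(s/2)$ and $\Gamma_\C(s)=2(2\pi)^{-s}\Gamma(s)$, one has $\Lambda_F(s)=\Lambda_F(1-s)$, and $\Lambda_F$ is holomorphic and non-vanishing for $\mathrm{Re}(s)<0$ with simple poles only at $s=0,1$ \cite[Corollary VII.5.11]{Neukirch_ANT}. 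At $s=1-k$ with $k\ge 2$ the left side is finite and nonzero, so $\ord_{s=1-k}\zeta_F(s)$ equals the order of the pole of the Gamma factors there: $\Gamma(s)$ has a simple pole at every non-positive integer (contributing $r_2$ in all), and $\Gamma(s/2)$ a simple pole at every non-positive even integer (a further $r_1$ exactly when $1-k$ is even, i.e.\ $k$ odd), giving $\ord_{s=1-k}\zeta_F(s)=r_2$ for $k$ even and $r_1+r_2$ for $k$ odd. For $k=1$ one instead observes that the simple pole of $\Lambda_F$ at $s=0$, set against the order-$(r_1+r_2)$ pole of the Gamma factors at $s=0$, forces $\zeta_F$ to vanish to order $r_1+r_2-1$. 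Matching these values with the ranks from (ii) completes the proof.
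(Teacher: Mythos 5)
The paper does not prove this theorem; it is quoted as a known result with citations to Quillen and Borel, so there is no ``paper's own proof'' to compare against. Your reconstruction of the classical argument is correct and follows the standard route: Quillen's finite generation theorem makes $\mathrm{rank}$ and $\dim_\Q(-\otimes_\Z\Q)$ agree and reduces the finiteness claim to a rank computation; the low-degree edge cases ($\K_0$ from the class group, $\K_1=\Ocal_F^\times$ via Bass--Milnor--Serre and Dirichlet) are handled correctly; Borel's theorem on stable real cohomology, fed through homological stability and Milnor--Moore primitives for the $H$-space $\mathrm{BGL}(\Ocal_F)^+$, gives $\dim_\Q\K_{2k-1}(\Ocal_F)\otimes\Q$ and the rational vanishing of $\K_{2k}(\Ocal_F)$ for $k>0$; and the pole-counting in the completed functional equation $\Lambda_F(s)=\Lambda_F(1-s)$ (one $\Gamma_\R$ pole per real place at even non-positive $s$, one $\Gamma_\C$ pole per complex place at every non-positive integer, minus the simple pole of $\Lambda_F$ itself at $s=0$) produces exactly the right-hand side. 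One remark worth making explicit: ingredients (ii) and (iii) are logically independent computations, and the content of the displayed equality is precisely that they agree; the classical proof verifies this numerically but does not explain it conceptually --- that conceptual explanation is in spirit what the Beilinson regulator conjecture addresses, and what \Cref{thm:twisted_Borel-1d} and \Cref{prop:twisted_Borel} later generalize to Artin $L$-functions.
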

	\begin{theorem}[Quillen--Lichtenbaum Conjecture, Rost--Voevodsky]\label{thm:QLC_numfld} 
		The followings hold:
		\begin{enumerate}
			\item  Suppose that $F$ is an \emph{abelian} number field, then the equality
			\begin{equation}\label{eqn:QLC_numfld}
				\zeta^*_{F}(1-k)=\varepsilon \cdot \frac{\# \K_{2k-2}(\mathcal{O}_F)}{\# \K_{2k-1}(\mathcal{O}_F)_{\mathrm{tors}}}\cdot R^B_k(F), \qquad \varepsilon=\begin{cases}
					1, &k\equiv 1\mod 4;\\
					(-1)^{r_1(F)+r_2(F)},&k\equiv 2\mod 4;\\
					(-1)^{r_1(F)}, &k\equiv 3\mod 4; \\
					(-1)^{r_2(F)}, &k\equiv 0\mod 4,
				\end{cases}
			\end{equation}
			holds up to powers of $2$, where $R^B_k(F)$ is the $k$-th Beilinson regulator of $F$.
			\item If $F$ is both \emph{totally real} and \emph{abelian}, then we have the precise equality 
			\begin{equation}\label{eqn:QLC_real_ab_numfld}
				\zeta_{F}(1-2n)=((-1)^{n}\cdot 2)^{r_1(F)}\cdot \frac{\# \K_{4n-2}(\mathcal{O}_F)}{\# \K_{4n-1}(\mathcal{O}_F)},
			\end{equation}
			where $r_1(F)$ is the number of real embeddings of $F$. 
			\item If $F$ is only totally real but not abelian, then \eqref{eqn:QLC_real_ab_numfld} only holds up to possible powers of $2$. 
		\end{enumerate}		
	\end{theorem}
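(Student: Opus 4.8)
The plan is to present this as a sketch in the style of the proof of \Cref{thm:QLC_curves}, following the roadmap displayed in \eqref{eqn:QLC_proof} and working one rational prime $p$ at a time, then reassembling the $p$-adic valuations of both sides. Fix $k\geq 2$. The argument splits into a $\K$-theoretic half, which expresses the $p$-part of $\#\K_{2k-2}(\Ocal_F)/\#\K_{2k-1}(\Ocal_F)_{\mathrm{tors}}$ in terms of orders of continuous \'etale cohomology groups of $\Ocal_F[1/p]$ with $\Zp(k)$-coefficients, and an arithmetic half, which identifies those orders with $\zeta_F^*(1-k)/R^B_k(F)$ via Iwasawa theory.

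For the $\K$-theoretic half I would first invoke the motivic spectral sequence $\H^{i-j}_\mot(\Ocal_F[1/p];\Zp(-j))\Rightarrow \K_{-i-j}(\Ocal_F[1/p])^\wedge_p$; weight-$k$ motivic cohomology of a ring of $S$-integers is concentrated in cohomological degrees $1$ and $2$, so the spectral sequence degenerates into a two-column object and reads off $\K_{2k-2}$ and $(\K_{2k-1})_{\mathrm{tors}}$ after $p$-completion. Next, the Beilinson--Lichtenbaum conjecture, a theorem of Rost--Voevodsky \cite{voevodsky-zl,voevodsky-z2} (the characteristic-$p$ case of Geisser--Levine \cite{geisser-levine} does not arise, as $F$ has characteristic $0$), identifies this motivic cohomology with truncated \'etale cohomology, hence matches the $E_2$-page with that of Thomason's hyperdescent spectral sequence \cite{thomason_1985}; the latter also collapses because $\Ocal_F[1/p]$ has $p$-cohomological dimension $\leq 2$ (for $p$ odd; at $p=2$ the real places must be accounted for, see below). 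This yields
\[
\frac{\#(\K_{2k-2}(\Ocal_F[1/p])^\wedge_p)}{\#(\K_{2k-1}(\Ocal_F[1/p])^\wedge_p)_{\mathrm{tors}}}=\frac{\#\H^2_\et(\Ocal_F[1/p];\Zp(k))}{\#\H^1_\et(\Ocal_F[1/p];\Zp(k))_{\mathrm{tors}}}.
\]
A localization sequence comparing $\Ocal_F$ with $\Ocal_F[1/p]$, whose error terms are $\K$-groups of the residue fields above $p$ and are computed by Quillen's formula \eqref{eqn:K_finfld}, then passes back to $\Ocal_F$. Equivalently, this is the assertion that $\K(\Ocal_F)^\wedge_p\to \mathrm{L}_{\K(1)}\K(\Ocal_F)$ is an equivalence in degrees $\geq 1$, which is the usual phrasing of ``Quillen--Lichtenbaum''.

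For the arithmetic half I would compute $\#\H^2_\et(\Ocal_F[1/p];\Zp(k))$ via the Iwasawa Main Conjecture. Running the Hochschild--Serre spectral sequence for the cyclotomic tower $\Ocal_F[1/p,\zeta_{p^\infty}]$ over $\Ocal_F[1/p]$ expresses $\H^2_\et$ (up to the $\H^1$-coinvariants, whose rank is the expected order of vanishing by Borel's \Cref{thm:Borel_rank}) through the characteristic ideal of an Iwasawa module; for $F/\Q$ abelian this equals the $p$-adic $L$-function by the Mazur--Wiles theorem, and the $p$-adic $L$-value interpolates $\zeta_F(1-k)$, respectively the leading term $\zeta_F^*(1-k)$ divided by the archimedean Euler factors and the Borel regulator $R^B_k(F)$. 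When $F$ is totally real the Iwasawa module is finite, $\zeta_F(1-2n)\neq 0$, and the Main Conjecture is available for all totally real $F$ by Wiles; if moreover $F/\Q$ is abelian one also controls $p=2$ via the $2$-adic Main Conjecture (as used in Rognes--Weibel and Kolster \cite{kolster}), which upgrades the identity to the exact equality \eqref{eqn:QLC_real_ab_numfld}, whereas without abelianness the $2$-part is left undetermined, giving part (3). The global sign $\varepsilon$ comes from the archimedean functional equation of $\zeta_F$ (the $\Gamma$-factors produce the pattern in $r_1,r_2\bmod 4$), using positivity of the $\K$-group orders and of $R^B_k(F)$, and the regulator itself is matched to the leading term via \Cref{thm:Borel_rank}.

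The hard part, and the reason for the ``up to powers of $2$'' caveats in (1) and (3), is everything at the prime $2$: \'etale descent and the Beilinson--Lichtenbaum comparison are less clean at $2$ because of the real places (extra $2$-cohomological dimension, $2$-adic Bott-element subtleties), and the $2$-adic Iwasawa Main Conjecture is only known in the totally real abelian setting. Reconciling the $2$-part of $\zeta_F^*(1-k)$ with the $2$-local $\K$-theory is precisely what forces the hypotheses of parts (2) and (3).
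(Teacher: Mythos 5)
Your sketch follows precisely the roadmap the paper displays in the diagram \eqref{eqn:QLC_proof} and invokes in its own proof sketch: the motivic and Thomason spectral sequences matched via the Rost--Voevodsky theorem, the localization comparison between $\Ocal_F$ and $\Ocal_F[1/p]$, the Iwasawa Main Conjecture input (Huber--Kings for abelian $F$, Wiles for totally real $F$ at odd primes, Kolster's appendix to Rognes--Weibel at $p=2$ in the totally real abelian case) together with the Rognes--Weibel $2$-primary computation, and the determination of the sign $\varepsilon$ from the functional equation of $\zeta_F$. This is the same approach the paper takes, just spelled out in more detail; the paper's proof is essentially a pointer to the same chain of references.
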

	\begin{remark}\label{rem:r1_K1}
		Like in the affine curve case, the number $r_1(F)$ can also be expressed in terms of algebraic $\K$-groups when $F$ is totally real, using  Dirichlet's Unit Theorem and the Bass-Milnor-Serre Theorem:
		\[r_1(F)=\mathrm{rank}(\Ocal_F^\times)+1=\mathrm{rank}(\K_1(\Ocal_F))+1.\]
	\end{remark}
	\begin{proof}[Proof sketch] We discuss how to find the statements in the literature. Making reference to the diagram \eqref{eqn:QLC_proof}, we note that, thanks to the Rost--Voevodsky theorem, the only bottleneck is the Lichtenbaum Conjecture, which follows from the Iwasawa Main Conjecture (except for the $2$-powers). It was proved by Huber-Kings in \cite[Theorem 1.4.1]{Huber-Kings_2003} for Case (1), by Wiles in \cite[Theorem 1.6]{Wiles-1990} at odd primes for Cases (2) and (3), and by Kolster in \cite[Theorem A.1]{Rognes-Weibel} at the prime $2$ for Case (2), respectively. The precise powers of $2$ in Case (2) are determined by Rognes-Weibel's computation in \cite{Rognes-Weibel}. The signs of special values are determined by the functional equation of $\zeta_F$, see \cite[page 200]{Kolster_k_thy_arithmetic}.
	\end{proof}	
	\section{Equivariant algebraic $\K$-theory and Galois descent}\label{ssec:mack} In this section, begin by discussing the necessary equivariant homotopy theory for the results in this paper. We claim no originality nor completeness in the exposition. We have in mind arithmetic geometers who might not be well-acquainted with the machinery of this theory and we follow the exposition of \cite{Barwick_spectral_Mackey_I}. We will then demonstrate the theory in practice by showing that $K$-theory satisfies descent on the level of homotopy groups in some cases in \Cref{sec:pgal-descent}. One of the points of working with spectral Mackey functors is that these latter statements apply for extension of rings which are not necessarily Galois but only Galois on the fraction fields.

	\subsection{The definition of spectral Mackey functors}\label{subsec:spectral-mackey}

	Let $G$ be a finite group. The basic category in genuine equivariant homotopy theory is the $1$-category $\Orb_G$ of orbits of $G$; objects are transitive $G$-sets which means that they must be of the form $G/H$ for some subgroup $H$, while the morphisms are $G$-equivariant morphisms. This is an enlargement of the basic category in other equivariant mathematics --- $\mathrm{B}G$ is the category with a single object and $G$ worth of morphisms. Thinking of the object in $\mathrm{B}G$ as the orbit $G/e$, we have a fully faithful embedding $\mathrm{B}G \subseteq \Orb^{\op}_G$. 
	
	The $\infty$-category of \textbf{$G$-spaces} is defined to be the presheaf category\footnote{This definition is justified by Elemendorf's theorem \cite{Elmendorf_fixedpt} where he proves that the homotopy theory of $G$-CW complexes is equivalent to this presheaf category.}
	\[
	\Spc_G:=\Fun(\Orb^{\op}_G, \Spc). 
	\]
	Given a $G$-space $\sX$, we will write 
	\[
	\sX(G/H) =: \sX^H. 
	\]
	This is suggestive notation for what follows. We will extract the following important pieces of data out of $G$-space.
	\begin{enumerate}
		\item the \textbf{underlying $G$-space} $\sX|_{\mathrm{B}G} \in \Fun(\mathrm{B}G, \Spc)$ obtained via restriction and the \textbf{underlying space} $\sX^e$. 
		\item the \textbf{fixed points space} $\sX^G$. 
		\item In general, we have a morphism in $\Spc$
		\[
		\sX^G \rightarrow \lim_{\mathrm{B}G} \sX|_{\mathrm{B}G} (=:\sX^{hG}),
		\]
		where the target is the \textbf{homotopy fixed points space}. 
	\end{enumerate}
	The last point highlights a difference between the theory of $G$-spaces and more ``usual'' versions of equivariant mathematics: the fixed points are \emph{data} and we have comparison maps to what one would otherwise guess as the correct notion of fixed points in homotopy theory. Furthermore, for all subgroups $H \leq G$, we do have the \textbf{$H$-fixed points space} $\sX^H$. This pattern persists as we go to spectra.

	This definition suggests that the $\infty$-category of $G$-spectra should be
	\[
	\Fun(\Orb^{\op}_G, \Spt).
	\]
	This is not quite the correct category because objects in it lack certain transfer maps\footnote{These transfer maps are not just novelties for a $G$-cohomology theory; they imply certain relations between such theories such as the one explained in \Cref{prop:Mack_G_inv}.} expected from a $G$-cohomology theories, nonetheless it is elementary and also elucidates certain important constructions that we will encounter. We will denote the above category of $\Spt_{\Orb_G}$ and refer to it as the $\infty$-category of \textbf{$\Orb_G$-\emph{spectra}}. Just like in the spaces situation, we have:
	\begin{enumerate}
		\item the \textbf{underlying $G$-\emph{spectrum}} $\sX|_{\mathrm{B}G} \in \Fun(\mathrm{B}G, \Spt)$ obtained via restriction and the \textbf{underlying \emph{spectrum}} $\sX^e$. 
		\item the \textbf{fixed points \emph{spectrum}} $\sX^G$. 
		\item In general, we have a morphism in $\Spt$
		\[
		\sX^G \rightarrow \lim_{\mathrm{B}G} \sX|_{\mathrm{B}G} (=:\sX^{hG}),
		\]
		where the target is the \textbf{homotopy fixed points \emph{spectrum}}.
	\end{enumerate}
	
	Now, let $\Fin_G$ be the category of finite $G$-sets; this is obtained from the category $\Orb_G$ by adjoining arbitrary finite coproducts. The $\infty$-category of genuine $G$-spectra is built from the indexing following indexing category: we have the $(2,1)$-category of $\Span_G$ of spans of finite $G$-sets. The objects are finite $G$-sets $T$ and morphisms are spans of finite $G$-sets:
	\begin{equation*}
		\begin{tikzcd}
			& S \ar[swap]{ld}{f}\ar{rd}{g} & \\
			T &   & U,
		\end{tikzcd}
	\end{equation*}
	while composition is obtained by pullback. One feature of this category is that it is \textbf{semiadditive} in that it is pointed admits finite products and coproducts which coincide. There is a functor
	\[
	\Fin_G^{\op} \longrightarrow \Span_G;
	\]
	defined informally by
	\[
	(T \rightarrow S) \longmapsto (S \leftarrow T \xrightarrow{=} T).
	\]
	
	A \textbf{spectral Mackey functor} (following \cite{Barwick_spectral_Mackey_I}) is a an additive functor of $\infty$-categories
	\[
	\sX: \Span_G \rightarrow \Spt. 
	\]
	As always we write $\sX(G/H) =: \sX^H$ for $H$ a subgroup of $G$.
	
	Out of this we extract several important pieces of structure that $\sX$ posses:
	\begin{enumerate}
		\item the \textbf{underlying $G$-spectrum} $\sX|_{\mathrm{B}G} \in \Fun(\mathrm{B}G, \Spt)$ obtained via restriction and the \textbf{underlying spectrum} $\sX^e$. 
		\item the \textbf{fixed points spectrum} $\sX^G$. 
		\item In general, we have a morphism in $\Spt$
		\[
		\sX^G \rightarrow \lim_{\mathrm{B}G} \sX|_{\mathrm{B}G} (=:\sX^{hG})
		\]
		where the target is the \textbf{homotopy fixed points spectrum}.
		\item given $H' \leq H \leq G$, then we have the \textbf{restriction} map
		\[
		\mathrm{Res}_{H/H'}\colon \sX^{H} \longrightarrow \sX^{H'};
		\]
		\item and the \textbf{transfer} map
		\[
		\mathrm{tr}_{H'/H}\colon \sX^{H'} \longrightarrow \sX^{H}
		\]
		and $\mathrm{Res}$ and $\mathrm{tr}$ are subject to base change relations, up to coherent homotopies, governed by the span category; see \cite[Summary]{Barwick_spectral_Mackey_I} for details.
	\end{enumerate}
	
	\subsection{Spectral Mackey functors from algebraic $\K$-theory}\label{subsec:spectral-mackey-K} Fix a pseudo $G$-Galois cover $\pi: Y \rightarrow X$; in the situation that we are interested in, $X$ and $Y$ are both one-dimensional, integral normal schemes. The goal of this section is to build a spectral Mackey functor in \Cref{constr:span-k}
	\[
	\underline{\K}_{\pi}:\Span_G \overset{\K}{\longrightarrow} \Spt;
	\]
	which takes the following values: for any $H \leq G$, we have
	\[
	\underline{\K}_{\pi}(G/H) = \K(\Perf(Y/_H)).
	\] For any $H' \leq H \leq G$, we have the restriction map
	\[
	p^{\ast}\colon \K(\Perf(Y/_H)) \longrightarrow \K(\Perf(Y/_{H'})) 
	\]
	induced by the canonical map $p:Y/H' \to Y/H$ and we have the transfer map
	\[
	p_{\ast}\colon \K(\Perf(Y/_{H'})) \longrightarrow \K(\Perf(Y/_H)). 
	\]
	These maps will satisfy a coherent system of compatibilities.
	\begin{remark}
		When $\pi\colon Y\to X$ is a $G$-\emph{Galois} cover, Barwick and Merling constructed $\K(Y)$ as a spectral $G$-Mackey functor (genuine $G$-spectrum) in \cite{Barwick_spectral_Mackey_I,Merling_equivar_alg_k}. The point of this subsection is to extend their results to pseudo Galois coverings in $1$-dimensional cases.
	\end{remark}
	For the rest of this subsection, we fix $X$ a one-dimensional, noetherian integral normal scheme, $G$ a finite group and $\pi\colon Y \rightarrow X$ a pseudo $G$-Galois cover; in particular $Y$ is also one-dimensional, noetherian and integral. We write $k(Y)$ and $k(X)$ as the fraction fields of $Y$ and $X$, respectively. Denote by $\widetilde{\pi}\colon k(X) \rightarrow k(Y)$ the induced map on fraction fields; the latter map is a $G$-Galois extension. For an intermediate extension $E$ between $k(X)$ and $k(Y)$ such that $k(Y)/E$ is a $H$-Galois extension, the normalization $X$ into $E$ is isomorphic to the quotient $Y/_H$ by the following:
	\begin{lemma}\label{lem:explicit}
		Let $A\to A'$ be a pseudo $G$-Galois extension of integral domains. Then for any subgroup $H\le G$, the integral closure of $A$ in $\Frac(A')^H$ is $(A')^H$. 
	\end{lemma}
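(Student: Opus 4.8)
The plan is to exploit the isomorphism $\aut_A(A') \xrightarrow{\sim} \gal(\Frac(A')/\Frac(A))$ that comes with a pseudo-Galois extension, together with the standard fact that integral closure commutes with taking invariants under a finite group action. Write $A' = B$, $L = \Frac(B)$, $K = \Frac(A)$, and let $G = \aut_A(B) = \gal(L/K)$. First I would observe that $B$ is, by definition of pseudo-Galois extension, a normal (integrally closed) domain which is finite over $A$, hence $B$ is the integral closure of $A$ in $L$. The subgroup $H \le G$ acts on $B$ by ring automorphisms, and one has the fixed subring $B^H \subseteq B$, contained in $L^H$. I claim $B^H$ is exactly the integral closure of $A$ in $L^H$, which is the assertion of the lemma.

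The key steps, in order: (1) Show $B^H$ is integral over $A$: every element $b \in B^H \subseteq B$ is integral over $A$ since $B$ is finite (hence integral) over $A$, and $b$ lies in $L^H$ by $H$-invariance; so $B^H$ is contained in the integral closure of $A$ in $L^H$. (2) Show conversely that the integral closure of $A$ in $L^H$ is contained in $B^H$: if $x \in L^H$ is integral over $A$, then $x$ is integral over $A$ and lies in $L$, so $x$ lies in the integral closure of $A$ in $L$, which is $B$; being also $H$-invariant (as $x \in L^H$), we get $x \in B^H$. Combining (1) and (2) gives that the integral closure of $A$ in $L^H$ equals $B^H = (A')^H$, as desired. (3) For completeness I would note that $\Frac(B^H) = L^H$ — this is the classical statement that $L/L^H$ is $H$-Galois (since $L/K$ is Galois with group $G \supseteq H$) combined with the fact that $B^H$ is a normal domain whose fraction field contains $K$ and is fixed by $H$; by Galois theory $\Frac(B^H)^H = \Frac(B^H)$, and a norm/trace argument (clearing denominators using $\prod_{h \in H} h(\cdot)$) shows every element of $L^H$ is a ratio of elements of $B^H$ — but strictly this last point is only needed if one wants to phrase the result in terms of $Y/_H$ being the normalization of $X$ in the intermediate field, and for the bare statement of the lemma it is not required.

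The main obstacle, such as it is, is purely bookkeeping: one must be careful that "pseudo-Galois" only guarantees $\aut_A(B) \cong \gal(L/K)$ and the normality/finiteness of $B$, not that $A \to B$ is étale or unramified, so one cannot invoke descent along a Galois cover of schemes directly. But the argument above never needs that — it only uses that $B$ is the integral closure of $A$ in $L$ and that $H$ acts through ring automorphisms, both of which are immediate. So the proof is short; the only thing to get right is not over-claiming and keeping the two inclusions in step (1) and step (2) cleanly separated.
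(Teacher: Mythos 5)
Your proof is correct and follows essentially the same route as the paper's: both establish the two inclusions by observing, on one hand, that $(A')^H$ is integral over $A$ because $A'$ is, and on the other, that any $x \in \Frac(A')^H$ integral over $A$ already lies in $A'$ (since $A'$ is normal and finite over $A$, hence is the integral closure of $A$ in $\Frac(A')$) and is $H$-invariant, so $x \in (A')^H$. Your step~(3) about $\Frac(B^H) = L^H$ is, as you correctly note, not needed for the statement and the paper does not include it.
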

	\begin{proof}
		On one hand, $(A')^H$ is integral over $A$ as subring of an integral extension $A'$ of $A$. On the other hand, any element $x\in \Frac(A')^H\subseteq \Frac(A')$ integral over $A$ is necessarily in $ \Frac(A')^H\cap A'=(A')^H$, since $A'$ is the integral closure of $A$ in $\Frac(A')$. 
	\end{proof}
	The usual Galois correspondence furnishes an equivalence between the orbit category of $G$ and the category of intermediate Galois extensions between $k(Y)$ and $k(X)$:
	\[
	\Orb_G \simeq \mathrm{Gal}_{k(Y)/k(X)}, \qquad G/H \mapsto (k(Y) \times G/H)^G \cong k(Y)^H. 
	\]
	The category $\mathrm{Gal}_{k(Y)/k(X)}$ is furthermore equivalent to the category of intermediate pseudo Galois extensions.
	
	\begin{definition}\label{def:psgal} Let $\pi\colon Y \rightarrow X$ be a pseudo $G$-Galois extension. Then the category $\mathrm{PsGal}_{Y/X}$ has as objects noetherian integral normal schemes $Y \rightarrow W \rightarrow X$ such that $Y \rightarrow W$ is a pseudo $H$-Galois extension for some $H$ a subgroup of $G$.
	\end{definition}
	
	\begin{lemma}\label{lem:ps-gal} We have an equivalence of categories $\mathrm{PsGal}_{Y/X} \simeq \mathrm{Gal}_{k(Y)/k(X)}$.
	\end{lemma}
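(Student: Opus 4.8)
The plan is to exhibit mutually inverse functors between $\mathrm{PsGal}_{Y/X}$ and $\mathrm{Gal}_{k(Y)/k(X)}$, using the generic fiber in one direction and normalization in the other. First I would define $\Phi\colon \mathrm{PsGal}_{Y/X} \to \mathrm{Gal}_{k(Y)/k(X)}$ by sending an intermediate scheme $Y \to W \to X$ to its function field $k(W)$. This is well-defined: if $Y \to W$ is pseudo $H$-Galois, then by \Cref{def:p-gal} the induced map $k(W) \to k(Y)$ is an $H$-Galois extension, so $k(W)$ is indeed an intermediate field between $k(X)$ and $k(Y)$, and on morphisms $\Phi$ acts by the induced map on function fields (a dominant morphism of integral normal schemes over $X$ induces a map of function fields over $k(X)$). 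Conversely, I would define $\Psi\colon \mathrm{Gal}_{k(Y)/k(X)} \to \mathrm{PsGal}_{Y/X}$ by sending an intermediate field $E$ to the normalization of $X$ in $E$; by \Cref{lem:explicit} applied to the pseudo $G$-Galois extension of rings on affine opens (and globalizing, since normalization is local on $X$), this normalization is canonically identified with the quotient scheme $Y/_H$ where $H = \gal(k(Y)/E)$, and the natural factorization $Y \to Y/_H \to X$ shows $Y/_H$ lies in $\mathrm{PsGal}_{Y/X}$ with $Y \to Y/_H$ pseudo $H$-Galois.

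The core of the argument is then to check $\Phi \circ \Psi \simeq \id$ and $\Psi \circ \Phi \simeq \id$. For $\Phi \circ \Psi$: the normalization of $X$ in $E$ is an integral normal scheme with function field $E$ (normalization does not change the function field), so $\Phi(\Psi(E)) = E$ on the nose. For $\Psi \circ \Phi$: given $Y \to W \to X$ with $Y \to W$ pseudo $H$-Galois, I must show $W$ is the normalization of $X$ in $k(W)$. Since $W$ is noetherian, integral, and normal by hypothesis, and $W \to X$ is finite (being a composite $W \leftarrow Y$, wait — rather: $Y \to X$ is finite and $Y \to W$ is surjective, which forces $W \to X$ finite since $W$ is dominated by the finite $X$-scheme $Y$, using that $W$ is the image; more carefully one uses that $W = Y/_H$ is finite over $X = Y/_G$), the scheme $W$ is a finite normal integral extension of $X$ with the right function field, hence equals the normalization of $X$ in $k(W)$ by the universal property of normalization \cite[\href{https://stacks.math.columbia.edu/tag/035H}{Tag 035H}]{stacks-project}. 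Compatibility with morphisms is then automatic because both functors are determined by their effect on function fields together with the normalization universal property, and the usual Galois correspondence $\Orb_G \simeq \mathrm{Gal}_{k(Y)/k(X)}$ identifies morphisms of intermediate fields with maps of quotient schemes.

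The main obstacle I anticipate is the finiteness and normality bookkeeping in the direction $\Psi \circ \Phi \simeq \id$: one needs to know that an object $W$ of $\mathrm{PsGal}_{Y/X}$ is automatically the normalization of $X$ in its own function field, which requires $W \to X$ to be finite and $W$ to be normal — the latter is built into \Cref{def:psgal}, but finiteness of $W \to X$ should be deduced from finiteness of $Y \to X$ together with surjectivity of $Y \to W$ (equivalently, from the identification $W \cong Y/_H$ of \Cref{lem:explicit}, since $Y/_H$ is visibly finite over $Y/_G = X$). Once that is in place, everything else reduces to the standard Galois correspondence for the $G$-Galois extension $k(X) \to k(Y)$ combined with the universal property of normalization, so the proof is essentially a matter of assembling \Cref{lem:explicit}, \Cref{def:p-gal}, and the equivalence $\Orb_G \simeq \mathrm{Gal}_{k(Y)/k(X)}$ recorded just above.
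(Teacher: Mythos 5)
Your proposal is correct and follows essentially the same route as the paper's (very terse) proof: the paper simply exhibits the mutually inverse functors $W \mapsto k(W)$ and $E \mapsto \widetilde{X}_E$ (normalization of $X$ in $E$, computed via \Cref{lem:explicit}) and leaves the verifications implicit. Your more careful treatment of why $\Psi\circ\Phi\simeq\id$ — in particular deducing finiteness of $W\to X$ and invoking the universal property of normalization — is exactly the bookkeeping the paper is silently relying on.
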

	
	\begin{proof} The mutually inverse functors are given by:
		\begin{align*}
			\mathrm{PsGal}_{Y/X} \longrightarrow  \mathrm{Gal}_{k(Y)/k(X)}&&&Y' \rightarrow Y'' \longmapsto k(Y') \rightarrow k(Y'');\\
			\mathrm{Gal}_{k(Y)/k(X)} \longrightarrow \mathrm{PsGal}_{Y/X} &&& F \rightarrow F' \longmapsto \widetilde{X}_{F} \rightarrow  \widetilde{X}_{F'},
		\end{align*}	
		where $\widetilde{X}_F$ is the normalization of $X$ into $F$  computed via \Cref{lem:explicit}. 
	\end{proof}
	On the other hand, taking finite-coproduct completion everywhere gives equivalences 
	\[
	\mathrm{Fin}_G \simeq \FEt_{k(Y)/k(X)} \simeq \mathrm{PsGal}^{\coprod}_{Y/X}
	\]
	Therefore taking spans everywhere gives equivalences
	\[
	\Span_G\simto \Span(\FEt_{k(Y)/k(X)}) \simto \Span(\mathrm{PsGal}^{\coprod}_{Y/X}). 
	\]
	To define transfer for $\K$-theory in our context, let us recall that $f\colon Y \rightarrow X$ is a proper morphism of finite tor-dimension, then the pushforward $f_{\ast}\colon \mathrm{D}_{\mathrm{qc}}(Y) \rightarrow \mathrm{D}_{\mathrm{qc}}(X)$ preserve perfect complexes \cite[\href{https://stacks.math.columbia.edu/tag/0B6G}{Tag 0B6G}]{stacks-project}
	\[
	f_{\ast}\colon \mathrm{Perf}(Y) \longrightarrow \mathrm{Perf}(X);
	\]
	whence we have a pushforward on $\K$-theory. 
	\begin{remark}
		Normalization maps do not necessarily have to have finite tor-dimension hence we are not guaranteed pushforwards on $\K$-theory. In our setting $Y \rightarrow X$ will be a finite morphism of one-dimensional regular affine schemes. In particular $Y, X$, and other intermediate schemes are all regular and all the maps in sight are finite. Therefore, \cite[\href{https://stacks.math.columbia.edu/tag/068B}{Tag  068B}]{stacks-project} guarantees that all morphisms in sight preserves perfect complexes. We use this observation in the next construction. 
	\end{remark}
	\begin{construction}\label{constr:span-k} Fix $\pi:Y \rightarrow X$ a pseudo $G$-Galois cover of one dimensional, regular affine schemes. We construct a functor 
		\[
		\Perf:\Span(\mathrm{PsGal}^{\coprod}_{Y/X}) \longrightarrow \Cat_{\infty}
		\]
		To do so, we recall from \cite[Corollary D.18.1]{Barwick_spectral_Mackey_I} that we have a Mackey functor
		\[
		\Span(\mathrm{DM}, \mathrm{DM}_{\mathrm{FP}}, \mathrm{DM}) \longrightarrow \Cat_{\infty},
		\]
		where $\mathrm{DM}_{\mathrm{FP}}$ refers to the class of morphisms of (spectral) Deligne-Mumford stacks which are strongly proper and morphisms have finite Tor-amplitude. As explained above, because morphisms $\mathrm{PsGal}$ are all proper and has finite tor-dimension, we have an inclusion of subcategories
		\[
		\Span(\mathrm{PsGal}^{\coprod}_{Y/X}) \subset \Span(\mathrm{DM}, \mathrm{DM}_{\mathrm{FP}}, \mathrm{DM}).
		\]
		Therefore, we get a spectral Mackey functor
		\[
		\underline{\K}_{\pi}\colon\Span_G \simto \Span(\mathrm{PsGal}^{\coprod}_{Y/X}) \xrightarrow{\K \circ \Perf} \Spt.
		\]
		In particular, we can take homotopy groups and obtain abelian-group valued Mackey functors
		\[
		\pi_i\underline{\K}_{\pi}\colon\Span_G \xrightarrow{\K \circ \Perf} \mathsf{Ab}, \qquad G/H \longmapsto \K_i(Y/_H).
		\]
	\end{construction}
	\subsection{(Pseudo-)Galois descent on the level of homotopy groups} \label{sec:pgal-descent}
	In this subsection, we study Galois descent on the level of algebraic $\K$-\emph{groups}. The case is simpler when the group order is inverted.
	\begin{proposition}\label{prop:Mack_G_inv}
		Let $\underline{M}$ be a $G$-Mackey functor for a finite group $G$. Then for any subgroup $H\le G$, the map $\mathrm{Res}\colon \M (G/H)[1/|H|]\to \M (G/e)^H[1/|H|]$ is an isomorphism.
	\end{proposition}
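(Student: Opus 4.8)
The plan is to construct an explicit two-sided inverse to $\mathrm{Res}$ after inverting $|H|$, using the transfer map $\mathrm{tr}_{e/H}\colon\underline M(G/e)\to\underline M(G/H)$ supplied by the Mackey structure together with the base-change (double-coset) relations encoded in $\Span_G$. Throughout I write $q\colon G/e\to G/H$ for the quotient map, so that $\mathrm{Res}_{H/e}$ is corepresented by the span $(G/H\xleftarrow{q}G/e\xrightarrow{=}G/e)$ and $\mathrm{tr}_{e/H}$ by $(G/e\xleftarrow{=}G/e\xrightarrow{q}G/H)$, and I let $h_\ast$ denote the action of $h\in H$ on $\underline M(G/e)$ through the inclusion $H\hookrightarrow\mathrm{Aut}_G(G/e)$.

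First I would verify that $\mathrm{Res}_{H/e}$ indeed factors through $\underline M(G/e)^H$: for each $h\in H$ the span with right leg postcomposed by the automorphism $h$ of $G/e$ is isomorphic to the defining span of $\mathrm{Res}_{H/e}$ (the isomorphism of spans being $h^{-1}$, using $q\circ h^{-1}=q$), so $h_\ast\circ\mathrm{Res}_{H/e}=\mathrm{Res}_{H/e}$ and the map factors canonically through the (homotopy) $H$-fixed points. Next, the pullback of $q$ along itself is $G/e\times_{G/H}G/e\cong\coprod_{h\in H}G/e$, with the two projections being the identity and translation-by-$h$ on the $h$-th summand; feeding this square into the Mackey/base-change relation yields $\mathrm{Res}_{H/e}\circ\mathrm{tr}_{e/H}=\sum_{h\in H}h_\ast=:N_H$ as a self-map of $\underline M(G/e)$, which restricts to multiplication by $|H|$ on $\underline M(G/e)^H$. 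Hence $\mathrm{Res}_{H/e}\circ\bigl(\tfrac{1}{|H|}\mathrm{tr}_{e/H}\bigr)=\id$ on $\underline M(G/e)^H[1/|H|]$, so $\mathrm{Res}$ is at least a split epimorphism after inverting $|H|$. For the opposite composite I would run the analogous span calculus on $\mathrm{tr}_{e/H}\circ\mathrm{Res}_{H/e}\colon\underline M(G/H)\to\underline M(G/H)$, the crux being to verify that this composite is again multiplication by $|H|$; combining the two identities shows that $\mathrm{Res}$ and $\tfrac1{|H|}\mathrm{tr}$ are mutually inverse on the $[1/|H|]$-localizations.

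The step I expect to be the main obstacle is exactly this last one — keeping the span-category bookkeeping honest so that $\mathrm{tr}_{e/H}\circ\mathrm{Res}_{H/e}$ is genuinely $|H|\cdot\id$ on $\underline M(G/H)$ rather than a $W_G(H)$-twisted combination of translates, and making the factorization through $\underline M(G/e)^H$ together with the norm computation coherent in Barwick's $\infty$-categorical framework (so that the averaged transfer is a map, not merely a map on homotopy groups). A clean way to organize it is to note that $\mathrm{tr}_{e/H}$ also factors through the coinvariants $\underline M(G/e)_H$ (since $\mathrm{tr}_{e/H}\circ h_\ast=\mathrm{tr}_{e/H}$), that after inverting $|H|$ the norm gives an equivalence $\underline M(G/e)_H[1/|H|]\xrightarrow{\ \sim\ }\underline M(G/e)^H[1/|H|]$, and that $\mathrm{Res}_{H/e}\circ\mathrm{tr}_{e/H}=N_H$ then forces the remaining comparison maps to be mutually inverse; this is precisely the Mackey-functor incarnation of the elementary fact that for a $\Z[1/|H|][H]$-module $M$ the norm $M_H\to M^H$ is an isomorphism.
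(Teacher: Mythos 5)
Your instinct that the identity $\mathrm{tr}_{e/H}\circ\mathrm{Res}_{H/e}=|H|\cdot\id$ on $\underline{M}(G/H)$ is the crux is exactly right, but you stop short of the real problem: that identity is \emph{false} for a general $G$-Mackey functor, so both your argument and the paper's own proof have a genuine gap, and the proposition as stated is not true. The self-map $\mathrm{tr}_{e/H}\circ\mathrm{Res}_{H/e}$ is the action of the span $G/H\xleftarrow{q}G/e\xrightarrow{q}G/H$, which under the standard identification $\mathrm{End}_{\Span_G}(G/H)\cong A(H)$ is the class $[H]$ of the free $H$-set of order $|H|$ in the Burnside ring of $H$, not the integer $|H|$. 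That class has rank $|H|$ but all of its other marks vanish, so it is a zero-divisor and stays non-invertible in $A(H)[1/|H|]$. Concretely, the Burnside Mackey functor $\underline{A}$ for $G=H=C_2$ already refutes the statement: $\underline{A}(C_2/C_2)[1/2]=A(C_2)[1/2]$ has rank $2$, while $\underline{A}(C_2/e)^{C_2}[1/2]=\Z[1/2]$ has rank $1$, so $\mathrm{Res}$ cannot be an isomorphism. The identity $\mathrm{tr}\circ\mathrm{Res}=[H:K]$ is exactly the definition of a \emph{cohomological} Mackey functor, a hypothesis the proposition silently requires; the paper's proof asserts $\tr\circ\mathrm{Res}=|H|$ without comment and thereby inherits the same gap.

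Your proposed fallback via coinvariants does not close the hole. You correctly show that $\underline{M}(G/e)_H\to\underline{M}(G/H)\xrightarrow{\mathrm{Res}}\underline{M}(G/e)^H$ composes to the norm $N_H$ and hence to an isomorphism after inverting $|H|$; but that only makes the first map a split monomorphism and the second a split epimorphism, and $\underline{M}(G/H)$ may carry extra summands that both maps ignore --- the Burnside example has precisely one such rank-$1$ summand, spanned by $[H]-|H|$. For the actual application, \Cref{cor:K_desc_G_inv}, the missing ingredient is a property of the $\K$-theory Mackey functor beyond the Mackey axioms: for a pseudo $G$-Galois cover $p\colon Y\to X$ of regular one-dimensional affine schemes, the projection formula gives $p_\ast p^\ast = (-)\cdot [p_\ast\Ocal_Y]$ on $\K_\ast(X)$, and the class $[p_\ast\Ocal_Y]\in\K_0(X)$ has rank $|G|$ with its non-rank part lying in the square-zero ideal $\mathrm{Pic}(X)$, so it becomes a unit once $|G|$ is inverted. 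Together with your correct computation $\mathrm{Res}\circ\mathrm{tr}=N_H$, that does yield the desired isomorphism for $\underline{\K}_\pi$ --- but it cannot be deduced from the general Mackey-functor statement, which needs either the cohomological hypothesis or at least the invertibility of $\tr\circ\mathrm{Res}$ in $\underline{M}(G/H)[1/|H|]$ added explicitly.
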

	\begin{proof}
		This is a standard result in the theory of (rational) $G$-Mackey functors.  Consider the restriction and the transfer maps:
		\begin{equation*}
			\begin{tikzcd}
				\mathrm{Res}\colon \M (G/H) \rar[shift left] & \M (G/e) \lar[shift left] \rcolon \tr
			\end{tikzcd}
		\end{equation*}
		Their compositions are given by $\tr\circ \mathrm{Res}=|H|$ and $\mathrm{Res}\circ \tr=\sum_{g\in H} g^*$. It is then straightforward to check that $\mathrm{Res}\colon \M (G/H)[1/|H|]\to \M (G/e)^H[1/|H|]$ is an isomorphism with inverse $\tr_*/|H|$.
	\end{proof}
	Applying \Cref{prop:Mack_G_inv} to the algebraic $\K$-group Mackey functors, we obtain:
	\begin{corollary}\label{cor:K_desc_G_inv}
		Let $Y\to X$ be (pseudo-)Galois covering of schemes for a finite group $G$. Then the  map $i^*\colon \K_*(X)[1/|G|]\to \K_*(Y)^G[1/|G|]$ is an isomorphism.
	\end{corollary}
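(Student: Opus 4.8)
The plan is to deduce the statement directly from \Cref{prop:Mack_G_inv} applied to the $\K$-theory Mackey functor. First I would recall that \Cref{constr:span-k} (together with the work of Barwick \cite{Barwick_spectral_Mackey_I} and Merling \cite{Merling_equivar_alg_k} in the honestly Galois case) produces, for the cover $\pi\colon Y\to X$, a spectral Mackey functor $\underline{\K}_{\pi}\colon \Span_G \to \Spt$, and hence on $i$-th homotopy groups an abelian-group-valued $G$-Mackey functor $\underline{M} := \pi_i\underline{\K}_{\pi}$ with $\underline{M}(G/H) = \K_i(Y/_H)$ for every subgroup $H\le G$.

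Next I would identify the two sides of the claimed isomorphism. By \Cref{lem:explicit} and \Cref{lem:ps-gal}, the quotient $Y/_G$ is the normalization of $X$ into $k(Y)^G = k(X)$, which is $X$ itself since $X$ is normal; thus $\underline{M}(G/G) = \K_i(X)$. Likewise $\underline{M}(G/e) = \K_i(Y)$, and the $G$-action recorded by the Mackey structure on this group is the Galois action. Under these identifications, the restriction map $\mathrm{Res}\colon \underline{M}(G/G) \to \underline{M}(G/e)$ of the Mackey functor is precisely the pullback $i^{*}\colon \K_i(X) \to \K_i(Y)$ along $Y\to X$.

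Then I would simply invoke \Cref{prop:Mack_G_inv} with $H = G$: it asserts that $\mathrm{Res}\colon \underline{M}(G/G)[1/|G|] \to \underline{M}(G/e)^G[1/|G|]$ is an isomorphism, with explicit inverse $\tfrac{1}{|G|}\tr$. Translating back through the identifications above, this says exactly that $i^{*}\colon \K_*(X)[1/|G|] \to \K_*(Y)^G[1/|G|]$ is an isomorphism for every $*$, which is the corollary.

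The only point requiring care is the first step. The corollary is phrased for an arbitrary (pseudo-)Galois covering, whereas \Cref{constr:span-k} constructs the Mackey functor under the standing hypotheses that the schemes are one-dimensional, regular and affine — so that normalizations have the finite Tor-dimension needed for pushforwards to preserve perfect complexes, hence to define the transfers. I expect this bookkeeping — citing \Cref{constr:span-k} when those hypotheses hold, and the Galois case of Barwick--Merling otherwise — to be the main (and quite mild) obstacle; once a spectral Mackey functor lifting $\K$-theory is in hand, the manipulation of restrictions and transfers is entirely formal.
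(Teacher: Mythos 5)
Your proposal is correct and follows exactly the paper's route: the corollary is obtained by applying \Cref{prop:Mack_G_inv} to the abelian-group-valued Mackey functor $\pi_i\underline{\K}_{\pi}$ from \Cref{constr:span-k}, with the restriction map identified with $i^*$ via $Y/_G \cong X$. Your caveat about the standing hypotheses of \Cref{constr:span-k} (one-dimensional, regular, affine) versus the general-sounding statement of the corollary is a fair and careful observation that the paper itself glosses over.
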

	In some special cases, this isomorphism holds without inverting the group order.  
	\begin{itemize}
		\item finite Galois extensions of finite fields;
		\item finite pseudo-Galois extensions of affine curves over finite fields or rings of integers of number fields and $*\ge 1$ is \emph{odd}.
	\end{itemize}
	
	For finite fields, the fixed point statement follows from Quillen's computation \eqref{eqn:QLC_finfld} in \cite{quillen-plus}.
	\begin{proposition}\label{cor:finite_fixed}
		For any $m\ge 1$, the induced map of field extension on algebraic $\K$-groups $\K_*(\Fq)\to \K_*(\F_{q^m})$ identifies the source as the Galois fixed points of the target. 
	\end{proposition}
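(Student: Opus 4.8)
The plan is to deduce this from Quillen's computation \eqref{eqn:K_finfld} together with the known description of the Galois action on the $\K$-groups of a finite field. Set $G=\gal(\F_{q^m}/\Fq)\cong C_m$, generated by the Frobenius $\Fr_q\colon x\mapsto x^q$, and write $\iota\colon\Fq\hookrightarrow\F_{q^m}$ for the inclusion. First I would observe the easy half: for every $g\in G$ one has $g\circ\iota=\iota$ as ring maps, since $g$ fixes $\Fq$ pointwise; applying $\K$ gives $\K_*(g)\circ\K_*(\iota)=\K_*(\iota)$, so the image of $\K_*(\iota)$ is automatically contained in $\K_*(\F_{q^m})^G$. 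It therefore remains to show that $\K_*(\iota)$ is injective and surjects onto the $G$-fixed points.

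Next I would dispose of the easy degrees. In degree $0$, $\K_0(\iota)\colon\Z\to\Z$ is the identity and $G$ acts trivially on $\K_0(\F_{q^m})=\Z$ (a field automorphism preserves ranks), so there is nothing to prove; in positive even degrees and in negative degrees both groups vanish by \eqref{eqn:K_finfld} and regularity of finite fields. This leaves the odd degrees $2n-1>0$, where the relevant map is $\K_{2n-1}(\iota)\colon\Z/(q^n-1)\to\Z/(q^{mn}-1)$.

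For these I would invoke Quillen's result in the form that $\K_{2n-1}(\overline{\F}_q)$ is isomorphic, as a module over $\gal(\overline{\F}_q/\Fq)$, to the prime-to-$p$ torsion subgroup $A$ of $\Q/\Z$ on which $\Fr_q$ acts by multiplication by $q^n$ (the $n$-th Tate twist, $A\cong\colim_\ell\mu_{\ell^\infty}^{\otimes n}$), with $\K_{2n-1}(\F_{q^r})=A[q^{rn}-1]$ the subgroup killed by $q^{rn}-1$ and the transition maps the evident inclusions. Granting this, $\K_{2n-1}(\iota)$ is the inclusion $A[q^n-1]\hookrightarrow A[q^{mn}-1]$ (using $q^n-1\mid q^{mn}-1$), hence injective; and an element $x\in A[q^{mn}-1]$ is fixed by $\Fr_q$ precisely when $(q^n-1)x=0$, i.e.\ when $x\in A[q^n-1]$. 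Thus $\K_{2n-1}(\F_{q^m})^G=A[q^n-1]=\image(\K_{2n-1}(\iota))$, completing the proof. One may also argue more elementarily with finite cyclic groups once one knows $\Fr_q$ acts on $\K_{2n-1}(\F_{q^m})\cong\Z/(q^{mn}-1)$ by multiplication by $q^n$: then the fixed subgroup has order $\gcd(q^n-1,q^{mn}-1)=q^n-1=\#\K_{2n-1}(\Fq)$, and comparing with injectivity of $\K_{2n-1}(\iota)$ forces equality. The only input that is not pure bookkeeping is the identification of the $G$-action on $\K_{2n-1}$ with the $n$-th Tate twist — this is exactly the content of Quillen's Brauer-lift computation \cite{quillen-plus}, so I do not anticipate any real obstacle beyond citing it correctly.
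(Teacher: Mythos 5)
Your proof is correct, and it rests on the same essential input as the paper's --- namely that the Galois generator $\Fr_q$ acts on $\K_{2n-1}(\F_{q^m})\cong\Z/(q^{mn}-1)$ as multiplication by $q^n$ --- but the packaging is genuinely different. The paper first establishes injectivity of $\K_{2n-1}(\Fq)\to\K_{2n-1}(\F_{q^m})$ by a diagram chase: it compares Quillen's fiber sequences $\mathrm{BGL}(\Fq)^+\to\BU\xrightarrow{\psi^q-\id}\BU$ and $\mathrm{BGL}(\F_{q^m})^+\to\BU\xrightarrow{\psi^{q^m}-\id}\BU$ via the intermediate map $\psi^{q^{m-1}}+\cdots+\psi^q+\id$, extracts short exact sequences on $\pi_*$, and reads off injectivity; it then $\ell$-adically completes at each $\ell\nmid q$ and invokes Suslin's theorem $\K(\overline{\F}_q)^\wedge_\ell\simeq\KU^\wedge_\ell$ with $\Fr_q\leftrightarrow\psi^q$ to identify the Galois action. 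You instead quote Quillen's Brauer-lift computation in the form of the colimit Galois module $\K_{2n-1}(\overline{\F}_q)\cong A$ (prime-to-$p$ part of $\Q/\Z$, with $\Fr_q$ acting by $q^n$) and the identification $\K_{2n-1}(\F_{q^r})=A[q^{rn}-1]$ with inclusions as transition maps; this hands you injectivity and the fixed-point computation simultaneously from the divisibility $q^n-1\mid q^{mn}-1$, and your elementary coda ($\#$ fixed pts $=\gcd(q^n-1,q^{mn}-1)=q^n-1$) is a clean sanity check. Your version is shorter and arguably more transparent; the paper's version is slightly more self-contained in that it derives the injectivity of the transition maps from the fibration diagram rather than assuming them. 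One small point in your favor: your writeup states the correct exponent $q^n$ for the Frobenius action on $\K_{2n-1}(\F_{q^m})$, whereas the paper's proof writes $q^{nm}$ at that step, which is evidently a typo (multiplication by $q^{nm}\equiv 1\pmod{q^{mn}-1}$ would be the identity and the fixed-point claim would fail).
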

	\begin{proof}
		The  $*=0$ case follows from the fact $\K_{0}(\Fq)=\K_{0}(\F_{q^m})=\Z$ with trivial Galois group action.  The $*=2n>0$ case is trivial since $\K_{2n}(\Fq)=\K_{2n}(\F_{q^m})=0$.
		
		When $*=2n-1$, $\K_{2n-1}(\F_{q^m})$ is a  finite abelian group whose orders are coprime to $q$ for all $m$.  The equalizer sequences \cite{quillen-plus}
		\begin{equation*}
			\begin{tikzcd}
				\mathrm{BGL}(\Fq)^+\rar &\BU\rar[shift left,"\psi^q"] \rar[shift right, "\id"'] &[5ex]\BU
			\end{tikzcd}
		\end{equation*}
		for $\Fq$ and $\F_{q^m}$ are connected by 
		\begin{equation*}
			\begin{tikzcd}
				\mathrm{BGL}(\Fq)^+\rar\dar &\BU\rar["\psi^q-\id"]\dar["\id"]\dar["\id"] &[5ex]\BU\dar["\psi^{q^{m-1}}+\cdots +\psi^q+\id"]\\
				\mathrm{BGL}(\F_{q^m})^+\rar &\BU\rar["\psi^{q^m}-\id "]&\BU.
			\end{tikzcd}
		\end{equation*}
		The induced maps on the long exact sequences of homotopy groups break up into maps between short exact sequences:
		\begin{equation*}
			\begin{tikzcd}
				0\rar &\pi_{2n}(\BU)\rar["q^{n}-1"] \dar["1"] &[5ex]\pi_{2n}(\BU) \dar["\frac{q^{nm}-1}{q^n-1}"] \rar & \K_{2n-1}(\Fq)\rar\dar  &0\\
				0\rar &\pi_{2n}(\BU)\rar["q^{nm}-1"]  &\pi_{2n}(\BU)  \rar & \K_{2n-1}(\F_{q^m})\rar&0.
			\end{tikzcd}
		\end{equation*}
		This shows that the natural map $ \K_{2n-1}(\Fq)\to  \K_{2n-1}(\F_{q^m})$ is injective. We next show it is the Galois fixed point. As taking fixed points commutes  completion, we can prove the statement by completing each prime $\ell$ not dividing $q$. Suslin's theorem identifies $\K(\overline{\F}_q)^\wedge_\ell$ with $\KU^\wedge_\ell$ such that Galois actions by the $q$-th power Frobenius $\Fr$ corresponds to the Adams operation $\psi^q$. It follows that $\Fr$ acts on $\K_{2n-1}(\F_{q^m})$ by multiplication by $q^{nm}$, whose fixed points is then precisely $\K_{2n-1}(\Fq) $ from the diagram above.  
	\end{proof}
	For affine curves and rings of integers of number fields (Dedekind domains), the  pseudo-Galois descent on $\K$-groups only holds in odd degrees. For $\K_1$, this follows from the Bass-Milnor-Serre's result in \cite{Bass-Milnor-Serre}.
	\begin{proposition}\label{prop:K1_fixedpt}
		Let $F'/F$ be a finite $G$-Galois extension of global fields, $A=S^{-1}\Ocal_F$ be the ring of $S$-integers in $F$ for a finite set $S$ of primes in $F$, and $A'$ be the integral closure of $A$ in $F'$. Then the induced map $\K_1(A)\to \K_1(A')$ exhibits the source as the $G$-fixed points of the target.
	\end{proposition}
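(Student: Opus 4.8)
The plan is to reduce the assertion to the statement that $\K_1$ of a ring of $S$-integers in a global field is just its unit group, which is precisely the content of the Bass--Milnor--Serre theorem \cite{Bass-Milnor-Serre}. First I would observe that $A'$, being the integral closure of $A = S^{-1}\Ocal_F$ in $F'$, is itself a ring of $S'$-integers in $F'$, where $S'$ is the finite set of places of $F'$ lying over $S$ (in the function field case one takes $S$, hence $S'$, nonempty as usual). Thus Bass--Milnor--Serre applies to both $A$ and $A'$, giving $\mathrm{SK}_1(A) = \mathrm{SK}_1(A') = 0$, so that the determinant induces natural isomorphisms $\K_1(A) \cong A^\times$ and $\K_1(A') \cong (A')^\times$. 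Since $G = \gal(F'/F) = \aut_X(Y)$ acts on $A'$ by ring automorphisms fixing $A$, and this action preserves the integral closure $A'$, the $G$-action on $\K_1(A')$ is carried to the evident $G$-action on $(A')^\times$, and the map $\K_1(A) \to \K_1(A')$ is carried to the inclusion $A^\times \hookrightarrow (A')^\times$.

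It then remains to verify that $\big((A')^\times\big)^G = A^\times$. I would first record that $(A')^G = A$: the fixed subring is $A' \cap (F')^G = A' \cap F$, and because $A$ is integrally closed in $F$ (it is a localization of the Dedekind domain $\Ocal_F$) while $A'$ is integral over $A$, one has $A' \cap F = A$. Now if $u \in (A')^\times$ is $G$-fixed, then $u \in (A')^G = A$, while $u^{-1} \in F$ and $u^{-1} \in A'$, so $u^{-1} \in A' \cap F = A$, whence $u \in A^\times$; the reverse inclusion is immediate. Combining this with the previous paragraph shows that the induced map exhibits $\K_1(A)$ as the $G$-fixed points of $\K_1(A')$.

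There is no real obstacle here beyond citing Bass--Milnor--Serre; the only points deserving a word of care are checking that $A'$ is again a ring of $S$-integers in a global field (so that the theorem applies on the upper level), and noting that the vanishing of $\mathrm{SK}_1$ makes the naturality and $G$-equivariance of the identification $\K_1 \cong (-)^\times$ transparent, so that the problem genuinely reduces to the elementary computation of $\big((A')^\times\big)^G$.
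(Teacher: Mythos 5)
Your proof is correct and takes essentially the same route as the paper: reduce to unit groups via Bass--Milnor--Serre, then identify $\bigl((A')^\times\bigr)^G$ with $A^\times$. The only difference is in the last step, where the paper appeals to $(-)^\times$ being right adjoint to the group-ring functor (hence preserving the limit defining $G$-fixed points), while you verify $\bigl((A')^\times\bigr)^G = A^\times$ by a direct hands-on argument using $(A')^G = A$; these are equivalent, and your explicit check that $A'$ is again a ring of $S'$-integers is a point the paper leaves implicit.
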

	\begin{proof}
		By  \cite[Corollary 3.4]{Bass-Milnor-Serre}, we have natural isomorphisms $\K_1(A)\cong A^\times$. The claim then follows from the facts that the functor $(-)^\times$ is right adjoint to the group ring functor and $(A')^G=A$ by construction. 
	\end{proof}
	
	For $\K_{2n-1}$ and $n\ge 2$, the fixed point statement is  a result of the following isomorphism in \'etale cohomology groups of Dedekind domains. 
	\begin{proposition}\label{prop:desc_H1et}
		Let $A\to A'$ be an extension of Dedekind domains that is $G$-Galois on their fraction fields for a finite group $G$. Then for any prime $\ell$ away from the characteristic of $A$ and $A'$ and $n\geq 2$, the natural map
		\begin{equation*}
			\H^1_\et(A;\Zell(n))\longrightarrow \H^1_\et(A';\Zell(n))
		\end{equation*}
		exhibits the source as the $G$-fixed point of the target.
	\end{proposition}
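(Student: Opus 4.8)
The plan is to reduce the (possibly ramified) extension $A\to A'$ to an honestly finite \'etale $G$-Galois cover by inverting the ramified primes, apply Galois descent there, and then observe that inverting those primes is invisible on $\H^1_\et(-;\Zell(n))$ for $n\ge 2$. I will work with $\ell$ invertible in $A$, which is the only case needed for the applications (e.g.\ $A=\Ocal_F[1/\ell]$). Write $F=\Frac(A)$, $F'=\Frac(A')$, so $F'/F$ is $G$-Galois, and choose a finite set $S$ of primes of $A$ containing every prime that ramifies in $A'$ and large enough that, with $S'$ the primes of $A'$ above $S$, the map $B:=A[1/S]\to B':=A'[1/S']$ is finite \'etale and $G$-Galois; since $\ell\in A^\times$, every prime in $S\cup S'$ has residue characteristic $\ne\ell$.

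First I would show the restriction maps $\H^1_\et(A;\Zell(n))\to\H^1_\et(B;\Zell(n))$ and $\H^1_\et(A';\Zell(n))\to\H^1_\et(B';\Zell(n))$ are isomorphisms for $n\ge 2$. For a prime $v$ of residue characteristic $\ne\ell$ the codimension-one Gysin sequence reads
\[
\cdots\to\H^{i-2}_\et(\kappa(v);\Zell(n-1))\to\H^i_\et(A;\Zell(n))\to\H^i_\et(A[1/v];\Zell(n))\to\H^{i-1}_\et(\kappa(v);\Zell(n-1))\to\cdots,
\]
and for $n\ge 2$ one has $\H^0_\et(\kappa(v);\Zell(n-1))=\{x\in\Zell:(\#\kappa(v)^{\,n-1}-1)x=0\}=0$. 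Taking $i=1$ gives $\H^1_\et(A;\Zell(n))\xrightarrow{\sim}\H^1_\et(A[1/v];\Zell(n))$; iterating over the finitely many primes in $S$ (resp.\ $S'$) gives the claim, $G$-equivariantly on the $A'$ side.

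Then I would apply the Cartan--Leray descent spectral sequence to the \'etale $G$-cover $\Spec B'\to\Spec B$:
\[
E_2^{p,q}=\H^p\big(G;\H^q_\et(B';\Zell(n))\big)\Longrightarrow\H^{p+q}_\et(B;\Zell(n)).
\]
Its bottom row vanishes, since $\H^0_\et(B';\Zell(n))$ embeds into $\H^0_\et(F';\Zell(n))=(\Zell(n))^{G_{F'}}=0$ for $n\ge 1$ (the $\ell$-adic cyclotomic character of the global field $F'$ has infinite image). Hence $E_2^{p,0}=0$ for all $p$, the edge map is an isomorphism, and the restriction $\H^1_\et(B;\Zell(n))\to\H^1_\et(B';\Zell(n))$ identifies the source with $E_2^{0,1}=\H^1_\et(B';\Zell(n))^G$. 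Combining with the previous step, $\H^1_\et(A;\Zell(n))\cong\H^1_\et(B';\Zell(n))^G\cong\H^1_\et(A';\Zell(n))^G$; and since the two routes $A\to B\to B'$ and $A\to A'\to B'$ of the structure map $A\to B'$ induce the same homomorphism on $\H^1_\et(-;\Zell(n))$, this composite is precisely the map induced by $A\to A'$, which is therefore an isomorphism onto the $G$-fixed points.

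The only real content here is the two vanishing statements ($\H^0_\et(-;\Zell(m))=0$ for $m\ne0$) and invoking the codimension-one Gysin sequence at primes away from $\ell$; once $S$ is inverted the descent is formal. The point needing a bit of care is the bookkeeping in the last step showing the composite isomorphism is the natural restriction map, together with the harmless normalization that $\ell$ is invertible in $A$.
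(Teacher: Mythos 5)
Your proof is correct and follows essentially the same two-step strategy as the paper: first use the localization/purity (Gysin) sequence together with the vanishing of $\H^0_\et(\kappa(v);\Zell(n-1))$ to show that $\H^1_\et$ is unchanged after inverting points, and then run a descent spectral sequence whose bottom row vanishes because $\H^0_\et(-;\Zell(n))=0$ for $n\neq 0$. The one cosmetic difference is that you stop at a finite localization $B'=A'[1/S']$ where the cover becomes finite \'etale and apply Cartan--Leray there, whereas the paper passes all the way to the fraction fields $\Frac(A)\to\Frac(A')$ and invokes Hochschild--Lyndon--Serre for the Galois extension of global fields; these are the same spectral sequence at the two ends of a chain of localizations, so neither route buys anything the other doesn't. (Your explicit normalization that $\ell$ be invertible in $A$ is harmless: the paper's use of purity implicitly requires the same, and it is the only case used downstream.)
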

	\begin{proof} (See  \cite[Corollary 2.3 and Proposition 2.9]{Kolster_k_thy_arithmetic})
		Let $C$ (resp. $C'$) be the collection of closed points of $\spec A$ (resp. $\spec A'$). Localization and purity in  \'etale cohomology yield a long exact sequence: 
		\begin{equation*}
			0 = \bigoplus_{x \in C} \H_\et^{-1}(\kappa(x); \Zell(n-1)) \longrightarrow	\H^1_\et(A;\Zell(n))\longrightarrow \H^1_\et(\Frac(A);\Zell(n)) \longrightarrow \bigoplus_{x \in C} \H_\et^{0}(\kappa(x); \Zell(n-1)),
		\end{equation*}
		where $\Frac(A)$ is the fraction field of $A$. Since $n\not =1$ the Galois action on $\Zell(n-1)$ on the \'etale site of $\kappa(x)$ for all $x$ is nontrivial, we have that $\H_\et^{0}(\kappa(x); \Zell(n-1)) = 0$, hence we have an isomorphism $\H^1_\et(A;\Zell(n))\cong \H^1_\et(\Frac(A);\Zell(n))$.   As $\Frac(A)\to \Frac(A')$ is a finite $G$-Galois extension of fields, we have a Hochschild-Lyndon-Serre spectral sequence connecting their \'etale cohomology groups:
		\begin{equation*}
			E_2^{s,t}=\H^s(G;\H^t_\et(\Frac(A');\Zell(n)))\Longrightarrow \H^{s+t}_\et(\Frac(A);\Zell(n)). 
		\end{equation*}
		By the same reasoning as in $\kappa(x)$,  $\H^0_\et(\Frac(A');\Zell(n))=0$ whenever $n\ne 0$, hence the edge homomorphism $\H^{1}_\et(\Frac(A);\Zell(n))\to \H^{1}_\et(\Frac(A');\Zell(n))^G$ becomes an isomorphism. The two isomorphisms assemble into the commutative diagram below, proving the isomorphism in the statement. 
		\begin{equation*}
			\begin{tikzcd}[baseline=(B.base)]
				\H^1_\et(A;\Zell(n))\dar["\sim"] \rar &\H^1_\et(A';\Zell(n))^G\dar["\sim"]\\
				\H^1_\et(\Frac(A);\Zell(n))\rar["\sim"]&|[alias=B]| \H^1_\et(\Frac(A');\Zell(n))^G.
			\end{tikzcd}\qedhere
		\end{equation*}
	\end{proof}
	\begin{corollary} \label{cor:fixed-curve}
		Suppose that $Y \rightarrow X$ is a pseudo $G$-Galois cover of smooth affine curves over $\Fq$ or for a finite group $G$. Then the induced map on odd degree algebraic $\K$-groups $\K_{2n-1}(X) \to \K_{2n-1}(Y)$ exhibits the source as the $G$-fixed point of the target when $n\ge 1$. 
	\end{corollary}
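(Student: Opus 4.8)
The plan is to prove the statement one prime at a time, reducing it to the \'etale-cohomological descent established in \Cref{prop:desc_H1et}. The case $n=1$ is immediate from \Cref{prop:K1_fixedpt}: writing $A=\Gamma(X,\Ocal_X)$ and $A'=\Gamma(Y,\Ocal_Y)$, these are rings of $S$-integers in the global function fields $k(X)\subseteq k(Y)$ with $A'$ the integral closure of $A$ in $k(Y)$, so $\K_1(A)\to\K_1(A')$ is the inclusion of $G$-fixed points. From now on assume $n\ge 2$.

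First I would reduce to a prime-by-prime comparison. By Quillen's finite-generation theorem $\K_{2n-1}(X)$ and $\K_{2n-1}(Y)$ are finitely generated, and for $n\ge 2$ they are rationally zero: Parshin's conjecture is known in dimension one, so $\K_{2n-1}(-)_\Q$ vanishes on a smooth projective curve over $\Fq$, and the localization sequence relating $X$ to a smooth compactification $\wt{X}$ has the vanishing groups $\K_{2n-2}(\kappa(x))=0$ at the points of $\wt{X}\setminus X$ as its only extra terms, whence $\K_{2n-1}(X)_\Q$ is a quotient of $\K_{2n-1}(\wt{X})_\Q=0$. A map of finitely generated abelian groups is an isomorphism as soon as it is one rationally and after $\ell$-completion at every prime $\ell$, and on such groups $(-)^G$ commutes both with $-\otimes\Q$ and with $\ell$-completion; so it suffices to produce, naturally in $\pi$, an isomorphism $\K_{2n-1}(X)^{\wedge}_\ell\xrightarrow{\ \sim\ }\bigl(\K_{2n-1}(Y)^{\wedge}_\ell\bigr)^G$ for every $\ell$ (the rational comparison being the trivial $0\cong 0$).

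For $\ell=p=\operatorname{char}(\Fq)$ I would show both sides vanish: by Geisser--Levine \cite{geisser-levine} the mod-$p^r$ motivic cohomology $\H^c_\mot(X;\Z/p^r(n))$ is computed by the logarithmic de Rham--Witt sheaf $W_r\Omega^n_{X,\log}$, which is zero for $n\ge 2$ because $X$ is a curve, and feeding this into the motivic spectral sequence forces $\K_{2n-1}(X)^{\wedge}_p=0$, and likewise for $Y$. For $\ell\ne p$ I would repeat the mechanism from the proof of \Cref{thm:QLC_curves}: the affine curve $X$ has $\ell$-cohomological dimension at most $2$, so in Thomason's descent spectral sequence $\H^{i-j}_\et(X;\Zell(-j))\Rightarrow\pi_{-i-j}(\mathrm{L}_{\K(1)}\K(X))$ only $\H^1_\et(X;\Zell(n))$ survives to contribute to $\pi_{2n-1}$, with no extension problem; combined with the Rost--Voevodsky identification of $\mathrm{L}_{\K(1)}\K(X)$ with $\K(X)^{\wedge}_\ell$ in this range, this gives $\K_{2n-1}(X)^{\wedge}_\ell\cong\H^1_\et(X;\Zell(n))$ naturally in $\pi$, and the same for $Y$. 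Then \Cref{prop:desc_H1et}, applied to the extension of Dedekind domains $A\to A'$ which is $G$-Galois on fraction fields, gives $\H^1_\et(X;\Zell(n))\xrightarrow{\ \sim\ }\H^1_\et(Y;\Zell(n))^G$, and chasing the resulting naturality square concludes.

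The step I expect to be most delicate is the bookkeeping in the $\ell\ne p$ case: one must check that the identification of $\K_{2n-1}(-)^{\wedge}_\ell$ with $\H^1_\et(-;\Zell(n))$ is genuinely functorial along the maps $p^{\ast}$ and $p_{\ast}$ of \Cref{constr:span-k}, so that \Cref{prop:desc_H1et} may be inserted, and that the two collapsing spectral sequences really leave a single contributing term on the relevant line --- this is exactly where the one-dimensionality of $X$ and $Y$ is used in an essential way.
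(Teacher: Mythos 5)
Your argument is correct, and the $\ell \neq p$ step is exactly the paper's: Thomason's descent spectral sequence plus Rost--Voevodsky identify $\K_{2n-1}(-)^{\wedge}_\ell$ with $\H^1_\et(-;\Zell(n))$ for $\ell \nmid q$ and $n\ge 2$, and then \Cref{prop:desc_H1et} finishes. Where you diverge is in handling the rational and $p$-primary parts. The paper dispatches both at once by quoting Harder's theorem, which says $\K_{*}(X)$ is \emph{finite of order coprime to $q$} for $*\ge 2$; this makes the reduction to $\ell$-completions with $\ell \nmid q$ immediate. You instead reassemble this fact by hand: Parshin's conjecture in dimension one plus the localization sequence to kill the rational part, and Geisser--Levine plus the motivic spectral sequence to kill the $p$-primary part. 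Both routes are valid; the paper's is a one-line citation while yours is more self-contained and exposes the mechanism (Weil-conjectures-style rational vanishing, and the vanishing of $W_r\Omega^n_{X,\log}$ for $n\ge 2$ on a curve). The only slight sharpening you should make is that the Geisser--Levine comparison lands in (hyper)cohomology of $W_r\Omega^n_{X,\log}$, so you should say explicitly that all relevant weights in the motivic spectral sequence computing $\K_{2n-1}$ are $\ge 2$ for $n\ge 2$, which is what forces the vanishing on a curve; as written the phrase ``feeding this into the motivic spectral sequence'' elides this bookkeeping, which is precisely parallel to the bookkeeping you flag as delicate in the $\ell\neq p$ case.
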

	\begin{proof}
		The $n=1$ case has been proved in \Cref{prop:K1_fixedpt}. Harder's theorem \cite{Harder-1977} (see also \cite[Theorem VI.6.1]{weibel-k-book}) states that $\K_*(X)$ is a finite abelian group of order coprime to $q$ when $*\ge 2$. It suffices to prove the isomorphism after completing at each prime $\ell$ not dividing $q$. By Rost--Voevodsky and the Thomason spectral sequence, we have natural isomorphisms for affine curves when $n\ge 2$:
		\begin{equation*}
			\K_{2n-1}(X)^\wedge_{\ell}\simto \K^\et_{2n-1}(X)^\wedge_{\ell}\simto \H^1_\et(X;\Zell(n)).
		\end{equation*}
		The claim then follows from \Cref{prop:desc_H1et}.
	\end{proof}
	There is a similar result in the number field case:
	\begin{corollary}\label{cor:fixed-numfld_odd}
		Let $A'/A$ be a pseudo $G$-Galois extension of integrally closed subdomains of number fields,\footnote{i.e. rings of $S$-integers for some finite set of primes $S$ in the number field.} then the natural map 
		\begin{enumerate}
			\item $\K_{2n-1}(A)\to \K_{2n-1}(A')$ exhibits the source as the $G$-fixed points of the target when $n= 1$ or both $\Frac(A)$ and $\Frac(A')$ has no real embeddings.
			\item $\K_{2n-1}(A)[1/2]\to \K_{2n-1}(A')[1/2]$ exhibits the source as the $G$-fixed points of the target for general pseudo Galois extensions $A'/A$. 
		\end{enumerate}
	\end{corollary}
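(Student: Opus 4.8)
The plan is to run the argument of \Cref{cor:fixed-curve} essentially verbatim, and then keep careful track of the prime $2$ and of the real places; write $F=\Frac(A)$ and $F'=\Frac(A')$. The case $n=1$ of both statements is nothing but \Cref{prop:K1_fixedpt}, so I will assume $n\ge 2$ from now on.

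The first step is to reduce to a descent statement in \'etale cohomology, one prime at a time. The groups $\K_{2n-1}(A)$ and $\K_{2n-1}(A')$ are finitely generated abelian groups (cf.\ \Cref{thm:Borel_rank}), $G$-fixed points commute with the flat base changes $(-)\otimes_\Z\Z_\ell$, and a homomorphism of finitely generated abelian groups (resp.\ finitely generated $\Z[1/2]$-modules) is an isomorphism as soon as it becomes one after $(-)\otimes_\Z\Z_\ell$ for every prime $\ell$ (resp.\ every odd prime $\ell$). Fix such an $\ell$ and set $U=\Spec A[1/\ell]$, $U'=\Spec A'[1/\ell]$. On the $\K$-theory side, the localization sequence relating $\K(A)$, $\K(U)$ and $\bigoplus_{v\mid\ell}\K(\kappa(v))$, together with Quillen's computation \eqref{eqn:K_finfld} --- which gives $\K_{2n-2}(\kappa(v))=0$ and makes $\K_{2n-1}(\kappa(v))$ finite of order prime to $\ell$ at the (characteristic-$\ell$) residue fields $\kappa(v)$ --- shows $\K_{2n-1}(A)^\wedge_\ell\cong \K_{2n-1}(U)^\wedge_\ell$, and likewise for $A'$. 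Then, exactly as in the curve case, Rost--Voevodsky \cite{voevodsky-zl,voevodsky-z2} and the Thomason descent spectral sequence \cite{thomason_1985} give natural isomorphisms $\K_{2n-1}(U)^\wedge_\ell\cong \K^\et_{2n-1}(U)^\wedge_\ell\cong \H^1_\et(U;\Zell(n))$ for $n\ge 2$: for odd $\ell$ this is standard (see \cite{Kolster_k_thy_arithmetic}), while at $\ell=2$ it requires the hypothesis that $F$ --- hence $F'$ --- has no real embeddings, for only then does $U$ have finite $2$-cohomological dimension and does the comparison behave as cleanly as in the odd case \cite{Rognes-Weibel, Kolster_k_thy_arithmetic}. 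When $F$ has a real place, the high-degree $2$-torsion in $\H^*_\et(U;\Z/2)$ contributed by the real places destroys this comparison --- this is exactly why part (2) only claims the conclusion after inverting $2$.

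It then remains to see that $\H^1_\et(U;\Zell(n))\to \H^1_\et(U';\Zell(n))^G$ is an isomorphism. Since inverting $\ell$ changes neither the fraction fields nor the property that $A'$ is the normalization of $A$ in $F'$, the map $A[1/\ell]\to A'[1/\ell]$ is again an extension of Dedekind domains that is $G$-Galois on fraction fields, so this is precisely \Cref{prop:desc_H1et} (applicable since $n\ge 2$ and the characteristic is $0$). Chaining the isomorphisms above --- all natural in $U'\to U$, so the comparison squares commute as in the proof of \Cref{prop:desc_H1et} --- shows that $\K_{2n-1}(A)\to\K_{2n-1}(A')$ induces an isomorphism onto $G$-fixed points after $(-)\otimes_\Z\Z_\ell$: for part (1) this holds for every prime $\ell$ (the no-real-embeddings hypothesis entering at $\ell=2$), which gives the integral statement; for part (2) it holds for every odd $\ell$, which gives the statement after inverting $2$. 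I expect the one genuinely delicate point to be this $\ell=2$ input: one needs the precise fact --- underpinned by the Bloch--Kato conjecture at $2$ \cite{voevodsky-z2} and the $2$-adic analysis of Rognes--Weibel \cite{Rognes-Weibel} --- that the $\K$-theory/\'etale comparison is clean at $2$ exactly for totally imaginary base fields and otherwise fails only by $2$-torsion.
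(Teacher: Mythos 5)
Your argument is correct and follows essentially the same route as the paper's own proof: dispose of $n=1$ via \Cref{prop:K1_fixedpt}, then for $n\ge 2$ use the Rost--Voevodsky and Thomason spectral sequence identification $\K_{2n-1}(A)^\wedge_\ell\cong\H^1_\et(A[1/\ell];\Zell(n))$ (valid at $\ell=2$ only when the fraction fields are totally imaginary, per Rognes--Weibel), and conclude by the $G$-descent on $\H^1_\et$ of \Cref{prop:desc_H1et}. You spell out two steps the paper leaves implicit --- the prime-by-prime reduction for finitely generated abelian groups and the use of the localization sequence and Quillen's computation to pass from $A$ to $A[1/\ell]$ before invoking the \'etale comparison --- which is a genuine, if minor, gain in precision, and your diagnosis of the role of the real places at $\ell=2$ matches the paper's.
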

	\begin{proof}
		Again, the $n=1$ case has been established in \Cref{prop:K1_fixedpt}. By Rost--Voevodsky and the Thomason spectral sequence, we have a natural isomorphism when $n\ge 2$ and $\ell$ is odd or $\Frac(A)$ is totally imaginary (by \cite[Theorem 0.4 and Proposition 6.12]{Rognes-Weibel}):
		\begin{equation*}
			\K_{2n-1}(A)^\wedge_\ell\cong \H^1_\et(A;\Zell(n)).
		\end{equation*}
		The claim then follows from  \Cref{prop:desc_H1et}. 
	\end{proof}
	The $2$-primary fixed point statement is more complicated to prove.
	\begin{proposition}\label{prop:fixed-numfld_2}
		Suppose the fraction field $F'$ of $A'$ in \Cref{cor:fixed-numfld_odd} is a \emph{totally real} number field (then so is $F=\Frac(A)$).  Then the natural map $\K_{2n-1}(A)\to \K_{2n-1}(A')$ exhibits the source as the $G$-fixed point of the target when $n\ge 1$. 
	\end{proposition}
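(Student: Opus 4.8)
For $n=1$ the statement is \Cref{prop:K1_fixedpt}, and by \Cref{cor:fixed-numfld_odd}(2) the map becomes an isomorphism onto the $G$-fixed points after inverting $2$; so it remains to show that $\K_{2n-1}(A)^\wedge_2\to\bigl(\K_{2n-1}(A')^\wedge_2\bigr)^G$ is an isomorphism for $n\ge 2$. The plan is to first reduce to the case $2\in S$, i.e.\ to rings in which $2$ is invertible: in its Mackey-functor form, the localization (d\'evissage) fiber sequence identifies the fiber of the $G$-equivariant map $\K(A)\to\K(A[1/2])$ with a spectrum supported over the primes $\mathfrak p\mid 2$ and built from the $\K$-theories of their residue fields; since those residue fields have characteristic $2$, their $2$-completed $\K$-theory is concentrated in degree $0$, so $\K_i(A)^\wedge_2\to\K_i(A[1/2])^\wedge_2$ is a $G$-equivariant isomorphism for $i\ge 1$. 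Henceforth $2$ is invertible in $A$ and $A'$.

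The arithmetic input is the $2$-adic Quillen--Lichtenbaum computation of the $\K$-theory of totally real number rings due to Rognes--Weibel \cite{Rognes-Weibel} (resting on Voevodsky--Rost's norm residue theorem and on the $2$-primary Main Conjecture used in \Cref{thm:QLC_numfld}). In the range $n\ge 2$ it provides a natural --- hence $G$-equivariant, through the spectral $G$-Mackey functor structure of \Cref{constr:span-k} --- finite filtration on $\K_{2n-1}(A')^\wedge_2$ whose associated graded consists of $\H^1_\et(A';\Z_2(n))$ together with finite $2$-groups built from the cohomology of the real places of $F'$ (sums of copies of $\Z/2$ indexed by those places, the number reflecting the $2$-adic homotopy of $\KO$); the analogous filtration for $A$ maps compatibly into it.

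It then suffices to check that the comparison map is an isomorphism from each graded piece of the $A$-filtration onto the $G$-invariants of the corresponding graded piece of the $A'$-filtration. For the $\H^1_\et$-piece this is exactly \Cref{prop:desc_H1et}, which applies at $\ell=2$ since $A$ and $A'$ have characteristic $0$, giving $\H^1_\et(A;\Z_2(n))\xrightarrow{\,\sim\,}\H^1_\et(A';\Z_2(n))^G$. For the real-place pieces it follows from the hypothesis that $F'$ (and hence $F$) is \emph{totally real}: every archimedean place of $F'$ sits over an archimedean place of $F$ with local extension $\R/\R$, hence has trivial decomposition group in $G$, so $G$ permutes the archimedean places of $F'$ freely, with exactly one orbit over each archimedean place of $F$; thus the real-place graded pieces for $A'$ are induced $G$-modules whose invariants are the corresponding pieces for $A$ (concretely, for $s\ge 3$ one uses $\H^s_\et(A';\Z_2(t))\cong\bigoplus_{w\mid\infty}\H^s(G_w;\Z_2(t))$ from Tate--Poitou duality together with the freeness of the $G$-action on the $w$). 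Granting these isomorphisms on graded pieces, a descending induction over the finite filtration shows that $\K_{2n-1}(A)^\wedge_2\to\bigl(\K_{2n-1}(A')^\wedge_2\bigr)^G$ is injective (being injective on graded pieces) and surjective (a $G$-invariant element is corrected one filtration step at a time by elements coming from $\K_{2n-1}(A)^\wedge_2$, each such correction being possible because $G$-invariant elements of a graded piece lift).

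The main obstacle is the second step: extracting from Rognes--Weibel a filtration of $\K_{2n-1}(A')^\wedge_2$ that is manifestly $G$-equivariant for \Cref{constr:span-k}, with its graded pieces identified precisely enough --- in particular with the $G$-action on the real-place pieces identified as a free permutation action --- that forming $G$-invariants commutes with passing to the associated graded. The deep ingredient standing behind the whole argument, as throughout this circle of ideas, is the Quillen--Lichtenbaum theorem at the prime $2$.
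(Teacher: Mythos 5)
Your strategy is essentially the same as the paper's: reduce to $2$-primary information (the $n=1$ case and the odd-primary case are dispatched exactly as you do), invert $2$ via localization, then feed the Rognes--Weibel $2$-adic computation of $\K_{2n-1}$ for totally real rings into a filtration/extension argument, with the two key inputs being the \'etale descent isomorphism $\H^1_\et(A;\Z_2(n))\xrightarrow{\sim}\H^1_\et(A';\Z_2(n))^G$ and the freeness of the $G$-action on the archimedean places of $F'$ (hence induced modules with vanishing $\H^{>0}(G;-)$ by Shapiro). The paper makes this precise by an explicit case analysis according to $n\bmod 4$, since for $n\equiv 0,3\pmod 4$ there is a direct isomorphism (no extension), for $n\equiv 1\pmod 4$ a split extension, and for $n\equiv 2\pmod 4$ a genuinely nontrivial two-step extension for which one must argue on the $E_\infty$-page of the motivic spectral sequence (rather than with an abstract direct-sum decomposition) and then apply the five lemma together with the Shapiro vanishing $\H^1(G;(\Z/2)^{\oplus r_1(F')})=0$. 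That last case is precisely the ``main obstacle'' you flag --- identifying a $G$-equivariant filtration with the right graded pieces --- so your sketch is correct in substance and the paper's proof is the concrete execution of it.
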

	\begin{proof}
		It remains to study the $2$-primary part of the statement when $n\ge 2$. This was computed by Rognes-Weibel in \cite[Theorem 0.6]{Rognes-Weibel}. Notice that $\K_{2n-1}(A)^\wedge_2\cong \K_{2n-1}(F)^\wedge_2$ by the localization sequence in algebraic $\K$-theory, so we can assume $A=\Ocal_F[1/2]$ and $A'=\Ocal_{F'}[1/2]$ without loss of generality. Denote the set of real embeddings of $F$ by $\mathrm{Emb}(F,\Rbb)$.  This is a $G$-set of size $r_1(F)$. There are four cases. 
		\begin{enumerate}
			\item  $n=4k>0$. The claim follows the same way as in \Cref{cor:fixed-numfld_odd} because of the natural isomorphisms (the case ``$n=8k+7$ of \cite[Theorem 0.6]{Rognes-Weibel}''):
			\begin{equation*}
				\K_{8k-1}(\Ocal_F[1/2])^\wedge_2\simto \H^1_\et(\Ocal_F[1/2];\Z_2(4k)). 
			\end{equation*}
			\item $n=4k+1>1$. We have a trivial extension on the $E_\infty$-page of the motivic spectral sequence (the case ``$n=8k+1$ of \cite[Theorem 0.6]{Rognes-Weibel}''):
			\begin{equation*}
				0\to  \Z_2^{\oplus r_1(F)}\longrightarrow \K_{8k+1}(\Ocal_F)^\wedge_2\longrightarrow \Z/2\to 0.
			\end{equation*}
			It suffices to prove the fixed point statement on both two summands. The $G$-action on $\Z/2$ is necessarily trivial and $(\Z/2)^G=\Z/2$. 
			
			The $\Z_2^{\oplus r_1(F)}$-term is coming from $\left[\K_{8k+1}(\Rbb)^\wedge_2\right]^{\oplus\mathrm{Emb}(F,\Rbb)}$.  Hence it is Galois-equivariantly isomorphic to $\hom_{\mathsf{Set}}\left(\mathrm{Emb}(F,\Rbb),\Rbb\right)$. As $F'/F$ is a $G$-Galois extension, we have an isomorphism of $G$-sets $\mathrm{Emb}(F',\Rbb)\cong G\times \mathrm{Emb}(F,\Rbb)$. This implies that $\left[\Z_2^{\oplus r_1(F')}\right]^G\cong   \Z_2^{\oplus r_1(F)}$. 
			\item $n=4k+2$. This is similar to the previous case, except that the extension problem is non-trivial (the case ``$n=8k+3$ of \cite[Theorem 0.6]{Rognes-Weibel}''\footnote{In the cited reference, there is a direct decomposition of the $\K$-group of interest. However, we do not know how to understand the Galois action on the summand given by $\Z/2w_{4k+2}(F)$. Instead we have to understand the Galois action on the $E_{\infty}$-page of the motivic spectral sequence which is given in \cite[Theorems 6.7]{Rognes-Weibel}. Note that there is a shift in degrees because this reference speaks about the $2^{\infty}$-torsion groups.}). We have the following diagram with the top and bottom rows exact by \cite[Theorems 6.7 and 6.9]{Rognes-Weibel}: 
			\begin{equation*}\adjustbox{width=\linewidth}{
					\begin{tikzcd}[column sep=small]
						0\to(\Z/2)^{\oplus r_1(F)}\rar \dar["\cong"]& \K_{8k+3}(\Ocal_F[1/2])^\wedge_2\rar\dar & \H^1_\et(\Ocal_F[1/2];\Z_2(4k+2))\dar["\cong"]\to  0&\\ 
						0\to  \left[(\Z/2)^{\oplus r_1(F')}\right]^G\rar \dar& \left[\K_{8k+3}(\Ocal_{F'}[1/2])^\wedge_2\right]^G\rar\dar & \left[\H^1_\et(\Ocal_{F'}[1/2];\Z_2(4k+2))\right]^G\rar\dar&\H^1(G; (\Z/2)^{\oplus r_1(F')})=0\\ 
						0\to  (\Z/2)^{\oplus r_1(F')} \rar& \K_{8k+3}(\Ocal_{F'}[1/2])^\wedge_2\rar & \H^1_\et(\Ocal_{F'}[1/2];\Z_2(4k+2))\to 0.&
				\end{tikzcd}}
			\end{equation*}
			The fixed point statement  holds on the \'etale cohomology part by \Cref{prop:desc_H1et}.  For the left vertical map, the term $(\Z/2)^{\oplus r_1(F)}$ is coming from $\left[\K_{8k+3}(\Rbb)^\wedge_2\right]^{\oplus\mathrm{Emb}(F,\Rbb)}$; similarly for $F'$. This uniform description across all fields yields a $G$-equivariant isomorphism $(\Z/2)^{\oplus r_1(F')} \cong \map_\mathsf{Set}(G, (\Z/2)^{\oplus r_1(F)})$, which implies:
			\begin{itemize}
				\item $(\Z/2)^{\oplus r_1(F)}\simto \left[(\Z/2)^{\oplus r_1(F')}\right]^G$;
				\item $\H^1\left(G; (\Z/2)^{\oplus r_1(F')}\right)\cong \H^1\left(e; (\Z/2)^{\oplus r_1(F)}\right)=0$ by Shapiro's Lemma. 
			\end{itemize}
			The isomorphism $\K_{8k+3}(\Ocal_F[1/2])^\wedge_2\simto \left[\K_{8k+3}(\Ocal_F[1/2])^\wedge_2\right]^G$ then follows from the Five Lemma. 		
			\item $n=4k+3$. There are natural isomorphisms $\K_{8k+5}(\Ocal_F)^\wedge_2\cong \Z_2^{\oplus r_1(F)}\cong \hom_{\mathsf{Set}}\left(\mathrm{Emb}(F,\Rbb),\Z_2\right)$ (The case ``$n=8k+5$ of \cite[Theorem 0.6]{Rognes-Weibel}''). The claim follows similar to the $n=4k+1$ case. \qedhere
		\end{enumerate}
	\end{proof}
	
	\begin{remark}
		We note that Galois descent also holds in even dimensional algebraic $\K$-groups in certain cases. When $A'/A$ is a $G$-\emph{Galois} of rings of integers of number fields or affine curves over $\Fq$,  Kolster proved a ``Galois codescent'' in \cite[Proposition 2.12]{Kolster_k_thy_arithmetic} of \'etale cohomology groups
		\begin{equation*}
			\H^2_\et(A';\Zell(n))_G\simto \H^2_\et(A;\Zell(n)),
		\end{equation*}
		where $\ell\ne 2$ in the number field case and $\ell\nmid q$ in the function field case. As both \'etale cohomology groups and the Galois group are finite, taking Pontryagin duals turns codescent into descent:
		\begin{equation*}
			\H^2_\et(A;\Zell(n))\simto \H^2_\et(A';\Zell(n))^G.
		\end{equation*}
		By Rost--Voevodsky and the motivic spectral sequence, we have natural isomorphisms when $n\ge 2$
		\begin{equation*}
			\H^2_\et(A;\Zell(n))\simto \K_{2n-2}(A)^\wedge_{\ell}.
		\end{equation*}
		This yields the Galois descent on even-degree $\K$-groups when $n\ge 2$
		\begin{equation*}
			\K_{2n-2}(A)\simto \K_{2n-2}(A')^G,\qquad \text{ (invert 2 in the number field case).}
		\end{equation*}
		Note that this argument fails when $A'/A$ is only a \emph{pseudo}-Galois extension or when $\ell=2$ in the number field case, because the first step in proving \cite[Proposition 2.12]{Kolster_k_thy_arithmetic} uses the Tate spectral sequence for Galois extensions. 
	\end{remark}
	\section{Equivariant Moore spectra and its cellular filtration}\label{sec:eMoore}
	This section is the technical heart of the paper and uses crucially the construction of a cellular presentation of equivariant Moore spectra due to Rezk and the second author as in \cite[\S 3.3]{nz_Dirichlet_J}. The first key result here is \Cref{cor:e1-norm} where the norms of $L$-values are incarnated as the ratio between Euler numbers of the $E_1$ page of a spectral sequence  built from a cellular filtration on equivariant Moore spectra. Turning the page once, we compute the Bredon cohomology of these Moore spectra with coefficients in equivariant algebraic $\K$-groups in \Cref{sec:moore}.
	\subsection{Equivariant Moore spectra of abelian characters}
	Let's recall the construction of the equivariant Moore spectra following the second author's thesis \cite{nz_Dirichlet_J}. If $A$ is an abelian group, then the Moore spectrum associated to $A$, denoted usually by $\M A$, is characterized uniquely as a \emph{connective} spectrum whose homology group is given as follows:
	\[
	\pi_{*}(\M A \otimes \H\mathbb{Z}) = \H_{*}(\M A;\Z) = \begin{cases}
		A, & * = 0;\\
		0, & \text{else}.
	\end{cases}
	\]
	In other words we have an equivalence
	\[
	\M A \otimes \H \Z \simeq \H A.
	\]
	Note, however, unlike the formation of Eilenberg-MacLane spectra, the formation of Moore spectra $\M A$ is \emph{not} functorial in $A$; this complication presents itself in the second author's thesis. 
	\begin{definition}\label{def:eMoore}
		Let $A$ be an abelian group with an action by a group $G$ via the homomorphism $\rho\colon G\to \aut(A)$. An \textbf{equivariant Moore spectrum} of $\rho$, if it exists, is a $G$-action on the Moore spectrum $\M A$ such that the induced $G$-action on $\H_0(\M A;\Z)=A$ is isomorphic to $\rho$. In other words, $\M (\rho)$ is a lifting
		\begin{equation*}
			\begin{tikzcd}[column sep=large]
				&& \Spt\dar["\H_*(-;\Z)"]\\ \mathrm{B}G\rar["\rho"']\ar[urr,dashed,"\exists \M (\rho)?"]&\mathsf{Ab}=\mathsf{Ab}^{\{0\}}\rar&\mathsf{Ab}^\Z 
			\end{tikzcd}
		\end{equation*}
		
		Any complex representation $\rho\colon G\to \GL_N(\Cbb)$ of a finite group $G$ necessarily factors through $\GL_N(\Ocal_E)$ for some number field $E$; we abusively call this representation $\rho\colon G\to \GL_N(\Ocal_E)$. An integral equivariant Moore spectrum $\M (\rho)$ of $\rho$, if it exists,  is a $G$-action on the Moore spectrum of $\Ocal_E^{\oplus N}$ such that the induced action on its zeroth homology group is given by $\rho\colon G\to \GL_N(\Ocal_E)$.   
	\end{definition}
	\begin{remark}
		Steenrod asked the question whether $\M (\rho)$ always exists and Carlsson constructed a counterexample for the group $C_2\times C_2$ in \cite{carlsson-cex}.  
	\end{remark}
	\begin{remark}\label{rem:eMoore_uniqueness}
		Even if $\M (\rho)$ exists, the lifting is not unique unless multiplication by the group order of $G$ is an automorphism of $A$. One example is the trivial and sign representations of $C_2$ on $\Z$. Let  $\sigma$ be the sign representation of $C_2$. Then depending on the parity of $n$, any representation sphere of the form $\mathrm{S}^{n(1-\sigma)}$ is an equivariant Moore spectrum for $\Z$ with the trivial or the sign $C_2$-action.  Moreover, Beaudry--Goerss--Hopkins--Stojanoska showed in \cite[Theorem 6.22]{BGHS_dualizing} that the set of homotopy classes of $C_2$-actions on $\mathrm{S}^0$ can be identified as:
		\begin{equation*}
			[\mathrm{B}C_2,\mathrm{B}\GL_1(\mathrm{S}^0)]\overset{\sim}{\longleftarrow}[\mathrm{B}C_2,\mathrm{BO}]=\wt{\KO}^0(\mathbb{RP}^\infty)= \Z_2.
		\end{equation*}
		The Atiyah-Segal completion theorem implies that $\mathrm{S}^{n(1-\sigma)}$ corresponds to the element $\pm n\in \Z\subseteq \Z_2$ on the right hand side. Hence, the sets of homotopy classes of $C_2$-actions on $\mathrm{S}^0$ that induce trivial and sign representations on $\H_0(\mathrm{S}^0;\Z)$ can be identified with $2\Z_2$ and $1+2\Z_2\subseteq \Z_2$, respectively.  
	\end{remark}
	First, we give a construction of integral equivariant Moore spectra for abelian characters $\chi\colon G\to \Cx$ for a finite group $G$. This cellular filtration is essential in our identification of norms special values of $L$-functions with sizes of equivariant algebraic $\K$-groups.
	\begin{construction}[Equivariant Moore spectra for abelian characters, see also {\cite[Construction 3.3.2]{nz_Dirichlet_J}}]\label{con:eMoore}
		Let $\chi\colon G\to \Cx$ be a complex-valued abelian character of a finite group $G$. Then there is a unique positive integer $m$ such that $\chi$ factors as
		\begin{equation}\label{eqn:O_chi}
			\begin{tikzcd}
				\chi\colon G\rar["\phi_\chi",->>]\ar[rr,bend right=30,"\Ocal_\chi"]&C_m\rar["\psi_m"]& \Z[\zeta_m]^\times\rar[hook]&\Cx,
			\end{tikzcd}
		\end{equation}
		where $\psi_m$ sends a generator of $C_m$ to a primitive $m$-th root of unity $\zeta_m$. As a result, it suffices to construct equivariant Moore spectra for the cyclotomic characters $\psi_m\colon C_m\to \Z[\zeta_m]^\times$. In turn, this is constructed as follows: 
		
		\begin{enumerate}
			\item First, following a suggestion of Rezk, we construct the case of $m = p^{\nu}$ by setting $\M (\psi_{p^{\nu}})$ to be the cofiber
			\[
			C_{p^{\nu}+} \rightarrow C_{p^{\nu-1}+} \rightarrow \M (\psi_{p^{\nu}})[1];
			\]
			where the first map is taken in the $\infty$-category of $\Spt^{C_{p^{\nu}}}$. 
			\item Second, take the (external) tensor product\footnote{For groups $H, H'$ there is an external tensor product functor
				\[
				\Spt^{H} \times \Spt^{H'} \xrightarrow{\boxtimes} \Spt^{H \times H'}.
				\]}
			\[
			\M (\psi_m) := \boxtimes_{p|m} \M\left(\psi_{p^{\nu_p(m)}}\right) \in \Spt^{C_m}.
			\]
		\end{enumerate}
	\end{construction}
	One model for integral equivariant Moore spectrum associated to $\chi$  would be $\phi_\chi^*\M (\psi_m)$. For technical reasons explained in the proof later, we will instead set 
	\[
	\M (\chi):= \Dbb((\phi_{\chi^{-1}})^* \M (\psi_m)),
	\] where $\Dbb$ is the $G$-equivariant Spanier-Whitehead dual. A priori, $(\phi_{\chi^{-1}})^* \M (\psi_m)$ is an equivariant Moore spectrum attached to the character $\Ocal_{\chi^{-1}}$. Taking duals inverts the character back to $\Ocal_{\chi}$. From this definition, we can see $\ker\chi$ acts trivially on $\M (\chi)$. This particular choice of equivariant Moore spectra allows us to avoid discussing transfer maps in algebraic $\K$-groups in the computations (see Remarks  \ref{rem:eMoore_homology} and \ref{rem:Bredon_res}). 
	\begin{remark}
		Technically, we can also set $\M(\chi)$ to be $\Dbb((\phi_\chi)^* \M (\psi_m))$ without taking the inverse. As $\chi^{-1}=\overline{\chi}$ is complex conjugate to $\chi$, we have $L(X,\chi^{-1},1-n)=\overline{L(X,\chi,1-n)}$ by \Cref{prop:rational_Weil_L_psuedo} and \Cref{prop:alg_special_val_numfld} --- their norms over $\Q$ are the same. So we do not need to distinguish the integral Moore spectra of $\chi$ and $\chi^{-1}$ for the purpose of proving \Cref{thm:main}. 
	\end{remark}

	For the purposes of this paper, we will evaluate spectra of the form $\map\left(\M (\psi_m), \E\right)$, where $\map$ denote $G$-equivariant maps in the genuine stable category. Towards this end we review the skeletal filtration on $\M (\psi_m)$ or, rather, on $\Sigma^\lambda \M (\psi_m)=\M (\psi_m)[\lambda]$ which first appeared in the second author's thesis \cite[Section 3.3]{nz_Dirichlet_J}. 	
	\begin{construction}[Equivariant cellular filtration on equivariant Moore spectra] \label{constr:cell_ss}
		Let $\lambda$ be the number of distinct prime factors of $m$. We have a finite, increasing filtration
		\[
		\mathrm{sk}_0 \M (\psi_m)[\lambda] \rightarrow \mathrm{sk}_1\M (\psi_m)[\lambda] \cdots \rightarrow \mathrm{sk}_\lambda \M (\psi_m)[\lambda],
		\]
		whose graded pieces 
		\[
		\mathrm{gr}_j \M (\psi_m)[\lambda] :=  \mathrm{cofib}\left( \mathrm{sk}_{j-1} \M (\psi_m)[\lambda] \rightarrow \mathrm{sk}_{j}\M (\psi_m)\right)[\lambda]. 
		\]
		are given by 
		\begin{enumerate}
			\item $\mathrm{gr}_0 \M (\psi_m)[\lambda]\simeq (C_m/C_{p_1\cdots p_\lambda})_+$.
			\item For $0 < j < \lambda$, $\mathrm{gr}_j \M (\psi_m)[\lambda]$ is equivalent to 
			
			\[\left( \coprod_{1\le k_1<\cdots<k_{\lambda-j}\le \lambda}C_m/C_{p_{k_1}\cdots p_{k_{\lambda-j}}} \right)_+[j]. \]
			
			\item There is only one equivariant simplex $C_m$ in top dimension $\lambda$, i.e.
			\[
			\mathrm{gr}_\lambda\M (\psi_m)[\lambda] \simeq (C_{m})_+[\lambda]. 
			\]
		\end{enumerate} 
		The attaching maps of equivariant simplices are given by quotients of orbits; we refer to \cite[Pages 34-35]{nz_Dirichlet_J} for details. 
		
		If $\E$ is a $C_m$-spectrum, then mapping the above filtration into $\E$ (and shifting) we then get another finite filtered object
		\begin{equation*}
			\Map( \mathrm{sk}_\lambda\M (\psi_m), \E) \rightarrow \Map( \mathrm{sk}_{\lambda-1} \M (\psi_m), \E)\rightarrow \cdots \rightarrow \Map( \mathrm{sk}_0\M (\psi_m), \E).
		\end{equation*}
		with graded pieces (in decreasing order)
		\[
		\E^{C_{p_1\cdots p_\lambda}}[\lambda], \cdots,  \coprod_ {1\le k_1<\cdots<k_{\lambda-j}\le \lambda} \E^{C_{p_{k_1}\cdots p_{k_{\lambda-j}}}}[\lambda-j], \cdots, \E.
		\]
		Taking equivariant homotopy groups, we obtain an $E_1$-spectral sequence called the \emph{equivariant Atiyah-Hirzebruch spectral sequence} (eAHSS):
		\begin{equation}\label{eq:cell-ss}
			E_1^{s,t}=\bigoplus_{1\le k_1<\cdots<k_s\le \lambda} \pi_{t}\E^{C_{p_{k_1}\cdots p_{k_{s}}}} \Longrightarrow \pi_{s+t}\map^{C_m}\left( \M (\psi_m), \E\right),\quad d_r^{s,t}\colon E_r^{s,t}\to E_{r}^{s-r,t+r-1}.
		\end{equation}
	\end{construction}
	\begin{remark}\label{rem:moore} 
		The construction of the Moore spectrum $\M (\psi_m)$ is an equivariant enhancement of the Moore spectrum $\M\Z[\zeta_m]$ which takes into account the $C_m$-action. More precisely $\M (\psi_m)^e \simeq M\Z[\zeta_m]$. By construction of the skeletal filtration, we can extract a chain complex:
		\begin{equation*}
			\Z[C_m]\to \bigoplus_i \Z[C_m/C_{p_i}]\to \cdots \to \Z\left[C_m/ C_{p_1\cdots p_\lambda}\right]. 
		\end{equation*}
		whose homology group concentrated in degree $0$. 
	\end{remark}
	\begin{remark}\label{rem:e2} To our knowledge, the spectral sequence in \eqref{eq:cell-ss} was first constructed by Davis-L\"uck in \cite{Davis-Lueck_1998}. Note that by \cite[Page 236]{Davis-Lueck_1998}, the $E_2$-page of the spectral sequence \eqref{eq:cell-ss} computes Bredon cohomology with coefficients in (abelian group valued) Mackey functors:
		\[
		E_2^{s,t} \cong \H^s_{C_m}(\M (\psi_m); \underline{\pi}_t\E).
		\]
		This is analogous to the story non-equivariantly where the cellular filtration on $X$ gives an $E_1$-refinement of the $E_2$-page of the Atiyah-Hirzebruch spectral sequence for computing the value of an generalized cohomology theory on $X$. 
	\end{remark}
	\begin{remark}\label{rem:eMoore_homology}
		If we had set $\M(\chi):=\phi_\chi^*\M(\psi_m)$ without taking the Spanier-Whitehead dual, we will instead obtain a \emph{homological} equivariant Atiyah-Hirzebruch spectral sequence with $E_2$-page:
		\begin{equation*}
			E^2_{s,t} \cong \H_s^{C_m}(\M (\psi_m); \underline{\pi}_t\E)\Longrightarrow \pi_{s+t}^{C_m}(\E\otimes \M(\psi_m)).
		\end{equation*}
	\end{remark}
	\begin{corollary}\label{cor:e1-norm}
		Let $\chi\colon C_m=\aut_X(Y)\hookrightarrow \Cx$ be a primitive character of the pseudo Galois cover $Y\to X$. Consider the equivariant Atiyah-Hirzebruch spectral sequence above for the $C_m$-spectral Mackey functor $\E=\K(Y)$:
		\begin{equation*}
			E_1^{s,t}\Longrightarrow \pi_{t-s}^{C_m}(\K(Y)\otimes \M (\chi)).
		\end{equation*}
		Then in the cases of finite and function fields, we have 
		\begin{equation*}
			\norm_{\Q[\zeta_m]/\Q} L(X,\chi,d-n)=(-1)^{\dim_\Q[(\K_1(Y)\oplus\Z_{\mathrm{triv}})\otimes\chi\otimes \Q]^G}\cdot \frac{\text{Euler number }(E^{*,2n-2d}_1)}{\text{Euler number }(E^{*,2n-1}_1)}.
		\end{equation*}
		In the case of totally real and abelian number fields where $X=\spec \Ocal_{F}$ and $Y=\spec \Ocal_{F'}$, denote the number of real embeddings of $F$ by $r_1(F)$.   We have a precise equality:
		\begin{equation*}
			\norm_{\Q[\zeta_m]/\Q} L(X,\chi,1-2n)=((-1)^n\cdot 2)^{\phi(m)\cdot r_1(F)}\cdot \frac{\text{Euler number }(E^{*,4n-2}_1)}{\text{Euler number }(E^{*,4n-1}_1)}.
		\end{equation*}
		In the case of totally real number fields which are not abelian, then the equality holds up to possible powers of $2$.
	\end{corollary}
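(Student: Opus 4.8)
The plan is to combine three ingredients already in place: the M\"obius Inversion Formula for norms of special $L$-values (\Cref{cor:norm_Weil_L} for finite and function fields, \Cref{prop:Dirichlet_L_Dedekind}(2) for number fields), the classical Quillen--Lichtenbaum Conjectures of \Cref{sec:ql-ff} (equation~\eqref{eqn:QLC_finfld}, \Cref{thm:QLC_curves}, and \Cref{thm:QLC_numfld}(2)--(3)), and the explicit $E_1$-page of the equivariant Atiyah--Hirzebruch spectral sequence of \Cref{constr:cell_ss}. First I would record that, since $\chi$ is primitive, the factorization $\phi_\chi\colon G\twoheadrightarrow C_m$ is an isomorphism, so \eqref{eq:cell-ss} with $\E=\K(Y)$ together with \Cref{constr:span-k} (which computes $\K(Y)^H\simeq\K(Y/_H)$, and noting that automorphisms of $C_m$ preserve all subgroups) gives
\[
E_1^{s,t}=\bigoplus_{1\le k_1<\cdots<k_s\le\lambda}\K_t\bigl(Y/_{C_{p_{k_1}\cdots p_{k_s}}}\bigr).
\]
For the two rows $t$ appearing in the statement every such group is finite (for curves and number fields by Harder's theorem and \Cref{thm:Borel_rank}, using that $\K_{4n-1}(\Ocal_{F'})$ is finite because $F'$ is totally real), so $E_1^{*,t}$ is a bounded complex of finite abelian groups whose Euler number equals $\prod_{s=0}^{\lambda}\bigl(\prod_{k_1<\cdots<k_s}\#\K_t(Y/_{C_{p_{k_1}\cdots p_{k_s}}})\bigr)^{(-1)^s}$, depending only on the orders of the terms and not on the $d_1$-differentials.

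The computation then runs as follows. The M\"obius Inversion Formula writes $\norm_{\Q[\zeta_m]/\Q}L(X,\chi,d-n)$ as $\prod_{j=0}^{\lambda}\bigl(\prod_{k_1<\cdots<k_j}\zeta(Y/_{C_{p_{k_1}\cdots p_{k_j}}},d-n)\bigr)^{(-1)^j}$; substituting the classical QLC replaces $\zeta(Y/_H,d-n)$ by $\varepsilon_H\cdot\#\K_{2n-2d}(Y/_H)/\#\K_{2n-1}(Y/_H)$ with $\varepsilon_H=(-1)^{r(Y/_H)}$ in the finite and function field cases (for finite fields $\varepsilon_H=-1$ and $\#\K_{2n}(Y/_H)=1$), and by $((-1)^n\cdot2)^{r_1((F')^{C_H})}\cdot\#\K_{4n-2}(Y/_H)/\#\K_{4n-1}(Y/_H)$ in the abelian totally real number field case, with only powers of $2$ lost in the non-abelian case by \Cref{thm:QLC_numfld}(3). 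Pulling out the $\K$-theoretic factors produces exactly the claimed ratio of Euler numbers, so the claim reduces to identifying the leftover scalar.

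This identification is the main obstacle. In the number field case it is short: totally realness gives $r_1((F')^{C_H})=r_1(F')/|H|$ and $r_1(F)=r_1(F')/m$, hence $\sum_j(-1)^j\sum_{k_1<\cdots<k_j}r_1((F')^{C_{p_{k_1}\cdots p_{k_j}}})=r_1(F')\prod_{i=1}^{\lambda}(1-p_i^{-1})=\phi(m)\,r_1(F)$, the exponent of $(-1)^n\cdot2$ in the statement. In the finite and function field cases I would show $\prod_j\bigl(\prod_{k_1<\cdots<k_j}(-1)^{r(Y/_{C_{p_{k_1}\cdots p_{k_j}}})}\bigr)^{(-1)^j}=(-1)^{E}$, where $E:=\sum_j(-1)^j\sum_{k_1<\cdots<k_j}\dim_{\Q}W^{C_{p_{k_1}\cdots p_{k_j}}}$ and $W:=(\K_1(Y)\oplus\Z_{\mathrm{triv}})\otimes\Q$, using $r(Y/_H)=\mathrm{rank}\,\K_1(Y/_H)+1=\dim_{\Q}W^H$ (by \Cref{thm:QLC_curves} and the $\K_1$-descent of \Cref{prop:K1_fixedpt}; for finite fields $\K_1(Y)$ is torsion, so $W=\Q_{\mathrm{triv}}$). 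Writing $\dim_{\Q}W^H=\sum_{H\subseteq\ker\psi}\mathrm{mult}_\psi(W\otimes\C)$ and interchanging the sums, the inner alternating sum over $j$ collapses by inclusion--exclusion to $1$ for $\psi$ primitive and to $0$ otherwise, since $C_{p_{k_1}\cdots p_{k_j}}\subseteq\ker\psi$ precisely when $\{k_1,\dots,k_j\}$ is contained in the set of primes dividing $|\ker\psi|$; hence $E=\sum_{\psi\text{ primitive}}\mathrm{mult}_\psi(W\otimes\C)=\phi(m)\,\mathrm{mult}_\chi(W\otimes\C)=\dim_{\Q}[W\otimes\chi\otimes\Q]^G$, the last step because the primitive characters of $C_m$ are the $\gal(\Q(\zeta_m)/\Q)$-conjugates of $\chi$ and $W$ is defined over $\Q$. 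With the three displayed identities established, the identification with $\pi^{C_m}_*(\K(Y)\otimes\M(\chi))$ — i.e.\ the collapse of this spectral sequence at $E_2$ — is deferred to the later sections.
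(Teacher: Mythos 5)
Your proof is correct and follows the paper's approach exactly: M\"obius inversion for the norm of the $L$-value (\Cref{cor:norm_Weil_L}, \Cref{prop:Dirichlet_L_Dedekind}), the classical QLC term by term, and the identification of the resulting alternating product of $\K$-group orders with a ratio of Euler numbers on the $E_1$-page of \eqref{eq:cell-ss}. The only organizational difference is that you establish the sign identity for the finite and function field cases inline by a direct representation-theoretic inclusion--exclusion (interchanging sums over subsets of primes and characters), whereas the paper defers this to \Cref{prop:eQLC_sign}, which in turn references the Euler-characteristic argument of \Cref{thm:twisted_Borel-1d}; the underlying combinatorics is the same.
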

	\begin{proof}
		We showcase the proof of the totally real and abelian field case. The other cases are simpler. 
		
		Set $\E$ to be a spectral Mackey functor of the algebraic $\K$-theory of the pseudo-Galois covering $\spec \Ocal_{F'}\to \spec \Ocal_F$, constructed in \Cref{constr:span-k}. Then we obtain the $E_1$-page of an equivariant Atiyah-Hirzebruch spectral sequence as in \eqref{eq:cell-ss}:
		\begin{equation}\label{eqn:eAHSS_E1}
			E_1^{s,t}=\bigoplus_{1\le k_1<\cdots<k_s\le \lambda} \K_{t}\left(\Ocal_{(F')^{C_{p_{k_1}\cdots p_{k_s}}}}\right)\Longrightarrow \pi_{s+t}^{C_m}\map( \M (\psi_m), \K(\Ocal_{F'})),~ d_r^{s,t}\colon E_r^{s,t}\to E_{r}^{s-r,t+r-1}.
		\end{equation}
		Notice $F'/F$ is $C_m$-Galois extension of totally real number fields, we have $r_1((F')^H)=r_1(F')/\# H$ for any subgroup $H\le C_m$. Combining \eqref{eq:norm-cal}, \eqref{eqn:QLC_real_ab_numfld}, and \eqref{eqn:eAHSS_E1}, we have:\\
		\adjustbox{width=\textwidth}{\begin{minipage}{\linewidth}
				\begin{align*}
					&&&\norm_{\Q[\image \chi]/\Q}L(\Ocal_F,\chi,1-2n)\\
					&=&&\prod_{j=0}^\lambda\left(\prod_{1\le k_1<\cdots<k_j\le \lambda} \zeta_{(F')^{C_{p_{k_1}\cdots p_{k_j}}}}(1-2n)\right)^{(-1)^j}\\
					&=&& \prod_{j=0}^\lambda\left(\prod_{1\le k_1<\cdots<k_j\le \lambda} \left(2\cdot (-1)^{n}\right)^{r_1\left((F')^{C_{p_{k_1}\cdots p_{k_j}}}\right)}\cdot \frac{\# \K_{4n-2}\left(\mathcal{O}_{(F')^{C_{p_{k_1}\cdots p_{k_j}}}}\right)}{\# \K_{4n-1}\left(\mathcal{O}_{(F')^{C_{p_{k_1}\cdots p_{k_j}}}}\right)}\right)^{(-1)^j}\\
					&=&&\prod_{j=0}^\lambda\left(\prod_{1\le k_1<\cdots<k_j\le \lambda} \left((-1)^n\cdot 2\right)^{r_1(F')/p_{k_1}\cdots p_{k_j}}\right)^{(-1)^j}\cdot  \frac{\prod_{j=0}^\lambda\left(\prod_{1\le k_1<\cdots<k_j\le \lambda} \# \K_{4n-2}\left(\mathcal{O}_{(F')^{C_{p_{k_1}\cdots p_{k_j}}}}\right)\right)^{(-1)^j}}{\prod_{j=0}^\lambda\left(\prod_{1\le k_1<\cdots<k_j\le \lambda} \# \K_{4n-1}\left(\mathcal{O}_{(F')^{C_{p_{k_1}\cdots p_{k_j}}}}\right)\right)^{(-1)^j}}\\
					&=&& \left((-1)^n\cdot 2\right)^{r_1(F)\phi(m)}\cdot \frac{\text{Euler number }(E^{*,4n-2}_1,d_1)}{\text{Euler number }(E^{*,4n-1}_1,d_1)}.
				\end{align*}
			\end{minipage}
		}
		The proofs for the other cases are almost the same, except for signs in the finite and function field cases. Those  will be postponed to \Cref{prop:eQLC_sign}.
	\end{proof}
	\subsection{Bredon cohomology of equivariant Moore spectra of abelian characters}\label{sec:moore}    From \Cref{cor:e1-norm}, the mere existence of the cellular $E_1$-spectral sequence gives an interpretation of the norm of special values of these $L$-functions in terms of Euler numbers of the said spectral sequence.  In this subsection, we compute these Bredon cohomology groups. The key point is the already established structure results of these Mackey functors --- namely that they are weak fixed point Mackey functors as proved in \Cref{sec:pgal-descent}. 
	
	The equivariant Moore spectra admits a cell structure whose cells are concentrated in non-positive (homological) degrees. Nonetheless, we show that the cohomology with coefficients in an arbitrary Mackey functor is concentrated only degrees $0$ and $-1$. We will employ the following rather non-standard definition: a \textbf{weak fixed point Mackey functor} is an $\mathsf{Ab}$-valued Mackey functor $\underline{A}$ such that the comparison map $\underline{A}(G/H) \rightarrow \underline{A}(G/e)^{H}$ is an isomorphism. 
	\begin{remark}
		Let $A$ be an abelian with an action by a finite group $G$. In \cite[\S 6]{TW-simple_Mackey}, the fixed point Mackey functor of this action is defined by setting $\underline{A}(G/H)=A^H$, the restriction maps being inclusions of fixed points, and transfer maps being relative trace maps. \Cref{prop:Mack_G_inv} says any $G$-Mackey functor is a \emph{weak} fixed point Mackey functor when the group order is levelwise invertible. However, they are not necessarily  fixed point Mackey functors in the sense above, since their transfers maps are not specified. See \cite[Corollary 6.5]{TW-str_Mackey} for a classification of  $G$-Mackey functors valued in $k$-vector spaces where $|G|$ is invertible in the field $k$.  
	\end{remark}

	We will need the following computation of Bredon cohomology of the equivariant Moore spectra.
	\begin{proposition}\label{prop:Bredon_psi_m_fixedpt} Let $\underline{A}$ be a weak fixed point $C_m$-Mackey functor and write $A:=\underline{A}(G/e)$. Then
		\[
		\H_{C_m}^{*}(\M (\psi_m); \underline{A})=\begin{cases}
			A/\left[\cup_{i=1}^\lambda A^{C_{p_i}}\right], & *=0;\\
			0, & \text{else},
		\end{cases} 
		\]
		where $[\cup_{i=1}^\lambda A^{C_{p_i}}]$ is the additive closure of the subset $\cup_{i=1}^\lambda A^{C_{p_i}}$ in $A$. 
	\end{proposition}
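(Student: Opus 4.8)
The plan is to use the equivariant cellular filtration of $\M(\psi_m)$ from \Cref{constr:cell_ss} to replace Bredon cohomology by a short, explicit complex, and then to analyze that complex. Mapping the skeletal filtration into the Eilenberg--MacLane $C_m$-spectrum $\H\underline{A}$ and using \Cref{rem:e2}, the resulting eAHSS collapses (its input vanishes off the row $t=0$), so $\H^{*}_{C_m}(\M(\psi_m);\underline{A})$ is the cohomology of
\[
D^{\bullet}\colon\ 0\longrightarrow D^{-\lambda}\longrightarrow\cdots\longrightarrow D^{-1}\longrightarrow D^{0}\longrightarrow 0,\qquad D^{-s}=\bigoplus_{1\le k_1<\cdots<k_s\le\lambda}\underline{A}\bigl(C_m/C_{p_{k_1}\cdots p_{k_s}}\bigr),
\]
the differential being the alternating sum of the restriction maps $\underline{A}(C_m/C_{p_T})\to\underline{A}(C_m/C_{p_{T\setminus\{i\}}})$; only restrictions appear because the attaching maps of \Cref{constr:cell_ss} are quotients of orbits (this is exactly why the model of \Cref{con:eMoore} is used). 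For a weak fixed point Mackey functor we have $\underline{A}(C_m/C_{p_T})=A^{C_{p_T}}$ and each restriction is the inclusion $A^{C_{p_T}}\hookrightarrow A^{C_{p_{T\setminus\{i\}}}}$, so $D^{\bullet}$ is the inclusion--exclusion complex of the submodules $A^{C_{p_1}},\dots,A^{C_{p_\lambda}}\subseteq A$. Reading off its top two terms gives at once $\H^{0}_{C_m}(\M(\psi_m);\underline{A})=\operatorname{coker}\bigl(\bigoplus_{i=1}^{\lambda}A^{C_{p_i}}\to A\bigr)=A/\bigl[\cup_{i=1}^{\lambda}A^{C_{p_i}}\bigr]$, while $\H^{s}=0$ for $s>0$ and $s<-\lambda$ is automatic.

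It remains to prove $\H^{-s}_{C_m}(\M(\psi_m);\underline{A})=0$ for $1\le s\le\lambda$, which I would do by induction on $\lambda$. Separating the summands of $D^{\bullet}$ whose index set contains $\lambda$ from those that do not exhibits $D^{\bullet}$, up to a degree shift, as an extension of the inclusion--exclusion complex $D'^{\bullet}$ of $A^{C_{p_1}},\dots,A^{C_{p_{\lambda-1}}}$ inside $A$ by that of $A^{C_{p_1p_\lambda}},\dots,A^{C_{p_{\lambda-1}p_\lambda}}$ inside $A^{C_{p_\lambda}}$ (here $A^{C_{p_ip_\lambda}}=\bigl(A^{C_{p_\lambda}}\bigr)^{C_{p_i}}$); both are complexes of the same shape built from $\lambda-1$ submodules, so by the inductive hypothesis their cohomology lies in degree $0$. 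The long exact sequence then immediately kills $\H^{-s}$ for $s\ge 2$ and identifies $\H^{-1}_{C_m}(\M(\psi_m);\underline{A})$ with the kernel of the connecting map $\H^{0}(D''^{\bullet})\to\H^{0}(D'^{\bullet})$, which is induced by the inclusion $A^{C_{p_\lambda}}\hookrightarrow A$. Hence the whole statement reduces to the \emph{distributivity identity}
\[
A^{C_{p_\lambda}}\cap\Bigl(\sum_{i<\lambda}A^{C_{p_i}}\Bigr)=\sum_{i<\lambda}A^{C_{p_ip_\lambda}}\qquad\text{for an arbitrary }\Z[C_m]\text{-module }A,
\]
and this --- the one place where the factorization $C_m=\prod_j C_{p_j^{\nu_j}}$ into prime-power factors is genuinely used --- is the main obstacle.

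For the distributivity identity I would run a second induction, on the number of summands needed to write a given $a\in A^{C_{p_\lambda}}$ as $a=\sum_{i\in S}b_i$ with $b_i\in A^{C_{p_i}}$ and $S\subseteq\{1,\dots,\lambda-1\}$. Applying the norm $N_{C_{p_\lambda}}=\sum_{g\in C_{p_\lambda}}g$ and using $a\in A^{C_{p_\lambda}}$ gives $p_\lambda a=\sum_{i\in S}N_{C_{p_\lambda}}b_i\in\sum_{i\in S}A^{C_{p_ip_\lambda}}$, since $N_{C_{p_\lambda}}b_i\in A^{C_{p_\lambda}}\cap A^{C_{p_i}}=A^{C_{p_ip_\lambda}}$. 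For a chosen $j\in S$, let $t_j$ generate the $p_j$-primary factor of $C_m$, set $s_j=t_j^{p_j^{\nu_j-1}}-1$ so that $A^{C_{p_j}}=\ker(s_j\colon A\to A)$ and $N_{C_{p_j}}\equiv p_j\pmod{s_j}$ in $\Z[C_m]$, and write $p_j-N_{C_{p_j}}=s_jw_j$ with $w_j$ in that same $p_j$-primary subring. Then $p_j a=N_{C_{p_j}}a+s_jw_j a$; the first term lies in $A^{C_{p_j}}\cap A^{C_{p_\lambda}}=A^{C_{p_jp_\lambda}}$, and $s_jw_j a=\sum_{i\in S\setminus\{j\}}s_jw_jb_i$ (the $i=j$ term dies as $s_jb_j=0$) lies in $A^{C_{p_\lambda}}\cap\sum_{i\in S\setminus\{j\}}A^{C_{p_i}}$, because $s_jw_j$ commutes with $C_{p_\lambda}$ and with every $C_{p_i}$, $i\ne j$. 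The inductive hypothesis places $s_jw_j a$ in $\sum_{i\in S\setminus\{j\}}A^{C_{p_ip_\lambda}}$, so $p_j a\in\sum_{i\in S}A^{C_{p_ip_\lambda}}$ as well; since $\gcd(p_j,p_\lambda)=1$, a $\Z$-linear combination of the two relations yields $a\in\sum_{i\in S}A^{C_{p_ip_\lambda}}$. The base cases $|S|\le 1$ follow from $A^{C_{p_j}}\cap A^{C_{p_\lambda}}=A^{C_{p_jp_\lambda}}$, completing the argument. I expect the only real care to be needed in this distributivity lemma and in checking that the complex $D^{\bullet}$ genuinely involves restriction maps alone.
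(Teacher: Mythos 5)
Your reduction to the inclusion--exclusion (\v{C}ech) complex and the short exact sequence you use to run the induction on $\lambda$ coincide with the paper's; the essential difference is that you isolate and prove the distributivity identity
\[
A^{C_{p_\lambda}}\cap\left(\sum_{i<\lambda}A^{C_{p_i}}\right)=\sum_{i<\lambda}A^{C_{p_ip_\lambda}},
\]
which the paper's argument assumes without justification. This is not cosmetic: the paper's auxiliary lemma is stated for an \emph{arbitrary} finite family of subgroups $A_1,\dots,A_n$ of an abelian group, and its inductive step tacitly replaces $A_n/\sum_{i<n}(A_n\cap A_i)$ by $A_n/\left(A_n\cap\sum_{i<n}A_i\right)$ --- precisely your distributivity claim. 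In that generality the lemma is false: with $A=\Z^2$, $A_1=\Z(1,0)$, $A_2=\Z(0,1)$, $A_3=\Z(1,1)$, all pairwise intersections vanish, the \v{C}ech complex is $0\to\Z^3\to\Z^2\to 0$ with $\H^{-1}\cong\Z\ne 0$, and correspondingly $A_3\cap(A_1+A_2)=A_3\ne 0=(A_3\cap A_1)+(A_3\cap A_2)$. So the paper's proof of its auxiliary lemma has a genuine gap.

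Your repair is correct and uses, exactly where needed, the $\Z[C_m]$-module structure together with the factorization $C_m\cong\prod_j C_{p_j^{\nu_j}}$: the norm identities $N_{C_{p_\lambda}}a=p_\lambda a$ for $a\in A^{C_{p_\lambda}}$ and $p_j-N_{C_{p_j}}=s_jw_j$ with $s_j$ annihilating $A^{C_{p_j}}$, combined with coprimality of $p_j$ and $p_\lambda$, push the secondary induction on $|S|$ through; the base case $A^{C_{p_j}}\cap A^{C_{p_\lambda}}=A^{C_{p_jp_\lambda}}$ is the product decomposition $C_{p_jp_\lambda}\cong C_{p_j}\times C_{p_\lambda}$. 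Your argument is therefore a strictly more careful version of the paper's: the auxiliary lemma should be restricted to families $\{A^{H_i}\}$ of fixed-point subgroups for commuting subgroups $H_i$ of pairwise coprime order, with your distributivity lemma supplied to justify the inductive step.
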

	\begin{proof}
		From the description of the $C_m$-CW structure of $\M (\psi_m)$ in \Cref{constr:cell_ss}, we know that the equivariant cellular chain complex to compute this Bredon cohomology is 
		\begin{align*}
			~&\underline{A}(C_m/C_{p_1\cdots p_\lambda})\to \cdots \to \bigoplus_{1\le k_1<\cdots<k_s\le \lambda} \underline{A}(C_m/C_{p_{k_1}\cdots p_{k_s}}) \to\cdots \to\underline{A}(C_m/e)\\
			=& A^{C_{p_1\cdots p_\lambda}} \to\bigoplus_{k=1}^\lambda A^{C_{p_1\cdots \widehat{p_k}\cdots p_\lambda} } \to \cdots \to\bigoplus_{1\le k_1<\cdots<k_s\le \lambda} A^{C_{p_{k_1}\cdots p_{k_s}}} \to\cdots\to  \bigoplus_{k=1}^\lambda A^{C_{p_k}}\to A,
		\end{align*}
		where the degrees of the first and the last terms are $-l$ and  $0$, respectively. The differentials in this complex are alternating sums of inclusions of fixed points. We want to prove cohomology of this complex is concentrated in the top degree $0$. As $C_{p_1\cdots p_\lambda}$ is canonically isomorphic to the product $C_{p_1}\times \cdots \times C_{p_\lambda}$, we have an identification of subsets (groups) of $A$: 
		\[ A^{C_{p_{k_1}\cdots p_{k_s}}} = \bigcap_{j=1}^s A^{C_{p_{k_j}}}.\] 
		In this sense, the complex is a \v{C}ech complex for the partial covering $\cup A^{C_{p_j}}$ of $A$. The claim is then  a result of the following lemma.
	\end{proof}
	\begin{lemma}
		Let $A$ be an abelian group and $\{A_i\mid 1\le i\le n\}$ be a finite collection of subgroups of $A$. Consider the following \v{C}ech cochain complex $\check{C}^\bullet\{A_i\subseteq A\mid 1\le i\le n\}$:
		\begin{align*}
			0\to \bigcap_{i=1}^n A_i\longrightarrow\bigoplus_{k=1}^n\bigcap_{i\ne k} A_i\longrightarrow \cdots \longrightarrow \bigoplus_{1\le k_1<\cdots<k_s\le n} \bigcap_{i=1}^s A_{k_i}\longrightarrow \cdots \longrightarrow \bigoplus_{k=1}^n A_k \longrightarrow A\to 0,
		\end{align*} 
		where the differentials are alternating sums of inclusions of subgroups (intersections). Suppose the complex is graded so that the degree of $A$ is zero. Then 
		\[\H^*(\check{C}^\bullet\{A_i\subseteq A\mid 1\le i\le n\})=\begin{cases}
			A/ [\cup_{i=1}^n A_i], &*=0;\\
			0, & \text{else}.
		\end{cases}\]
	\end{lemma}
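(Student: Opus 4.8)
The plan is to prove this by induction on $n$, the number of subgroups, using the obvious short exact sequence of \v{C}ech complexes that separates off the last subgroup $A_n$. When $n=1$, the complex is simply $0 \to A_1 \to A \to 0$ in degrees $-1$ and $0$, and since the map is the inclusion (hence injective) we get $\H^{-1}=0$ and $\H^0 = A/A_1 = A/[\cup_{i=1}^1 A_i]$, as claimed. For the inductive step, I would write the \v{C}ech complex $\check{C}^\bullet\{A_i \subseteq A \mid 1 \le i \le n\}$ as the total complex of a bicomplex, or more directly observe that it fits into a short exact sequence of cochain complexes
\[
0 \to \check{C}^\bullet\{A_i \cap A_n \subseteq A_n \mid 1\le i\le n-1\}[1] \to \check{C}^\bullet\{A_i\subseteq A \mid 1 \le i \le n\} \to \check{C}^\bullet\{A_i \subseteq A \mid 1\le i\le n-1\} \to 0,
\]
where the first complex is the sub-\v{C}ech-complex built from all index sets containing $n$ (with a degree shift, since including $n$ drops the cardinality bookkeeping by one), and the quotient is the \v{C}ech complex on the first $n-1$ subgroups. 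One checks the maps are the evident inclusions and projections and are compatible with the alternating-sum differentials.

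Next I would run the long exact sequence in cohomology. By the inductive hypothesis, $\check{C}^\bullet\{A_i \subseteq A \mid 1 \le i \le n-1\}$ has cohomology $A/[\cup_{i=1}^{n-1} A_i]$ in degree $0$ and zero elsewhere; and $\check{C}^\bullet\{A_i \cap A_n \subseteq A_n \mid 1\le i\le n-1\}$, applied to the ambient group $A_n$ with its $n-1$ subgroups $A_i \cap A_n$, has cohomology $A_n/[\cup_{i=1}^{n-1}(A_i \cap A_n)]$ in degree $0$ and zero elsewhere — so after the shift by $1$ it contributes only in degree $-1$. The long exact sequence therefore immediately forces $\H^{*} = 0$ for $* \le -2$ and, comparing the degree $-1$ and degree $0$ terms, gives a four-term exact sequence
\[
0 \to \H^{-1}\bigl(\check{C}^\bullet\{A_i\subseteq A\}\bigr) \to A_n/[\textstyle\bigcup_{i=1}^{n-1}(A_i\cap A_n)] \xrightarrow{\ \delta\ } A/[\textstyle\bigcup_{i=1}^{n-1}A_i] \to \H^{0}\bigl(\check{C}^\bullet\{A_i\subseteq A\}\bigr) \to 0.
\]
The connecting map $\delta$ is induced by the inclusion $A_n \hookrightarrow A$; its image is exactly the image of $A_n$ in the quotient $A/[\cup_{i=1}^{n-1}A_i]$, so the cokernel is $A/[\cup_{i=1}^{n}A_i]$, giving the desired computation of $\H^0$. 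It remains to show $\delta$ is injective, i.e.\ $\H^{-1}=0$: an element of $A_n/[\cup_{i=1}^{n-1}(A_i\cap A_n)]$ in the kernel of $\delta$ is represented by some $a \in A_n$ lying in $[\cup_{i=1}^{n-1}A_i]$, and I must show $a$ already lies in $[\cup_{i=1}^{n-1}(A_i\cap A_n)]$.

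The injectivity of $\delta$ is the one place where it is not purely formal, and this is where I expect the real content to sit. The issue is that the additive closure operation $[\,\cdot\,]$ does not commute with intersection for general abelian groups: an element $a\in A_n$ that is a sum $a = b_1 + \cdots + b_r$ with each $b_j \in A_{i_j}$ need not be expressible as a sum of elements that each lie in $A_{i_j}\cap A_n$. However, I believe this does go through in the present setting because the subgroups in question are the $C_{p_i}$-fixed subgroups $A^{C_{p_i}}$ of a weak fixed point Mackey functor over $C_m$, with the $p_i$ distinct primes: here $A^{C_{p_1\cdots p_\lambda}} = \bigcap_i A^{C_{p_i}}$ and the projectors/averaging structure coming from the Mackey functor (or from coprimality of the orders $p_i$, via an idempotent decomposition after localizing at each prime, as in the proof of \Cref{prop:Mack_G_inv}) let one split a sum according to the fixed-point strata. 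Concretely, I would argue that an element fixed by $C_{p_n}$ which lies in $[\cup_{i<n} A^{C_{p_i}}]$ can be pushed into $[\cup_{i<n} A^{C_{p_i}C_{p_n}}]$ by applying the transfer/restriction idempotent associated to $C_{p_n}$, using that each $A^{C_{p_i}}$ is itself a weak fixed point sub-Mackey-functor stable under that operation. If one prefers to keep the lemma stated for abstract subgroups, an alternative is to impose (and verify in the application) the hypothesis that the family $\{A_i\}$ is "distributive", i.e.\ $[\bigcup_{i\in S} A_i] \cap A_j = [\bigcup_{i\in S}(A_i\cap A_j)]$ for all $S$ and $j$; then the induction closes cleanly and $\H^{-1}=0$ follows formally. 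Either way, the final statement of \Cref{prop:Bredon_psi_m_fixedpt} is then obtained by specializing to $A_i = A^{C_{p_i}}$.
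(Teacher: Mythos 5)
Your strategy coincides with the paper's — induction on $n$ via a short exact sequence of \v{C}ech complexes and the resulting long exact sequence — but your scrutiny of the injectivity of $\delta$ is sharper than the paper's, and in fact uncovers a genuine problem. The paper's one-line justification (``The second term above is equal to $A_n/(A_n\cap[\cup_{i=1}^{n-1}A_i])$'') silently assumes $[\cup_{i<n}(A_i\cap A_n)] = A_n\cap[\cup_{i<n}A_i]$, which is exactly your distributivity condition, and this fails for arbitrary subgroups: take $A=\Z^2$ with $A_1=\Z(1,0)$, $A_2=\Z(0,1)$, $A_3=\Z(1,1)$; all pairwise intersections vanish, the \v{C}ech complex is $0\to\Z^3\to\Z^2\to 0$, and $\H^{-1}\cong\Z\neq 0$, contradicting the lemma as stated. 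So the lemma should carry a distributivity hypothesis, as you propose. You are also right that the hypothesis holds in the paper's application: with $A_i=A^{C_{p_i}}$ for distinct primes $p_i$, the norm $N_{p_n}=\sum_{g\in C_{p_n}}g^*$ lands in $A^{C_{p_n}}$, commutes with the $C_{p_i}$-actions, and restricts to $p_n\cdot\id$ on $A^{C_{p_n}}$; everywhere Propositions~\ref{prop:Bredon_psi_m_fixedpt} and~\ref{prop:Bredon_psi_m_general} invoke the lemma, $p_n$ is invertible on the coefficient group (either $m$ is inverted, or one is working $p$-completely at a prime $p$ coprime to all the $p_i$ in play), so applying $p_n^{-1}N_{p_n}$ to a decomposition $a=\sum b_i$ with $a\in A^{C_{p_n}}$ and $b_i\in A^{C_{p_i}}$ pushes each summand into $A^{C_{p_ip_n}}$ and gives the needed containment.

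One local slip in your write-up: the short exact sequence is oriented backwards. The differential drops elements from the index set, so a summand indexed by a set containing $n$ can map (by dropping $n$) to one indexed by a set not containing $n$; hence the span of summands with $n$ in the index set is a \emph{quotient} complex, not a subcomplex. The subcomplex is $\check{C}^\bullet\{A_i\subseteq A\mid i<n\}$, exactly as in the paper. Your four-term exact sequence is nevertheless the correct one, but it follows from the paper's orientation, not yours — with the roles swapped the long exact sequence would instead (wrongly) yield $\H^{-1}\cong A_n/[\cup_{i<n}(A_i\cap A_n)]$ outright.
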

	\begin{proof}
		We prove this by induction. When $n=0$, the statement is trivial since the cochain complex is $[0\to A\to 0]$ concentrated in degree $0$. 
		
		Suppose the claim has been verified for collections of $n-1$ subgroups of $A$.  Note that we have a short exact sequence of cochain complexes:
		\begin{align*}
			0\to \check{C}^\bullet\{A_i\subseteq A \mid 1\le i\le n-1\}&\longrightarrow \check{C}^\bullet\{A_i\subseteq A\mid 1\le i\le n\} \\&\longrightarrow\check{C}^\bullet\{(A_i\cap A_n)\subseteq A_n\mid 1\le i\le n-1\}[-1]\to 0. 
		\end{align*}
		By the inductive hypothesis, the only non-zero part of the long exact sequence of cohomology groups of this short exact sequence is 
		\begin{align*}
			0\to \H^{-1}\left(\check{C}^\bullet\{A_i\subseteq A\mid 1\le i\le n\}\right)&\longrightarrow A_n/[\cup_{i=1}^{n-1} (A_n\cap A_i)]\\ &\overset{\iota_n}{\longrightarrow} A/[\cup_{i=1}^{n-1} A_i]\longrightarrow  \H^{0}\left(\check{C}^\bullet\{A_i\subseteq A\mid 1\le i\le n\}\right)\to 0. 
		\end{align*}
		The second term above is equal to $A_n/(A_n\cap[\cup_{i=1}^{n-1}  A_i] )$.  Then it is straight forward to check that the map $\iota_n$ is injective with cokernel $A/[\cup_{i=1}^{n} A_i]$. This proves the inductive step. 
	\end{proof}
	\begin{proposition}\label{prop:Bredon_psi_m_general} Let $\underline{A}$ be a $C_m$-Mackey functor. 
		Then
		\[
		\H_{C_m}^{*}(\M (\psi_m); \underline{A}) = 0, \qquad * \not= 0, -1.
		\]
	\end{proposition}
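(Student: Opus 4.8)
The plan is to bootstrap from the weak fixed point case of \Cref{prop:Bredon_psi_m_fixedpt} by a dévissage on $\underline{A}$. Two preliminary observations: since $\M(\psi_m)$ is a \emph{finite} $C_m$-CW spectrum (\Cref{constr:cell_ss}), the functor $\H^{*}_{C_m}(\M(\psi_m);-)$ commutes with filtered colimits of Mackey functors, so we may assume $\underline{A}$ is finitely generated; and the class of Mackey functors whose Bredon cohomology vanishes outside degrees $0$ and $-1$ is closed under extensions and retracts, by the long exact sequence attached to a short exact sequence of Mackey functors (in each degree $*\notin\{0,-1\}$ the group $\H^{*}$ of the middle term is squeezed between two vanishing groups).

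Next I would compare $\underline{A}$ with the weak fixed point Mackey functor it generates. Set $A:=\underline{A}(C_m/e)$ with its $C_m$-action and let $\underline{A}^{\mathrm{wfp}}$ be the weak fixed point Mackey functor with $\underline{A}^{\mathrm{wfp}}(C_m/H)=A^{H}$. The comparison maps $\underline{A}(C_m/H)\to \underline{A}(C_m/e)^{H}$ are compatible with restrictions, and also with transfers thanks to the Mackey double coset identity $\operatorname{Res}^{H}_{e}\circ\operatorname{tr}^{H}_{e}=\sum_{g\in H}g$ on $A$; hence they assemble into a morphism of Mackey functors $\Theta\colon\underline{A}\to\underline{A}^{\mathrm{wfp}}$ that is an isomorphism at the underlying level. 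Factoring $\Theta$ as $\underline{A}\twoheadrightarrow\operatorname{im}\Theta\hookrightarrow\underline{A}^{\mathrm{wfp}}$ and feeding the two resulting short exact sequences into the long exact sequence in Bredon cohomology, while invoking \Cref{prop:Bredon_psi_m_fixedpt} for $\underline{A}^{\mathrm{wfp}}$ (whose Bredon cohomology is concentrated in degree $0$), reduces the statement to Mackey functors $\underline{M}$ with $\underline{M}(C_m/e)=0$.

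For such $\underline{M}$ the top cell $C_m/e$ of $\M(\psi_m)$ contributes nothing, so $\H^{0}$ already vanishes and one must kill $\H^{-2},\dots,\H^{-\lambda}$. Here I would induct on the number $\lambda$ of distinct prime factors of $m$ using the external product presentation $\M(\psi_m)=\M(\psi_{p^{\nu}})\boxtimes\M(\psi_{m'})$ with $m'=m/p^{\nu}$ (\Cref{con:eMoore}) together with the defining cofiber sequence $(C_{p^{\nu}}/e)_{+}\to (C_{p^{\nu}}/C_{p})_{+}\to \M(\psi_{p^{\nu}})[1]$. Boxing the latter with $\M(\psi_{m'})[\lambda-1]$ exhibits $\M(\psi_m)[\lambda]$ as the cofiber of a map
\[
\Ind_{C_{m'}}^{C_m}\bigl(\M(\psi_{m'})[\lambda-1]\bigr)\longrightarrow \Ind_{C_{pm'}}^{C_m}\bigl(\operatorname{infl}\M(\psi_{m'})[\lambda-1]\bigr),
\]
where $C_{pm'}=C_{p}\times C_{m'}\le C_m$ and $\operatorname{infl}$ denotes inflation along $C_{pm'}\twoheadrightarrow C_{pm'}/C_{p}\cong C_{m'}$. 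Applying $\H^{*}_{C_m}(-;\underline{M})$ and using Wirthmüller/Frobenius reciprocity $\H^{*}_{C_m}(\Ind_{H}^{C_m}X;\underline{M})\cong\H^{*}_{H}(X;\operatorname{res}_{H}\underline{M})$, one rewrites both outer terms as Bredon cohomology of $\M(\psi_{m'})[\lambda-1]$: for the first the coefficient is $\operatorname{res}_{C_{m'}}\underline{M}$, which again vanishes at the underlying level, so by induction its cohomology sits in a single degree; for the second, since the categorical $C_p$-fixed points of the Eilenberg--MacLane $C_{pm'}$-spectrum on $\operatorname{res}_{C_{pm'}}\underline{M}$ is again Eilenberg--MacLane, the coefficient is the $C_{m'}$-Mackey functor $L\mapsto\underline{M}(C_m/(C_{p}\times L))$, to which the general inductive hypothesis applies. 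Tracking these two contributions through the long exact sequence of the cofiber sequence then pins the cohomology of $\M(\psi_m)$ into degrees $0$ and $-1$.

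The hard part is this last step. The inflated factor does not have coefficients vanishing at the underlying level, so one genuinely needs the \emph{general} case of the inductive hypothesis there, and the long exact sequence delivers the desired vanishing only once one knows that the map which the defining cofiber map induces on the $\H^{-1}$-level coefficient Mackey functors --- essentially the restriction $\operatorname{res}^{C_{p}\times L}_{L}$ --- is injective on the relevant $\H^{-1}$'s. This is precisely where the finer descent information of \Cref{sec:pgal-descent} must be brought in: the algebraic $\K$-theory Mackey functors $\underline{\K}_{t}(Y)$ relevant to the main theorem have restriction maps controlled by the (pseudo-)Galois descent statements proved there, and it is this control that makes the inductive step close in the cases needed.
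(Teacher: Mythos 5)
Your dévissage (comparing $\underline{A}$ to the weak fixed point Mackey functor $\underline{A}^{\mathrm{wfp}}$ it generates, reducing to Mackey functors that vanish on $C_m/e$, then inducting on $\lambda$) is genuinely different from the paper's, and your closing assessment that the inductive step does not close is correct. The sticking point you identify is real: the inflated factor carries the $C_{m'}$-coefficient Mackey functor $L\mapsto\underline{M}(C_m/(C_p\times L))$, which does not vanish at the underlying level, so the inductive hypothesis only confines its Bredon cohomology to degrees $\{0,-1\}$, and the long exact sequence then allows $\H^{-2}_{C_m}(\M(\psi_m);\underline{M})$ to inject into that stray $\H^{-1}$. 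Appealing to the $\K$-theoretic descent of \Cref{sec:pgal-descent} to close the gap is not available here, since the proposition is asserted for \emph{all} $C_m$-Mackey functors.

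The paper instead uses an arithmetic rather than representation-theoretic dévissage, and that is what makes the cofiber argument close: first invert $m$ so that \Cref{prop:Mack_G_inv} makes $\underline{A}$ weak fixed point and \Cref{prop:Bredon_psi_m_fixedpt} applies; then for each $p\mid m$ work $p$-completely and use the cofiber sequence \eqref{eqn:eq_Moore_cofib}. The point you are missing is that after $p$-completion the induced $C_{m'}$-Mackey functors $\underline{A}(C_{p^v}\times-)$ and $\underline{A}(C_{p^v}/C_p\times-)$ are \emph{automatically} weak fixed point, since $m'=m/p^v$ is coprime to $p$ and hence its divisors are invertible $p$-adically; thus both terms in the long exact sequence are concentrated in a single degree $0$, not in $\{0,-1\}$, and the argument closes. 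Incidentally, your instinct that some extra structure on $\underline{A}$ is genuinely needed seems right: for the Burnside Mackey functor with $m=pq$, the kernel of the restriction map from the Burnside ring of $C_{pq}$ to those of $C_p$ and $C_q$ has rank one, generated by $pq\cdot 1-p[C_{pq}/C_p]-q[C_{pq}/C_q]+[C_{pq}/e]$, so $\H^{-2}\neq 0$; the same example shows that \Cref{prop:Mack_G_inv} needs an implicit cohomologicality-type hypothesis (such as $\tr^H_e\circ\mathrm{Res}^H_e=|H|$) to hold. This does not affect the paper's applications to the $\underline{\K}_t$ Mackey functors of pseudo-Galois covers, but it does mean the proposition as stated for arbitrary $\underline{A}$ is asking for more than either proof can deliver.
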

	\begin{proof}
		By \Cref{prop:Mack_G_inv} and \Cref{prop:Bredon_psi_m_fixedpt}, the claim holds for $\H_{C_m}^{*}(\M (\psi_m); \underline{A}[1/m])$. It remains to verify it after completing at each prime $p$ dividing $m$. Write $m=p^v m'$ where $m'$ is coprime to $p$. Suppose $\underline{A}(C_m/H)$ is $p$-complete for all subgroups $H$ of $C_m$. By construction, $\M (\psi_m)$ sits in a $C_m$-equivariant cofiber sequence:
		\begin{equation}\label{eqn:eq_Moore_cofib}
			\M (\psi_m):=\M (\psi_{p^v})\boxtimes \M (\psi_{m'})\longrightarrow \left(C_{p^v}\right)_+\boxtimes \M (\psi_{m'})\longrightarrow \left(C_{p^v}/C_p\right)_+\boxtimes \M (\psi_{m'})
		\end{equation}
		By the induction formulas for Bredon cohomology, we have isomorphisms:
		\begin{align*}
			\H^*_{C_m}\left(\left(C_{p^v}\right)_+\boxtimes \M (\psi_{m'});\underline{A}\right)& \cong \H^*_{C_{m'}}\left( \M (\psi_{m'});\underline{A}(C_{p^v}\times-)\right),\\
			\H^*_{C_m}\left(\left(C_{p^v}/C_p\right)_+\boxtimes \M (\psi_{m'});\underline{A}\right)& \cong \H^*_{C_{m'}}\left( \M (\psi_{m'});\underline{A}(C_{p^v}/C_p\times -)\right).
		\end{align*}
		Notice both $C_{m'}$-Mackey functors $\underline{A}(C_{p^v}\times -)$ and $\underline{A}(C_{p^v}/C_p\times -)$ are fixed point Mackey functor, since they are assumed to be $p$-complete.  \Cref{prop:Bredon_psi_m_fixedpt} implies the Bredon cohomology groups of $\M (\psi_{m'})$ with coefficients in them are concentrated in degree $0$.  The long exact sequence of the Bredon cohomology of the cofiber sequence \eqref{eqn:eq_Moore_cofib} with coefficients in $\underline{A}$ is then:
		\begin{align*}
			\cdots \to 0 \to \H^{-1}_{C_m}(\M (\psi_m);\underline{A})&\longrightarrow \H^0_{C_{m'}}( \M (\psi_{m'});\underline{A}(C_{p^v}/C_p\times -))\\&\longrightarrow \H^0_{C_{m'}}( \M (\psi_{m'});\underline{A}(C_{p^v}\times -))\longrightarrow \H^{0}_{C_m}(\M (\psi_m);\underline{A})\to 0\to \cdots.
		\end{align*}
		Hence the Bredon cohomology groups $\H^{*}_{C_m}\left(\M (\psi_m);\underline{A}\right)$ are concentrated in degrees $0$ and $-1$. 
	\end{proof}
	\begin{remark}\label{rem:Bredon_res}
		In the computations of Bredon cohomology above, only the formulas for restriction maps in the Mackey functor $\underline{A}$ are required. Transfer maps are (implicitly) used  in proving \Cref{prop:Mack_G_inv}, which is needed in the proof of \Cref{prop:Bredon_psi_m_general}. 
	\end{remark}
	\section{Proof of the Main Theorem}
	We now come to the proof of the main \Cref{thm:main}. As a consequence of the computations of Bredon cohomology of equivariant Moore spectra in \Cref{sec:moore}, we see that the spectral sequence collapses at the $E_2$-page and thus gives us the main result relating equivariant $K$-groups to norms of special values of Artin $L$-functions of primitive abelian characters. Bootstrapping from this base case, we first extend our result to non-primitive characters in \Cref{prop:desc} and then to higher dimensional representations in \Cref{subsec:hd_rep}. 
	
	Our method also yields a generalization of Borel's \Cref{thm:Borel_rank} to Artin $L$-functions as in \Cref{thm:twisted_Borel-1d}  and \Cref{prop:twisted_Borel}, which recovers results of Gross and the second author in \cite{Gross_Artin_L,nz_QB_AL}. 
	\subsection{Assembling the proof for abelian characters}
	We are now ready to assemble the proof of \Cref{thm:main} when $\chi$ is an abelian character. There are three cases: finite, function, and totally real number fields. We will first prove the primitive character cases and then use descent to generalize to non-primitive characters. 
	\begin{theorem}\label{thm:eQLC_finfld}
		Let $\chi\colon \gal\left(\Fbb_{q^m}/\Fq\right)\hookrightarrow\Cx$ be a primitive/injective character. Then 
		\begin{equation*}
			\norm_{\Q[\image \chi]/\Q} L(\Fq,\chi,-n)=\pm \frac{\# \pi_{2n}^{C_m}\left(\K(\Fbb_{q^m})\otimes \M (\chi)\right)}{\# \pi_{2n-1}^{C_m}\left( \K(\Fbb_{q^m})\otimes \M (\chi)\right)} \qquad n \geq 1,
		\end{equation*} 
		where the minus sign is taken only $\chi$ is trivial. 
	\end{theorem}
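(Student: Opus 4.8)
The plan is to combine the $L$-function side of \Cref{cor:e1-norm} with a collapse analysis of the equivariant Atiyah--Hirzebruch spectral sequence (eAHSS) of \Cref{constr:cell_ss} applied to the genuine $C_m$-spectrum $\E=\K(\Fbb_{q^m})$. Since $\Fbb_{q^m}/\Fq$ is an honest $C_m$-Galois extension, $\K(\Fbb_{q^m})$ carries a genuine $C_m$-spectrum structure by Barwick \cite{Barwick_spectral_Mackey_I} and Merling \cite{Merling_equivar_alg_k} (or by \Cref{constr:span-k}), with $\K(\Fbb_{q^m})^{C_{p_{k_1}\cdots p_{k_s}}}=\K\big(\Fbb_{q^{m/(p_{k_1}\cdots p_{k_s})}}\big)$. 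Feeding this into \Cref{cor:e1-norm} with $X=\spec\Fq$, $Y=\spec\Fbb_{q^m}$ and $d=0$ gives
\begin{equation*}
\norm_{\Q[\image\chi]/\Q}L(\Fq,\chi,-n)=(-1)^{\varepsilon}\cdot\frac{\text{Euler number}(E_1^{*,2n})}{\text{Euler number}(E_1^{*,2n-1})},\qquad \varepsilon=\dim_\Q\big[(\K_1(\Fbb_{q^m})\oplus\Z_{\mathrm{triv}})\otimes\chi\otimes\Q\big]^{C_m},
\end{equation*}
where $E_1^{s,t}=\bigoplus_{1\le k_1<\cdots<k_s\le\lambda}\K_t\big(\Fbb_{q^{m/(p_{k_1}\cdots p_{k_s})}}\big)\Rightarrow\pi_{t-s}^{C_m}(\K(\Fbb_{q^m})\otimes\M(\chi))$.

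Next I would turn the page twice. Because $\K_{2n}$ of every finite field vanishes for $n\ge1$ and $\K_{2n-1}$ of a finite field is finite, the rows $t=2n$ and $t=2n-1$ of the $E_1$-page consist of finite abelian groups, so their Euler numbers are defined and are preserved under passage to the $E_2$-page, where $E_2^{s,t}\cong\H^s_{C_m}(\M(\psi_m);\underline{\K}_t(\Fbb_{q^m}))$ by \Cref{rem:e2}. By \Cref{cor:finite_fixed} each Mackey functor $\underline{\K}_t(\Fbb_{q^m})$ is a weak fixed point Mackey functor (including $t=0$, where it is the constant Mackey functor $\underline{\Z}$), so \Cref{prop:Bredon_psi_m_fixedpt} shows $E_2^{s,t}$ is concentrated in $s=0$ for these $t$; in particular $E_2^{*,2n}=0$ and $E_2^{0,2n-1}=\K_{2n-1}(\Fbb_{q^m})/\big[\bigcup_{i=1}^\lambda\K_{2n-1}(\Fbb_{q^m})^{C_{p_i}}\big]$, so the Euler-number ratio equals $1/\#E_2^{0,2n-1}$. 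On the other hand \Cref{prop:Bredon_psi_m_general} gives $E_2^{s,t}=0$ for all $s\notin\{0,-1\}$, whence every differential $d_r$ with $r\ge2$ vanishes and $E_2=E_\infty$; the abutment $\pi_N^{C_m}(\K(\Fbb_{q^m})\otimes\M(\chi))$ then has associated graded $E_\infty^{0,N}\oplus E_\infty^{-1,N-1}$, and the second summand is $\H^{-1}_{C_m}(\M(\psi_m);\underline{\K}_{N-1}(\Fbb_{q^m}))=0$ again by \Cref{prop:Bredon_psi_m_fixedpt} (the coefficient Mackey functor being weak fixed point). Hence $\pi_N^{C_m}(\K(\Fbb_{q^m})\otimes\M(\chi))\cong\H^0_{C_m}(\M(\psi_m);\underline{\K}_N(\Fbb_{q^m}))$ for $N\in\{2n-1,2n\}$; this is $0$ when $N=2n$ and has order $\#E_2^{0,2n-1}$ when $N=2n-1$.

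Assembling the two sides, $\norm_{\Q[\image\chi]/\Q}L(\Fq,\chi,-n)=(-1)^\varepsilon\cdot\#\pi_{2n}^{C_m}(\K(\Fbb_{q^m})\otimes\M(\chi))/\#\pi_{2n-1}^{C_m}(\K(\Fbb_{q^m})\otimes\M(\chi))$. It remains to evaluate the sign: since $\K_1(\Fbb_{q^m})=\Fbb_{q^m}^\times$ is finite, $\varepsilon=\dim_\Q(\chi\otimes\Q)^{C_m}$, and $C_m$ acts on the $\phi(m)$-dimensional $\Q$-vector space $\chi\otimes\Q$ through the faithful cyclotomic character $\psi_m$, so this dimension is $0$ unless $m=1$, in which case it is $1$; as $\chi$ is injective, $m=1$ precisely when $\chi$ is trivial, giving the stated sign (alternatively one invokes \Cref{prop:eQLC_sign}). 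The step I expect to be delicate is the identification of $\pi_{2n-1}^{C_m}$ with the single group $\H^0_{C_m}(\M(\psi_m);\underline{\K}_{2n-1}(\Fbb_{q^m}))$: it is the weak-fixed-point property of the $\K$-theory Mackey functors (\Cref{cor:finite_fixed}), not merely the $E_2$-sparsity of \Cref{prop:Bredon_psi_m_general}, that forces the potential $\H^{-1}$-contribution to the abutment to vanish and makes the homotopy-group ratio coincide with the Euler-number ratio exactly.
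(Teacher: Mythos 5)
Your argument is correct and follows essentially the same route as the paper's own proof: apply \Cref{cor:e1-norm} with $d=0$ to relate the norm of $L(\Fq,\chi,-n)$ to an Euler-number ratio on the $E_1$-page, pass to Bredon cohomology on the $E_2$-page, invoke the weak-fixed-point property from \Cref{cor:finite_fixed} together with \Cref{prop:Bredon_psi_m_fixedpt} to concentrate everything in $s=0$, and conclude collapse. The only (welcome) refinements are that you spell out explicitly why the potential $E_\infty^{-1,N-1}$ extension vanishes, and you compute the sign directly from $\dim_\Q(\chi\otimes\Q)^{C_m}$ rather than just deferring to \Cref{prop:eQLC_sign}; both match the paper.
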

	\begin{proof}
		Consider the $E_1$-page of the equivariant Atiyah-Hirzebruch spectral sequence:
		\begin{equation*}
			E^1_{s,t}=\bigoplus_{1\le k_1<\cdots<k_s\le \lambda} \K_{t}\left(\F_{q^{m/{(p_{k_1}\cdots p_{k_s})}}}\right)\Longrightarrow \pi_{s+t}^{C_m}\map\left( \M (\psi_m), \K(\F_{q^m})\right), d^r_{s,t}\colon E^r_{s,t}\to E^{r}_{s-r,t+r-1}.
		\end{equation*}
		By \Cref{cor:e1-norm}, we have an equality:
		\begin{equation*}
			\norm_{\Q[\image \chi]/\Q}L(\Fq,\chi,-n)=(-1)^{\dim_\Q \chi^{C_m}}\cdot \frac{\text{Euler number }(E^{*,2n}_1,d_1)}{\text{Euler number }(E^{*,2n-1}_1,d_1)}.
		\end{equation*}	
		The $E_2$-page of the spectral sequence is 
		\begin{equation*}
			E_2^{s,t}=\H_{C_m}^s(\M(\psi_m);\underline{\K}_t(\F_{q^m}))\Longrightarrow  \pi_{s+t}^{C_m}\map\left( \M (\psi_m), \K(\F_{q^m})\right).
		\end{equation*}
		\Cref{cor:finite_fixed} says the algebraic $\K$-groups $\underline{\K}_t(-)$ is a weak fixed point Mackey functor of finite abelian groups. By \Cref{prop:Bredon_psi_m_fixedpt}, we have 
		$\H_{C_m}^s(\M(\psi_m);\underline{\K}_t(\F_{q^m}))\ne 0$ unless $s=0$ and $t>0$ is odd.  It follows that the spectral sequence collapses on the $E_2$-page and we have
		\begin{align*}
			\frac{\text{Euler number }(E^{*,2n}_1,d_1)}{\text{Euler number }(E^{*,2n-1}_1,d_1)}=\frac{\# E_2^{0,2n}}{\# E_2^{0,2n-1}}&=\frac{\# \pi_{2n}^{C_m}\map\left( \M (\psi_m), \K(\F_{q^m})\right)}{\# \pi_{2n-1}^{C_m}\map\left( \M (\psi_m), \K(\F_{q^m})\right)}=\frac{\# \pi_{2n}^{C_m}\left(\K(\Fbb_{q^m})\otimes \M (\chi)\right)}{\# \pi_{2n-1}^{C_m}\left( \K(\Fbb_{q^m})\otimes \M (\chi)\right)}.
		\end{align*}
	\end{proof}
	\begin{corollary}\label{cor:size-ff-1d}
		We have
		\begin{equation*}
			\pi_{*}^{C_m}(\K(\Fbb_{q^m})\otimes \M (\chi))=\begin{cases}
				\Z/\mathrm{Norm}_{\Q[\image \chi]/\Q} L(\Fq,\chi,-n)^{-1}\cong \Z[\image \chi]/ L(\Fq,\chi,-n)^{-1}, & *=2n-1>0;\\
				0, &\text{else.}
			\end{cases}
		\end{equation*}
	\end{corollary}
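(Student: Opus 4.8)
The plan is to bootstrap entirely off \Cref{thm:eQLC_finfld} and its proof. First I would recall from that proof that, for a primitive character $\chi\colon C_m\hookrightarrow\Cx$, the groups $\pi^{C_m}_*(\K(\Fbb_{q^m})\otimes\M(\chi))$ are the abutment of the equivariant Atiyah--Hirzebruch spectral sequence \eqref{eq:cell-ss} with $\E=\K(\Fbb_{q^m})$, whose $E_2$-page is $E_2^{s,t}=\H^s_{C_m}(\M(\psi_m);\underline{\K}_t(\Fbb_{q^m}))$ (for primitive $\chi$ the map $\phi_\chi$ is an isomorphism, so the cellular filtration built from $\M(\psi_m)$ does compute the homotopy of $\K(\Fbb_{q^m})\otimes\M(\chi)$, exactly as in the last line of the proof of \Cref{thm:eQLC_finfld}). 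By \Cref{cor:finite_fixed} the Mackey functor $\underline{\K}_t(\Fbb_{q^m})$ is a weak fixed point Mackey functor, so \Cref{prop:Bredon_psi_m_fixedpt} applies; combined with Quillen's computation \eqref{eqn:K_finfld} — which makes $\underline{\K}_t$ the zero Mackey functor for $t$ even positive and for $t<0$, and the constant Mackey functor $\underline{\Z}$ for $t=0$ (for which $\H^0_{C_m}(\M(\psi_m);\underline{\Z})=\Z/[\cup_i\Z]=0$) — it follows that $E_2^{s,t}=0$ unless $s=0$ and $t=2n-1>0$. Hence the spectral sequence collapses, which already gives $\pi^{C_m}_*(\K(\Fbb_{q^m})\otimes\M(\chi))=0$ for $*$ even or $*\le 0$, and $\pi^{C_m}_{2n-1}(\K(\Fbb_{q^m})\otimes\M(\chi))\cong\H^0_{C_m}\bigl(\M(\psi_m);\underline{\K}_{2n-1}(\Fbb_{q^m})\bigr)$ for $n\ge 1$.

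Next I would pin down the group in the surviving degree. By \Cref{prop:Bredon_psi_m_fixedpt} the right-hand side is a \emph{quotient} of $\K_{2n-1}(\Fbb_{q^m})\cong\Z/(q^{mn}-1)$, hence cyclic. To compute its order without any independent number theory, I would feed back \Cref{thm:eQLC_finfld} together with the vanishing $\pi^{C_m}_{2n}=0$ just proved: the displayed equality there becomes $\norm_{\Q[\image\chi]/\Q}L(\Fq,\chi,-n)=\pm\,1/\#\pi^{C_m}_{2n-1}(\K(\Fbb_{q^m})\otimes\M(\chi))$, so by multiplicativity of the norm $\#\pi^{C_m}_{2n-1}(\K(\Fbb_{q^m})\otimes\M(\chi))=\bigl|\norm_{\Q[\image\chi]/\Q}L(\Fq,\chi,-n)^{-1}\bigr|$. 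A cyclic group of that order is $\Z/\norm_{\Q[\image\chi]/\Q}L(\Fq,\chi,-n)^{-1}$, which is the first isomorphism in the statement.

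It then remains to match this with $\Z[\image\chi]/L(\Fq,\chi,-n)^{-1}$, which I expect to be the only mildly fiddly step — pure commutative algebra in the cyclotomic ring. Since $\chi$ is primitive, $\zeta:=\chi(\Fr_q)$ is a primitive $m$-th root of unity, $\image\chi=\mu_m$, $\Z[\image\chi]=\Z[\zeta_m]$ is the ring of integers of $\Q(\zeta_m)$, and $L(\Fq,\chi,-n)^{-1}=1-\zeta q^n$ by \Cref{exam:fr-fq}. The cardinality of $\Z[\zeta_m]/(1-\zeta q^n)$ is the absolute norm of $1-\zeta q^n$, which matches the order found above; and $\Z[\zeta_m]/(1-\zeta q^n)$ is cyclic because $\Phi_m(q^n)=\prod_{\gcd(j,m)=1}(1-\zeta_m^j q^n)$ lies in the ideal $(1-\zeta q^n)$ while $q^n$ is a unit modulo $\Phi_m(q^n)$ (as $\Phi_m(q^n)\mid q^{mn}-1$), so $\Z[\zeta_m]/(1-\zeta q^n)\cong\Z[\zeta_m]/(1-\zeta q^n,\,\Phi_m(q^n))$, in which $\zeta$ is congruent to an integer. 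The degenerate case $m=1$ (trivial $\chi$) is immediate from \eqref{eqn:K_finfld}.

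In short, there is no serious obstacle: the heavy lifting already sits in \Cref{thm:eQLC_finfld}, \Cref{prop:Bredon_psi_m_fixedpt}, and \Cref{cor:finite_fixed}. The one genuinely new observation is that the collapsing spectral sequence, combined with the evenness vanishing, pins down $\pi^{C_m}_*$ on the nose rather than merely a ratio of cardinalities; everything else is the elementary cyclotomic bookkeeping above.
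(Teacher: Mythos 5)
Your proposal is correct and follows essentially the same route as the paper: the paper's own proof is the terse combination of the vanishing $\pi^{C_m}_{2n}=0$ (from $\K_{2n}(\Fq)=0$ plus the collapsed spectral sequence) with \Cref{thm:eQLC_finfld}, leaving the cyclicity of $\H^0_{C_m}(\M(\psi_m);\underline{\K}_{2n-1})$ and the identification $\Z/\mathrm{Norm}(1-\zeta q^n)\cong\Z[\zeta_m]/(1-\zeta q^n)$ implicit, and you spell out exactly those steps. The only micro-nitpick is that $\prod_{\gcd(j,m)=1}(1-\zeta_m^j q^n)$ agrees with $\Phi_m(q^n)$ only up to a sign $(-1)^{\phi(m)}\prod\zeta_m^j$, which is harmless since both the norm and the ideal generated are unaffected.
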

	\begin{proof}
		Notice $L(\Fq,\chi,-n)^{-1}=1-\chi(\Fr)q^n \in \Z[\image \chi]$ is an algebraic integer. As $\K_{2n}(\Fq)=0$ for any finite field $\Fq$ and $n>0$, we have 
		\begin{equation*}
			\pi_{2n}^{C_m}\map\left( \M (\psi_m), \K(\F_{q^m})\right)\cong \H^0_{C_m}(\M(\psi_m);\underline{\K}_{2n}(\F_{q^m}))=0.
		\end{equation*} 
		The claim then follows from \Cref{thm:eQLC_finfld}.
	\end{proof}
	\begin{remark}
		We note that our computation relies on the particular cell structure of the equivariant Moore spectrum $\M(\chi)$. Interested readers are encouraged to carry out the computation using other models of equivariant Moore spectra. In the $C_2$-extension case, this amounts to computing $\mathrm{RO}(C_2)^\wedge_{(1-\sigma)}$-graded $C_2$-equivariant algebraic $\K$-groups of $\F_{q^2}$ as discussed in \Cref{rem:eMoore_uniqueness}.
	\end{remark}
	Next, we prove the twisted QLC for function and totally real number fields. 
	\begin{theorem}\label{thm:eQLC_numfld}
		Let $Y\to X$ be a pseudo $C_m$-Galois cover of integral, normal, and affine schemes of dimension $d=1$ and $\chi\colon C_m\hookrightarrow \Cx$ be a primitive character. 
		\begin{enumerate}
			\item When $Y$ and $X$ are affine curves over a finite field $\Fq$, we have 
			\begin{equation*}
				\norm_{\Q[\image \chi]/\Q}(L(X,\chi,1-n))= (-1)^{\dim_\Q [(\K_1(Y)\oplus \Z_{\mathrm{triv}})\otimes \chi]^{C_m}} \cdot\frac{\# \pi^{C_m}_{2n-2}\left(\K(Y)\otimes \M (\chi)\right)}{\# \pi^{C_m}_{2n-1}\left(\K(Y)\otimes \M (\chi)\right)}.
			\end{equation*}
			\item When $Y=\spec \Ocal_{F'}$, $X=\spec \Ocal_F$, and $F'/F$ be a $C_m$-Galois extension of totally real number fields. Then the following holds up to powers of $2$:
			\begin{equation*}
				\norm_{\Q[\image \chi]/\Q}(L(\Ocal_F,\chi,1-2n))= (-1)^{n\cdot r_1(F)\cdot \phi(\#\image \chi)}\cdot \frac{\# \pi^{C_m}_{4n-2}\left(\K(\Ocal_{F'})\otimes \M (\chi)\right)}{\# \pi^{C_m}_{4n-1}\left(\K(\Ocal_{F'})\otimes \M (\chi)\right)}.
			\end{equation*}
			\item In addition, if $F'$ is both totally real and \emph{abelian} over $\Q$ (then so is $F$), we have the precise formula:
			\begin{equation*}
				\norm_{\Q[\image \chi]/\Q}(L(\Ocal_F,\chi,1-2n))=((-1)^n\cdot 2)^{r_1(F)\cdot \phi(\#\image \chi)}\cdot \frac{\# \pi^{C_m}_{4n-2}\left(\K(\Ocal_{F'})\otimes \M (\chi)\right)}{\# \pi^{C_m}_{4n-1}\left(\K(\Ocal_{F'})\otimes \M (\chi)\right)}.
			\end{equation*}
		\end{enumerate}
	\end{theorem}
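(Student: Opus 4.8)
The plan is to reuse, almost verbatim, the argument from the proof of \Cref{thm:eQLC_finfld}, now with $\E=\K(Y)$ regarded as a $C_m$-spectral Mackey functor via \Cref{constr:span-k}. I would begin from the equivariant Atiyah--Hirzebruch spectral sequence of \Cref{constr:cell_ss},
\begin{equation*}
	E_1^{s,t}=\bigoplus_{1\le k_1<\cdots<k_s\le\lambda}\K_t\!\left(Y/_{C_{p_{k_1}\cdots p_{k_s}}}\right)\Longrightarrow \pi^{C_m}_{t-s}(\K(Y)\otimes\M(\chi)),
\end{equation*}
whose $E_2$-page is the Bredon cohomology $E_2^{s,t}=\H^s_{C_m}(\M(\psi_m);\underline{\K}_t(Y))$. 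By \Cref{cor:e1-norm}, the left-hand sides of (1)--(3) already equal the indicated signs (resp.\ powers of $2$) times $\text{Euler number}(E_1^{*,\bullet})/\text{Euler number}(E_1^{*,\circ})$ for the appropriate rows $\bullet=2n-2$ (resp.\ $4n-2$) and $\circ=2n-1$ (resp.\ $4n-1$). Since Euler numbers are unchanged under passing to (co)homology, the entire task reduces to computing the $E_2$-page and showing the spectral sequence degenerates there, after which $\text{Euler number}(E_1^{*,t})=\text{Euler number}(E_2^{*,t})=\text{Euler number}(E_\infty^{*,t})$ and one reads off the orders of the equivariant homotopy groups from the two-line abutment filtration.

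For (1), the facts I would invoke are: Harder's theorem (as used in \Cref{cor:fixed-curve}) that $\K_i(Y/_H)$ is finite for $i\ge 2$; that $\underline{\K}_{2j-1}(-)$ is a weak fixed point Mackey functor for every $j\ge 1$, by \Cref{cor:fixed-curve} with \Cref{prop:K1_fixedpt} handling $j=1$; and the two Bredon-vanishing results. Concretely, \Cref{prop:Bredon_psi_m_fixedpt} forces $E_2^{s,t}=0$ for $t$ odd unless $s=0$, while \Cref{prop:Bredon_psi_m_general} forces $E_2^{s,t}=0$ for $t$ even unless $s\in\{0,-1\}$. For $n\ge 2$ this leaves only $E_2^{0,2n-2}$, $E_2^{-1,2n-2}$ and $E_2^{0,2n-1}$ possibly nonzero; inspecting the differentials $d_r^{s,t}\colon E_r^{s,t}\to E_r^{s-r,t+r-1}$ shows every differential into or out of these entries lands in a group already known to vanish (for instance $d_2\colon E_2^{0,2n-1}\to E_2^{-2,2n}=0$ since $2n$ is even), so the spectral sequence collapses at $E_2$. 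Reading off the filtrations in total degrees $2n-2$ and $2n-1$ gives $\#\pi^{C_m}_{2n-2}(\K(Y)\otimes\M(\chi))=\#E_2^{0,2n-2}$ and $\#\pi^{C_m}_{2n-1}(\K(Y)\otimes\M(\chi))=\#E_2^{0,2n-1}\cdot\#E_2^{-1,2n-2}$, so
\begin{equation*}
	\frac{\text{Euler number}(E_1^{*,2n-2})}{\text{Euler number}(E_1^{*,2n-1})}=\frac{\#E_2^{0,2n-2}/\#E_2^{-1,2n-2}}{\#E_2^{0,2n-1}}=\frac{\#\pi^{C_m}_{2n-2}(\K(Y)\otimes\M(\chi))}{\#\pi^{C_m}_{2n-1}(\K(Y)\otimes\M(\chi))},
\end{equation*}
and (1) follows by combining with the function-field case of \Cref{cor:e1-norm}.

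For (2) and (3) I would run the identical argument with $2n-2\rightsquigarrow 4n-2$ and $2n-1\rightsquigarrow 4n-1$. The only substitutions are: finiteness of the relevant $\K$-groups, which holds because $\K_{4n-2}(\Ocal_{(F')^H})$ is finite by Borel (\Cref{thm:Borel_rank}) and $\K_{4n-1}(\Ocal_{(F')^H})$ is finite since $(F')^H$ is totally real, whence $\operatorname{rank}\K_{4n-1}(\Ocal_{(F')^H})=\ord_{s=1-2n}\zeta_{(F')^H}(s)=r_2=0$; and the weak fixed point property of $\underline{\K}_{2j-1}(-)$ for all $j\ge 1$, which for totally real fields is exactly \Cref{prop:fixed-numfld_2} (valid integrally, including the prime $2$), together with \Cref{prop:K1_fixedpt} for $j=1$. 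The same $E_2$-computation, degeneration, and Euler-number comparison then apply verbatim, and (3) follows from the totally-real-abelian case of \Cref{cor:e1-norm}, while (2) follows from its general totally-real case, which holds up to powers of $2$.

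I expect the conceptual obstacles to have already been dispatched in the earlier sections: the incarnation of $\norm$-values of $L$-functions as Euler numbers on $E_1$ (\Cref{cor:e1-norm}, resting on the M\"obius inversion formula and the classical Quillen--Lichtenbaum theorems), the Bredon-cohomology vanishing of Propositions~\ref{prop:Bredon_psi_m_fixedpt} and~\ref{prop:Bredon_psi_m_general}, and the integral (pseudo-)Galois descent of odd $\K$-groups, of which the $2$-primary totally real statement \Cref{prop:fixed-numfld_2} is by far the most delicate. The only point that still requires genuine attention here is the bookkeeping: one must verify that the at-most-two nonzero rows on $E_2$ really force degeneration (a position count on the $d_r$'s) and that every $\K$-group appearing in a nonzero $E_2$-entry is finite --- which is precisely why $n\ge 2$ must be imposed in the curve case, so that $\K_{2n-2}$ and $\K_{2n-1}$ are finite by Harder, and why total reality of $F'$ is indispensable in the number field case, so that $\K_{4n-1}$ is finite by Borel.
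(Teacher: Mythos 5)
Your proposal is essentially identical to the paper's proof: the same eAHSS of \Cref{constr:cell_ss}, the reduction via \Cref{cor:e1-norm}, the identification of the $E_2$-page as Bredon cohomology, the vanishing from Propositions~\ref{prop:Bredon_psi_m_fixedpt} and~\ref{prop:Bredon_psi_m_general} (using the weak fixed point property from Corollaries~\ref{cor:fixed-curve}, \ref{cor:fixed-numfld_odd} and Proposition~\ref{prop:fixed-numfld_2}), the collapse at $E_2$, and the two-step filtration. The only additions are making explicit the position count for the differentials and the finiteness bookkeeping, which the paper leaves implicit; these are correct and not a deviation in approach.
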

	\begin{proof}
		Consider the equivariant-Hirzebruch spectral sequence \eqref{eqn:eAHSS_E1}:
		\begin{equation*}
			E^1_{s,t}=\bigoplus_{1\le k_1<\cdots<k_s\le \lambda} \K_{t}\left(Y/_{C_{p_{k_1}\cdots p_{k_s}}}\right)\Longrightarrow \pi_{s+t}^{C_m}\map\left( \M (\psi_m), \K(Y)\right), d^r_{s,t}\colon E^r_{s,t}\to E^{r}_{s-r,t+r-1}.
		\end{equation*}
		The $E_2$-page of this spectral sequence is:
		\begin{equation}\label{eqn:eAHSS_E2}
			E_2^{s,t}=\H_{C_m}^s(\M(\psi_m);\underline{\K}_t(Y))\Longrightarrow  \pi_{s+t}^{C_m}\map\left( \M (\psi_m), \K(Y)\right).
		\end{equation}
		By \Cref{cor:fixed-curve}, \Cref{cor:fixed-numfld_odd}, and \Cref{prop:fixed-numfld_2}, $\underline{\K}_{2n-1}(Y)$ is a weak fixed point $C_m$-Mackey functor in all three cases. \Cref{prop:Bredon_psi_m_fixedpt} then says 
		\begin{equation*}
			\H_{C_m}^s(\M(\psi_m);\underline{\K}_t(Y))= 0,\qquad\text{ when }s\neq 0.
		\end{equation*}
		For even algebraic $\K$-groups, we have by \Cref{cor:K_desc_G_inv} and \Cref{prop:Bredon_psi_m_general} that 
		\begin{equation*}
			\H_{C_m}^s(\M(\psi_m);\underline{\K}_t(Y))= 0,\qquad \text{ when }s\neq -1,0.
		\end{equation*}
		It follows that the spectral sequence \eqref{eqn:eAHSS_E2} collapses on the $E_2$-page. This yields an isomorphism
		\begin{equation*}
			\H_{C_m}^s(\M(\psi_m);\underline{\K}_t(Y))\cong  \pi_{2n}^{C_m}\map\left( \M (\psi_m), \K(Y)\right),
		\end{equation*}
		and an extension problems on the $E_\infty$-page:
		\begin{equation*}
			0 \rightarrow \H^{-1}_{C_m}(\M (\psi_m); \underline{\K}_{2n-2}(Y)) \longrightarrow \pi^{C_m}_{2n-1}\map\left( \M (\psi_m), \K(Y)\right)\longrightarrow \H^{0}_{C_m}(\M (\psi_m); \underline{\K}_{2n-1}(Y)) \rightarrow 0. 
		\end{equation*}
		Assembling the above, we have equalities ($n$ even in the number field cases):
		\begin{align*} 
			\frac{\# \pi^{C_m}_{2n-2}\map\left( \M (\psi_m), \K(Y)\right)}{\# \pi^{C_m}_{2n-1}\map\left( \M (\psi_m), \K(Y)\right)}= \frac{\# E_2^{0,2n-2}}{\# E_2^{-1,2n-2}\cdot \# E_2^{0,2n-1}}&= \frac{\# E_2^{0,2n-2}/\# E_2^{-1,2n-2}}{\# E_2^{0,2n-1}}\\&=\frac{\text{Euler number }(E^{*,2n-2}_1,d_1)}{\text{Euler number }(E^{*,2n-1}_1,d_1)}.
		\end{align*}
		The claim then follows from \Cref{cor:e1-norm}. 
	\end{proof}
	For general number fields, the argument above does not work. This is mainly because algebraic $\K$-groups $\K_{2n-1}(\Ocal_F)$ are finite only when $F$ is totally real and $n$ is even by Borel's computation in  \Cref{thm:Borel_rank}. Then our argument using Euler numbers of bounded complexes of \emph{finite} abelian groups fails. Instead, we can recover a generalization of  \Cref{thm:Borel_rank} to $L$-functions by Gross in \cite{Gross_Artin_L}, which was reformulated by the second author in \cite{nz_QB_AL} recently. 
	\begin{theorem}[Gross, Zhang, {\cite{Gross_Artin_L,nz_QB_AL}}, $1$-dimensional case]\label{thm:twisted_Borel-1d}
		Let $F'/F$ be a $C_m$-Galois extension of number fields and $\chi\colon C_m\to \Cx$ be a primitive character. Then we have 
		\begin{equation*}
			\dim_{\Q[\image \chi]}\pi^{C_m}_{*}\left(\K(\Ocal_{F'})\otimes \M(\chi)\otimes \H\Q\right)=\begin{cases}
				\ord_{s=1-n}L(\Ocal_F,s,\chi), & *=2n-1>0;\\
				0, & *=2n>0.
			\end{cases}.
		\end{equation*}	
	\end{theorem}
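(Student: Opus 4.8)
The plan is to run the mechanism of \Cref{thm:eQLC_numfld} after smashing with $\H\Q$, where the distinction between even and odd degrees and between abelian and non-abelian number fields disappears: rationally \emph{every} $C_m$-Mackey functor is a weak fixed point Mackey functor by \Cref{prop:Mack_G_inv}, so the equivariant Atiyah--Hirzebruch spectral sequence collapses by \Cref{prop:Bredon_psi_m_fixedpt}, and the one surviving term is matched to $\ord_{s=1-n}L(\Ocal_F,\chi,s)$ by Borel's rank computation \Cref{thm:Borel_rank} together with the M\"obius inversion formula of \Cref{prop:Dirichlet_L_Dedekind}. Concretely, I would apply the equivariant Atiyah--Hirzebruch spectral sequence of \Cref{cor:e1-norm} to the rational spectral Mackey functor $\E=\K(\Ocal_{F'})\otimes\H\Q$ built from \Cref{constr:span-k}; its homotopy Mackey functors are $\underline{\pi}_t\E=\underline{\K}_t(\Ocal_{F'})\otimes\Q$, so the resulting spectral sequence has
\[
E_1^{s,t}=\bigoplus_{1\le k_1<\cdots<k_s\le\lambda}\K_t\bigl(\Ocal_{(F')^{C_{p_{k_1}\cdots p_{k_s}}}}\bigr)\otimes\Q,\qquad E_2^{s,t}=\H^s_{C_m}\bigl(\M(\psi_m);\underline{\K}_t(\Ocal_{F'})\otimes\Q\bigr),
\]
and it converges to $\pi^{C_m}_{t-s}(\K(\Ocal_{F'})\otimes\M(\chi)\otimes\H\Q)$.

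The first step is to see this spectral sequence collapses onto the column $s=0$. For $t$ even and positive the whole row vanishes since $\K_t(\Ocal_{F'})$ is finite by \Cref{thm:Borel_rank}; for $t=0$ and for $t$ odd positive the Mackey functor $\underline{\K}_t(\Ocal_{F'})\otimes\Q$ is $\Q$-linear, hence a weak fixed point Mackey functor by \Cref{prop:Mack_G_inv}, so \Cref{prop:Bredon_psi_m_fixedpt} forces $E_2^{s,t}=0$ for $s\ne 0$. Thus $E_2=E_\infty$ is concentrated in $s=0$, every extension problem is trivial, and for $n\ge 1$ one obtains
\[
\pi^{C_m}_{2n}\bigl(\K(\Ocal_{F'})\otimes\M(\chi)\otimes\H\Q\bigr)=0,\qquad \pi^{C_m}_{2n-1}\bigl(\K(\Ocal_{F'})\otimes\M(\chi)\otimes\H\Q\bigr)\cong A\big/\bigl[\,\textstyle\bigcup_{i=1}^{\lambda}A^{C_{p_i}}\,\bigr],
\]
where $A:=\K_{2n-1}(\Ocal_{F'})\otimes\Q$ and the right-hand side is read off from \Cref{prop:Bredon_psi_m_fixedpt}.

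The second step computes $\dim_{\Q[\image\chi]}$ of this quotient. Splitting $A=\bigoplus_{d\mid m}A_d$ according to $\Q[C_m]=\prod_{d\mid m}\Q(\zeta_d)$, one has $A^{C_{p_i}}=\bigoplus_{d\mid m/p_i}A_d$, so $\sum_i A^{C_{p_i}}=\bigoplus_{d\mid m,\,d\ne m}A_d$, and therefore $A/[\bigcup_i A^{C_{p_i}}]=A_m$, the summand on which $C_m$ acts through a faithful character; this is a $\Q(\zeta_m)=\Q[\image\chi]$-vector space, which is exactly what makes $\dim_{\Q[\image\chi]}$ meaningful. To count $\dim_{\Q(\zeta_m)}A_m$, observe that, by the rational Galois descent isomorphism $\K_{2n-1}(\Ocal_{(F')^H})\otimes\Q\cong A^H$ of \Cref{cor:K_desc_G_inv} (with $\Ocal_{(F')^H}=(\Ocal_{F'})^H$ from \Cref{lem:explicit}), the row $E_1^{*,2n-1}$ is precisely the \v{C}ech complex $\check{C}^{\bullet}\{A^{C_{p_i}}\subseteq A\}$ considered in the proof of \Cref{prop:Bredon_psi_m_fixedpt}, whose cohomology is concentrated in the spot of $A$, namely $s=0$, where it equals $A_m$. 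Comparing the Euler characteristic of this bounded complex computed termwise and via its cohomology gives
\[
\phi(m)\,\dim_{\Q(\zeta_m)}A_m=\dim_\Q A_m=\sum_{j=0}^{\lambda}(-1)^j\sum_{1\le k_1<\cdots<k_j\le\lambda}\dim_\Q\bigl(\K_{2n-1}(\Ocal_{(F')^{C_{p_{k_1}\cdots p_{k_j}}}})\otimes\Q\bigr),
\]
and \Cref{thm:Borel_rank} rewrites each summand on the right as $\ord_{s=1-n}\zeta_{(F')^{C_{p_{k_1}\cdots p_{k_j}}}}(s)$, whereupon \Cref{prop:Dirichlet_L_Dedekind}(1) evaluates the alternating sum as $\phi(m)\cdot\ord_{s=1-n}L(\Ocal_F,\chi,s)$. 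Dividing by $\phi(m)$ yields $\dim_{\Q[\image\chi]}\pi^{C_m}_{2n-1}(\K(\Ocal_{F'})\otimes\M(\chi)\otimes\H\Q)=\ord_{s=1-n}L(\Ocal_F,\chi,s)$, which together with the vanishing in even degrees is the assertion.

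The one point deserving genuine care — more a compatibility to track than an obstacle — is that under the descent isomorphisms $\K_{2n-1}(\Ocal_{(F')^H})\otimes\Q\cong A^H$ the $d_1$-differentials really do become the alternating sums of the fixed-subspace inclusions, so that the $t=2n-1$ row is \emph{literally} the \v{C}ech complex above; this follows from the description in \Cref{constr:cell_ss} of the attaching maps of $\M(\psi_m)$ as orbit quotients (which induce restriction maps on $\K$-theory) together with \Cref{rem:Bredon_res}, so no input beyond what the paper has already established is required.
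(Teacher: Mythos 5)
Your proposal is correct and follows essentially the same route as the paper: the same rational equivariant Atiyah--Hirzebruch spectral sequence, rational collapse via \Cref{prop:Mack_G_inv} and \Cref{prop:Bredon_psi_m_fixedpt}, Borel's theorem, and the M\"obius inversion of \Cref{prop:Dirichlet_L_Dedekind}. The one modest refinement you add is the explicit identification of $\H^0_{C_m}(\M(\psi_m);\underline{\K}_{2n-1}\otimes\Q)$ with the faithful $\Q(\zeta_m)$-isotypic piece $A_m$ via the decomposition $\Q[C_m]\cong\prod_{d\mid m}\Q(\zeta_d)$, which makes the $\Q[\image\chi]$-module structure self-contained rather than deferred to \cite[Lemma 3.13]{nz_Dirichlet_J} as the paper does.
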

	\begin{proof}
		Recall a more familiar notion of the Euler characteristic of a bounded complex of \emph{finite dimensional} vector spaces: it is defined to be the alternating sum of the dimensions at each level.  Then we have
		\begin{align*}
			&\quad \ord_{s=1-n}L(\Ocal_F,s,\chi)&&\\&=\frac{1}{\phi(m)}\cdot \sum_{j=0}^\lambda(-1)^j\left(\sum_{1\le k_1<\cdots<k_j\le \lambda} \ord_{s=1-n} \zeta_{(F')^{C_{p_{k_1}\cdots p_{k_j}}}}(s)\right)&&\text{\Cref{prop:Dirichlet_L_Dedekind}}\\
			&=\frac{1}{\phi(m)}\cdot \sum_{j=0}^\lambda(-1)^j\left(\sum_{1\le k_1<\cdots<k_j\le \lambda} \dim_\Q \K_{2n-1}\left(\Ocal_{(F')^{C_{p_{k_1}\cdots p_{k_j}}}}\right)\otimes \Q\right)&&\text{Borel's \Cref{thm:Borel_rank}}\\
			&= \frac{1}{\phi(m)} \cdot \text{Euler characteristic of } E^1_{*,2n-1}\otimes \Q&&\text{\eqref{eq:cell-ss}}\\
			&= \frac{1}{\phi(m)}\cdot \dim_\Q \H^0_{C_m}(\M (\psi_m);\underline{\K}_{2n-1}(\Ocal_{F'})\otimes \Q)&&\text{\Cref{prop:Bredon_psi_m_general}}\\
			&= \frac{1}{[\Q[\image \chi]:\Q]}\cdot \dim_\Q \pi^{C_m}_{2n-1}\left(\K(\Ocal_{F'})\otimes \M(\chi)\otimes \H\Q\right)&& \text{rational eAHSS collapses.}\\
			&= \dim_{\Q[\image \chi]}\pi^{C_m}_{2n-1}\left(\K(\Ocal_{F'})\otimes \M(\chi)\otimes \H\Q\right).&&
		\end{align*}
		Note the rational equivariant homotopy groups $\pi^G_{2n-1}\left(\K(\Ocal_{F'})\otimes \M(\chi )\otimes \H\Q\right)$ have a natural $\Q[\image \chi]$-vector space structure, since $\M(\chi)\otimes \H\Q\simeq \H\Q[\image \chi]$ and the $G$-action on homotopy groups are $\Q[\image \chi]$-linear. See \cite[Lemma 3.13]{nz_Dirichlet_J}.  The vanishing of even equivariant algebraic $\K$-groups follow from the Borel's theorem that $\K_{2n}(\Ocal_F)=0$ for any number field and $n\ge 1$.
	\end{proof}
	Similar to the proof above, we determine the signs in the finite and the function field cases in \Cref{thm:main}.
	\begin{proposition}\label{prop:eQLC_sign}
		Let $Y\to X$ be a pseudo $C_m$-Galois cover of integral, normal, and affine schemes of dimension $d\le 1$ over a finite field $\Fq$ and $\chi\colon C_m\hookrightarrow \Cx$ be a primitive character.  Then:
		\begin{equation*}
			\mathrm{sign}(\norm_{\Q[\image \chi]/\Q}L(X,\chi,d-n))=(-1)^{\dim_\Q [(\K_1(Y)\oplus \Z_{\mathrm{triv}})\otimes \chi\otimes \Q]^{C_m}}.
		\end{equation*}
		In particular, the sign is positive in the finite field case unless the character is trivial, since $\K_1(\Fq)$ is a finite group. 
	\end{proposition}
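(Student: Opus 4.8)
The plan is to apply $\mathrm{sign}$ to both sides of the M\"obius inversion formula (\Cref{cor:norm_Weil_L}, or \Cref{prop:inv_fac} evaluated at $s=d-n$) and recognise the resulting alternating sum of ranks as an Euler characteristic computed by \Cref{prop:Bredon_psi_m_fixedpt}, paralleling the proof of \Cref{thm:twisted_Borel-1d} with the $\underline{\K}_1$-row in place of the $\underline{\K}_{2n-1}$-row. The first step is to pin down the sign of each Hasse--Weil value appearing in the factorisation: for a smooth affine curve $Z$ over $\Fq$ and $n\ge 2$, \Cref{thm:QLC_curves} gives $\mathrm{sign}(\zeta(Z,1-n))=(-1)^{\mathrm{rank}(\K_1(Z))+1}$; for $Z=\spec\Fq$ the identity $\zeta(\Fq,-n)=1/(1-q^n)$ gives $\mathrm{sign}(\zeta(\Fq,-n))=-1$, which is again $(-1)^{\mathrm{rank}(\K_1(\Fq))+1}$ since $\K_1(\Fq)=\Fq^\times$ is finite. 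Writing $W(Z):=\K_1(Z)\oplus\Z_{\mathrm{triv}}$, both cases become the single statement $\mathrm{sign}(\zeta(Z,d-n))=(-1)^{\dim_\Q(W(Z)\otimes\Q)}$.

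Now each factor $\zeta(Y/_{C_{p_{k_1}\cdots p_{k_j}}},d-n)$ is a nonzero rational number and $\mathrm{sign}\colon\Q^\times\to\{\pm 1\}$ is a homomorphism unchanged by inverting its argument, so applying $\mathrm{sign}$ to the M\"obius identity and substituting the sign of each $\zeta$-value gives $\mathrm{sign}\bigl(\norm_{\Q[\image\chi]/\Q}L(X,\chi,d-n)\bigr)=(-1)^{E}$ with
\[
E:=\sum_{j=0}^{\lambda}(-1)^j\sum_{1\le k_1<\cdots<k_j\le\lambda}\dim_\Q\bigl(W(Y/_{C_{p_{k_1}\cdots p_{k_j}}})\otimes\Q\bigr),
\]
where the exponents $(-1)^j$ may be reinstated because the alternating and plain sums over these subsets agree modulo $2$. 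By \Cref{prop:K1_fixedpt} (pseudo-Galois descent for $\K_1$) we have $\K_1(Y/_H)\otimes\Q\cong(\K_1(Y)\otimes\Q)^H$, and as $\Z_{\mathrm{triv}}$ is a trivial module this upgrades to $W(Y/_H)\otimes\Q\cong W^H$ for the $\Q[C_m]$-module $W:=W(Y)\otimes\Q$. Hence $E$ is exactly the Euler characteristic of the rational equivariant cellular cochain complex of \Cref{constr:cell_ss} computing $\H^{\ast}_{C_m}(\M(\psi_m);\underline{W})$ for the weak fixed point Mackey functor $\underline{W}$ attached to $W$.

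It remains to evaluate $E$. Euler characteristics are invariant under passage to cohomology, and by \Cref{prop:Bredon_psi_m_fixedpt} these Bredon cohomology groups are concentrated in degree $0$ with value $W/\bigl[\cup_{i=1}^{\lambda}W^{C_{p_i}}\bigr]$, so $E=\dim_\Q\bigl(W/[\cup_i W^{C_{p_i}}]\bigr)$. Over the semisimple ring $\Q[C_m]$ the subspace $[\cup_i W^{C_{p_i}}]$ is the sum of the isotypic components of $W$ for characters of order a proper divisor of $m$, so the quotient is the isotypic summand $W_{(m)}$ for faithful characters; the faithful characters form one Galois orbit of size $\phi(m)$ and $W$ is defined over $\Q$, so $\dim_\Q W_{(m)}=\phi(m)\cdot\dim_\C(W\otimes\C)_{\chi}$. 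On the other hand, decomposing $W\otimes_\Q\Q[\image\chi]$ into $\chi$-eigenspaces shows $\dim_\Q\bigl[(\K_1(Y)\oplus\Z_{\mathrm{triv}})\otimes\chi\otimes\Q\bigr]^{C_m}=\phi(m)\cdot\dim_\C(W\otimes\C)_{\chi^{-1}}$, which equals $\phi(m)\cdot\dim_\C(W\otimes\C)_{\chi}$ because $W$ is a rational representation. Therefore $E=\dim_\Q\bigl[(\K_1(Y)\oplus\Z_{\mathrm{triv}})\otimes\chi\otimes\Q\bigr]^{C_m}$, which is the asserted identity; and in the finite field case $\K_1(\F_{q^m})$ is torsion, so this dimension vanishes unless $\chi$ is trivial, where it equals $1$ (contributed by $\Z_{\mathrm{triv}}$), recovering the last sentence.

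The main obstacle is organisational rather than conceptual. The one genuinely delicate point is the semisimple representation-theory bookkeeping in the last step --- equating $\dim_\Q$ of the ``top isotypic part'' $W/[\cup_i W^{C_{p_i}}]$ with $\dim_\Q$ of the $\chi$-twisted invariants $\bigl[(\K_1(Y)\oplus\Z_{\mathrm{triv}})\otimes\chi\otimes\Q\bigr]^{C_m}$, keeping careful track of the factor $[\Q[\image\chi]:\Q]=\phi(m)$ and of $\chi$ versus $\chi^{-1}$ --- together with a check that the Euler-characteristic degree convention matches the exponents $(-1)^j$ in the M\"obius formula. No homotopy-theoretic input beyond what is established in \Cref{sec:pgal-descent} and \Cref{sec:eMoore} is required.
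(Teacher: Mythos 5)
Your proof is correct and follows the same route as the paper's, which after establishing the sign formula for each $\zeta(Y/_H,d-n)$ via \eqref{eqn:QLC_finfld} and \Cref{thm:QLC_curves} simply says the rest is ``similar to the proof of \Cref{thm:twisted_Borel-1d}.'' You have made that step explicit: you pass from the alternating sum of ranks to the Euler characteristic of the cellular cochain complex of \Cref{constr:cell_ss}, use \Cref{prop:Bredon_psi_m_fixedpt} to identify it with $\dim_\Q\bigl(W/[\cup_i W^{C_{p_i}}]\bigr)$, and then do the semisimple isotypic-decomposition bookkeeping to equate this with $\dim_\Q[(\K_1(Y)\oplus\Z_{\mathrm{triv}})\otimes\chi\otimes\Q]^{C_m}$ --- exactly the content the paper delegates to that reference.
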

	\begin{proof}
		By \eqref{eqn:QLC_finfld} and \Cref{thm:QLC_curves}, we have for any subgroup $H\le C_m$:
		\begin{equation*}
			\mathrm{sign}(\zeta(Y/_H,d-n))=(-1)^{\dim_\Q[(\K_1(Y/_H)\oplus\Z_{\mathrm{triv}})\otimes \Q]}.
		\end{equation*}
		Then similar to the proof of \Cref{thm:twisted_Borel-1d}, we have:
		\begin{align*}
			\mathrm{sign}(\norm_{\Q[\image \chi]/\Q}L(X,\chi,d-n))&=\prod_{j=0}^\lambda\left(\prod_{1\le k_1<\cdots<k_j\le \lambda}\mathrm{sign}\left(\zeta\left(Y/_{C_{p_{k_1}\cdots p_{k_j}}},d-n\right)\right)\right)^{(-1)^j}\\
			&=(-1)^{ \sum_{j=0}^\lambda(-1)^j\left(\sum_{1\le k_1<\cdots<k_j\le \lambda} \dim_\Q\left[\left( \K_{1}\left(Y/_{C_{p_{k_1}\cdots p_{k_j}}}\right)\oplus\Z_{\mathrm{triv}}\right)\otimes \Q\right]\right)}\\
			&=(-1)^{\dim_\Q[(\K_1(Y)\oplus\Z_{\mathrm{triv}})\otimes \chi\otimes \Q]^G}.\qedhere
		\end{align*}
	\end{proof}
	\begin{remark}
		Recall in \Cref{cor:e1-norm}, we proved the sign and extra powers of $2$ in the totally real and abelian number field $F'/F$ case is $((-1)^n\cdot 2)^{r_1(F)\cdot \phi(m)}$, where $r_1(F)$ is the number of real embeddings of the base field $F$. Following \Cref{rem:r1_K1}, we note the number $r_1(F)\cdot \phi(m)$ can also be expressed in terms of the first rational equivariant algebraic $\K$-groups:
		\begin{equation*}
			r_1(F)\cdot \phi(m)=\dim_\Q[(\K_1(\Ocal_{F'})\oplus\Z_{\mathrm{triv}})\otimes \chi]^{C_m}
		\end{equation*}
		In the function field case, there is no nice  formula for the sign in terms of $r(X)$, the number of closed points in the complement of smooth completion of the base curve, due to  possible ramifications and residue field extensions at those points in the pseudo-Galois cover. This is not an issue in the totally real number field case, because all real places of a totally real number field $F$ split completely in a Galois extension to another totally real number field $F'$. As a result, we have $r_1(F')=r_1(F)\cdot [F':F]$.
	\end{remark}
	Lastly, we drop the assumption that $\chi\colon C_m\to \Cx$ is primitive from the results in this subsection. 
	\begin{proposition}\label{prop:desc}
		\Cref{thm:eQLC_finfld}, \Cref{thm:eQLC_numfld}, and \Cref{thm:twisted_Borel-1d} hold for any abelian characters of Galois groups $\chi\colon G\to \Cx$. 
	\end{proposition}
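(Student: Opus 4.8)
The plan is to reduce to the primitive case already established, using that an arbitrary abelian character factors through a cyclic quotient \emph{and} that this factorization is mirrored on both the $L$-function side (by the descent identity \Cref{prop:Artin-L_properties}(3)) and the $\K$-theory side (because $\M(\chi)$ is inflated from that quotient). First, on the arithmetic side: write $\chi\colon G\to\Cx$, put $N:=\ker\chi$ and $m:=\#\image\chi$. Since $\image\chi\subset\Cx$ is cyclic of order $m$, $\chi$ factors as $G\xrightarrow{\phi_\chi}C_m\cong G/N\xrightarrow{\chi'}\Cx$ with $\chi'$ a primitive character of $C_m$, exactly as in \Cref{con:eMoore}. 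By \Cref{lem:explicit} and \Cref{lem:ps-gal}, $Y/_N$ is again integral, normal and affine of dimension $\le 1$ (the $\spec$ of a ring of $S$-integers in the number field case), and $Y/_N\to X$ is a pseudo $C_m$-Galois cover with $\aut_X(Y/_N)\cong C_m$ acting through $\chi'$. The descent identity gives $L(X,\chi,s)=L(X,\chi',s)$, and since $\image\chi=\image\chi'$ we have $\Q[\image\chi]=\Q[\image\chi']$, equal norm maps, and equal orders of vanishing at $s=1-n$.

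Second, on the homotopy side, I would establish the natural isomorphism
\[
\pi^{G}_{t}\bigl(\K(Y)\otimes\M(\chi)\bigr)\;\cong\;\pi^{C_m}_{t}\bigl(\K(Y/_N)\otimes\M(\chi')\bigr),
\]
and likewise after $-\otimes\H\Q$. The key point is that $\M(\chi)=\phi_\chi^{*}\M(\chi')$: inflation commutes with the operations $\Dbb(-)$ and pullback along group quotients used to build $\M(-)$ in \Cref{con:eMoore}, and $\M(\chi')$ is precisely the spectrum appearing in the primitive-case theorems. Inflating the cellular filtration of \Cref{constr:cell_ss} then exhibits $\M(\chi)$ as a $G$-CW spectrum all of whose cells are orbits $G/H$ with $N\subseteq H$. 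Mapping this filtration into $\E=\K(Y)$ produces the equivariant Atiyah--Hirzebruch spectral sequence \eqref{eq:cell-ss} whose graded $E_1$-terms are the groups $\K_t(Y/_H)$ for $N\subseteq H\le G$, with $d_1$ the alternating sum of restriction maps along the quotients $Y/_{H'}\to Y/_H$. Because $Y/_H=(Y/_N)/_{H/N}$ and these restriction maps coincide with those of the Mackey functor $\underline{\K}_\pi$ for $Y/_N\to X$ of \Cref{constr:span-k}, this is identical, from the $E_1$-page onward, to the eAHSS for $\M(\chi')$ and $\K(Y/_N)$ over $C_m=G/N$; both filtrations are finite, so the abutments agree.

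Finally, I would match the remaining error terms and conclude. In the finite and function field cases the sign exponent $\dim_\Q[(\K_1(Y)\oplus\Z_{\mathrm{triv}})\otimes\chi\otimes\Q]^{G}$ equals $\dim_\Q[(\K_1(Y/_N)\oplus\Z_{\mathrm{triv}})\otimes\chi'\otimes\Q]^{C_m}$, since $\chi$ is trivial on $N$ and $(\K_1(Y)\otimes\Q)^{N}\cong\K_1(Y/_N)\otimes\Q$ by rational descent \Cref{cor:K_desc_G_inv}; in the totally real number field case the exponent $r_1(F)\cdot\phi(m)$ depends only on $F$ and on $m=\#\image\chi'$, hence is unchanged. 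Applying \Cref{thm:eQLC_finfld}, \Cref{thm:eQLC_numfld} and \Cref{thm:twisted_Borel-1d} to $\chi'$ on $Y/_N\to X$ and combining with the identities above yields the corresponding statements for $\chi$.

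I expect the main obstacle to be the bookkeeping in the second step: verifying cleanly that $\M(\chi)$ is the inflation of $\M(\chi')$ compatibly with cellular filtrations, and that the $G$-Mackey structure on $\K(Y)$ restricted to subgroups containing $N$ is identified with the $C_m$-Mackey structure on $\K(Y/_N)$ of \Cref{constr:span-k}; once this is in place the rest is formal. One could instead route this step through the inflation--genuine-fixed-points adjunction, $\pi^{G}_{*}(\phi_\chi^{*}\bar M\otimes\E)\cong\pi^{G/N}_{*}(\bar M\otimes\E^{N})$, but the spectral-sequence comparison has the advantage of not relying on a projection formula.
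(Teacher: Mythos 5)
Your reduction strategy is exactly the paper's: the $L$-function side is handled by the descent identity $L(X,\chi,s)=L(X,\chi',s)$, and the point is to transport the equivariant $\K$-group from $(G,\M(\chi),\K(Y))$ to $(C_m,\M(\chi'),\K(Y/_N))$. Where you differ is in how you establish the $\K$-theory-side equivalence. The paper does this in one line via the chain of equivalences
\[
[\K(Y)\otimes \M(\chi)]^G\simeq [\K(Y)^{N}\otimes \M(\chi')]^{G/N}\simeq [\K(Y/_{N})\otimes \M(\chi')]^{G/N},
\]
the first of which is the inflation--fixed-points identity (a projection formula, legitimate since $\M(\chi')$ is a finite $C_m$-CW spectrum, hence dualizable), and the second of which identifies genuine $N$-fixed points of $\underline{\K}_\pi$ with $\K(Y/_N)$ by \Cref{constr:span-k}. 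You instead inflate the cellular filtration of \Cref{constr:cell_ss} and observe that the resulting eAHSS for $(\M(\chi),G)$ has literally the same $E_1$-page, with the same $d_1$, as the eAHSS for $(\M(\chi'),C_m)$, because $Y/_H=(Y/_N)/_{H/N}$ and the $G$-Mackey structure on $\underline{\K}_\pi$ restricted to subgroups containing $N$ is the $C_m$-Mackey structure on $\underline{\K}_{\pi'}$; since both filtrations are finite, the abutments agree. Both arguments are correct. Your version is longer but has the advantage you note: it never invokes a projection formula for genuine fixed points, only the explicit cell structure and the identification of quotients, which is verifiable term by term. The paper's version is terser but implicitly leans on that unreferenced compatibility. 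Your matching of the sign exponent in the function field case via rational descent \Cref{cor:K_desc_G_inv} and of the exponent $r_1(F)\phi(m)$ (which depends only on $\#\image\chi$) is also correct; the paper leaves this implicit.
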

	\begin{proof}
		On the $L$-function side, we have a descent identity in \Cref{prop:Artin-L_properties}:
		\begin{equation*}
			L(X,\chi,s)=L(X,\chi',s),
		\end{equation*}
		where $\chi'\colon G/\ker \chi\hookrightarrow \Cx$ is a primitive character of the pseudo-Galois cover $Y/_{\ker \chi}\to X$. On the equivariant $\K$-group side, note that $\ker \chi$ acts trivially on $\M(\chi)$ by \Cref{con:eMoore}. We also have descent equivalences:
		\begin{equation*}
			[\K(Y)\otimes \M(\chi)]^G\simeq [\K(Y)^{\ker \chi}\otimes \M(\chi')]^{G/\ker \chi}\simeq [\K(Y/_{\ker \chi})\otimes \M(\chi')]^{G/\ker \chi}.
		\end{equation*}
		The identity and the equivalence above combine to reduces to the claim to the twisted QLC for the primitive character $\chi'$. 
	\end{proof}
	\begin{corollary}
		When $\chi\colon \znx=\gal(\Q[\zeta_N]/\Q)\to \Cx$ is an even Dirichlet character, we have 
		\begin{align*}
			\norm_{\Q[\image \chi]/\Q}(L(\Z,\chi,1-2n))= ((-1)^n\cdot 2)^{\phi(\#\image \chi)}\cdot \frac{\# \pi^{\znx}_{4n-2}\left(\K(\Z[\zeta_N])\otimes \M (\chi)\right)}{\# \pi^{\znx}_{4n-1}\left(\K(\Z[\zeta_N])\otimes \M (\chi)\right)}.
		\end{align*}
	\end{corollary}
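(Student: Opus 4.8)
The plan is to obtain this identity as the special case $F'=K$, $F=\Q$ of Theorem~\ref{thm:eQLC_numfld}(3), passed through the descent Proposition~\ref{prop:desc}; the only genuinely new point is the translation of ``$\chi$ even'' into ``the subextension of $\Q(\zeta_N)/\Q$ that $\chi$ cuts out is totally real''.

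First I would carry out the arithmetic reduction. Since $\chi$ is even, $\chi(-1)=1$, so the image of complex conjugation lies in $\ker\chi\le\znx\cong\gal(\Q(\zeta_N)/\Q)$; hence the fixed field $K:=\Q(\zeta_N)^{\ker\chi}$ is contained in the maximal real subfield $\Q(\zeta_N)^{+}$ and is therefore totally real, and, being a subfield of a cyclotomic field, it is abelian over $\Q$. Set $m:=\#\image\chi$, so that $\chi$ descends to a primitive character $\chi'$ of the cyclic group $\gal(K/\Q)\cong C_m$. By Lemma~\ref{lem:explicit} the quotient scheme $(\spec\Z[\zeta_N])/_{\ker\chi}$ is $\spec\Ocal_K$, the map $\spec\Z[\zeta_N]\to\spec\Z$ is a pseudo $\znx$-Galois cover in the sense of Definition~\ref{def:p-gal} (it is even Galois away from the primes dividing $N$), and $\spec\Ocal_K\to\spec\Z$ is a $C_m$-Galois cover of totally real \emph{abelian} number fields.

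Next I would apply Theorem~\ref{thm:eQLC_numfld}(3) to the extension $K/\Q$ with the primitive character $\chi'$. Feeding in $r_1(\Q)=1$, $\#\image\chi'=m$, and $L(\Z,\chi',1-2n)=L(\Z,\chi,1-2n)$ (the descent identity of Proposition~\ref{prop:Artin-L_properties}(3)), this gives
\begin{equation*}
\norm_{\Q[\image\chi]/\Q}\bigl(L(\Z,\chi,1-2n)\bigr)=\bigl((-1)^{n}\cdot 2\bigr)^{\phi(m)}\cdot\frac{\#\pi^{C_m}_{4n-2}\bigl(\K(\Ocal_K)\otimes\M(\chi')\bigr)}{\#\pi^{C_m}_{4n-1}\bigl(\K(\Ocal_K)\otimes\M(\chi')\bigr)}.
\end{equation*}
To finish I would rewrite the right-hand side over $\Z[\zeta_N]$ using the descent equivalence in the proof of Proposition~\ref{prop:desc}: since $\ker\chi$ acts trivially on $\M(\chi)$ by Construction~\ref{con:eMoore} and the $\ker\chi$-fixed points of the spectral Mackey functor $\underline{\K}$ of $\spec\Z[\zeta_N]\to\spec\Z$ are $\K(\Ocal_K)$ by Construction~\ref{constr:span-k}, one obtains $[\K(\Z[\zeta_N])\otimes\M(\chi)]^{\znx}\simeq[\K(\Ocal_K)\otimes\M(\chi')]^{C_m}$, hence $\pi^{\znx}_{*}(\K(\Z[\zeta_N])\otimes\M(\chi))\cong\pi^{C_m}_{*}(\K(\Ocal_K)\otimes\M(\chi'))$; substituting yields the stated formula.

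There is no genuinely hard step here: the statement is a repackaging of results already established. The one place to be careful is the passage from ``even'' to ``totally real'' --- one must note that quotienting by all of $\ker\chi$, and not merely by complex conjugation, still lands inside $\Q(\zeta_N)^{+}$, which is exactly what licenses the use of the \emph{precise} equality in Theorem~\ref{thm:eQLC_numfld}(3) (rather than its up-to-$2$-powers variant) and pins the power of $2$ to $\phi(\#\image\chi)$ via $r_1(\Q)=1$. I would also remark that $N$ is allowed to be any modulus through which $\chi$ factors, not necessarily the conductor, since both sides of the identity depend only on the descended character $\chi'$.
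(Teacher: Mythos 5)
Your argument is correct and follows the same route as the paper: reduce, via Proposition~\ref{prop:desc}, to the primitive character $\chi'$ on the quotient $\gal(\Q(\zeta_N)^{\ker\chi}/\Q)$, and then apply Theorem~\ref{thm:eQLC_numfld}(3) to the totally real abelian field $\Q(\zeta_N)^{\ker\chi}$ over $\Q$ with $r_1(\Q)=1$. You supply the two small translations the paper leaves implicit --- that ``$\chi$ even'' forces $\Q(\zeta_N)^{\ker\chi}\subseteq\Q(\zeta_N)^{+}$ and hence totally real, and that $N$ need not be the conductor of $\chi$ --- but the structure of the argument is the same.
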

	\begin{proof}
		For an even Dirichlet character $\chi\colon \znx\to \Cx$, we have that $\Q[\zeta_N]^{\ker\chi}$ is a totally real abelian extension of $\Q$. \Cref{prop:desc} reduces the equality to Artin $L$-function of the primitive character $\chi'\colon \znx/\ker\chi\hookrightarrow \Cx$ case, which is proved in \Cref{thm:eQLC_numfld}. 
	\end{proof}
	\subsection{The proof for higher dimensional representations}\label{subsec:hd_rep}
	In the last step to prove the Main \Cref{thm:main}, we bootstrap  results from the previous subsection to higher dimensional Galois representations. First of all, we need to construct their integral equivariant Moore spectra in some nice cases.
	\begin{proposition}\label{thm:eMoore_higherdim}
		Let $E$ be a number field and $\rho\colon G\to \GL_N(\Ocal_E)\hookrightarrow \GL_N(\Cbb)$ be an $E$-linear representation of a finite $G$. If $\rho$ is isomorphic to a sum of inductions of abelian characters on subgroups of $G$. Then there is a $G$-CW complex structure on the Moore spectrum $\M (\Ocal_E^{\oplus N})$ such that induced action on the homology groups is isomorphic to $\rho$. 
	\end{proposition}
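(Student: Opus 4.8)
The plan is to make precise the construction indicated in the introduction. Fix a decomposition $\rho\cong\bigoplus_i\mathrm{Ind}_{H_i}^G\chi_i$ into inductions of abelian characters $\chi_i\colon H_i\to E^\times\hookrightarrow\Cx$ from subgroups $H_i\le G$, set $m_i:=\#\image\chi_i$ and $r_i:=[E:\Q[\image\chi_i]]$, and put
\[
\M(\rho):=\bigoplus_i\mathrm{Ind}_{H_i}^G\M(\chi_i)^{\oplus r_i},
\]
where $\M(\chi_i)$ is the $H_i$-CW spectrum produced by \Cref{con:eMoore}. The first, routine, step is to record that this is a connective $G$-CW spectrum: induction $\mathrm{Ind}_{H}^G$ sends an orbit cell $H/L_+\wedge S^n$ to $G/L_+\wedge S^n$, hence carries connective $H$-CW spectra to connective $G$-CW spectra; finite wedges of connective $G$-CW spectra are again such; and each $\M(\chi_i)$ is connective by its construction.

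Second, I would compute the underlying integral homology. The Frobenius (double-coset) formula gives $\mathrm{Res}^G_e\,\mathrm{Ind}_{H_i}^G M\simeq\bigvee_{G/H_i}M$ as a spectrum with $G$-action permuting the wedge summands, so on homology $\H_*(\mathrm{Ind}_{H_i}^G M;\Z)\cong\mathrm{Ind}_{H_i}^G\H_*(M;\Z)$ as graded $\Z[G]$-modules. By \Cref{con:eMoore} and the identification of $\M(\chi_i)$ with the $G$-equivariant Spanier--Whitehead dual of $(\phi_{\chi_i^{-1}})^*\M(\psi_{m_i})$, the homology $\H_*(\M(\chi_i);\Z)$ is concentrated in degree $0$ with $\H_0(\M(\chi_i);\Z)\cong\Z[\image\chi_i]$ carrying the $H_i$-action $\Ocal_{\chi_i}$. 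Additivity of homology then gives that $\H_*(\M(\rho);\Z)$ is concentrated in degree $0$ and
\[
\H_0(\M(\rho);\Z)\cong\bigoplus_i\mathrm{Ind}_{H_i}^G\bigl(\Z[\image\chi_i],\Ocal_{\chi_i}\bigr)^{\oplus r_i}
\]
as $\Z[G]$-modules; in particular its underlying abelian group is free, so $\M(\rho)$ really is a $G$-CW structure on the Moore spectrum $\M(\Ocal_E^{\oplus N})$.

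Third, I would identify this $\Z[G]$-module with $\Ocal_E^{\oplus N}$ equipped with the action $\rho$. Rationally this is a character computation: $\bigl(\Z[\image\chi_i],\Ocal_{\chi_i}\bigr)\otimes_\Z\C$ is the sum of the $\phi(m_i)$ Galois conjugates $\chi_i^a$ (with $(a,m_i)=1$), each with multiplicity one, whereas the $\Q[H_i]$-module $E$ with $H_i$ acting by $\chi_i$ satisfies $E\otimes_\Q\C\cong\bigoplus_{\tau\colon E\hookrightarrow\C}\tau\circ\chi_i$, in which each conjugate $\chi_i^a$ appears exactly $r_i=[E:\Q[\image\chi_i]]$ times. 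Inducing up and summing over $i$ gives $\H_0(\M(\rho);\Z)\otimes_\Z\C\cong\rho\otimes_\Z\C$ as $\C[G]$-modules, and since $\Q[G]$ is semisimple this descends to an isomorphism of $\Q[G]$-modules. For the integral statement I would argue locally at each prime $p$: the localization $\Z_{(p)}[\image\chi_i]$ is a semilocal Dedekind domain, hence a principal ideal domain, so $\Ocal_E\otimes\Z_{(p)}$ is free of rank $r_i$ over it, matching the local form of the construction; together with the rational isomorphism this pins the two $\Z[G]$-lattices down to within a genus.

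I expect this last point to be the main obstacle: two $\Z[G]$-lattices inside a common $\Q[G]$-module need not be isomorphic, and $\Ocal_E$ need not be a free $\Z[\image\chi_i]$-module globally, so the equality $\H_0(\M(\rho);\Z)=\Ocal_E^{\oplus N}$ as literal $\Z[G]$-modules is not immediate from the above. I would resolve this either by choosing the decomposition of $\rho$ so that the relevant lattices are $\Z[\image\chi_i]$-free, or --- cleaner, and sufficient for the paper --- by observing that $\M(\rho)$ enters \Cref{thm:main} only through its inductive-and-wedge description: via the projection formula and $\pi^G_*(\K(Y)\otimes\mathrm{Ind}_{H_i}^G\M(\chi_i))\cong\pi^{H_i}_*(\K(Y)\otimes\M(\chi_i))$, every $L$-value computation reduces to the primitive abelian character cases already established in \Cref{thm:eQLC_finfld} and \Cref{thm:eQLC_numfld}, so only the isomorphism class of $\H_0(\M(\rho);\Z)$ as a rational representation is actually used.
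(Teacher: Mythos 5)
Your construction and homology computation are precisely the paper's: the paper sets $\M(\rho):=\bigoplus_i\mathrm{Ind}_{H_i}^G\M(\chi_i)^{\oplus[E:\Q[\image\chi_i]]}$ and computes $\H_0$ by the same induction-and-additivity argument. Your hesitation about the final integral identification is well placed, and in fact points at the exact spot where the paper's own proof is quietly incomplete. The paper's key claim is that when $\zeta_m\in E$, ``$\Ocal_E$ is $C_m$-equivariantly isomorphic to $\Z[\zeta_m]^{\oplus[E:\Q[\zeta_m]]}$''. Because $C_m$ acts on both sides through $\zeta_m$, any $C_m$-equivariant $\Z$-isomorphism is automatically $\Z[\zeta_m]$-linear, so this claim is literally the assertion that $\Ocal_E$ is free over the Dedekind domain $\Z[\zeta_m]$. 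By the Steinitz structure theorem, $\Ocal_E$ is projective of rank $r=[E:\Q(\zeta_m)]$ but free iff its Steinitz class in $\Cl(\Z[\zeta_m])$ vanishes, which is not automatic (for instance $\Q(\zeta_{23})$ has class number $3$ and admits extensions $E$ with nontrivial Steinitz class). Beyond that, even when each piece is free the hypothesis only says $\rho$ is \emph{conjugate} --- over $E$ or $\Cbb$ --- to the sum of inductions, so the two $\Z[G]$-lattices $(\Ocal_E^{\oplus N},\rho)$ and $\bigoplus_i\mathrm{Ind}_{H_i}^G\wt{\chi}_i$ sit in the same $\Q[G]$-module but need not be isomorphic integrally; your character computation and local freeness argument only place them in a common genus, as you say.

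Of your two suggested repairs, the first (choosing the decomposition so the lattices are free) does not quite work: freeness of $\Ocal_E$ over $\Z[\zeta_{m_i}]$ is determined by $E$ and $m_i$, not by how one decomposes $\rho$. The second is the right reading: $\M(\rho)$ enters \Cref{prop:eQLC_reduction} and \Cref{prop:twisted_Borel} only through the explicit wedge $\bigoplus_i\mathrm{Ind}_{H_i}^G\M(\chi_i)^{\oplus r_i}$ and the induction isomorphism $\pi^G_*(\K(Y)\otimes\mathrm{Ind}_{H_i}^G\M(\chi_i))\cong\pi^{H_i}_*(\K(Y)\otimes\M(\chi_i))$, and the $L$-function side likewise uses only \Cref{prop:Artin-L_properties}(2),(4), so only the rational (character-level) identification of $\H_0(\M(\rho);\Z)$ with $\rho$ is ever invoked. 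In other words, \Cref{thm:eMoore_higherdim} as stated is slightly stronger than what is proved or needed; the construction genuinely produces a connective $G$-CW spectrum with homology concentrated in degree $0$ that is rationally (indeed genus-locally) $\rho$, and that suffices for all downstream uses.
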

	\begin{proof}
		The case of abelian characters  has been explained in \Cref{con:eMoore}. Suppose $\rho=\bigoplus_i \mathrm{Ind}_{H_i}^G \chi_i$ ($H_i$ can be repeated) and each equivariant Moore spectrum $\M (\chi_i)$ of $\chi_i\colon H\to \Z[\image \chi_i]^\times$ has been constructed. Then set 
		\begin{equation}\label{eqn:eMoore}
			\M (\rho):= \bigoplus_i \mathrm{Ind}_{H_i}^G \left(\M (\chi_i)^{\oplus [E:\Q[\image \chi_i]]}\right).
		\end{equation}
		Notice when $\zeta_m\in E$, we have $\Ocal_E$ is $C_{m}$-equivariantly isomorphic to $\Z[\zeta_{m}]^{\oplus [E:\Q[\zeta_m]]}$, where the group acts by multiplication by roots of unity. It follows that $\M (\chi_i)^{\oplus [E:\Q[\image \chi_i]]}$ is an equivariant Moore spectrum for the character $\wt{\chi}_i\colon H\xrightarrow{\chi_i} \Z[\image \chi_i]^\times\hookrightarrow \Ocal_E^\times$ after scalar extensions. Then we have $G$-equivariant isomorphisms:
		\begin{equation*}
			\H_0(\M (\rho);\Z)\cong  \bigoplus_i \mathrm{Ind}_{H_i}^G \left(\H_0(\M (\chi_i);\Z)^{\oplus [E:\Q[\image \chi_i]]}\right)\cong \bigoplus_i \mathrm{Ind}_{H_i}^G \wt{\chi}_i=\rho.\qedhere
		\end{equation*}
	\end{proof}
	\begin{remark}\label{rem:Brauer_ind}
		The Brauer Induction Theorem (see \cite[Theorems 10.20]{Serre_rep1977}) states that the complex representation ring of a finite group $G$ is generated by inductions of abelian characters on  its subgroups. It follows that any complex representation $\rho$ of $G$ can be written as a \emph{virtual difference} of two finite sums of inductions of abelian characters on subgroups. In \cite[Exercise 10.5.(b)]{Serre_rep1977}, Serre gave the following example of an irreducible representation of a finite group that \emph{cannot} be decomposed as a direct sum of inductions of abelian characters on subgroups. 
		
		Let $G=A_5$ be the alternating group on $5$ elements. Then the character of the reduced permutation representation $\rho\colon G\to \GL_4(\Cbb)$ is not a linear combination of inductions of abelian characters with positive real coefficients. 	For this $\rho$, we can construct an integral equivariant Moore spectra via the $A_5$-equivariant cofiber sequence:
		\begin{equation*}
			\Dbb \M (\rho)\longrightarrow \Sigma_+^{\infty} \{1,2,3,4,5\}\longrightarrow  \Sigma_+^{\infty}\{*\},
		\end{equation*}
		where $\{1,2,3,4,5\}$ is an $A_5$-set with permutation action and $\{*\}$ has trivial $A_5$-action. Moreover, one can prove the twisted QLC for this Galois representation using the filtration above, just like the abelian character case. 
		
		For a general irreducible representation of a finite group, however, it is not clear whether an integral equivariant Moore spectrum exists or not. 
	\end{remark}
	The construction of $\M (\rho)$ in \Cref{thm:eMoore_higherdim} allows us to prove the  Main \Cref{thm:main} for higher dimensional Galois representations:
	\begin{theorem}\label{prop:eQLC_reduction}
		Let $G$ be a finite group and $Y\to X$ be a pseudo $G$-Galois covering of integral normal schemes of dimension $d\le 1$. Suppose $\rho\colon G\to \GL_N(\Ocal_E)$ is conjugate to a sum of induction of complex abelian characters on subgroups of $G$. Then \Cref{thm:main} holds for the Artin $L$-function $L(X,\rho,s)$. 
	\end{theorem}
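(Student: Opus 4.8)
The plan is to carry out ``Step II'' from the introduction: transport the three moves of \Cref{table:analogies} --- trivial character versus fixed points, direct sum versus direct sum, induction versus restriction --- from the $L$-function side to the equivariant $\K$-theory side, thereby reducing to the abelian case already settled in \Cref{prop:desc}. Write $\rho\cong\bigoplus_i\mathrm{Ind}_{H_i}^G\chi_i$ with $\chi_i\colon H_i\to\Cx$; by the very definition \eqref{eqn:eMoore} the Moore spectrum is $\M(\rho)=\bigoplus_i\mathrm{Ind}_{H_i}^G\bigl(\M(\chi_i)^{\oplus[E:\Q[\image\chi_i]]}\bigr)$, so no choice of lift is in play. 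Since $L(X,\rho,s)$ depends only on the character of $\rho$ (\Cref{rem:katz}), hence only on its conjugacy class, parts (2) and (4) of \Cref{prop:Artin-L_properties} give $L(X,\rho,s)=\prod_i L(Y/_{H_i},\chi_i,s)$, where each $\pi_i\colon Y\to Y/_{H_i}$ is again a pseudo $H_i$-Galois cover (via \Cref{lem:ps-gal}) and $Y/_{H_i}$ is over $\Fq$, resp. has totally real function field, resp. is totally real abelian over $\Q$, whenever $X$ does. As $\chi_i$ takes values in $\Q[\image\chi_i]\subseteq E$ and $\norm_{E/\Q}=\norm_{\Q[\image\chi_i]/\Q}\circ\norm_{E/\Q[\image\chi_i]}$ with the inner norm acting as the $[E:\Q[\image\chi_i]]$-th power on $\Q[\image\chi_i]$,
\[
\norm_{E/\Q}L(X,\rho,1-k)=\prod_i\Bigl(\norm_{\Q[\image\chi_i]/\Q}L(Y/_{H_i},\chi_i,1-k)\Bigr)^{[E:\Q[\image\chi_i]]}.
\]

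On the homotopy side I would first observe that $\mathrm{Res}_{H_i}^G\underline{\K}_\pi$ is the spectral $H_i$-Mackey functor $\underline{\K}_{\pi_i}$ of \Cref{constr:span-k} attached to $\pi_i\colon Y\to Y/_{H_i}$: restricting the indexing equivalence $\Span_G\simeq\Span(\mathrm{PsGal}^{\coprod}_{Y/X})$ along $\Span_{H_i}\hookrightarrow\Span_G$ produces $\Span_{H_i}\simeq\Span(\mathrm{PsGal}^{\coprod}_{Y/(Y/_{H_i})})$, and when $d=0$ this is simply the Barwick--Merling construction restricted. Then the projection formula $\mathrm{Ind}_{H_i}^G M\otimes\sX\simeq\mathrm{Ind}_{H_i}^G(M\otimes\mathrm{Res}_{H_i}^G\sX)$, the identification $\pi^G_*\circ\mathrm{Ind}_{H_i}^G\simeq\pi^{H_i}_*$ (finite index, so induction is coinduction), and additivity of $\pi^G_*$ over direct sums yield
\[
\#\pi^G_j\bigl(\K(Y)\otimes\M(\rho)\bigr)=\prod_i\Bigl(\#\pi^{H_i}_j\bigl(\K(Y)\otimes\M(\chi_i)\bigr)\Bigr)^{[E:\Q[\image\chi_i]]},
\]
the $[E:\Q[\image\chi_i]]$-fold sum supplying the exponent. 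Applying \Cref{prop:desc} (the abelian, possibly non-primitive, case of \Cref{thm:main}, i.e. the upgrade of \Cref{thm:eQLC_finfld} and \Cref{thm:eQLC_numfld}) to the cover $\pi_i$ and character $\chi_i$, raising each identity to the power $[E:\Q[\image\chi_i]]$, and multiplying over $i$, one matches the two displays above --- up to the sign and power-of-$2$ prefactors.

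That last bookkeeping is the only genuine obstacle. One must verify that the exponent appearing in \Cref{thm:main} --- namely $\dim_\Q[(\K_1(Y)\oplus\Z_{\mathrm{triv}})\otimes\rho\otimes\Q]^G$ in the finite and function field cases, and $r_1(F)\cdot\dim_\Q(\rho\otimes\Q)$ in the totally real number field case --- equals the corresponding product of the exponents for the $\chi_i$ coming from \Cref{thm:eQLC_numfld}, raised to the powers $[E:\Q[\image\chi_i]]$. This should be purely formal: Frobenius reciprocity gives $[W\otimes\mathrm{Ind}_{H_i}^G\chi_i\otimes\Q]^G\cong[\mathrm{Res}_{H_i}^G W\otimes\chi_i\otimes\Q]^{H_i}$ for $W=\K_1(Y)\oplus\Z_{\mathrm{triv}}$; $\dim_\Q$ is additive over $\oplus$; the $[E:\Q[\image\chi_i]]$-fold direct sum scales dimensions accordingly; and in the number field case $r_1\bigl((F')^{H_i}\bigr)=r_1(F')/\#H_i$ because every real place of a totally real field splits completely in a Galois extension to a totally real field. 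Finally, since \Cref{prop:desc} only yields equality up to powers of $2$ in case (3) of \Cref{thm:main} when $F'$ is not abelian over $\Q$, the identical caveat is inherited for $\rho$.
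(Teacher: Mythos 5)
Your proof is correct and follows essentially the same route as the paper's: decompose $\rho\cong\bigoplus_i\mathrm{Ind}_{H_i}^G\chi_i$, apply Proposition~\ref{prop:Artin-L_properties} to factor the $L$-function, feed each factor through the abelian case (Proposition~\ref{prop:desc}), and reassemble via the induction-and-direct-sum structure of $\M(\rho)$ from \eqref{eqn:eMoore}, matching norms through the tower $\norm_{E/\Q}=\norm_{\Q[\image\chi_i]/\Q}^{\,[E:\Q[\image\chi_i]]}$. The one place where you are more explicit than the paper — and usefully so — is in verifying that $\mathrm{Res}_{H_i}^G\underline{\K}_\pi$ coincides with the spectral $H_i$-Mackey functor of the pseudo-Galois cover $Y\to Y/_{H_i}$, which the paper's display calculation uses tacitly when it invokes Theorem~\ref{thm:eQLC_numfld} for $\pi^{H_i}_*(\K(Y)\otimes\M(\chi_i))$; your sketch via restricting the span-category equivalence is the right argument, and your Frobenius-reciprocity bookkeeping of the sign and power-of-$2$ prefactors likewise reproduces what the paper does (and defers partly to the proof of Proposition~\ref{prop:eQLC_sign} and Theorem~\ref{prop:twisted_Borel}).
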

	\begin{proof}
		Like in \Cref{cor:e1-norm}, we showcase the totally abelian number field case. Let $X=\spec \Ocal_F$ and $Y=\spec \Ocal_{F'}$.  Write $\rho=\oplus_i \mathrm{Ind}_{H_i}^G \chi_i$. The identities of Artin $L$-functions in \Cref{prop:Artin-L_properties} imply that
		\begin{equation*}
			L(\Ocal_F,\rho,s)=\prod_i L(\Ocal_F,\mathrm{Ind}_{H_i}^G\chi_i,s)=\prod_i L(\Ocal_{(F')^{H_i}},\chi_i,s),
		\end{equation*}
		where each $\chi_i$ is a character of the pseudo $G/\ker \chi_i$-Galois extension $\Ocal_{(F')^{H_i}}\to \Ocal_{F'}$. Then \Cref{thm:eQLC_numfld} and \Cref{prop:desc} for $L(\Ocal_{(F')^{H_i}},\chi_i,s)$ states that:
		\begin{align}
			\norm_{\Q[\image \chi_i]/\Q} L(\Ocal_{(F')^{H_i}},\chi_i, 1-2n)&=((-1)^n\cdot 2)^{\phi(\#\image \chi_i)\cdot r_1\left((F')^{H_i}\right)}\cdot \frac{\#\pi_{4n-2}^{H_i}\left(\K(\Ocal_{F'})\otimes \M (\chi_i)\right)}{\#\pi_{4n-1}^{H_i}\left(\K(\Ocal_{F'})\otimes \M (\chi_i)\right)}\nonumber\\\label{eqn:norm_ind}
			&=((-1)^n\cdot 2)^{[\Q[\image \chi_i]:\Q]\cdot r_1(F)\cdot [G:H_i]}\cdot \frac{\#\pi_{4n-2}^{H_i}\left(\K(\Ocal_{F'})\otimes \M (\chi_i)\right)}{\#\pi_{4n-1}^{H_i}\left(\K(\Ocal_{F'})\otimes \M (\chi_i)\right)}.
		\end{align}
		It follows that 
		\begin{align*}
			&\quad~ \norm_{E/\Q} L(\Ocal_F,\rho,1-2n))&&\\
			&=\prod_i \norm_{E/\Q} L(\Ocal_{(F')^{H_i}},\chi_i,1-2n)&&\text{\ref{prop:Artin-L_properties}}\\
			&=\prod_i \left[\norm_{\Q[\zeta_{m_i}]/\Q} L(\Ocal_{(F')^{H_i}},\chi_i,1-2n)\right]^{[E:\Q[\image \chi_i]]}&& \\
			&= \prod_i \left(((-1)^n\cdot 2)^{[\Q[\image \chi_i]:\Q]\cdot r_1(F)\cdot [G:H_i]}\cdot \frac{\#\pi_{4n-2}^{H_i}\left(\K(\Ocal_{F'})\otimes \M (\chi_i)\right)}{\#\pi_{4n-1}^{H_i}\left(\K(\Ocal_{F'})\otimes \M (\chi_i)\right)}\right)^{[E:\Q[\image \chi_i]]} && \text{\eqref{eqn:norm_ind}}\\
			&=((-1)^n\cdot 2)^{\sum_i r_1(F)\cdot[E:\Q]\cdot \dim_E(\Ind_{H_i}^G \wt{\chi}_i\otimes \Q)}  \prod_i \left(\frac{\#\pi_{4n-2}^{G}\left(\K(\Ocal_{F'})\otimes \mathrm{Ind}_{H_i}^G  \M (\chi_i)\right)}{\#\pi_{4n-1}^{G}\left(\K(\Ocal_{F'})\otimes \mathrm{Ind}_{H_i}^G  \M (\chi_i)\right)}\right)^{[E:\Q[\image \chi_i]]}&& \\
			&=((-1)^n\cdot 2)^{r_1(F)\cdot \dim_\Q[\bigoplus_i(\Ind_{H_i}^G\wt{\chi}_i\otimes \Q)]} \cdot  \frac{\#\pi_{4n-2}^{G}\left(\K(\Ocal_{F'})\otimes \left(\oplus_i\mathrm{Ind}_{H_i}^G  \M (\chi_i)^{\oplus [E:\Q[\image \chi_i]]}\right)\right)}{\#\pi_{4n-1}^{G}\left(\K(\Ocal_{F'})\otimes \left(\oplus_i\mathrm{Ind}_{H_i}^G  \M (\chi_i)^{\oplus [E:\Q[\image \chi_i]]}\right)\right)} && \\
			&= ((-1)^n\cdot 2)^{ r_1(F)\cdot (\dim_\Q \rho\otimes \Q)}\cdot \frac{\#\pi_{4n-2}^{G}\left(\K(\Ocal_{F'})\otimes \M (\rho)\right)}{\#\pi_{4n-1}^{G}\left(\K(\Ocal_{F'})\otimes \M (\rho)\right)}.&& \text{\eqref{eqn:eMoore}}
		\end{align*}
		Parallel to the proof of \Cref{prop:eQLC_sign}, the proof of the signs in the finite and function field cases are similar to the following computation of rational equivariant algebraic $\K$-groups with coefficients in higher dimensional Galois representations with in \Cref{prop:twisted_Borel}.
	\end{proof}
	\begin{remark}\label{rem:size-ff}
		Like in \Cref{cor:size-ff-1d}, we can determine the structure of equivariant algebraic $\K$-groups of finite fields with coefficients in higher dimensional Galois representations, though they are not necessarily cyclic. In this case, any Galois representation $\rho\colon \gal(\F_{q^m}/\Fq)=C_m\to \GL_N(\Ocal_E)$ is isomorphic to a direct sum of abelian characters over $\Q[\zeta_m]\subseteq \Cbb$. So an integral equivariant Moore spectra $\M(\rho)$ can always be constructed as in \Cref{thm:eMoore_higherdim}. 
	\end{remark}
	Rationally, we can similarly generalize \Cref{thm:twisted_Borel-1d} to higher dimensions.
	\begin{theorem}[Gross, Zhang, {\cite{Gross_Artin_L,nz_QB_AL}}, higher dimensional case]\label{prop:twisted_Borel}
		Let $F'/F$ be a Galois extension of number fields and $\rho\colon G\to\GL_N(\Ocal_E)$ be an $E$-linear Galois representation for some number field $E$. Then there exists a unique rational equivariant Moore spectra $M\Q(\rho)$ such that 
		\begin{equation*}
			\ord_{s=1-n}L(\Ocal_F,\rho,s)=\dim_E \pi^G_{2n-1}(\K(\Ocal_{F'})\otimes M\Q(\rho)).
		\end{equation*}
	\end{theorem}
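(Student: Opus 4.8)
The plan is to bootstrap from the one-dimensional case \Cref{thm:twisted_Borel-1d}, reducing an arbitrary Galois representation $\rho$ to inductions of abelian characters of subgroups via the Brauer induction theorem and carrying this reduction out rationally, where the ambient homotopy categories are semisimple and virtual decompositions are realized by honest spectra.

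First I would construct $M\Q(\rho)$. When $\rho$ is isomorphic to a direct sum of inductions of abelian characters, set $M\Q(\rho):=\M(\rho)\otimes\H\Q$ with $\M(\rho)$ as in \Cref{thm:eMoore_higherdim}. Applying the cellular filtration of \Cref{constr:cell_ss} to each building block $\M(\chi)$ after tensoring with $\H\Q$, and invoking the rational collapse already used in the proof of \Cref{thm:twisted_Borel-1d}, one sees that $M\Q(\rho)$ is concentrated in homotopical degree $0$, hence is the Eilenberg--MacLane spectrum of a rational $G$-Mackey functor; since $\rho$ is $E$-linear, $M\Q(\rho)$ is in fact an $\H E$-module. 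For a general $\rho$ I would invoke the Brauer induction theorem to write $\rho\oplus P\cong Q$ as $E[G]$-modules with $P$ and $Q$ honest direct sums of inductions of abelian characters, so that $M\Q(P)$ and $M\Q(Q)$ are already defined. Since rational $G$-Mackey functors form a semisimple abelian category and the assignment $\M(-)\otimes\H\Q$ is compatible with direct sums and inductions through \eqref{eqn:eMoore}, the $E[G]$-splitting $Q\cong\rho\oplus P$ lifts to a splitting $M\Q(Q)\simeq X\oplus M\Q(P)$ of rational genuine $G$-spectra, and I would set $M\Q(\rho):=X$. That $M\Q(\rho)$ is a rational equivariant Moore spectrum for $\rho$ is immediate from its underlying homology, and its independence of the chosen Brauer decomposition (hence the uniqueness clause) follows from the additivity of all the constructions involved.

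Next I would establish the identity by exploiting that both sides are additive and compatible with induction. On the $L$-side this is \Cref{prop:Artin-L_properties}(2) and (4). On the $\K$-side, $\rho\mapsto\dim_E\pi^G_{2n-1}(\K(\Ocal_{F'})\otimes M\Q(\rho))$ is additive because $-\otimes M\Q(\rho)$ is exact rationally and the splitting above is $\H E$-linear, while for $\rho=\Ind_H^G\chi$ the projection formula and the Wirthm\"uller isomorphism (\Cref{table:analogies}) give $\pi^G_{*}(\K(\Ocal_{F'})\otimes\Ind_H^G M\Q(\chi))\cong\pi^G_{*}(\Ind_H^G(\K(\Ocal_{F'})\otimes M\Q(\chi)))\cong\pi^H_{*}(\K(\Ocal_{F'})\otimes M\Q(\chi))$, using that $\mathrm{Res}^G_H$ of the spectral Mackey functor $\underline{\K}_\pi$ of the pseudo $G$-Galois cover is the spectral Mackey functor of the restricted pseudo $H$-Galois cover $\Ocal_{(F')^H}\to\Ocal_{F'}$ of \Cref{constr:span-k}. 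Together with \Cref{prop:desc}, this reduces the identity for an arbitrary $\rho$ to the one-dimensional case \Cref{thm:twisted_Borel-1d} applied to each character $\chi_i$ over the intermediate field $(F')^{H_i}$; the multiplicities $[E:\Q[\image\chi_i]]$ built into \eqref{eqn:eMoore} convert each $\dim_{\Q[\image\chi_i]}$ into $\dim_E$, and the virtual identity $\rho=Q\ominus P$ combined with multiplicativity of $L$-functions yields $\ord_{s=1-n}L(\Ocal_F,\rho,s)=\ord_{s=1-n}L(\Ocal_F,Q,s)-\ord_{s=1-n}L(\Ocal_F,P,s)=\dim_E\pi^G_{2n-1}(\K(\Ocal_{F'})\otimes M\Q(\rho))$. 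The vanishing of $\pi^G_{2n}(\K(\Ocal_{F'})\otimes M\Q(\rho))$ likewise reduces to the one-dimensional case: $\K_{2n}(\Ocal_{F'})$ is finite by Borel's \Cref{thm:Borel_rank}, so the relevant rational Bredon cohomology groups vanish by \Cref{prop:Mack_G_inv} and \Cref{prop:Bredon_psi_m_fixedpt}.

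The hard part will be the realization of Brauer's \emph{virtual} decomposition of $\rho$ as an \emph{actual} rational genuine $G$-spectrum $M\Q(\rho)$, together with its independence of choices: this is exactly where one must use the semisimplicity of rational $G$-Mackey functors and the functoriality of $\M(-)\otimes\H\Q$ on sums of inductions of abelian characters --- equivalently, its identification with the Eilenberg--MacLane spectrum of the fixed-point Mackey functor --- so that a projection $Q\twoheadrightarrow P$ lifts to a splitting of spectra. A secondary but necessary bookkeeping task is to keep the $E$-linear structures consistent so that $\dim_E$ is well-posed, which is precisely what the multiplicities in \eqref{eqn:eMoore} accomplish; beyond this, the additivity, induction-compatibility, and the rational collapse of the equivariant Atiyah--Hirzebruch spectral sequence are all as in the one-dimensional case and introduce no new difficulty.
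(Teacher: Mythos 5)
Your proposal follows essentially the same route as the paper: construct $M\Q(\rho)$ from a Brauer decomposition of $\rho$ as a virtual sum of inductions of abelian characters (you realize the virtual difference as a complementary summand; the paper takes the cofiber of the split injection $M\Q(P)\to M\Q(Q)$, which is the same thing), then reduce to \Cref{thm:twisted_Borel-1d} via additivity, the induction formula, and the vanishing of rational even equivariant $\K$-groups. The paper's uniqueness argument is slightly crisper --- it observes directly that for a $\Q$-module, Moore and Eilenberg--MacLane spectra agree, making the construction canonical --- but you note this same point, and the substance of the argument is identical.
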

	\begin{proof}
		For a $\Q$-module $A$, its Moore spectrum $\M A$ is equivalent to its  Eilenberg-MacLane spectrum $\H A$. This implies the existence and uniqueness of rational equivariant Moore spectra. See more discussions in \cite{nz_QB_AL}.  We will deduce the identity from the $1$-dimensional case in \Cref{thm:twisted_Borel-1d}. By Brauer Induction Theorem, we can write $\rho=\bigoplus_i \mathrm{Ind}_{H_i}^G \chi_i - \bigoplus_i \mathrm{Ind}_{H'_j}^G\chi'_j$. From this decomposition, we can set 
		\begin{equation}\label{eqn:MQrho}
			M\Q(\rho):=\mathrm{Cofib}\left[\bigoplus_j \mathrm{Ind}_{H'_j}^G M(\chi'_j)^{\oplus [E:\Q[\image \chi'_j]]}\otimes \H\Q\longrightarrow\bigoplus_i \mathrm{Ind}_{H_i}^G M(\chi_i)^{\oplus [E:\Q[\image \chi_i]]}\otimes \H\Q \right],
		\end{equation} 
		where the map between Moore/Eilenberg-MacLane spectra is induced by the inclusion of $E$-vector spaces from the decomposition of $\rho$ in the Brauer Induction Theorem. Then we have:
		\begin{align*}
			&\quad~\ord_{s=1-n}L(\Ocal_F,\rho,s)\\&= \sum_j\ord_{s=1-n} L(\Ocal_{(F')^{H'_j}},\chi'_j,s)-\sum_i\ord_{s=1-n} L(\Ocal_{(F')^{H_i}},\chi_i,s)\\
			&= \sum_j\dim_{\Q[\image \chi'_j]}\pi^{H'_j}_{2n-1}(\K(\Ocal_{F'})\otimes M(\chi'_j)\otimes \H\Q)-\sum_i\dim_{\Q[\image \chi_i]}\pi^{H_i}_{2n-1}(\K(\Ocal_{F'})\otimes M(\chi_i)\otimes \H\Q)\\
			&=  \dim_{E}\pi^{G}_{2n-1}\mathrm{Cofib}\left[\bigoplus_j \mathrm{Ind}_{H'_j}^G M(\chi'_j)^{\oplus [E:\Q[\image \chi'_j]]}\otimes \H\Q\longrightarrow\bigoplus_i \mathrm{Ind}_{H_i}^G M(\chi_i)^{\oplus [E:\Q[\image \chi_i]]}\otimes \H\Q \right]\\
			&= \dim_E \pi^G_{2n-1}(\K(\Ocal_{F'})\otimes M\Q(\rho)).
		\end{align*}
		In the second last step, we used the facts that  
		\begin{itemize}
			\item Rational equivariant algebraic $\K$-groups of number fields vanish in positive even degrees in \Cref{thm:twisted_Borel-1d}
			\item The map in the cofiber \eqref{eqn:MQrho} admits a section, since it is induced by an inclusion of rational $G$-representations, which always has an equivariant cross-section (projection). \qedhere
		\end{itemize}
	\end{proof}
	\bibliographystyle{plain}
	\bibliography{bibliography}
\end{document}